\newcommand{\subjclass}[2][2020]{%
  \let\@oldtitle\@title%
  \gdef\@title{\@oldtitle\footnotetext{#1 \emph{Mathematics subject classification.} #2}}%
}
\colorlet{darkblue}{blue!90!black}
\colorlet{darkred}{red!90!black}
\colorlet{darkgreen}{green!60!black}
\colorlet{purp}{blue!30!red}
\colorlet{lblue}{blue!60!white}
\def\chengcheng#1{\comment[darkblue]{CL: #1}}
\def\mate#1{\comment[darkred]{MG: #1}}
\newtheorem{theorem}{Theorem}[section]
\newtheorem{assumption}[theorem]{Assumption}
\newtheorem{lemma}[theorem]{Lemma}
\newtheorem{proposition}[theorem]{Proposition}
\newtheorem{corollary}[theorem]{Corollary}
\theoremstyle{definition}
\newtheorem{definition}[theorem]{Definition}
\theoremstyle{remark}
\newtheorem{remark}[theorem]{Remark}
\def\scal#1{\langle #1 \rangle}
\newcommand{\vn}[1]{{\vert\kern-0.23ex\vert\kern-0.23ex\vert #1 
    \vert\kern-0.23ex\vert\kern-0.23ex\vert}}
\newcommand{\I}{\mathbb{I}}
\newcommand\bone{\mathbf{1}}
\newcommand\cR{\mathcal{R}}
\newcommand\cK{\mathcal{K}}
\newcommand\cL{\mathcal{L}}
\newcommand\cH{\mathcal{H}}
\newcommand\cD{\mathcal{D}}
\newcommand\cA{\mathcal{A}}
\newcommand\cC{\mathcal{C}}
\newcommand\cJ{\mathcal{J}}
\newcommand\cZ{\mathcal{Z}}
\newcommand\cV{\mathcal{V}}
\newcommand\cM{\mathcal{M}}
\newcommand\cY{\mathcal{Y}}
\newcommand\cP{\mathcal{P}}
\newcommand\cF{\mathcal{F}}
\newcommand\CH{\mathcal{H}}
\newcommand{\bbg}{\mathbb{g}}
\newcommand{\cX}{\mathcal{X}}
\newcommand{\D}{\partial}
\def\eps{\varepsilon}
\newcommand{\R}{{\mathbb{R}}}
\renewcommand{\P}{\mathbb{P}}
\def\Q{\mathbb{Q}}
\newcommand{\E}{\mathbb{E}}
\newcommand{\bbI}{\mathbb{I}}
\newcommand{\mR}{\mathbb{R}}
\newcommand{\bD}{\mathbb{D}}
\newcommand{\bF}{\mathbb{F}}
\newcommand{\N}{\mathbb{N}}
\newcommand{\Var}{\mathbf{Var}}
\def\supp{\mathop{\mathrm{supp}}}
\def\id{\mathrm{id}}
\def\path{\mathrm{path}}
\def\cut{\mathrm{Cut}}
\def\geo{\mathrm{geo}}
\def\dom{\mathrm{dom}}
\def\pol{\mathrm{pol}}
\newcommand{\blue}[1]{{\color{blue}#1}}
\newcommand{\black}[1]{{\color{black}#1}}
\newcommand{\red}[1]{{\color{red}#1}}
\newcommand{\purp}[1]{{\color{purp}#1}}
\begin{document}
\title{Regularisation by Gaussian rough path lifts of fractional Brownian motions}
\author{Konstantinos Dareiotis\thanks{School of Mathematics, University of Leeds, Leeds, U.K., \url{K.Dareiotis@leeds.ac.uk}}\, , M\'at\'e Gerencs\'er\thanks{Institute of Analysis and Scientific Computing, TU Wien, Austria, \url{mate.gerencser@asc.tuwien.ac.at}}\, , Khoa L\^e\thanks{School of Mathematics, University of Leeds, Leeds, U.K., \url{K.Le@leeds.ac.uk}}\, , Chengcheng Ling\thanks{Institut f\"ur Mathematik, University of Augsburg, Augsburg,  Germany.
\url{chengcheng.ling@uni-a.de}}}
\maketitle

\begin{abstract}
The aim of the paper is to show the probabilistically strong well-posedness of rough differential equations with distributional drifts driven by the Gaussian rough path lift of fractional Brownian motion with Hurst parameter $H\in(1/3,1/2)$. We assume that the noise is nondegenerate and the drift lies in  the Besov-H\"older space $\cC^\alpha$ for some $\alpha>1-1/(2H)$. The latter condition matches the one of the additive noise case, thereby providing a multiplicative analogue of Catellier-Gubinelli in the said regime.% $H\in(1/3,1/2)$. 
\end{abstract}
\vspace{1em}
\\
\indent \textit{Mathematics Subject Classification 2020:} 60H50; 60H07; 60L20; 60L90.
\\
\indent \keywords{Regularisation by noise; rough paths; stochastic sewing; Malliavin calculus. }

\tableofcontents

\section{Introduction}
Let $B$ be a fractional Brownian motion with Hurst parameter $H\in(1/3,1/2)$.
%and let $\blue{B}$ be its Gaussian rough path lift.
Our goal is to solve equations of the form
\begin{equ}\label{eq:main}
X_t=\int_{0}^t b(X_s)\,ds+\int_{0}^t \black{\sigma(X_s)}\,d\black{B_s},%=:x_0+D^X_t+S^X_t.
\end{equ}
with \emph{distributional} drift $b$.
This is a natural continuation of the work \cite{KM}, whose aim was to simultaneously treat two ``sides'' of the fractional Brownian motion. On the one hand, the sample paths of $B$ are too irregular to have any canonical solution theories of differential equations driven by them (even without a drift component $b$), which makes them a prime example for Lyons' theory of rough paths \cite{Lyons, FV10}.
On the other hand, when one avoids the issue of rough integration by taking $\sigma$ to be constant, it is known \cite{NO1, CATELLIER20162323}
that the equation with nondegenerate $\sigma$ behaves much better than the equation without the noise,
in the sense that the regularity assumptions on $b$ that are required for well-posedness are far below the ones of the classical Cauchy-Lipschitz theorem.
This \emph{regularisation by noise} phenomenon has been recently widely studied and extended in a variety of directions, see, e.g.,
\cite{ Khoa, MR4342752, MR4404773, MR4664455, GG, butkovsky2023stochasticequationssingulardrift, oleg-weak}.
A common theme is the rule of thumb \emph{``the more irregular the noise, the more regularisation effect it has''}, having small parameters $H$ is not only relevant for rough path theory, but also interesting from the regularisation perspective. In fact, the regime $H<1/2$ is where the regularisation is strong enough so that equations with \emph{distributional} $b$ can be strongly well-posed. More precisely, in the Besov-H\"older scale the condition on the drift reads $b\in \cC^\alpha$, $\alpha>1-1/(2H)$, and for negative exponents such spaces contain distributions.

In recent works \cite{KM, cate} equations of the kind \eqref{eq:main} with genuinely multiplicative noise (i.e. non-constant $\sigma$) have been considered.
The approach of \cite{KM} yields in fact optimal results in the regime $H>1/2$ in the sense that the requirement on $b$ matches the one from the additive noise case. 
In the rough regime both works go well beyond the classical Cauchy Lipschitz condition but fail to include distributional drifts: strong well-posedness is shown in \cite{KM} under the condition $\alpha>0$, $H\in(1/3,1/2)$, while in \cite{cate} under the condition $\alpha>\max(0,3/2-1/(2H))$, $H\in(1/4,1/2)$.
%The goal of the present paper is to
%While in the regime $H>1/2$ \cite{KM} gives the
In the present paper, we prove strong well-posedness for equations with distributional drifts. Loosely speaking, the main result is that provided $\sigma$ is smooth and nondegenerate, \eqref{eq:main} is well-posed for any $b\in\cC^\alpha$ under the ``classical'' condition $\alpha>1-1/(2H)$ \cite{CATELLIER20162323}.
The precise formulation requires some setup and is stated in \cref{thm:main} below.
As in \cite{KM}, the present approach relies on the stochastic sewing lemma (SSL) from \cite{Khoa}, but with rather significant novelties that we outline below.

\begin{remark}
Our assumption of smooth $\sigma$ is only for convenience, but the methods require at least $\sigma\in \cC^5$.  
    We note that somewhat orthogonally to the direction of the present paper, significant progress has also been made recently in reducing the regularity requirements on $\sigma$ compared to the conditions in the purely analytic Young/rough theories, see \cite{Avi_Toyomu, MR4730255}. Combining their method with the present work seems to be challenging, given the several Malliavin derivatives we require on the flow of the driftless equation.
\end{remark}

\iffalse
{\color{blue}(some short introductory blabla)There is an increasing interest on studying the well-posedness of \eqref{eq:main}. It is nowadays a well understood subject when  $B$ is a Brownian motion by using of the well-developed tools from It\^o's theory (\cite{MR1867429, Fef, zhang2018heatkernelergodicitysdes}).  Those results usually are interpreted as a phenomenon of \emph{regularisation by noise}. Unfortunately but interestingly,  the
approach based on It\^o's calculus, used in most of the papers  for
Brownian motion, does not easily extend to the fractional  case.  Starting from  \cite{NO1, CATELLIER20162323}, the analogue of the results on Brownian noise gradually are obtained for fractional  Brownian motion with  $\sigma\equiv\mathbb{I}_{d}$. In last five years also, due to the contribution from \emph{sewing lemma} \cite{MR2091358} and \emph{stochastic sewing lemma} \cite{Khoa}, the study on \eqref{eq:main} with singular drift regains a lot attention and decent development (e.g. \cite{MR4342752, MR4404773, GG, butkovsky2023stochasticequationssingulardrift}). However,  to be noticed that among all of the aforementioned results, the one  on multiplicative fractional noise (i.e. $\sigma$ is a non-constant matrix valued function) is either missing or left behind comparing with  the ``classical''  additive one (\cite{CATELLIER20162323}). } 
\fi

%That is, $b\in C^\alpha$ with some $\alpha<0$.
\subsection{The overview of the strategy}

Let us give an overall view on the strategy of the paper and highlight what novel difficulties have to be tackled. First of all, let us recall that \eqref{eq:main} in fact employs an abuse of notation. Indeed, the integral
\begin{equ}
    \int_{0}^t \black{\sigma(X_s)}\,d\black{B_s}
\end{equ}
is not an object that depends only on $B$ and $X$, but also on other components: the second order iterated integrals of the coordinates of $B$ and the so-called Gubinelli derivative of $\sigma(X)$, which in turn is obtained not only from $X$ but also its Gubinelli derivative.
To distinguish between standard processes and their enhancements with  other components, we shall use colors (similar to, but not exactly the same convention as, in \cite{Friz-Hairer}), their precise meaning is defined in the notation section below.
For example, the integral above will be denoted by 
\begin{equ}
    \int_{0}^t \blue{\sigma(X_s)}\,d\blue{B_s}
\end{equ}
to indicate that \(\blue{\sigma(X)}, \blue{B}\) are enhanced processes.
%Since the strategy deviates significantly from typical well-posedness proofs for SDEs, and even compared to the established stochastic sewing literature there are new challenges to be handled, we give a sketch of the overall strategy of the article.
%\mate{Not sure if this is better here or even before the notation section? }

%\emph{Step 0 }(Cauchy-Lipschitz).
We start by recalling the standard rough Cauchy-Lipschitz argument.
Take two strong solutions $X$ and $Y$ that are adapted to the same filtration $(\cF_s)_{s\geq 0}$, with respect to which $B$ is a fractional Brownian motion. Their difference can be written as
\begin{equ}
X_t-Y_t=\int_0^t \big(b(X_s)-b(Y_s)\big)\,ds+\int_0^t \big( \blue{\sigma(X_s)}-\blue{\sigma(Y_s)}\big) \, d \blue{B_s}.
\end{equ}
If $b$ were Lipschitz, then using that for sufficiently regular $\sigma$, the map $\blue{X}\mapsto\blue{\sigma(X)}$ is locally Lipschitz in the norm of controlled paths, one easily concludes that $X=Y$ by a Gronwall-type argument.
To formulate slightly differently, one can write by Newton-Leibniz and ``rough Newton-Leibniz'' (see \eqref{eq:rough_FTC} below) formulae
\begin{equs}
X_t-Y_t&=\int_0^t \Big(\int_0^1\nabla b\big(\theta X_s+(1-\theta) Y_s\big)\,d\theta\Big)(X_s-Y_s)\,ds
\\
&\qquad+\int_0^t \Big(\int_0^1\blue{\nabla \sigma\big(\theta' X_s+(1-\theta') Y_s\big)}\,d\theta'\Big)(\blue{X_s}-\blue{Y_s})\,d\blue{B_s}.
\end{equs}
In this form, it is also clear that the Lipschitzness of $b$ guarantees that the above is a well-defined linear rough equation for $Z=X-Y$ with initial condition $0$, therefore the solution is identically $0$.
If $b$ fails to be Lipschitz, the meaning of the integrand with respect to $\theta$ becomes unclear. Note that since $\sigma$ is always assumed to be fairly regular, the integrand with respect to $\theta'$ is always well-defined and is henceforth denoted by $\blue{\Sigma_s}$.

\emph{An attempt at Young formulation}. To give a meaning to the above equation for $Z$, we first aim to \emph{construct} the integral
\begin{equ}\label{eq:L-early}
L_t=\int_0^1\int_0^t\nabla b\big(\theta X_s+(1-\theta) Y_s)\big)\,ds\,d\theta
\end{equ}
by probabilistic methods.
Such objects, i.e. integrals of distributions along stochastic processes play a key role in regularisation by noise, see \cite{ Davie,flandoli2010well, CATELLIER20162323,Khoa}
for some prominent examples.
%MR4342752,  KM, butkovsky2023stochasticequationssingulardrift, MR4730255,  GG}
On the first reading, one may also think of taking smooth $b$ to avoid any confusion of meaning, but with the aim to control $L$ in some norm using only the distributional norm of $\nabla b$.
Controlling such objects are easiest when the distribution of the process is exactly known, for example, when it is a Brownian motion, fractional Brownian motion, or L\'evy process. It is more involved when the process is an additive perturbation of an exactly known process, see e.g. \cite{GG} for a general criterion when this is possible.
%Solutions of equations with multiplicative noise are not of this form, and constructing such integrals become more sophisticated.
%This issue was first addressed recently independently and with different methods in \cite{KM}. 

With the first goal being the construction of $L$, we have now created two new difficulties, which we address one by one below. First of all, the process in \eqref{eq:L-early} along which we wish to integrate is not a perturbation of an exactly known process. Secondly, even if $L$ is constructed, its regularity can be guessed (by replacing $X$ and $Y$ with $B$ and recalling e.g. \cite{CATELLIER20162323}): we can only expect\footnote{For the sake of readability in this introduction we omit the necessary $-\eps$-s in regularities.} $L\in \cC^{1+(\alpha-1)H}$. The condition $\alpha>1-1/(2H)$ guarantees that this exponent is bigger than but arbitrarily close to $1/2$. Therefore rewriting 
the equation for $Z=X-Y$ as \footnote{%{\color{red}For simplicity in the present exposition let us assume that we are in dimension $1$, where one does not (....hmm not perfect.. not consistent with the indices for iterated guys later)..how about this:}\\
Here we admit to being careless in how the indices of different matrix- and vector-valued processes are contracted. If this worries the reader, they may assume for the present exposition that we are in dimension $1$.}
\begin{equ}\label{eq:Z-intro}
Z_t=\int_0^t Z_s\,dL_s+\int_0^t \blue{\Sigma_sZ_s}\,d\blue{B_s},
\end{equ} 
the first integral is a well-defined Young integral only if the regularity of $Z$ is almost $1/2$.
This would be the case, for example, if $\blue{\Sigma}$ were $0$: then $Z$ would inherit the regularity of $L$. Of course the case $\Sigma=0$  corresponds precisely to the additive case so it is not in our focus. 
For the correct power counting, we first realise that $Z$ in general inherits only regularity $H$ from the rough integral, so to define the first integral in \eqref{eq:Z-intro} as a  Young integration, we would need $H+1+(\alpha-1)H>1$, that is, $\alpha>0$. This excludes distributional drift and leads to the same threshold as in \cite{KM}.
To resolve this issue, we will construct a  joint rough path lift of \((L,B)\) and  treat \eqref{eq:Z-intro} as a rough differential equation driven by such emerging joint lift.
% Note that there are two different low-regularity processes driving \eqref{eq:Z-intro}, but at this point they cannot be seen as a single rough path above $(L,B)$ since the cross integrals, i.e. the integral of $L$ against $B$ and vice versa, are not well-defined a priori.

\emph{Conditional densities of RDEs as input to SSL}. Let us  comment on the construction of $L$. In recent years the stochastic sewing lemma of \cite{Khoa} has been rather successful in constructing such additive functionals. The lemma itself gives a simple criterion for when bounds on approximating increments $A_{s,t}\approx L_{t}-L_s$ transfer to bounds on $L$ itself.
The ``art'' is then choosing appropriate $A_{s,t}$, which on one hand can be analysed efficiently, and on the other hand fit in the stochastic sewing framework. 
%These two goals are competing: the better the approximation is, the easier it is expected to sew it to $L$, 
In \cite{KM}, for example, when dealing with $L$ (or rather with an analogous but not completely identical object),  the approximation
\begin{equ}
    A_{s,t}=\int_0^1\E_s\int_{s}^t\nabla b\Big(\theta\big(X_s+\sigma(X_s)(B_r-B_s)\big)+
    (1-\theta)\big(Y_s+\sigma(Y_s)(B_r-B_s)\big)\Big)\,dr\,d\theta
\end{equ}
is used,
where $\E_s$ denotes the conditional expectation given $\cF_s$. %\kl{we should indicate the filtration somewhere earlier} 
Clearly this choice satisfies the first of the two aforementioned criterion: this $A_{s,t}$ is fairly easy to bound since the only randomness to be averaged out is Gaussian. It turns out, however, that this approximation can only be sewed when $\alpha>0$.
In the present paper we chose instead the approximation
\begin{equ}
       A_{s,t}=\int_0^1\E_s\nabla b(\theta \phi_r^{s,X_s}+(1-\theta) \phi_r^{s,Y_s})\,dr\,d\theta,
\end{equ}
where $\phi$ is (the first component of) the flow of the driftless rough differential equation, i.e. of \eqref{eq:main} with $b=0$.
Intuitively it is clear that this is a better approximation to $L_t-L_s$, since $\phi^{s,X_s}_r$ is a better approximation of $X_r$ than $X_s+\sigma(X_s)(B_r-B_s)$ and similarly for $Y$. Indeed, it turns out that the approximating error for the first one is of order $|r-s|^{1+\alpha H}$ (see \cref{Prop:sta-ini}) while  for the second one is of order $|r-s|^{2H}$.  In the range $\alpha> 1-1/(2H)$ and \(r-s\) is small, the first one is clearly better.  However, estimating $A_{s,t}$ becomes much harder. Indeed, we now have to use information about the conditional distribution of the flow given $\cF_s.$\footnote{In fact, we consider convex combinations of the flow starting from different points, and a priori it is not even clear that the nondegeneracy of the two do not cancel each other.}

A detailed theory on the density of the solutions of rough differential equations driven by fractional Brownian motions has been developed in the articles \cite{CF10, Lyons-Cass-Litterer, INAHAMA, CHLT15, GOT}. Although it is natural to expect that any such result also holds with the obvious modifications for conditional densities, making this rigorous is fairly nontrivial due to the lack of Markovianity.
In \cref{sec:partial-malliavin} we approach this question  by developing \emph{partial Malliavin calculus}, as introduced in \cite{stroock, Nualart_PMC}, for rough differential equations.
We comment on an alternative approach to this problem in \cite{cate} in \cref{rem:cate} below.

\emph{An equation driven by a joint rough path.}
As for the second difficulty above, we need another probabilistic construction that goes beyond an analytic threshold, this time for integrals against $B$ and/or $L$: we need to give a meaning to the iterated integrals 
\begin{equ}\label{eq:Q-intro}
\int_s^t(B_r-B_s)\otimes dL_r,\qquad\int_s^t(L_r-L_s)\otimes dB_r. 
\end{equ}
The two integrals are essentially equivalent due to the ``product rule'' 
\begin{equ}
    \int_s^t(B^i_r-B^i_s) dL_r^{jk}+\int_s^t(L_r^{jk}-L_s^{jk}) dB_r^i=(L^{jk}_t-L^{jk}_s)(B^i_t-B^i_s),
\end{equ}
which needs to be enforced since $L$ is a limit of smooth approximations.
Constructing these iterated integrals is our second main step and is the content of \cref{cor:K} below.
%The integral of $B$ against $L$ can then be automatically given by imposing integration by parts:\mate{I guess some transpose or something is missing, the dimensions are a bit screwed up here}\mate{To be rewritten anyway because we define it the opposite way. Plus maybe with other notations...}
%\begin{equ}
%=L_{s,t}\otimes B_{s,t}-\mathbb{Q}_{s,t}.
%\end{equ}
Putting it together with the rest of the objects, we therefore build a rough path of mixed regularity
\begin{equ}
\purp{G}=\Big( B, L, \int B  \otimes dB , \int L  \otimes  dB, \int B   \otimes dL \Big) \in \mathcal{C}^{1+(\alpha-1)H}\times \mathcal{C}^{H}\times \mathcal{C}_2^{2H}\times \mathcal{C}_2^{1+\alpha H}\times \mathcal{C}_2^{1+\alpha H}.
\end{equ}
%satisfying the Chen's relations $\delta \mathbb{B}_{s,u,t}=B_{s,u}\otimes B_{u,t}$, $\delta \mathbb{Q}_{s,u,t}=L_{s,u}\otimes B_{u,t}$, $\delta \tilde{\mathbb{Q}}_{s,u,t}=B_{s,u}\otimes L_{u,t}$.
%of mixed regularity: we impose 
%\begin{equ}
%L\in C^{1+(\alpha-1)H},\quad B\in C^{H},\quad\mathbb{B}\in C_2^{2H},\quad \mathbb{Q}\in C_2^{1+(\alpha-1)H+H}
%\end{equ}
%Here $\mathbb{Q}$ stands for the iterated integral 
Note that the iterated integral of $L$ against itself  is canonically given as a Young integral.% as $1+(\alpha-1)H>1/2$.
\begin{remark}
We are not aware of standard references concerning rough paths of mixed regularity, but one can easily transfer the tools from the usual theory without any difficulty.
In any case, since regularity structures \cite{H0} accommodate different regularities, viewing rough paths as a special case of regularity structures, one may take all standard results as valid also in such mixed regularity setting.
\end{remark}
Having given a meaning to \eqref{eq:Z-intro} as a rough differential equation above the rough path $\purp{G}$, it might seem that $Z\equiv 0$ follows.
%It remains to show that $\cZ=(Z,Z,Z\Sigma)$, as a path controlled by $L$ and $B$, solves the equation
%\begin{equ}
%\cZ_t=\int_0^t\cZ_s\,d\mathbf{L}_s+\int_0^t\cZ_s\Sigma_s\,d\mathbf{B}_s.
%\end{equ}
%This equation is now perfectly well-defined and well-posed, therefore $\cZ$ has to coincide with the trivial solution $0$, 

\emph{Closing the equation}.
Unfortunately, the last sentence of optimism is misguided: from the preceding construction it does \emph{not} follow that there is a controlled path above $Z$ that solves \eqref{eq:Z-intro} as a rough differential equation above $\purp{G}$!
Indeed, %the natural candidate for the Gubinelli derivatives with respect to $B$ and $L$ would be $\Sigma Z$ and $Z$, respectively,
from \eqref{eq:Z-intro} it is clear that $Z$ is controlled by $(B,L)$ with Gubinelli derivatives $(\Sigma Z,Z)$,
so writing $\purp{Z}=(Z,Z,\Sigma Z)$, what the above optimistic sentence is really saying is that
\begin{equ}\label{eq:Z-intro2}
Z_t=\int_0^t\purp{Z_s}\,d\purp{L_s}+\int_0^t\purp{\Sigma_s}\purp{Z_s}\,d\purp{B_s}
\end{equ}
implies $Z=0$ (here $\purp{\Sigma}$ is the trivial extension of $\blue{\Sigma}$ from a controlled path by $B$ to a controlled path by $(B,L)$).
%(i.e. taking its Gubinelli derivative with respect to $L$ to be $0$).
Although \eqref{eq:Z-intro2} indeed implies $Z=0$, it is far less clear that \eqref{eq:Z-intro} implies \eqref{eq:Z-intro2}. Indeed, at this point the role of different colors starts being important, and it is in general not true that two integrals written in blue and purple (or in black and purple) coincide, even if both integrals are meaningful.
%Indeed, as a controlled path, the right-hand side of \eqref{eq:Z-intro} gives $\cZ=(Z,Z,\Sigma Z)$ as an output, while the inputs for the integrands are $\hat \cZ=(Z,0,0)$ and $\tilde \cZ=(Z,0,Z\Sigma)$, respectively (here we write the Gubinelli derivatives with respect to $L$ and $B$ as the second and third coordinate of a controlled path, respectively).\mate{Not sure what I'm saying here is correct, to be double checked in the end}
On the other hand, there is one more resource not used so far: the ``drift'' part of $Z$ does not merely have the regularity of $L$ (i.e. $1+(\alpha-1)H$), but the regularity of the individual drifts of $X$ and $Y$, which is of higher order, $1+\alpha H$. 
The last main task of the proof is to show that this extra regularity can be leveraged to go from \eqref{eq:Z-intro} to \eqref{eq:Z-intro2}.
%We show that this can be leveraged in the construction of $\mathbb{Q}$ above to get a rough path lift of $(L,B)$ such that $\cZ=(Z,Z,Z\Sigma)$ indeed solves the linear rough differential equation \eqref{eq:Z-intro} and hence vanishes.
%this extra information allows us to show that even if $Z$ only solves \eqref{eq:Z-intro} in a ``nonstandard'' way, it is enough to conclude $Z=0$.
%It is \emph{not} a closed equation! Indeed, as a controlled path, the right-hand side gives $\cZ=(Z,Z,Z\Sigma)$, while the integrands under the are $\tilde \cZ=(Z,0,Z\Sigma)$. Therefore, the last step of the proof is to show that this information is sufficient to conclude that $Z=0$.
This implies strong uniqueness, after which strong existence follows from the Gy\"ongy-Krylov lemma \cite{Gyongy_Krylov} and the weak existence result from \cite{KM}.

\subsection{Setup and notation}

\emph{Classical function/distribution spaces}.
For $\alpha\in(0,1]$,  we set $\mathcal{C}^\alpha(\mR^d)$ to be the space of continuous functions $f : \R^d \to \R$ such that
\begin{align*}
  \Vert f\Vert_{\mathcal{C}^\alpha}:=
    [ f]_{\mathcal{C}^\alpha}+\sup_{x\in\mR^d}|f(x)|:=\sup_{x,y\in\mR^d,x\neq y}\frac{|f(x)-f(y)|}{|x-y|^\alpha}+\sup_{x\in\mR^d}|f(x)|<\infty.
\end{align*}
Here, and often below, we write $\mathcal{C}^\alpha$ instead of $\mathcal{C}^\alpha(\mR^d)$ for simplicity.
For $\alpha\in(0,\infty)$, we define $\mathcal{C}^\alpha(\R^d)$ the space of all functions $f$ defined on $\R^d$ having bounded derivatives $\partial^k f$ for multi-indices $k\in\N^d$ with $|k|:=|k_1|+\cdots+|k_d|\leq \alpha$ such that
\begin{align*}
  \Vert f\Vert_{\mathcal{C}^\alpha}:=\sum_{|k|\leq \alpha}\sup_{x\in\mR^d}|\partial^kf(x)|+\sum_{\alpha-1\leq|k|<\alpha} \Vert \partial^k f\Vert_{\mathcal{C}^{\alpha-|k|}}<\infty.
\end{align*}
Note that the $\mathcal{C}^\alpha$-norm always includes the supremum of the function.
 We also denote the space of bounded measurable functions equipped with the supremum norm by $\mathcal{C}^0(\mR^d)$. We denote by $\cC^\infty=\cap_{\alpha>0}\cC^{\alpha}$ the space of smooth functions and by $\cC^{\alpha+}$ the closure of $\cC^{\infty}$ in $\cC^\alpha$. It will often be more convenient to work with the $\cC^{\alpha+}$ spaces, the fact that this is not a loss of generality is a consequence of the well known fact that for any $\eps>0$, $\cC^{\alpha+\eps}\subset\cC^{\alpha+}$ (and that in the relevant assumptions strict inequality will be assumed).
 For $U\subset \R^d$ and a Banach space $V$, the extension of the definition to functions $f:U\to V$ is straightforward and is denoted by $\mathcal{C}^\alpha(U;V)$. If the target space $V$ is obvious from the context, we simply write $\cC^\alpha(U)$ and if the same is true for the domain we write simply $\cC^\alpha$, with some abuse of notation. Further, $\alpha \in (0, 1]$ and $(S, T) \in [0,1]_{\leq}^2$, we denote by $\cC^\alpha_2([S,T]^2; V)$ the collection of all $\mathbb{f}:[S, T]^2 \mapsto V$ such that 
 \begin{equs}
    \,  [\mathbb{f}]_{\cC^\alpha_2}:= \sup_{(s, t) \in [S,T]^2_\leq}\frac{|\mathbb{f}_{s,t}|}{|t-s|^\alpha}.
 \end{equs}

For $\alpha<0$ we denote by $\mathcal{C}^\alpha(\mR^d)$  the space of all Schwarz distributions such that
\begin{equ}\label{def:C-}
  \Vert f\Vert_{\mathcal{C}^\alpha}:=\sup_{\gamma\in(0,1]}\gamma^{-\frac{\alpha}{2}}\Vert P_\gamma f\Vert_{\mathcal{C}^0}<\infty,
\end{equ}
where $P_tf:=p_{t}\ast f$ and $p_t(x):=\frac{1}{\sqrt{2\pi t}^d}e^{-\frac{|x|^2}{2t}}$. Recall the standard heat kernel bound 
\begin{align}
    \label{est:2derivate-heatk}
    \Vert \partial^k p_t\Vert_{L^1(\R^d)}\leq N(d,k) t^{-|k|/2}.
\end{align}

%\subsection{Rough path spaces}  \label{sec:rp_notation}
\emph{Rough path spaces.}
Below $V_1,V_2,V_3$ are all finite dimensional Euclidean spaces and $\cL(V_1;V_2)$ denotes the space of bounded linear operators from $V_1$ to $V_2$.
Recall that there is a canonical isometry between $\cL(V_1;\cL(V_2,V_3))$ and $\cL(V_1\otimes V_2;V_3)$.
For a function $f$ in $1$ variable we denote $f_{s,t}=f_t-f_s$ and for a function $g$ in $2$ variables we denote $\delta g_{s,u,t}=g_{s,t}-g_{s,u}-g_{u,t}$. For an interval $I$ we denote by $\pi_I$ the set of partitions of $I$, that is, collections $\cP$ of closed subintervals of $I$ whose union is $I$ and whose interior is mutually disjoint. For $\cP\in\pi_I$ we denote by $|\cP|$ the length of the largest element of $\cP$.
We loosely follow the convention of \cite{Friz-Hairer} by denoting rough and controlled paths with colors.
This will help distinguish between integrals that have different interpretations (e.g. Young and rough), which, as indicated in the introduction, is crucial in the final part of the proof \cref{sec:closing}.

Let $(S, T) \in [0, 1]^2_{\leq}:=\{(s,t)\in[0,1]^2:\,s\leq t\}$. For $\gamma \in(1/3,1/2]$ we denote by $\mathcal{R}^\gamma([S,T];V_1)$ the set of $\gamma$-H\"older rough paths on $[S,T]$: the subset of $\cC^\gamma([S,T],V_1)\times \cC^{2\gamma}_2([S,T]^2,V_1\otimes V_1)$ 
constrained by the nonlinear relation (\emph{Chen's identity}) postulating that any ${\color{blue}g}=(g,\bbg)\in \mathcal{R}^\gamma$ satisfies
\begin{equ}      \label{eq:Chen_single_regularity}
\bbg_{s,t}-\bbg_{s,u}-\bbg_{u,t}=g_{s, u}\otimes g_{u, t} 
\end{equ}
for all $(s,u,t)\in[S,T]^3_\leq:=\{(s,u,t)\in[S,T]^3:\,s\leq u\leq t\}$.  Such ${\color{blue}g}=(g,\bbg)\in \mathcal{R}^\gamma$ is also called a \emph{lift} of $g$. For $\blue{g}=(g, \mathbb{g}), \blue{h}=(h, \mathbb{h}) \in \mathcal{R}^\gamma([S,T];V_1)$, we set 
\begin{equs}
  \,   [\blue{g}]_{\mathcal{R}^\gamma([S,T])}&:= [g]_{\cC^\gamma([S,T])}+[\mathbb{g}]_{\cC^{2\gamma}_2([S,T]^2)},
  \\
  d_\gamma(\blue{g},\blue{h}) &:=  \|g-h\|_{\cC^\gamma([S,T])}+[\mathbb{g}-\mathbb{h}]_{\cC^{2\gamma}_2([S,T]^2)}. 
\end{equs}
The map $d_\gamma$ is a metric on  $\mathcal{R}^\gamma([S,T];V_1)$.  We denote by $\mathcal{R}^\gamma_{\geo}([S,T];V_1)$ the set of geometric rough paths, that is, the closure in $\mathcal{R}^\gamma([S,T];V_1)$ of the set 
\begin{equs}
    \Big\{ (g, \mathbb{g}) : g \in \cC^1([0,1]; V_1), \mathbb{g}_{s,t}= \int_s^t g_{s,r} \otimes dg_r \Big\}. 
\end{equs}
It is known that $(\mathcal{R}^\gamma_{\geo}([S,T];V_1), d_\gamma)$ is a Polish space (see, e.g., \cite[Excercise 2.8]{Friz-Hairer}). 

Given $\gamma \in (1/3, 1/2]$ and $\blue{g}= (g, \mathbb{g}) \in\cR^{\gamma}([S,T];V_1)$,  we denote by $\cD^{2\gamma}_{g}([S,T];V_2)$ the set of all functions $\blue{f}=(f,f'):[S,T]\to V_2\times\cL(V_1;V_2)$ such that
\begin{equ}
     \, [\blue{f}]_{\mathcal{D}_{g}^{2\gamma}([S, T])}:=[ R^{\blue{f}}]_{\cC^{2\gamma}_2([S, T])} +[f']_{\cC^{\gamma}([S, T])}< \infty, 
\end{equ}
where $R^{\blue{f}}_{s,t}= f_{s,t}-  f'_s g_{s, t}$.
Moreover, let us set 
\begin{equs}
    \|\blue{f}\|_{\mathcal{D}_{g}^{2\gamma}([S, T])} = [\blue{f}]_{\mathcal{D}_{g}^{2\gamma}([S, T])}+ \|f\|_{\cC^0([S, T])}+\|f'\|_{\cC^0([S, T])}. 
\end{equs}
Notice that by the triangle inequality we have 
\begin{equs}  \label{eq:holder_bound_by_rough}
    [f]_{\cC^\gamma([S, T])} \leq  [R^\blue{f}]_{\cC^{2\gamma}_2([S, T])}|T-S|^{\gamma} + \|f'\|_{\cC^0([S, T])}[g]_{\cC^\gamma([S, T])}. 
\end{equs}
Note that for $k\geq1$ and $0 \leq u_0\leq u_1\leq\cdots\leq u_k$ one has
\begin{equs}     
\, [\blue{f}]_{ \cD^{2\gamma}_g([u_0, u_k])}  &  \leq 
\sum_{i=1}^k [\blue{f}]_{ \cD^{2\gamma}_g([u_{i-1},u_i])} +\sum_{i=1}^{k-1}
[ f' ]_{\cC^{\gamma}([u_{i-1},u_i])} [ g]_{\cC^\gamma([u_i,u_k])}
\\
& \leq 2 (1+[g]_{\cC^\gamma([u_0, u_k])} )  \sum_{i=1}^k [\blue{f}]_{ \cD^{2\gamma}_g([u_{i-1}, u_i])}.   \label{eq:sum_partition}
\end{equs}
Let $V_3$ be another finite dimensional Euclidean space. If $F \in \cC^2(V_2; V_3)$, then it holds that $\blue{F(f)}:= (F(f), \nabla F (f) f') \in \cD^{2\gamma}_{g}([S,T]; V_3)$ 
and the following estimates holds (see, e.g., \cite[equation (1.8) and the Appendix therein]{KM})
\begin{equs}
\,\!\!  [R^\blue{F(f)}]_{\cC^{2\gamma}_2([S, T])}  &\leq  N(  [R^\blue{f}]_{\cC^{2\gamma}_2([S, T])}+  \| f'\|_{\cC^0([S, T])}[f]_{\cC^{\gamma}([S, T])}[g]_{\cC^\gamma([S, T])}) ,\label{eq:composition_Holder_R_primitive}
\\
   \!\!\!\!\! [\blue{F(f)}]_{\mathcal{D}_{g}^{2\gamma}([S, T])}  &\leq N ( 1+\|f'\|^2_{\cC^0([S, T])}+ [g]^2_{\cC^\gamma([S, T])} ) ( [\blue{f}]_{\mathcal{D}_{g}^{2\gamma}([S, T])}+ \|f'\|_{\cC^0([S, T])}),
   \label{eq:composition_estimate_controlled}
\end{equs}
where $N$ depends only on $\|F\|_{\cC^2}$.  From \eqref{eq:composition_Holder_R_primitive} and \eqref{eq:holder_bound_by_rough}, we also have 
\begin{equs}
\,  [R^\blue{F(f)}]_{\cC^{2\gamma}_2([S, T])} \leq  N(   &[R^\blue{f}]_{\cC^{2\gamma}_2([S, T])}+ [R^\blue{f}]_{\cC^{2\gamma}_2([S, T])} \| f'\|_{\cC^0_{S,T}}[g]_{\cC^\gamma([S, T])}\\&+  \| f'\|^2_{\cC^0([S, T])}[g]^2_{\cC^\gamma([S, T])} ). \label{eq:composition_Holder_R}
\end{equs}
For $\blue{f}=(f,f') \in \cD^{2\gamma}_{g}([S,T];V_2)$ and $ \blue{h} =(h, h')  \in  \cD^{2\gamma}_{g}([S,T];\mathcal{L}(V_2; V_3))$, we set 
\begin{equs}           \label{eq:def_blue_product}
     \blue{hf}:= \Big(h f,  v_1 \mapsto \big( (h'v_1) f+ h  (f'v_1) \big) \Big).
\end{equs}
When then has that  $\blue{hf}\in \cD^{2\gamma}_{g}([S,T];V_3)$. Indeed, 
one has
\begin{equs}
    R^{\blue{hf}}_{s,t}=  h_s R^{\blue{f}}_{s,t}+R^{\blue{h}}_{s,t}f_s+(h'_sg_{s,t})f_{s,t},
\end{equs}
which implies that 
\begin{equs}
 \,    [R^{\blue{hf}}]_{\cC^{2\gamma}_2([S, T])} & \leq   \|h\|_{\cC^0([S,T])}[R^{\blue{f}}]_{\cC^{2\gamma}_2([S, T])} +[R^{\blue{h}}]_{\cC^{2\gamma}_2([S, T])}\|f\|_{\cC^0([S,T])}
    \\
    & \qquad  +\|h'\|_{\cC^0([S,T])}[f]_{\cC^{\gamma}([S, T])}[g]_{\cC^{\gamma},([S, T])}                    \label{eq:reminder_product}
    \\
    \, [\blue{hf}]_{\cD^{2\gamma}_{g}([S,T])} &  \leq  \|h\|_{\cC^0([S,T])}[\blue{f}]_{\cD^{2\gamma}_{g}([S,T])}+[\blue{h}]_{\cD^{2\gamma}_{g}([S,T])}\|f\|_{\cC^0([S,T])}
    \\
    & \qquad  +\|h'\|_{\cC^0([S,T])}[f]_{\cC^{\gamma}([S, T])}[g]_{\cC^{\gamma},([S, T])}. 
\end{equs}
The latter combined with \eqref{eq:holder_bound_by_rough}  further implies that 
\begin{equs}     \label{eq:controlled_product}
\|\blue{hf}\|_{\cD^{2\gamma}_{g}([S,T])} \leq N \| \blue{h}  \|_{\cD^{2\gamma}_{g}([S,T])}\| \blue{f}  \|_{\cD^{2\gamma}_{g}([S,T])} (1+[g]_{ \mathcal{C}^\gamma([S, T])})^2,
\end{equs}
with a universal constant $N$. 

For  $f , h \in  \cD^{2\gamma}_{g}([S,T];V_2)$ and $F \in \cC^3(V_2, V_3)$, by means of the usual fundamental theorem of calculus (FTC)  and integration by parts,  one has the following rough version of FTC 
\begin{equ}        \label{eq:rough_FTC}
   \blue{F(f)-F(h)}  = \int_0^1 \blue{\nabla F(\theta f+(1-\theta)h)} \, d\theta  \blue{(f-h)},
\end{equ}
where 
 the product is as in \eqref{eq:def_blue_product} and the composition with $\nabla F$ is as before.

For $\blue{f}=(f,f')\in\cD^{2\gamma}_{g}([S,T];\cL(V_1;V_2))$ one can define the $V_2$-valued rough integral of $\blue{f}$ with respect to $\blue{g}$ by setting for $t \in [S, T]$
\begin{equ}
\int_S^t\blue{f_r}\,d\blue{g_r}:=\lim_{\substack{\cP\in\pi_{[S,t]}\\ |\cP|\to0}}\sum_{[s,u]\in\cP} A_{s,u},
\end{equ}
where $A_{s,u}:=f_sg_{s,u}+ f_s'\bbg_{s,u}$. 
The fact that this limit exists is a standard consequence of the sewing lemma and the estimate for $(s,u,t)\in[S,T]_\leq^3$
\begin{equs}
    |\delta A_{s,u,t}|& = | (f_{s, u}- f'_s g_{s, u}) g_{u, t}+  f'_{s,u} \mathbb{g}_{u,t}| 
    \\
    & \leq 2[\blue{f}]_{\mathcal{D}_{g}^{2\gamma}([s, t])} [\blue{g}]_{\mathcal{R}^\gamma([s,t])} |t-s|^{3\gamma},
\end{equs}
where Chen's identity  has been used. Another consequence of the sewing lemma is that the following  estimate holds for all $(s, t) \in [S, T]^2_{\leq}$
\begin{equs}
    \Big| \int_s^t\blue{f_r}\,d\blue{g_r}-f_sg_{s,t}- f_s'\bbg_{s,t} \Big|
 \leq N   [\blue{g}]_{\mathcal{R}^\gamma([s, t])}[\blue{f}]_{\mathcal{D}_{g}^{2\gamma}([s, t])} |t-s|^{3\gamma}, 
\end{equs}
where $N=N(\gamma)$. In addition, the above estimate obviously implies that for all $(s, t) \in [S, T]^2_{\leq}$
\begin{equ}           \label{eq:rough_integral_first_order}
    \Big| \int_s^t\blue{f_r}\,d\blue{g_r}-f_sg_{s,t} \Big| \leq N  [\blue{g}]_{\mathcal{R}^\gamma([s,t])} \big( [\blue{f}]_{\mathcal{D}_{g}^{2\gamma}([s, t])}|t-s|^{3\gamma} + |f'_s|    |t-s|^{2\gamma} \big) ,
\end{equ}
with $N=N(\gamma)$.  From \eqref{eq:rough_integral_first_order} combined with \eqref{eq:holder_bound_by_rough} one can conclude that $ \blue{\int_S^\cdot f_r d g_r}:=(\int_S^\cdot \blue{f_r} d \blue{g_r}, f) \in\cD^{2\gamma}_{g}([S,T];V_2)$, and the following estimate holds
\begin{equs}  \label{eq:boundedness_rough_integration}
\Big\|\blue{\int_S^\cdot f_r d g_r}\Big\|_{\cD^{2\gamma}_{g}([S,T])} \leq N (\|\blue{f}\|_{\cD^{2\gamma}_{g}([S,T])} +1) [\blue{g}]_{\mathcal{R}^\gamma([S,T])},
\end{equs}
with $N=N(\gamma)$. 
For $\rho>0,$ we denote $\mathcal{V}^{\rho}([S,T])$  the collection of  all finite $\rho$-variation paths $g:[S,T]\mapsto W$ so that 
\begin{equ}\label{def:var-1}
    [g]_{\mathcal{V}^{\rho}([S,T])}:=\sup_{\substack{\cP\in\pi_{[S,T]}}}\Big(\sum_{[s,u]\in\cP} |g_{s,u}|^\rho\Big)^\frac{1}{\rho}<\infty. 
\end{equ}
By $\mathcal{V}^{\rho}([S,T]^2)$ we denote the collection of all $g:[S,T]^2\mapsto W$ such that 
\begin{equ}\label{def:var-2}
    [g]_{\mathcal{V}^{\rho}([S,T]^2)}:=\sup_{\substack{\cP_1\in\pi_{[S,T]}\\\cP_2\in\pi_{[S,T]}}}\Big(\sum_{\substack{{[s_1,u_1]\in\cP_1}\\{[s_2,u_2]\in\cP_2}} }\big|g_{s_1,s_2}+g_{u_1,u_2}-g_{s_1,u_2}-g_{u_1,s_2}\big|^\rho\Big)^\frac{1}{\rho}<\infty.
\end{equ}

\emph{Probabilistic setup}.
 We fix a probability space $(\Omega,\cF,\P)$ with a complete filtration $\bF=(\cF_t)_{t\in [0,1]}$ carrying a $d_0$-dimensional $\bF$-Wiener process $W$.
For $H\in(0,1/2)$ we consider a fractional Brownian Motion $B$ given by 
\begin{equ}
B_t=\int_0^t K(t, s) dW_s,
\end{equ}     \label{eq:mandelbrot}
with the kernel $K$ given by 
    \begin{equs}         
    K(t,s)& = c_H \Big[  \left( \frac{t}{s} \right)^{H-1/2} (t-s)^{H-1/2} -(H-1/2)s^{1/2-H} \int_s^t u^{H-3/2}(u-s)^{H-1/2} \, du \Big] \bone_{s<t}
    \\
    \label{eq:def_K}
\end{equs}
where  $c_H= \left(2H/\big((1-2H)\beta(1-2H, H+1/2)\big) \right)^{1/2} >0$, 
and $\beta$ is the Beta function. It is known (see also \cref{rem:same_filtration} below) that $B$ and $W$ generate the same filtration, whose completion will be denoted by $\mathbb{F}^B=(\cF^B_t)_{t \in [0,1]}$.  We clearly have $\cF^B_t \subset \cF_t$ for all $t \in [0,1]$. 
The conditional expectation given $\cF_s$ is denoted by $\E_s$ while conditional expectation given $\cF^B_s$ will be denoted by $\E_s^B$. 

We denote by $\blue{B}:=(B, \mathbb{B})$ the Gaussian rough path lift of $B$ (see, e.g., \cite{FV10}) and recall that  $\blue{B} \in \mathcal{R}^\gamma_{\geo}([0, 1]; \R^{d_0})$ with probability one, for all $\gamma \in (1/3, H)$. From now on, we fix $H_- <H$ and $H_+>H$ sufficiently close to $H$, such that 
\begin{equs}        \label{eq:Choice of H-}
  3H_- >1,  \qquad 1+(\alpha-1)H_+>1/2. 
\end{equs}
The existence of such $H_+$ is guaranteed under the assumption $\alpha>1-1/(2H)$ which is in force (see \cref{asn:b} below). 

\subsection{Formulation}

At this point we can formulate the definition of a solution, the main assumptions, and the main result on \eqref{eq:main}.

\begin{definition}     \label{def_solution}
An $\mathbb{F}$-adapted  stochastic process $(X_t)_{t \in [0, 1]}$ is called a solution of \eqref{eq:main} if the following are satisfied:
\begin{enumerate}[(i)]
\item There exists an $\mathbb{F}$-adapted  stochastic process $(D^X_t)_{t \in [0, 1]}$ such that $\int_0^\cdot b^n(X_s) \, ds \to D^X $ uniformly in time, in probability, whenever $(b^n)_{n\in\N} \subset \cC^\infty$ and $b^n \to b$ in $\cC^\alpha(\R^d)$. \label{item:def_D}

\item With probability one we have $\blue{X}:=(X, \sigma(X)) \in \mathcal{D}^{2H_-}_B([0, 1])$, and  for all $p\geq 1$, we have 
\begin{equs}
        C_{S,p}^X:= \| \blue{X}\|_{L_p(\Omega ; \mathcal{D}^{2H_-}_B([0, 1])) } < \infty.  \label{eq:bound_X_controlled}
     \end{equs}

    \item For all $p \geq 1$, we have 
    \begin{equs}
     C_{D,p}^X:= [D^X]_{\mathcal{C}^{1+\alpha H}([0, 1]; L_p(\Omega))} < \infty.     \label{eq:bound:D^X} 
    \end{equs}

    \item With probability one, for all $t \in [0, 1]$, 
    \begin{equs}
        X_t=D^X_t+ \int_0^t \blue{\sigma(X_s)} \, d \blue{B_s}.
    \end{equs}
\end{enumerate}
If the above holds, we also say \emph{$X$ is a solution with drift component $D^X$}.
\end{definition}

The initial condition is chosen as $0$ purely for convenience and will play no role in the article. As for the coefficients, we impose the following.
\begin{assumption}\label{asn:b}
For some $\alpha>1-1/(2H)$ one has $b\in \cC^{\alpha+}(\R^d;\R^d)$.
\end{assumption}
\begin{assumption}\label{asn:sigma}
One has $\sigma\in\cC^\infty(\R^d;\R^{d\times d_1})$. Furthermore, there exists a constant $\lambda>0$ such that for all $x\in\R^d$, $\sigma(x)\sigma^*(x)\succeq \lambda \mathbb{I}_{d}$.
\end{assumption}
Our main theorem is the following. 
\begin{theorem}\label{thm:main}
Let \cref{asn:b} and \cref{asn:sigma} hold. Then there exists a strong solution $X$ to \eqref{eq:main} and for any other solution $Y$, one has $\P(X=Y)=1$.
\end{theorem}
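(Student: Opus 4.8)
The plan is to establish strong uniqueness first and then obtain strong existence by a Yamada--Watanabe-type argument. For uniqueness, let $X$ and $Y$ be two solutions adapted to $\mathbb{F}$ with drift components $D^X,D^Y$ as in \cref{def_solution}, and set $Z=X-Y$. The heart of the matter is to give a rigorous meaning to the linearised equation \eqref{eq:Z-intro} and to show it forces $Z\equiv 0$. I would proceed in the following steps.

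\emph{Step 1: Construct the averaged drift field $L$ via stochastic sewing.} With $\phi^{s,x}$ the flow of the driftless RDE (i.e. \eqref{eq:main} with $b=0$), I would define $L$ as the limit of the germ
\begin{equs}
A_{s,t}=\int_0^1\E_s\int_s^t\nabla b(\theta\phi^{s,X_s}_r+(1-\theta)\phi^{s,Y_s}_r)\,dr\,d\theta,
\end{equs}
applying the stochastic sewing lemma of \cite{Khoa}. Verifying the SSL hypotheses requires the conditional density bounds for convex combinations of the flow started from $\cF_s$-measurable points; this is exactly what the partial Malliavin calculus developed in \cref{sec:partial-malliavin} provides, and the stability-in-initial-condition estimate \cref{Prop:sta-ini} gives that $A_{s,t}$ approximates the true drift increment to order $|t-s|^{1+\alpha H}$. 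The output is that $L\in\cC^{1+(\alpha-1)H_+}$ with moments of all orders, and that the drift parts of $X$ and $Y$ are themselves controlled by $L$ (and have the higher regularity $1+\alpha H$).

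\emph{Step 2: Build the joint rough path $\purp{G}$.} Next I would construct the iterated integrals \eqref{eq:Q-intro}, i.e. $\int B\otimes dL$ and $\int L\otimes dB$, again by stochastic sewing (this is \cref{cor:K}), enforcing the product rule so that the pair is a genuine lift. Assembling this with the Gaussian lift $\blue{B}$, the Young self-integral of $L$, and $\mathbb{B}$, one obtains a rough path of mixed regularity
\begin{equs}
\purp{G}=\big(B,L,\textstyle\int B\otimes dB,\int L\otimes dB,\int B\otimes dL\big)\in\cC^{1+(\alpha-1)H_+}\times\cC^{H_-}\times\cC^{2H_-}_2\times\cC^{1+\alpha H}_2\times\cC^{1+\alpha H}_2.
\end{equs}
The constraint \eqref{eq:Choice of H-} guarantees $1+(\alpha-1)H_+>1/2$ and $3H_->1$, so $\purp{G}$ is an admissible rough path and linear RDEs driven by $\purp{G}$ are uniquely solvable.

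\emph{Step 3: Closing the equation and concluding $Z\equiv0$.} From \eqref{eq:Z-intro}, $Z$ is controlled by $(B,L)$ with Gubinelli derivatives $(\Sigma Z,Z)$, so $\purp{Z}=(Z,Z,\Sigma Z)$. One shows $Z$ solves \eqref{eq:Z-intro} above $\purp{G}$; the delicate point (carried out in \cref{sec:closing}) is to upgrade this to \eqref{eq:Z-intro2}, i.e. to verify that the ``blue'' rough integral $\int\purp{\Sigma}\purp{Z}\,d\purp{B}$ and the ``purple'' one coincide and that the $L$-integral is the correct one — this uses the \emph{extra} regularity $1+\alpha H$ (rather than merely $1+(\alpha-1)H$) of the drift part of $Z$ to control the cross terms. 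Once \eqref{eq:Z-intro2} holds, $Z$ satisfies a linear RDE above $\purp{G}$ with zero initial condition, whose unique solution is $0$; a Gronwall/localisation argument over a partition of $[0,1]$, using \eqref{eq:sum_partition} and the moment bounds on $\purp{G}$, then yields $\P(X=Y)=1$.

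\emph{Step 4: Strong existence.} Weak existence is already available from \cite{KM}. Combining weak existence with the pathwise uniqueness just proved, the Gy\"ongy--Krylov lemma \cite{Gyongy_Krylov} upgrades this to existence of a strong solution (one checks tightness of pairs of Galerkin/mollified approximations and that any joint subsequential limit is a coupling of two solutions, hence diagonal by uniqueness). This produces the strong solution $X$ asserted in the theorem.

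\emph{Main obstacle.} I expect the main difficulty to be Step 1 together with the closing argument of Step 3: establishing the conditional density estimates for convex combinations of the flow (the nondegeneracy of the two endpoints could a priori cancel, as the paper warns) so that the sharper germ can be sewn down to the threshold $\alpha>1-1/(2H)$, and then — having built $\purp{G}$ — showing that the purple RDE formulation \eqref{eq:Z-intro2} is actually implied by \eqref{eq:Z-intro}, since equality of integrals written in different colours is not automatic and genuinely requires exploiting the improved Hölder regularity of the drift component.
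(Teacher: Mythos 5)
Your architecture matches the paper's: sew a singular additive functional via \cref{lem:SSL-vanila}, use the partial Malliavin calculus and \cref{Prop:sta-ini} to control the germ, build the mixed-regularity rough path $\purp{G}$, show the ``blue'' and ``purple'' integrals coincide so that \eqref{eq:Z-intro} upgrades to \eqref{eq:Z-intro2}, conclude $Z\equiv0$ from the linear RDE, and finish existence via Gy\"ongy--Krylov. However, there is a genuine gap in Step 1, which you yourself flag as the main obstacle without resolving it. The germ you write,
\begin{equ}
A_{s,t}=\int_0^1\E_s\int_s^t\nabla b\big(\theta\phi^{s,X_s}_r+(1-\theta)\phi^{s,Y_s}_r\big)\,dr\,d\theta,
\end{equ}
cannot be sewn as it stands: the conditional integration by parts of \cref{lem:IBP-with-cutoff} requires a lower bound on the partial Malliavin covariance of the convex combination $\theta\phi^{s,X_s}_r+(1-\theta)\phi^{s,Y_s}_r$, and the nondegeneracy of $\sigma$ at the two endpoints can in principle cancel. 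The paper obtains such a lower bound only when the endpoints stay close, via \eqref{eq:convex-combination-nondegenerate}, and therefore inserts the cutoff $\chi_\rho(\phi^{s,X_s}_r-\phi^{s,Y_s}_r)$ directly into the germ (see \eqref{eq:L-A-def} and \cref{thm:L}). The objects $L=\cJ b$ and $\purp{G}$ actually constructed are thus \emph{truncated} versions of \eqref{eq:L-early} and the joint rough path.

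This truncation changes the endgame, and your Step 3 does not account for it. Because the identification $D^X_t-D^Y_t=\int_0^t\purp{IZ_s}\,d\purp{G^\circ_s}$ relies on the cutoff being inactive, the linear RDE representation of $Z$ is only established on the random interval $[0,\tau]$ with $\tau=\inf\{t:\,|Z_t|>\rho\}\wedge1$ (see \cref{lem:Z_is_controlled_by_G}). The correct closing move is: linear RDE uniqueness gives $Z\equiv0$ on $[0,\tau]$, which by definition of $\tau$ forces $\tau=1$ almost surely. A Gronwall/partition argument over $[0,1]$, as you propose, does not substitute for this localisation--bootstrap step, since the issue is not accumulating estimates across subintervals but the domain on which the probabilistic construction of $\purp{G}$ is valid at all. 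With the cutoff and the stopping-time bootstrap inserted, the remainder of your argument (including Step 4) is essentially the paper's proof.
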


\section{Preliminaries}
\subsection{Stochastic sewing}
We use the original version of L\^e's stochastic sewing lemma (SSL) from \cite{Khoa}. % and  the extension developed by Matsuda and Perkowski \cite{MR4730255}.
We start by introducing a few common notations for stochastic sewing. For $S<T$ and $i\in\N$ we denote $[S,T]_{\leq}^i:=\{(t_0,\ldots,t_{i-1})\in[S,T]^i:\,t_0\leq \cdots\leq t_{i-1}\}$. %and $\widehat{[S,T]}_{\leq}^i=\{(t_0,\ldots,t_{i-1})\in[S,T]^i_{\leq}:\,|t_{i-1}-t_1|\leq (i-1)|t_1-t_0|\}$
\begin{lemma}\label{lem:SSL-vanila}
Let $p\in[2,\infty)$.   Let $(S,T)\in[0,1]_\leq^2$ and let
$(A_{s,t})_{(s,t)\in[S,T]_\leq^2}$ be a family of $\R^d$-valued random variables 
%satisfying $A_{s,s}=0$, for all $s\in[S,T]$ and 
such that $A_{s,t}$ is $\mathcal{F}_t$-measurable for all $(s,t)\in[S,T]_{\leq}^2$.
Suppose that there exist constants $\Gamma_1,\Gamma_2\in[0,\infty)$, $\beta_1>1/2$, and $\beta_2>1$ such that the following holds:
\begin{enumerate}[(i)]
    \item $\|A_{s,t}\|_{L^p(\Omega)}\leq \Gamma_1|t-s|^{\beta_1}$, $(s,t)\in[S,T]_{\leq}^2$,
    \item $\|\E_s\delta A_{s,u,t}\|_{L^p(\Omega)}\leq \Gamma_2|t-s|^{\beta_2}$, $(s,u,t)\in[S,T]_{\leq}^3$. 
\end{enumerate}
  Then there exists a unique continuous $(\mathcal{F}_t)_{t\in[S,T]}$-adapted process $\mathcal{A}:[S,T]\mapsto L^p(\Omega;\R^d)$  such that $\mathcal{A}_S=0$ and the following bounds hold for some constants $K_1,K_2, K>0$: $(s,t)\in[S,T]_{\leq}^2$,
  \begin{enumerate}[(I)]
      \item $\|\mathcal{A}_t-\mathcal{A}_s- A_{s,t}\|_{L^p(\Omega)}\leq K_1|t-s|^{\beta_1}+ K_2|t-s|^{\beta_2}$, 
      \item $\|\E_s(\mathcal{A}_t-\mathcal{A}_s- A_{s,t})\|_{L^p(\Omega)}\leq K_2|t-s|^{\beta_2}$.
\end{enumerate}
Moreover, there exists a constant $K=K(\beta_1,\beta_2,p,d)$ such that the above bounds hold with $K_1=K\Gamma_1$, $K_2=K\Gamma_2$, and  one has for all $(s,t)\in[S,T]_{\leq}^2$
\begin{enumerate}
    \item[(III)]  $\|\mathcal{A}_t-\mathcal{A}_s\|_{L^p(\Omega)}\leq K\Gamma_1|t-s|^{\beta_1}+K\Gamma_2|t-s|^{\beta_2}$.
    %Here $K$ depends only on $\beta_1,\beta_2,d$.
  \end{enumerate}
\end{lemma}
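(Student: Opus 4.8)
The plan is to reproduce the construction of \cite{Khoa}: realise $\mathcal{A}$ as an $L^p$-limit of Riemann-type sums of $A$ along dyadic partitions, the convergence being driven by a Burkholder--Davis--Gundy (BDG) estimate on the successive increments of this sequence. Fix $t\in(S,T]$ and for $n\geq 0$ let $\pi_n\in\pi_{[S,t]}$ be the partition of $[S,t]$ into $2^n$ intervals of equal length; set $\mathcal{A}^n_t:=\sum_{[u,v]\in\pi_n}A_{u,v}$ and $\mathcal{A}^n_S:=0$. Since $\pi_{n+1}$ is obtained from $\pi_n$ by bisecting each interval, $\mathcal{A}^{n+1}_t-\mathcal{A}^n_t=-\sum_{[u,v]\in\pi_n}\delta A_{u,m,v}$ with $m:=(u+v)/2$. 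I would split $\delta A_{u,m,v}=\E_u\delta A_{u,m,v}+h_{u,v}$, where $h_{u,v}:=\delta A_{u,m,v}-\E_u\delta A_{u,m,v}$ satisfies $\E_u h_{u,v}=0$ and, being assembled from $A_{u,v},A_{u,m},A_{m,v}$, is $\mathcal{F}_v$-measurable. Assumption (ii) and the triangle inequality bound the first part by $\|\sum_{[u,v]\in\pi_n}\E_u\delta A_{u,m,v}\|_{L^p}\leq\Gamma_2\sum_{[u,v]\in\pi_n}|v-u|^{\beta_2}\lesssim\Gamma_2|t-S|\,2^{-n(\beta_2-1)}$. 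For the second part, listing the intervals of $\pi_n$ left to right as $[u_i,v_i]$ with $v_i=u_{i+1}$, the partial sums $\sum_{i\leq k}h_{u_i,v_i}$ form a martingale relative to $(\mathcal{F}_{v_k})_k$; BDG followed by Minkowski's inequality (this is where $p\geq 2$ enters) gives $\|\sum_i h_{u_i,v_i}\|_{L^p}\lesssim(\sum_i\|h_{u_i,v_i}\|_{L^p}^2)^{1/2}$, and since $\|h_{u,v}\|_{L^p}\leq 2\|\delta A_{u,m,v}\|_{L^p}\lesssim\Gamma_1|v-u|^{\beta_1}$ by (i), this is $\lesssim\Gamma_1|t-S|^{1/2}2^{-n(\beta_1-1/2)}$.

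As $\beta_1>1/2$ and $\beta_2>1$, the resulting bound $\|\mathcal{A}^{n+1}_t-\mathcal{A}^n_t\|_{L^p}\lesssim\Gamma_1|t-S|^{1/2}2^{-n(\beta_1-1/2)}+\Gamma_2|t-S|\,2^{-n(\beta_2-1)}$ is summable in $n$, so $(\mathcal{A}^n_t)_n$ converges in $L^p$; I set $\mathcal{A}_t:=\lim_n\mathcal{A}^n_t$ and $\mathcal{A}_S:=0$, and $\mathcal{A}_t$ is $\mathcal{F}_t$-measurable since each $\mathcal{A}^n_t$ is. A routine comparison of $\pi_n$ with an arbitrary refining sequence (using the very same two estimates on the discrepancies) shows the limit is partition-independent, which for $s<t$ gives the additivity $\mathcal{A}_t-\mathcal{A}_s=\lim_{|\cP|\to0,\,\cP\in\pi_{[s,t]}}\sum_{[u,v]\in\cP}A_{u,v}$. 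Summing the geometric estimates from $n=0$, where $\pi_0=\{[S,t]\}$ and $\mathcal{A}^0_t=A_{S,t}$, and then using additivity to pass from $[S,t]$ to a general $[s,t]$, yields (I) with $K_1=K\Gamma_1$, $K_2=K\Gamma_2$ for a constant $K=K(\beta_1,\beta_2,p,d)$; taking $\E_S$ annihilates every $h$-contribution, since $\E_S h_{u,v}=\E_S\E_u h_{u,v}=0$, leaving only the $\Gamma_2$-terms, which is (II); and (III), hence also continuity of $t\mapsto\mathcal{A}_t$ in $L^p$, follows from (I) and hypothesis (i).

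For uniqueness, suppose $\mathcal{A},\tilde{\mathcal{A}}$ both start at $0$ and satisfy (I)--(II), and put $D_t:=\mathcal{A}_t-\tilde{\mathcal{A}}_t$, so that $D_{s,t}:=D_t-D_s$ is exactly additive, $\|D_{s,t}\|_{L^p}\lesssim|t-s|^{\beta_1}+|t-s|^{\beta_2}$ by (I), and $\|\E_s D_{s,t}\|_{L^p}\lesssim|t-s|^{\beta_2}$ by (II). Writing $D_{S,t}=\sum_{[u,v]\in\pi_n}D_{u,v}$ and splitting $D_{u,v}=\E_u D_{u,v}+(D_{u,v}-\E_u D_{u,v})$, the same BDG/Minkowski argument bounds the centred part by a quantity tending to $0$ as $n\to\infty$ (because $\beta_1,\beta_2>1/2$), while the triangle inequality bounds $\|\sum_{[u,v]}\E_u D_{u,v}\|_{L^p}\lesssim|t-S|\,2^{-n(\beta_2-1)}\to 0$; hence $D_{S,t}=0$ and, localising, $\mathcal{A}=\tilde{\mathcal{A}}$.

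The heart of the lemma --- and the step I expect to be the real obstacle to get exactly right --- is the BDG argument: one must recognise that the conditionally-centred second differences, summed over consecutive subintervals, are martingale increments, so that the naive bound $\sum|v-u|^{\beta_1}$ (which would force $\beta_1>1$) is upgraded to the square-function bound $(\sum|v-u|^{2\beta_1})^{1/2}$, lowering the requirement to $\beta_1>1/2$. The rest is bookkeeping: keeping the constants genuinely proportional to $\Gamma_1,\Gamma_2$, and verifying measurability of the limit together with its independence of the chosen partitions.
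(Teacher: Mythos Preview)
The paper does not prove this lemma; it is simply quoted as the original stochastic sewing lemma from \cite{Khoa}. Your sketch faithfully reproduces that standard argument and is correct in structure. One small slip: in the BDG step the bound should read $\Gamma_1|t-S|^{\beta_1}2^{-n(\beta_1-1/2)}$ rather than $\Gamma_1|t-S|^{1/2}2^{-n(\beta_1-1/2)}$, since $(2^n\cdot((t-S)2^{-n})^{2\beta_1})^{1/2}=(t-S)^{\beta_1}2^{-n(\beta_1-1/2)}$; the correct exponent is what produces the $|t-s|^{\beta_1}$ in (I) and (III) after summing over $n$.
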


\subsection{Tools from Malliavin calculus}
We collect some basic results concerning  Malliavin calculus for the fractional Brownian motion (see  \cite{Nu09, CHLT15, CF10, FV10}). In this section, we work on the probability space $(\Omega, \cF^B_1, \mathbb{P})$, that is, we consider random variables that are measurable functions of the underlying fractional Brownian motion $(B_t)_{t \in [0,1]}$. On the basis of the $d_0$-dimensional fractional Brownian motion $B_t=(B^1_t, \dots, B^{d_0}_t)$ we construct an isonormal process on a Hilbert space $\mathcal{H}$ (see below). With some abuse of notation we will use $B$ both for the $d_0$-dimensional process $ [0,1] \ni t \mapsto B_t$ and for the isonormal process $ \mathcal{H} \ni h \mapsto B(h)$. 

Let us denote by $\mathcal{H}$ the completion of the set 
\begin{align*}
    \mathcal{E}:=\left\{ t \mapsto \sum_{k=1}^ma_k\mathbf{1}_{[0,t_k]}(t) : m\in\N_+, a_k\in\R^{d_0}, t_k\in [0, 1]\right\}
\end{align*}
with respect to the norm 
\begin{equs}       \label{eq:norm_H}
    \|\sum_{k=1}^ma_k\mathbf{1}_{[0,t_k]} \|_{\mathcal{H}}^2=\sum_{k,\ell=1}^m\langle a_k\mathbf{1}_{[0,t_k]},a_\ell\mathbf{1}_{[0,t_\ell]}\rangle_{\mathcal{H}}:=\sum_{k,\ell=1}^ma_k\cdot a_\ell Q(t_k,t_\ell),
\end{equs}
where $Q(s,t)= 2^{-1}(s^{2H}+t^{2H}-|t-s|^{2H})$
is the covariance function of the process $t \mapsto B^1_t$. It is evident from definition that \(\mathcal{H}\) is a separable Hilbert space.  Next, we consider the linear map  $B: \mathcal{E} \to L_2(\Omega)$ defined by 
\begin{equs}
    B( a \mathbf{1}_{[0, t]}) = \sum_{i=1}^{d_0}a^i B^i_t,
\end{equs}
for $a=(a^1, \dots, a^{d_0})$ and $t \in [0,1]$. Keeping in mind \eqref{eq:norm_H} and the fact that $\E B^i_sB^j_t= \delta_{ij} Q(s,t)$,  the map $B$ extends to a linear isometry $B : \mathcal{H} \to L_2(\Omega)$.

We denote by $\mathcal{S}$ the collection of real  random variables given by 
\begin{equs}
    \mathcal{S}=\left\{ f(B(h_1),\ldots,B(h_n)): n \in \mathbb{N},  f\in \mathcal{C}_{\pol}^\infty(\R^n), h_i\in\mathcal{H} \right\},
\end{equs}
where $\mathcal{C}_{\pol}^\infty(\R^n)$ denotes the set of smooth real functions on $\R^n$ whose derivatives of any order grow at most polynomialy. 
Similarly, for a  separable Hilbert space $(V, \| \cdot\|_{V})$, let us denote by  $\mathcal{S}(V)$ the collection of smooth  $V$-valued random variables
\begin{equ}  \label{eq:defSV}
    \mathcal{S}(V)=\left\{ \sum_{i=1}^n F_i v_i : n \in \mathbb{N}, F_i \in \mathcal{S}, v_i \in V\right\}.
\end{equ}
The set $\mathcal{S}(V)$ is dense in $L_p(\Omega;V)$ for any $p \geq 1$. 
The \emph{Malliavin derivative} is a linear operator  $D: \mathcal{S}(V) \to \mathcal{S}(V\otimes \mathcal{H})$ given by 
\begin{equ}
DF=\sum_{k=1}^n\partial_kf(B(h_1),\ldots,B(h_n))v\otimes h_k, 
\end{equ}
for $F=f(B(h_1), \dots , B(h_n)) v$, $f \in \cC^\infty_{\text{pol}}(\R^n)$, $v \in V$. 
The operator  $D$ as a mapping from $\mathcal{S}(V)$ to $L_2(\Omega;V \otimes \mathcal{H})$ is closable and its domain, which we will denote by $\mathbb{D}^{1,2}(V)$,   is given by  the completion of $\mathcal{S}(V)$ with respect to the norm
\begin{equ}
    \Vert F\Vert_{\mathbb{D}^{1,2}(V)}:=\big(\E\|F\|_V^2+\E(\Vert DF\Vert_{V \otimes \mathcal{H}}^2)\big)^{1/2}.
\end{equ}
For $F \in \mathbb{D}^{1,2}(V)$ and $h \in \mathcal{H}$, we denote by $D_hF$ the $V$ -valued random variable 
\begin{equs}   
    D_hF= \langle DF, h \rangle_{\mathcal{H}} := \sum_{i,k} \langle DF,  q_k \otimes e_i \rangle_ {V \otimes \mathcal{H}}  \langle e_i, h \rangle_{\mathcal{H}} q_k,
\end{equs}
where $(q_k)_{k=1}^\infty$, $(e_i)_{i=1}^\infty$, are orthonormal bases of $V$, $\mathcal{H}$, respectively. Notice that it does not depend on the choice of the bases. 
The adjoint of $D$ will be denoted by $\delta : \dom(\delta) \subset L_2(\Omega;V \otimes \mathcal{H}) \to L_2(\Omega; V)$. By definition, for $u \in \dom(\delta)$, $F \in \mathbb{D}^{1,2}(V)$, we have the following \emph{integration by parts} formula 
\begin{equ}\label{eq:ini-IBP}
    \E\langle DF,u\rangle_{ V \otimes \mathcal{H}}=\E\langle F, \delta(u)\rangle_V.
\end{equ}
If $u \in L_2(\Omega; V \otimes \mathcal{H})$ is of the form 
\begin{equs}
    u= \sum_{i=1}^n F_i v_i \otimes h_i, \qquad F_i \in \mathcal{S}(\R), \,  v_i \in V,\,  h_i \in \mathcal{H},
\end{equs}
then $u \in \dom(\delta)$ and 
\begin{equs}    \label{eq:explicit_formula_delta}
    \delta(u)= \sum_{i=1}^n (F_i B(h_i)- \langle DF_i, h_i\rangle_{\mathcal{H}}) v_i. 
\end{equs}
For $u$ as above, by virtue of \eqref{eq:explicit_formula_delta}, it is easy to verify that 
\begin{equs}                 \label{eq:Heisenberg}
    D \delta(u) = u+ \delta (\widehat{Du}),
\end{equs}
where the map $\widehat{\cdot} : V \otimes \mathcal{H} \otimes \mathcal{H} \to  V \otimes \mathcal{H} \otimes \mathcal{H}$  is the linear isometry given by 
$\widehat{v \otimes h_1 \otimes h_2}= v \otimes h_2 \otimes h_1$ for $v \in V, h_1, h_2 \in \mathcal{H}$. 
Moreover, it is known (\cite{Nu09}) that 
\begin{equ}\label{eq:domain-inclusion}
    \mathbb{D}^{1,2}(V \otimes \mathcal{H}) \subset \dom(\delta).
\end{equ}

Next, notice that for $m \in \mathbb{N}$, $D^m$ maps $\mathcal{S}(V)$ into $\mathcal{S}(V \otimes \mathcal{H}^{\otimes m})$. For $m \in \mathbb{N}$, $p\geq 2$, we denote by  $\mathbb{D}^{m,p}(V)$ the completion of $\mathcal{S}(V)$ with respect to the norm
\begin{equ}
    \Vert F\Vert_{\mathbb{D}^{m,p}(V)}:=\big(\E\|F\|_V^p+\sum_{k=1}^m\E(\Vert D^kF\Vert_{V \otimes \mathcal{H}^{\otimes k}}^p)\big)^{1/p}.
\end{equ}
The space $\mathbb{D}^{m,p}(V)$ is reflexive (since it can be identified with a closed subspace of $\prod_{k=0}^nL_p(\Omega; V\otimes \cH^{\otimes k})$. 
The following lemma, whose proof is in the Appendix, will be used often. 
\begin{lemma}   \label{lem:orthogonal_density}
    Let $(e_i)_{i=1}^\infty, (q_k)_{k=1}^\infty$ be  orthonormal basis of $\mathcal{H}, V$, respectively.  The collection  
    \begin{equs}
        \mathcal{S}_{ON}(V \otimes \mathcal{H})= \left\{ \sum_{i,k= 1}^n f_{ik}(B(e_1), \dots, B(e_n)) q_k \otimes e_i: n \in \mathbb{N}, f_{ik} \in \cC_{\pol}^\infty(\R^n) \right\} 
    \end{equs}
    is dense in $\mathbb{D}^{k, p}(V \otimes \mathcal{H})$ for any $k \geq 0$, $p \geq 2$. 
\end{lemma}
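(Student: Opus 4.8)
\textbf{Proof proposal for \cref{lem:orthogonal_density}.}

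The plan is to reduce the problem to a purely Gaussian/orthogonal-basis statement by two successive approximation arguments: first replace a general element of $\mathcal{S}(V\otimes\mathcal H)$ by one built from the fixed bases, then replace a general smooth function of many $B(h_i)$ by a smooth function of finitely many $B(e_i)$. Since $\mathcal{S}(V\otimes\mathcal H)$ is dense in $\mathbb D^{k,p}(V\otimes\mathcal H)$ by definition, it suffices to show that every $u=\sum_{j=1}^m F_j\,w_j$ with $F_j\in\mathcal S$ and $w_j\in V\otimes\mathcal H$ can be approximated in the $\mathbb D^{k,p}$-norm by elements of $\mathcal S_{ON}(V\otimes\mathcal H)$. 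Expanding each $w_j$ in the orthonormal basis $(q_k\otimes e_i)_{i,k}$ of $V\otimes\mathcal H$ and truncating the (convergent) series, one reduces to $u$ of the form $\sum_{i,k\le n} G_{ik}\,q_k\otimes e_i$ with $G_{ik}\in\mathcal S$; here one must check that multiplying a fixed $F_j\in\mathcal S$ by the scalar Fourier coefficients and truncating converges in $\mathbb D^{k,p}$, which is immediate because $D^\ell(G\,q_k\otimes e_i)=(D^\ell G)\otimes q_k\otimes e_i$ (up to the isometric reshuffling of tensor slots), so the $\mathbb D^{k,p}$-norm of the tail is controlled by $\|F_j\|_{\mathbb D^{k,p}}$ times the $\ell^2$-tail of the coefficients of $w_j$.

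Next I would handle the functional form of the $G_{ik}$. Each $G_{ik}=g_{ik}(B(h_1),\dots,B(h_r))$ for some $g_{ik}\in\mathcal C^\infty_{\pol}$ and $h_1,\dots,h_r\in\mathcal H$ (we may take a common $r$ and common vectors $h_1,\dots,h_r$ for all $i,k$ by enlarging the list). Let $\mathcal H_r=\mathrm{span}(h_1,\dots,h_r)$ and write each $h_\ell$ in the orthonormal basis $(e_i)$; truncating at level $N$ gives $h_\ell^N=\sum_{i\le N}\langle h_\ell,e_i\rangle_{\mathcal H}e_i\to h_\ell$ in $\mathcal H$, hence $B(h_\ell^N)\to B(h_\ell)$ in every $L_p(\Omega)$ (Gaussian, so convergence in $L_2$ upgrades to all $L_p$), and likewise $D^s$ applied to these linear functionals is constant in $\omega$ and converges. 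By the chain rule, $D^s\big(g_{ik}(B(h_1),\dots,B(h_r))\big)$ is a universal polynomial expression in the partial derivatives $\partial^\beta g_{ik}$ evaluated at $(B(h_1),\dots,B(h_r))$ and the fixed tensors $h_{\ell_1}\otimes\cdots\otimes h_{\ell_s}$; replacing $h_\ell$ by $h_\ell^N$ everywhere and using the polynomial growth of $g_{ik}$ together with Gaussian hypercontractivity (all moments of $B(h_\ell)$, $B(h_\ell^N)$ are uniformly bounded), one gets convergence in $L_p$ of $D^s$ for each $s\le k$. Thus $g_{ik}(B(h_1^N),\dots,B(h_r^N))\to G_{ik}$ in $\mathbb D^{k,p}$. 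Finally, $g_{ik}(B(h_1^N),\dots,B(h_r^N))$ is a smooth polynomially-growing function of $(B(e_1),\dots,B(e_N))$, so the resulting approximant $\sum_{i,k\le n} g_{ik}(B(h_1^N),\dots,B(h_r^N))\,q_k\otimes e_i$ lies in $\mathcal S_{ON}(V\otimes\mathcal H)$ (for $N\ge n$). Combining the two approximation steps via the triangle inequality in $\mathbb D^{k,p}$ finishes the proof.

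The routine points to be careful about are: (a) that the tensor contraction/reshuffling maps $V\otimes\mathcal H^{\otimes s}\to\cdots$ induced by $D^s$ acting on a product $G\,(q_k\otimes e_i)$ are isometries, so norms are preserved under truncation of the $w_j$; and (b) the uniform-in-$N$ moment bounds needed to pass the chain-rule expression to the limit, which follow from $\|B(h_\ell^N)\|_{\mathcal H}\le\|h_\ell\|_{\mathcal H}$ and Gaussian hypercontractivity. The only genuinely substantive step — and the one I expect to be the main obstacle — is verifying the $\mathbb D^{k,p}$-convergence of the chain-rule expansion in step two: one must show that each term $\partial^\beta g_{ik}(B(h^N))\cdot(\text{fixed tensor of }h^N\text{'s})$ converges in $L_p(\Omega;V\otimes\mathcal H^{\otimes s})$, which requires combining (i) $L_p$-continuity of $x\mapsto\partial^\beta g_{ik}(x)$ composed with the $L_q$-convergent Gaussian vector $B(h^N)\to B(h)$ (using polynomial growth plus uniform moment bounds, i.e. a vector-valued dominated-convergence/uniform-integrability argument), and (ii) $\mathcal H^{\otimes s}$-norm convergence of the fixed tensors, then multiplying two such convergent factors. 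Everything else is bookkeeping with the definitions of $\mathcal S$, $\mathcal S(V)$, $D^m$, and the $\mathbb D^{m,p}$-norms given in the excerpt.
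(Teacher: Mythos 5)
Your proposal is correct and follows essentially the same route as the paper's own proof: reduce via density of $\mathcal{S}(V\otimes\mathcal{H})$ to approximating elementary tensors, expand in the orthonormal bases of $V\otimes\mathcal{H}$ and $\mathcal{H}$, truncate, and pass to the limit using Gaussianity, the polynomial-growth assumption, and uniform $L_q$-moment bounds, then repeat for each Malliavin derivative order. The only cosmetic difference is that you split the truncation of the tensor factor $w_j$ and the replacement of $B(h_\ell)$ by $B(h_\ell^N)$ into two separate approximation steps, whereas the paper performs both at once in a single approximant $\zeta_n$; this does not change the substance of the argument.
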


Clearly,  $\mathbb{D}^{m',p'}(V) \subset \mathbb{D}^{m,p}(V)$ if $m' \geq m$ and $p' \geq p$. It is immediate that  for all $F \in \mathbb{D}^{m,p}(V)$ we have 
\begin{equs}
    \| D F\|_{\mathbb{D}^{m-1,p}(V\otimes \mathcal{H})} \leq \| F\|_{\mathbb{D}^{m,p}(V)}. 
\end{equs}
In addition, it is known that there exists $C=C(p,m)$ such that for all $u \in \mathbb{D}^{m,p}(V\otimes \cH)$
we have 
\begin{equ} 
\label{est:delta}
    \| \delta(u)\|_{\mathbb{D}^{m-1,p}(V)}\leq C\|u\|_{\mathbb{D}^{m,p}(V \otimes \mathcal{H})}.
\end{equ}
If $F\in\mathbb{D}^{1,p}(\R^m)$  and $g\in C^{1}(\R^m)$, then $g(F)\in\mathbb{D}^{1,p}(\R)$ and the following \emph{chain rule} holds
\begin{equ}
    \label{eq:chain}
    Dg(F)=\sum_{k=1}^m\D_kg(F)DF^k.
\end{equ}
\section{Partial Malliavin calculus and conditional estimates}\label{sec:partial-malliavin} 

In this section, we continue to work on the probability space $(\Omega, \cF^B_1, \mathbb{P})$ (recall that the terminal time is $1$). The concept of partial Malliavin calculus was first introduced in \cite{stroock} and later further studied in \cite{Nualart_PMC}. Its development was aimed at studying the conditional densities of diffusion processes appearing in filtering problems.

It turns out that this concept is very useful for our purposes as well. Recall from the introduction that, roughly speaking, for our sewing arguments, we need to estimate expressions of the form $\E^B_s \nabla f (\phi^{s,x}_t)$ in terms of the function $f$ but in relatively weak norms, say $\|f\|_{L_\infty}$ for now. In the case $H=1/2$, one can use the Markov property to reduce the conditional expectation to a usual expectation, and then use tools from Malliavin calculus, such as the Malliavin integration by parts formula. Such an argument does not work in the non-Markovian setting. Nevertheless, it turns out that an integration by parts formula conditionally on $\cF^B_s$ is true, provided that one considers Malliavin derivatives with respect to $B_\cdot-\E^B_sB_\cdot$, which is independent of $\cF^B_s$, rather than Malliavin derivatives with respect to $B$ (see \eqref{eq:conditional_IBP} below). In this context, derivatives with respect to $B_\cdot-\E^B_sB_\cdot$  are referred to as \emph{partial Malliavin derivatives} since $B_\cdot-\E^B_sB_\cdot$ can be seen as a component of $B$ via the decomposition $B_\cdot=(B_\cdot-\E^B_sB_\cdot)+\E^B_s B_\cdot$.  In this section, we study the properties of these partial Malliavin derivatives and their adjoint operator.

\begin{remark}\label{rem:cate}
    In \cite{cate} a different way to estimate conditional densities is proposed.
    To our best judgment this approach has flaws that seem to be nontrivial to fix. For instance, one of the basic statements, Lemma 4.7 therein is not true and we provide a counterexample: $F=1$, $G=W_a$, $h=1_{[a,b]}$. Indeed, using notation therein (note that $W$ in their notation is $B$ in our notation), one then has $D_{[a,b]} F=0$,  $D G=1_{[0,a]}$ and so $ D_{[a,b]}G=1_{[a,b]}D G=0$, which means that the first two expectations in \cite[Lemma~4.7]{cate} are $0$, while the last is $\E(W_a(W_b-W_a))\neq 0$.
    Although this lemma is not directly used in the paper, in the unconditioned setting it is an important ingredient in showing the continuity of the $\delta$ operator in the Malliavin norm. Since the conditional version of the continuity of $\delta$ is only stated but not proved in \cite[Theorem~4.11]{cate}, it is not clear how one can overcome the failure of \cite[Lemma~4.7]{cate} in its proof. 
\end{remark}

For $s \in [0,1]$, we define the subspace \(\mathcal{H}_s\) of \(\mathcal{H}\) by  
\begin{equs}
    \mathcal{H}_s= \overline{\text{span}\{ \bone_{[0, q]} : \, q \leq s\}}^{\|\cdot\|_{\mathcal{H}}}.
\end{equs}
We denote by $\mathcal{H}_s^\perp$ the orthogonal complement of $\mathcal{H}_s$, so that one has the orthogonal decomposition $   \mathcal{H}=  \mathcal{H}_s \oplus  \mathcal{H}_s^\perp$.
We denote by $\Pi_{ \mathcal{H}_s^\perp}: \mathcal{H} \to  \mathcal{H}_s^\perp$ the orthogonal projection operator and  with a slight abuse of notation, we also use the same notation for the projection operator from $V \otimes\mathcal{H}$ onto $V \otimes \mathcal{H}_s^\perp$. We have the following continuity property. 
\begin{lemma}     \label{lem:continuity_projection_Sobolev}
    For $k \in\N, p \geq 2$, there exists a constant $C=C(k,p)$ such that for all $F \in \mathbb{D}^{k,p}(V)$ we have 
    \begin{equs}
        \| \Pi_{\mathcal{H}_s^\perp}F \|_{\mathbb{D}^{k,p}(V\otimes \mathcal{H})} \leq C \|F \|_{\mathbb{D}^{k,p}(V\otimes \mathcal{H})}.
    \end{equs}
\end{lemma}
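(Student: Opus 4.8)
Write $\Pi:=\Pi_{\mathcal{H}_s^\perp}$ for brevity. The plan is to exhibit $\Pi$, in its action on $V\otimes\mathcal{H}$, as a \emph{deterministic} contraction that intertwines with the iterated Malliavin derivatives, and then to conclude by density; in fact this produces the claim with the explicit constant $C=1$, independent of $k$ and $p$. First note that, as an operator on $V\otimes\mathcal{H}$, one has $\Pi=\mathrm{id}_V\otimes\Pi_{\mathcal{H}_s^\perp}$, so $\Pi$ is again an orthogonal projection and in particular $\|\Pi\|_{\mathrm{op}}\le1$; likewise, for every $j\ge0$, the operator $\Pi\otimes\mathrm{id}_{\mathcal{H}^{\otimes j}}$ acting on $(V\otimes\mathcal{H})\otimes\mathcal{H}^{\otimes j}$ is an orthogonal projection of operator norm at most $1$. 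In particular $\Pi$ maps $L_p(\Omega;V\otimes\mathcal{H})$ into itself with $\|\Pi X\|_{L_p}\le\|X\|_{L_p}$.

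Next I would check the intertwining relation on the smooth class $\mathcal{S}(V\otimes\mathcal{H})$. For a generic element $F=f(B(h_1),\dots,B(h_n))\,w$ with $f\in\cC^\infty_{\pol}(\R^n)$, $h_i\in\mathcal{H}$ and $w\in V\otimes\mathcal{H}$ deterministic, the operator $\Pi$ touches only the coefficient $w$, so directly from the definition of $D^j$ one gets
\begin{equs}   \label{eq:intertwine_projection}
  D^j\big(\Pi F\big)=\big(\Pi\otimes\mathrm{id}_{\mathcal{H}^{\otimes j}}\big)\,D^jF,\qquad j=0,1,\dots,k,
\end{equs}
and by linearity \eqref{eq:intertwine_projection} holds for all $F\in\mathcal{S}(V\otimes\mathcal{H})$. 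Combining \eqref{eq:intertwine_projection} with the operator-norm bounds of the previous paragraph gives, for such $F$,
\begin{equs}
  \|\Pi F\|_{\mathbb{D}^{k,p}(V\otimes\mathcal{H})}^p
  &=\E\|\Pi F\|_{V\otimes\mathcal{H}}^p+\sum_{j=1}^k\E\big\|\big(\Pi\otimes\mathrm{id}_{\mathcal{H}^{\otimes j}}\big)D^jF\big\|^p
  \\
  &\le\E\|F\|_{V\otimes\mathcal{H}}^p+\sum_{j=1}^k\E\|D^jF\|^p
  =\|F\|_{\mathbb{D}^{k,p}(V\otimes\mathcal{H})}^p.
\end{equs}

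Finally I would remove the smoothness assumption. Since $\mathbb{D}^{k,p}(V\otimes\mathcal{H})$ is, by definition, the completion of $\mathcal{S}(V\otimes\mathcal{H})$ in the norm $\|\cdot\|_{\mathbb{D}^{k,p}(V\otimes\mathcal{H})}$, for $F\in\mathbb{D}^{k,p}(V\otimes\mathcal{H})$ pick $F_m\in\mathcal{S}(V\otimes\mathcal{H})$ with $F_m\to F$ in that norm. Then $\Pi F_m\to\Pi F$ in $L_p(\Omega;V\otimes\mathcal{H})$ by the pointwise contraction property of $\Pi$, while by \eqref{eq:intertwine_projection} each sequence $D^j(\Pi F_m)=(\Pi\otimes\mathrm{id}_{\mathcal{H}^{\otimes j}})D^jF_m$ converges in $L_p(\Omega;(V\otimes\mathcal{H})\otimes\mathcal{H}^{\otimes j})$; recalling that $\mathbb{D}^{k,p}(V\otimes\mathcal{H})$ is identified with a closed subspace of $\prod_{j=0}^kL_p(\Omega;(V\otimes\mathcal{H})\otimes\mathcal{H}^{\otimes j})$, we deduce $\Pi F\in\mathbb{D}^{k,p}(V\otimes\mathcal{H})$ and that the bound of the previous display passes to the limit, which is the assertion. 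The proof has no genuine obstacle; the only points requiring a little care are tracking the tensor slot in which each operator acts — and noting that tensoring an orthogonal projection with an identity is again a contraction — followed by the routine density-and-closedness argument that lifts the estimate from $\mathcal{S}(V\otimes\mathcal{H})$ to all of $\mathbb{D}^{k,p}(V\otimes\mathcal{H})$.
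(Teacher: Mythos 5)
Your proof is correct, and the core idea is the same as the paper's: the projection $\Pi_{\mathcal{H}_s^\perp}$ acts only on the deterministic Hilbert-space slot, commutes with Malliavin differentiation, and is a contraction, so the bound transfers to $\mathbb{D}^{k,p}$ with constant $1$ and then extends by density.

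Where you differ from the paper is only in packaging, though the difference is worth noting. The paper fixes an orthonormal basis of $\mathcal{H}$ consisting of eigenvectors of $\Pi_{\mathcal{H}_s^\perp}$, restricts to the special dense class $\mathcal{S}_{ON}(V\otimes\mathcal{H})$ (which requires a separate density lemma), and computes $\|D^l\Pi F\|^2$ coordinate by coordinate, using orthogonality of the $\Pi e_i$ to collapse the cross-terms. You instead isolate the intertwining identity $D^j(\Pi F)=(\Pi\otimes\mathrm{id}_{\mathcal{H}^{\otimes j}})D^jF$ on the ordinary class $\mathcal{S}(V\otimes\mathcal{H})$ and then use nothing more than the fact that an orthogonal projection tensored with an identity is again an orthogonal projection, hence a contraction on $L_p$. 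This avoids the eigenbasis and the auxiliary $\mathcal{S}_{ON}$-density lemma, gives the explicit constant $C=1$ independent of $k,p$, and makes transparent exactly which structural facts are being used. It is a genuine streamlining rather than a different mathematical mechanism.
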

\begin{proof}
    Let $(e_i)_{i=1}^\infty$ be an orthonormal basis of $\mathcal{H}$ consisting of eigenvectors of $\Pi_{\mathcal{H}_s^\perp}: \mathcal{H} \to \mathcal{H}_s^\perp$, so that $(\Pi_{\mathcal{H}_s^\perp}e_i)_{i=1}^\infty$ are orthogonal and let $(q_k)_{k=1}^\infty$ be an orthonormal basis of $V$. By \cref{lem:orthogonal_density}, it suffices to show the desired inequality only for $F$ of the form 
    \begin{equs}
        F= \sum_{i,k= 1}^n f_{ik}(B(e_1), \dots, B(e_n)) q_k \otimes e_i, 
    \end{equs}
    with $n \in \mathbb{N}$ and $f_{ik} \in \cC_{\pol}^\infty(\R^n)$. By using orthogonality and that the norm of the projection operator is bounded by $1$, we see that for $l \in \{0, \dots, k\}$, we have 
    \begin{equs}
    \,  &  \quad  \| D^l \Pi_{\mathcal{H}_s^\perp}F\|_{V \otimes \mathcal{H}^{\otimes (l+1)}}^2
      \\
      & = \Big\|\sum_{i,k=1}^n \sum_{j_1,...,j_l} \D_{j_1,...,j_l}f_{ik}(B(e_1), \dots, B(e_n)) q_k \otimes \Pi_{\mathcal{H}_s^\perp}e_i \otimes e_{j_1} \dots \otimes e_{j_1}\Big\|_{V \otimes \mathcal{H}^{\otimes (l+1)}}^2
      \\
      & = \sum_{i,k=1}^n \sum_{j_1,...,j_l} \big \|\D_{j_1,...,j_l}f_{ik}(B(e_1), \dots, B(e_n))  q_k \otimes \Pi_{\mathcal{H}_s^\perp}e_i \otimes e_{j_1} \dots \otimes e_{j_1} \big\|_{V \otimes \mathcal{H}^{\otimes (l+1)}}^2
      \\
      & \leq  \sum_{i,k=1}^n \sum_{j_1,...,j_l} \big | \D_{j_1,...,j_l}f_{ik}(B(e_1), \dots, B(e_n))| ^2= \| D^l F\|_{V \otimes \mathcal{H}^{\otimes (l+1)}}^2.
    \end{equs}
    From this, the claim follows. 
\end{proof}
Next, we define the operator $D_{\mathcal{H}_s^\perp}: \mathbb{D}^{1,2}(V) \to L_2(\Omega; V \otimes \mathcal{H})$ by the formula
\begin{equs}
    D_{\mathcal{H}_s^\perp}F= \Pi_{\mathcal{H}_s^\perp} DF. 
\end{equs}
We denote by $\delta_{\mathcal{H}_s^\perp}: \mathrm{dom}(\delta_{\mathcal{H}_s^\perp}) \to L_2(\Omega; V)$ the adjoint of  $D_{\mathcal{H}_s^\perp}$, where 
\begin{equs}
    \dom(\delta_{\mathcal{H}_s^\perp}):= \{ u \in  V \otimes \mathcal{H}: \Pi_{\mathcal{H}_s^\perp} u \in \dom(\delta)\}. 
\end{equs}
For $u \in  \dom(\delta_{\mathcal{H}_s^\perp})$ we have  $\delta_{\mathcal{H}_s^\perp}(u)= \delta(\Pi_{\mathcal{H}_s^\perp} u)$. Notice that by  \cref{lem:continuity_projection_Sobolev} and \eqref{eq:domain-inclusion},
for any $m \in\N$, $p \geq 2$, we have that  $\mathbb{D}^{m,p}(V \otimes \mathcal{H})$ is contained in $\dom(\delta_{\mathcal{H}_s^\perp})$. The following \emph{conditional integration by parts formula}  holds.

\begin{proposition}
    For $F \in \mathbb{D}^{1,2}(V)$ and $u \in \mathbb{D}^{1,2}(V \otimes \mathcal{H})$ we have 
\begin{equs}           \label{eq:conditional_IBP}
    \E_s^B \langle F, \delta_{\mathcal{H}_s^\perp}(u)\rangle_V= \E_s^B \langle  D_{\mathcal{H}_s^\perp}F, u \rangle_{V \otimes \mathcal{H}}, \qquad a.s. 
\end{equs}
\end{proposition}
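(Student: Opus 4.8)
The plan is to establish \eqref{eq:conditional_IBP} first for elements of a dense subclass and then pass to the limit. Since the identity to be proved is an $\cF^B_s$-measurable statement, the natural route is to test it against bounded $\cF^B_s$-measurable random variables: \eqref{eq:conditional_IBP} is equivalent to
\begin{equs}
    \E\big[ G\, \langle F, \delta_{\mathcal{H}_s^\perp}(u)\rangle_V\big]= \E\big[ G\,\langle  D_{\mathcal{H}_s^\perp}F, u \rangle_{V \otimes \mathcal{H}}\big]
\end{equs}
for every bounded $\cF^B_s$-measurable $G$. By a monotone class / density argument it suffices to take $G$ of the form $G=g(B(h_1),\dots,B(h_m))$ with $g\in\cC^\infty_{\pol}(\R^m)$ and $h_1,\dots,h_m\in\mathcal{H}_s$; indeed such $G$ are dense (in every $L_q$) in the bounded $\cF^B_s$-measurable random variables, and both sides of the identity are continuous in $G$ in a suitable $L_q(\Omega)$ sense once $F\in\mathbb{D}^{1,2}(V)$ and $u\in\mathbb{D}^{1,2}(V\otimes\mathcal{H})$ are fixed (using \eqref{est:delta}, \cref{lem:continuity_projection_Sobolev}, and Hölder).

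With $G$ of that special form, I would reduce to the ordinary (unconditional) integration-by-parts formula \eqref{eq:ini-IBP}. Apply \eqref{eq:ini-IBP} to the pair $GF\in\mathbb{D}^{1,2}(V)$ and $u$, after first replacing $u$ by $\Pi_{\mathcal{H}_s^\perp}u$: since $\Pi_{\mathcal{H}_s^\perp}u\in\dom(\delta)$ by \eqref{eq:domain-inclusion} and \cref{lem:continuity_projection_Sobolev}, and $\delta_{\mathcal{H}_s^\perp}(u)=\delta(\Pi_{\mathcal{H}_s^\perp}u)$, we get
\begin{equs}
    \E\langle D(GF),\Pi_{\mathcal{H}_s^\perp}u\rangle_{V\otimes\mathcal{H}}=\E\langle GF,\delta(\Pi_{\mathcal{H}_s^\perp}u)\rangle_V=\E\big[G\langle F,\delta_{\mathcal{H}_s^\perp}(u)\rangle_V\big].
\end{equs}
On the left, use the Leibniz/chain rule \eqref{eq:chain} to write $D(GF)=G\,DF+F\otimes DG$ (as an element of $V\otimes\mathcal{H}$, with the obvious interpretation), and then the key point: $DG=\sum_k\partial_k g(\cdots)\,h_k$ lies in $\mathcal{H}_s$ because every $h_k\in\mathcal{H}_s$, hence $\langle F\otimes DG,\Pi_{\mathcal{H}_s^\perp}u\rangle_{V\otimes\mathcal{H}}=0$ by orthogonality of $\mathcal{H}_s$ and $\mathcal{H}_s^\perp$. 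Therefore the left side collapses to $\E\big[G\,\langle DF,\Pi_{\mathcal{H}_s^\perp}u\rangle_{V\otimes\mathcal{H}}\big]=\E\big[G\,\langle \Pi_{\mathcal{H}_s^\perp}DF,u\rangle_{V\otimes\mathcal{H}}\big]=\E\big[G\,\langle D_{\mathcal{H}_s^\perp}F,u\rangle_{V\otimes\mathcal{H}}\big]$, using self-adjointness of the projection. Comparing the two displays gives the tested identity for all such $G$, and the density argument upgrades it to \eqref{eq:conditional_IBP}.

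The main obstacle, such as it is, is purely a matter of justifying the density/closability manipulations cleanly rather than any conceptual difficulty: one must check that $GF\in\mathbb{D}^{1,2}(V)$ with the product rule valid (true since $G$ is smooth with polynomially bounded derivatives, so $GF$ is a limit of smooth functionals and \eqref{eq:chain} extends), that $\Pi_{\mathcal{H}_s^\perp}u\in\dom(\delta)$ with the stated identity $\delta_{\mathcal{H}_s^\perp}(u)=\delta(\Pi_{\mathcal{H}_s^\perp}u)$ (already recorded above from \cref{lem:continuity_projection_Sobolev} and \eqref{eq:domain-inclusion}), and that the class of $G=g(B(h_1),\dots,B(h_m))$ with $h_i\in\mathcal{H}_s$ is dense in the bounded $\cF^B_s$-measurable functions in the relevant $L_q$; this last point follows since $\cF^B_s=\sigma(B_q:q\le s)=\sigma(B(h):h\in\mathcal{H}_s)$ up to null sets. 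All the estimates needed to pass the limit through both sides are the standard Malliavin continuity bounds already quoted in the preliminaries.
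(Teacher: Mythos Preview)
Your proposal is correct and follows essentially the same argument as the paper: test the identity against $\cF^B_s$-measurable $G=g(B(h_1),\dots,B(h_m))$ with $h_i\in\mathcal{H}_s$ (the paper takes $h_i=\mathbf{1}_{[0,r_i]}$, $r_i\le s$), apply the unconditional integration-by-parts \eqref{eq:ini-IBP} to $GF$ and $\Pi_{\mathcal{H}_s^\perp}u$, expand $D(GF)=G\,DF+F\otimes DG$, and use $DG\in\mathcal{H}_s$ to kill the cross term. Your write-up is simply more detailed about the density and closability justifications than the paper's version.
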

\begin{proof}
    For $G$ of the form $g(B_{r_1}, ..., B_{r_n})$ with a smooth $g$ and $0 \leq r_1 \leq ... \leq r_n \leq s$, we have by \eqref{eq:ini-IBP}
\begin{equs}
    \E G \langle F, \delta_{\mathcal{H}_s^\perp}(u)\rangle_V & =  \E \langle G F, \delta_{\mathcal{H}_s^\perp}(u)\rangle_V
    \\
   &  =\E \langle D(G F), \Pi_{\mathcal{H}_s^\perp}u \rangle_{V\otimes \mathcal{H}}
   \\
   & = \E \langle F \otimes DG+ G DF, \Pi_{\mathcal{H}_s^\perp}u \rangle_{V\otimes \mathcal{H}}
   \\
   & = \E G \langle D_{\mathcal{H}_s^\perp}F, u \rangle_{V\otimes \mathcal{H}},
\end{equs}
where for the last equality we have used the fact that $DG \in \mathcal{H}_s$, hence, $ \Pi_{\mathcal{H}_s^\perp}(F \otimes DG)=0$. 
\end{proof}
\begin{remark}
    For $u \in \mathscr{S}_{ON}(V \otimes \mathcal{H})$, by virtue of \eqref{eq:explicit_formula_delta}, it is straightforward to verify that 
    \begin{equs}      \label{eq:Pi_delta}
        \Pi_{\mathcal{H}_s^\perp}\delta ( \widehat{D\Pi_{\mathcal{H}_s^\perp}u}) = \delta ( \widehat{D_{\mathcal{H}_s^\perp}\Pi_{\mathcal{H}_s^\perp}u})= \delta_{\mathcal{H}_s^\perp}( \widehat{D_{\mathcal{H}_s^\perp}\Pi_{\mathcal{H}_s^\perp}u}).
    \end{equs}
    Indeed, let us verify the first equality. If $u$ is of the form 
    \begin{equs}
        u = \sum_{i,k= 1}^n f_{ik}(B(e_1), \dots, B(e_n)) q_k \otimes e_i, 
    \end{equs}
    then we have that 
    \begin{equs}
       \widehat{D\Pi_{\mathcal{H}_s^\perp}u}= \sum_{i,j,k=1}^n \D_jf_{ik}(e_1,\dots, e_n) q_k \otimes e_j \otimes \Pi_{\mathcal{H}_s^\perp}e_i.
    \end{equs}
    Then, according to \eqref{eq:explicit_formula_delta}, we have 
    \begin{equs}      \label{eq:deltaDPi}
          \delta (\widehat{D\Pi_{\mathcal{H}_s^\perp}u})&= \sum_{i,k,j=1}^n \D_jf_{ik}(e_1,\dots, e_n) B(\Pi_{\mathcal{H}_s^\perp}e_i) q_k \otimes e_j
          \\
          & \qquad - \sum_{i,k,j, l=1}^n \D^2_{lj}f_{ik}(e_1,\dots, e_n) \langle e_l, \Pi_{\mathcal{H}_s^\perp}e_i \rangle_{\mathcal{H}} q_k \otimes e_j.  
    \end{equs}
    On the other hand, we have 
    \begin{equ}
\widehat{D_{\mathcal{H}_s^\perp}\Pi_{\mathcal{H}_s^\perp}u}= \sum_{i,j,k=1}^n \D_jf_{ik}(e_1,\dots, e_n) q_k \otimes \Pi_{\mathcal{H}_s^\perp}e_j \otimes \Pi_{\mathcal{H}_s^\perp}e_i,
    \end{equ}
    which implies that  
    \begin{equs}       \label{eq:deltaD_HPi}
        \delta (\widehat{D_{\mathcal{H}_s^\perp}\Pi_{\mathcal{H}_s^\perp}u})&= \sum_{i,k,j=1}^n \D_jf_{ik}(e_1,\dots, e_n) B(\Pi_{\mathcal{H}_s^\perp}e_i) q_k \otimes {\Pi_{\mathcal{H}_s^\perp}}e_j
          \\
          & \qquad - \sum_{i,k,j, l=1}^n \D^2_{lj}f_{ik}(e_1,\dots, e_n) \langle e_l, \Pi_{\mathcal{H}_s^\perp}e_i \rangle_{\mathcal{H}} q_k \otimes \Pi_{\mathcal{H}_s^\perp}e_j.
    \end{equs}
    By comparing \eqref{eq:deltaDPi} and \eqref{eq:deltaD_HPi}, the first equality in \eqref{eq:Pi_delta} is obvious. The second equality holds since $\widehat{D_{\mathcal{H}_s^\perp}\Pi_{\mathcal{H}_s^\perp}u}$ takes values in $V \otimes {\mathcal{H}_s^\perp} \otimes {\mathcal{H}_s^\perp}$, and on the latter,   $\delta$ and $\delta_{\mathcal{H}_s^\perp}$ coincide. 

    Finally, by \eqref{eq:Pi_delta} and \eqref{eq:Heisenberg}, it follows that 
\begin{equs}                 \label{eq:H_Heisenberg}
    D_{\mathcal{H}_s^\perp} \delta_{\mathcal{H}_s^\perp}(u) = \Pi_{\mathcal{H}_s^\perp}u+ \delta_{\mathcal{H}_s^\perp} (\widehat{D_{\mathcal{H}_s^\perp} \Pi_{\mathcal{H}_s^\perp}u}).
\end{equs}
\end{remark}

\begin{lemma}          \label{lem:conditional_boundedness_L2}
    For $u \in \mathbb{D}^{1,2}(V \otimes \mathcal{H})$ we have with probability one
    \begin{equs}
        \E^B_s \|\delta_{\mathcal{H}_s^\perp}(u)\|^2_V \leq \E^B_s \left( \|\Pi_{\mathcal{H}_s^\perp} u\|^2_{V \otimes \mathcal{H}}+ \|\Pi_{\mathcal{H}_s^\perp \otimes \mathcal{H}_s^\perp} Du\|^2_{V \otimes \mathcal{H}\otimes \mathcal{H}} \right). 
    \end{equs}
\end{lemma}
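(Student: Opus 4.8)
The plan is to reproduce the classical $L^2$ estimate for the divergence, $\E\|\delta(u)\|^2\le \E\|u\|^2+\E\|Du\|^2$, but carried out conditionally on $\cF^B_s$ and with the projected operators $D_{\mathcal{H}_s^\perp},\delta_{\mathcal{H}_s^\perp}$ in place of $D,\delta$. By \cref{lem:orthogonal_density} the simple variables $\mathscr{S}_{ON}(V\otimes\mathcal{H})$ are dense in $\mathbb{D}^{1,2}(V\otimes\mathcal{H})$, and by \cref{lem:continuity_projection_Sobolev} combined with \eqref{est:delta} the operator $\delta_{\mathcal{H}_s^\perp}$ is continuous from $\mathbb{D}^{1,2}(V\otimes\mathcal{H})$ to $L_2(\Omega;V)$; hence it suffices to prove the asserted inequality for $u\in\mathscr{S}_{ON}(V\otimes\mathcal{H})$ (with the orthonormal basis $(e_i)$ of $\mathcal{H}$ taken to consist of eigenvectors of $\Pi_{\mathcal{H}_s^\perp}$, so that $\Pi_{\mathcal{H}_s^\perp}u$ is again a simple variable and $\delta_{\mathcal{H}_s^\perp}(u)=\delta(\Pi_{\mathcal{H}_s^\perp}u)\in\mathcal{S}(V)\subset\mathbb{D}^{1,2}(V)$), and then pass to the limit.

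For such $u$, I would first apply the conditional integration-by-parts formula \eqref{eq:conditional_IBP} with $F=\delta_{\mathcal{H}_s^\perp}(u)$ to get $\E^B_s\|\delta_{\mathcal{H}_s^\perp}(u)\|^2_V=\E^B_s\langle D_{\mathcal{H}_s^\perp}\delta_{\mathcal{H}_s^\perp}(u),u\rangle_{V\otimes\mathcal{H}}$, and then substitute the conditional Heisenberg relation \eqref{eq:H_Heisenberg}, $D_{\mathcal{H}_s^\perp}\delta_{\mathcal{H}_s^\perp}(u)=\Pi_{\mathcal{H}_s^\perp}u+\delta_{\mathcal{H}_s^\perp}(\widehat{D_{\mathcal{H}_s^\perp}\Pi_{\mathcal{H}_s^\perp}u})$. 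Since $\Pi_{\mathcal{H}_s^\perp}$ is a self-adjoint idempotent, the first term contributes exactly $\E^B_s\|\Pi_{\mathcal{H}_s^\perp}u\|^2_{V\otimes\mathcal{H}}$. To the second term I would apply \eqref{eq:conditional_IBP} once more — this time with the target Hilbert space $V\otimes\mathcal{H}$ and $F$ the $(V\otimes\mathcal{H})$-valued variable $u$, so that $\delta_{\mathcal{H}_s^\perp}$ acts on the trailing $\mathcal{H}$-copy — obtaining $\E^B_s\langle D_{\mathcal{H}_s^\perp}u,\widehat{D_{\mathcal{H}_s^\perp}\Pi_{\mathcal{H}_s^\perp}u}\rangle_{V\otimes\mathcal{H}\otimes\mathcal{H}}$.

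What remains is slot bookkeeping: writing $\Pi:=\Pi_{\mathcal{H}_s^\perp}$ and using that $D$ commutes with a deterministic operator applied to an already-existing tensor factor, one has $D_{\mathcal{H}_s^\perp}u=(\mathrm{id}_V\otimes\mathrm{id}_{\mathcal{H}}\otimes\Pi)Du$ and $D_{\mathcal{H}_s^\perp}\Pi_{\mathcal{H}_s^\perp}u=(\mathrm{id}_V\otimes\Pi\otimes\Pi)Du$; since $\widehat{\cdot}$ swaps the two $\mathcal{H}$-factors and commutes with the symmetric operator $\Pi\otimes\Pi$, repeated use of $\Pi=\Pi^*=\Pi^2$ collapses the inner product to $\langle a,\widehat a\rangle$ with $a:=\Pi_{\mathcal{H}_s^\perp\otimes\mathcal{H}_s^\perp}Du$. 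Because $\widehat{\cdot}$ is an isometry, Cauchy–Schwarz gives $\langle a,\widehat a\rangle\le\|a\|^2_{V\otimes\mathcal{H}\otimes\mathcal{H}}$, and monotonicity of $\E^B_s$ yields $\E^B_s\langle a,\widehat a\rangle\le\E^B_s\|a\|^2$; collecting the two contributions proves the inequality for $u\in\mathscr{S}_{ON}$. Finally, for general $u\in\mathbb{D}^{1,2}(V\otimes\mathcal{H})$ choose $u_n\in\mathscr{S}_{ON}(V\otimes\mathcal{H})$ with $u_n\to u$ in $\mathbb{D}^{1,2}$; then $\delta_{\mathcal{H}_s^\perp}(u_n)\to\delta_{\mathcal{H}_s^\perp}(u)$ in $L_2(\Omega;V)$, $\Pi_{\mathcal{H}_s^\perp}u_n\to\Pi_{\mathcal{H}_s^\perp}u$ and $\Pi_{\mathcal{H}_s^\perp\otimes\mathcal{H}_s^\perp}Du_n\to\Pi_{\mathcal{H}_s^\perp\otimes\mathcal{H}_s^\perp}Du$ in the respective $L_2$ spaces, hence all three squared norms converge in $L_1(\Omega)$, their conditional expectations converge in $L_1(\Omega)$, and along an a.s.\ convergent subsequence the inequality persists, giving it a.s.

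The main obstacle is not conceptual but the careful tensor-slot bookkeeping in the second integration by parts: ensuring $\delta_{\mathcal{H}_s^\perp}$ there acts on the correct $\mathcal{H}$-copy after the $\widehat{\cdot}$-swap, that $\widehat{\cdot}$ legitimately intertwines with the symmetric projections, and — a minor but genuine point — that $\delta_{\mathcal{H}_s^\perp}(u)$ really lies in $\mathbb{D}^{1,2}(V)$ so the first application of \eqref{eq:conditional_IBP} is licit, which is why one reduces to $\mathscr{S}_{ON}$ with the eigenbasis of $\Pi_{\mathcal{H}_s^\perp}$ before computing.
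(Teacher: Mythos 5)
Your proposal is correct and takes essentially the same route as the paper: a first conditional integration by parts with $F=\delta_{\mathcal{H}_s^\perp}(u)$, the Heisenberg commutation relation, a second conditional integration by parts, and Cauchy--Schwarz combined with the isometry of $\widehat{\,\cdot\,}$, then density. The only cosmetic differences are that you invoke the conditional Heisenberg relation \eqref{eq:H_Heisenberg} directly rather than \eqref{eq:Heisenberg}, and you spell out the tensor-slot reduction $\langle D_{\mathcal{H}_s^\perp}u,\widehat{D_{\mathcal{H}_s^\perp}\Pi_{\mathcal{H}_s^\perp}u}\rangle=\langle a,\widehat a\rangle$ with $a=\Pi_{\mathcal{H}_s^\perp\otimes\mathcal{H}_s^\perp}Du$, which is exactly the step the paper's final inequality implicitly uses.
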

\begin{proof}
     Let $(e_i)_{i=1}^\infty$ be an orthonormal basis of $\mathcal{H}$ consisting of eigenvectors of $\Pi_{\mathcal{H}_s^\perp}$, let $(q_k)_{k=1}^\infty$ be an orthonormal basis of $V$ and let  $u$ be of the form 
    \begin{equs}   \label{eq:simple_u}
        u= \sum_{i,k= 1}^n F_{ik} q_k \otimes  e_i, \qquad F_{ik} \in \mathcal{S}(\mR). 
    \end{equs}
    By using \eqref{eq:conditional_IBP} (with $F=\delta_{\mathcal{H}_s^\perp}(u)$), and \eqref{eq:Heisenberg}, we get 
    \begin{equs}
         \E^B_s \|\delta_{\mathcal{H}_s^\perp}(u)\|^2_V& = \E^B_s \big\langle \Pi_{\mathcal{H}_s^\perp} u,  D \delta( \Pi_{\mathcal{H}_s^\perp} u) \big\rangle_{V \otimes \mathcal{H}}
         \\
         &=  \E^B_s \left( \|\Pi_{\mathcal{H}_s^\perp} u\|_{V \otimes \mathcal{H}}^2 +  \big\langle \Pi_{\mathcal{H}_s^\perp} u, \delta( \widehat{ D\Pi_{\mathcal{H}_s^\perp} u} ) \big\rangle_{V \otimes \mathcal{H}}\right). 
    \end{equs}
    Notice that $\widehat{ D\Pi_{\mathcal{H}_s^\perp} u} \in V \otimes \mathcal{H} \otimes \mathcal{H}_s^{\perp}$, so that we can use \eqref{eq:conditional_IBP} to get 
     \begin{equs}
         \E^B_s \|\delta_{\mathcal{H}_s^\perp}(u)\|^2_V& =  \E^B_s \left( \|\Pi_{\mathcal{H}_s^\perp} u\|_{V \otimes \mathcal{H}}^2 +  \big\langle D_{\mathcal{H}_s^\perp}\Pi_{\mathcal{H}_s^\perp} u, \widehat{ D\Pi_{\mathcal{H}_s^\perp} u}  \big\rangle_{V \otimes \mathcal{H} \otimes \mathcal{H} }\right) 
         \\
         &= \E^B_s \left( \|\Pi_{\mathcal{H}_s^\perp} u\|_{V \otimes \mathcal{H}}^2 +  \big\langle \Pi_{\mathcal{H}_s^\perp \otimes \mathcal{H}_s^\perp} Du, \widehat{ D\Pi_{\mathcal{H}_s^\perp} u}  \big\rangle_{V \otimes \mathcal{H} \otimes \mathcal{H} }\right)
         \\
         & \leq \E^B_s \left( \|\Pi_{\mathcal{H}_s^\perp} u\|^2_{V \otimes \mathcal{H}}+ \|\Pi_{\mathcal{H}_s^\perp \otimes \mathcal{H}_s^\perp} Du\|^2_{V \otimes \mathcal{H}\otimes \mathcal{H}} \right). 
    \end{equs}
    This shows the desired inequality for $u$ of the from \eqref{eq:simple_u}. For general $u \in \mathbb{D}^{1,2}(V \otimes \mathcal{H})$ the result follows by a standard approximation argument upon using  \cref{lem:orthogonal_density}, \cref{lem:continuity_projection_Sobolev} and the continuity of conditional expectation on $L_1(\Omega)$. 
\end{proof}
The proof of the next lemma is essentially a repetition of the proof of \cite[Proposition 3.9]{Malliavin_Hairer}.
\begin{lemma}       \label{lem:conditional_continuity_delta_1}
    For every $p \in [1, \infty)$,  there exists $k=k(p) \in \mathbb{N}$ and a constant $C=C(p)$ such that for all separable Hilbert spaces $V$ and all  $u \in \mathbb{D}^{k,p}(V \otimes \mathcal{H})$ we have with probability one
    \begin{equs}
        \E^B_s \| \delta_{\mathcal{H}_s^\perp}(u)\|^p_V \leq C \sum_{l=0}^{k}  \left(\E^B_s \| D^l_{\mathcal{H}_s^\perp} \Pi_{\mathcal{H}_s^\perp}u \|_{V \otimes \mathcal{H}^{ \otimes (l+1)}}^{2p} \right)^{1/2}.
    \end{equs}
\end{lemma}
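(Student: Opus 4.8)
The plan is to mimic the classical Meyer-type estimate for the continuity of the divergence operator $\delta$ on $\mathbb{D}^{k,p}$, but carried out conditionally on $\cF^B_s$ and with $\delta$ replaced by $\delta_{\mathcal{H}_s^\perp}$. The starting point is \cref{lem:conditional_boundedness_L2}, which handles the case $p=2$ with $k=1$: for $u\in\mathbb{D}^{1,2}(V\otimes\mathcal{H})$ one has
\begin{equs}
    \E^B_s\|\delta_{\mathcal{H}_s^\perp}(u)\|_V^2 \leq \E^B_s\big(\|\Pi_{\mathcal{H}_s^\perp}u\|_{V\otimes\mathcal{H}}^2+\|\Pi_{\mathcal{H}_s^\perp\otimes\mathcal{H}_s^\perp}Du\|_{V\otimes\mathcal{H}\otimes\mathcal{H}}^2\big).
\end{equs}
To bootstrap from $p=2$ to general $p$, I would apply this $L^2$ bound not to $u$ itself but to $u\otimes(\text{something built out of }\delta_{\mathcal{H}_s^\perp}(u))$, or — following the route in \cite[Proposition~3.9]{Malliavin_Hairer} more literally — use the elementary inequality that for a nonnegative random variable and an exponent $m$, controlling the conditional $L^{2}$ norm of $m$-th powers controls the conditional $L^{2m}$ norm. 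Concretely, writing $R=\delta_{\mathcal{H}_s^\perp}(u)$ and assuming first $V=\R$ for clarity, one uses $R^{m}=\delta_{\mathcal{H}_s^\perp}(u)\,R^{m-1}$ together with the product/Heisenberg-type identity \eqref{eq:H_Heisenberg} and the conditional integration by parts formula \eqref{eq:conditional_IBP} to re-express $\E^B_s|R|^{2m}$ as a conditional expectation of an inner product involving $D_{\mathcal{H}_s^\perp}(R^{m-1})$ against $\Pi_{\mathcal{H}_s^\perp}u$. Expanding $D_{\mathcal{H}_s^\perp}(R^{m-1})$ by the chain rule \eqref{eq:chain} produces lower powers of $R$ multiplied by $D_{\mathcal{H}_s^\perp}R=D_{\mathcal{H}_s^\perp}\delta_{\mathcal{H}_s^\perp}(u)$, and the latter is again controlled via \eqref{eq:H_Heisenberg} by $\Pi_{\mathcal{H}_s^\perp}u$ and $\delta_{\mathcal{H}_s^\perp}$ of first Malliavin derivatives of $u$. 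Iterating this $m=1,2,\ldots$ up to $m=2^{j}$ with $2^{j}\geq p$, and using conditional Hölder's inequality at each stage to absorb the top power of $|R|$ into the left-hand side, one ends up bounding $\E^B_s|R|^{2m}$ by a constant times a sum over $l\le k$ of $(\E^B_s\|D^l_{\mathcal{H}_s^\perp}\Pi_{\mathcal{H}_s^\perp}u\|^{2p})^{1/2}$-type terms, where $k=k(p)$ is the number of derivatives consumed by the iteration (roughly $k\sim\log_2 p$, although for the statement any finite $k(p)$ suffices).

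The reduction to simple $u$ of the form $\sum_{i,k}F_{ik}q_k\otimes e_i$ with $F_{ik}\in\mathscr{S}_{ON}$, $(e_i)$ eigenvectors of $\Pi_{\mathcal{H}_s^\perp}$, is as in the proof of \cref{lem:conditional_boundedness_L2}: once the inequality is established for such $u$, the general case $u\in\mathbb{D}^{k,p}(V\otimes\mathcal{H})$ follows by approximation using \cref{lem:orthogonal_density}, the continuity of $\Pi_{\mathcal{H}_s^\perp}$ and $D_{\mathcal{H}_s^\perp}$ on the relevant Sobolev spaces (\cref{lem:continuity_projection_Sobolev} and \eqref{est:delta} applied to $\delta_{\mathcal{H}_s^\perp}$), and the $L^1(\Omega)$-continuity of $\E^B_s$; since the right-hand side norms are conditional, one passes to a subsequence converging a.s. For the passage from scalar $V=\R$ to a general separable Hilbert space $V$, I would simply carry the $V$-valued norms through the computation — every step above is an identity or inequality between $V$-valued (or $V\otimes\mathcal{H}^{\otimes l}$-valued) quantities — noting that the constant $C(p)$ does not depend on $V$ because the combinatorics of the iteration and the Heisenberg relation \eqref{eq:H_Heisenberg} are insensitive to the target space.

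The main obstacle I expect is bookkeeping rather than a genuinely new idea: keeping track, through the iteration, of exactly which mixed projections $\Pi_{\mathcal{H}_s^\perp\otimes\cdots\otimes\mathcal{H}_s^\perp}D^l u$ appear and verifying that they are all dominated by $D^l_{\mathcal{H}_s^\perp}\Pi_{\mathcal{H}_s^\perp}u$ in $V\otimes\mathcal{H}^{\otimes(l+1)}$ — this is where the precise form of \eqref{eq:Pi_delta} and \eqref{eq:H_Heisenberg}, i.e. the fact that $D_{\mathcal{H}_s^\perp}\delta_{\mathcal{H}_s^\perp}=\Pi_{\mathcal{H}_s^\perp}+\delta_{\mathcal{H}_s^\perp}\widehat{D_{\mathcal{H}_s^\perp}\Pi_{\mathcal{H}_s^\perp}(\cdot)}$ closes the recursion — is essential. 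A secondary subtlety is that conditional Hölder inequalities only allow absorbing a power of $|R|$ into the left side when that power is finite a.s., which for simple $u$ is automatic, so no additional integrability hypothesis is needed at the reduced level; the final quantitative claim then survives the approximation step because $k(p)$ and $C(p)$ were fixed before taking limits.
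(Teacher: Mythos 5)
Your proposal is correct and follows essentially the same route as the paper's proof: both start from \cref{lem:conditional_boundedness_L2} as the $p\in[1,2]$ base case, and both iterate by applying the chain rule, the conditional integration-by-parts formula \eqref{eq:conditional_IBP}, and the Heisenberg-type identity \eqref{eq:H_Heisenberg}/\eqref{eq:Pi_delta} to express $\E^B_s\|\delta_{\mathcal{H}_s^\perp}(u)\|^p$ in terms of $\Pi_{\mathcal{H}_s^\perp}u$ and $\delta_{\mathcal{H}_s^\perp}(\widehat{D_{\mathcal{H}_s^\perp}\Pi_{\mathcal{H}_s^\perp}u})$, absorbing the top power of $\|\delta_{\mathcal{H}_s^\perp}(u)\|$ by Young/H\"older and then invoking the induction hypothesis at a strictly smaller exponent applied to one extra derivative, after first reducing to $u\in\mathcal{S}_{ON}$. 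The only difference is bookkeeping: the paper organizes the induction over the ranges $p\in[1,2(3/2)^n]$ rather than over integer powers of $R$, but the mechanism is the same.
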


\begin{proof}
     The statement is clearly true for $p \in [1,2]$ by virtue of by \cref{lem:conditional_boundedness_L2}. Suppose now that it is true for all $p \in [1, 2 (3/2)^n]$ for some $n \in \{0,1,2,\dots\}$ and let us show that it is true also for $n+1$. Clearly, it suffices to show the result for $u \in \mathcal{S}_{ON}(V\otimes \mathcal{H})$. So let $p \in [2 (3/2)^n, 2 (3/2)^{n+1}]$.  By the chain rule, \eqref{eq:conditional_IBP}, and \eqref{eq:Heisenberg}, we have 
    \begin{equs}
        \E^B_s \| \delta_{\mathcal{H}_s^\perp}(u)\|_V^p & = (p-1) \E^B_s \big( \| \delta_{\mathcal{H}_s^\perp}(u)\|_V^{p-2}\langle D \delta_{\mathcal{H}_s^\perp}(u), \Pi_{\mathcal{H}_s^\perp}u \rangle_{V \otimes \mathcal{H}} \big) 
        \\
        & = (p-1) \E^B_s \Big(  \| \delta_{\mathcal{H}_s^\perp}(u)\|_V^{p-2} ( \|\Pi_{\mathcal{H}_s^\perp}u\|^2_{V \otimes \mathcal{H}}+ \langle \delta ( \widehat{D\Pi_{\mathcal{H}_s^\perp}u}), \Pi_{\mathcal{H}_s^\perp}u \rangle_{V \otimes \mathcal{H}} \Big). 
    \end{equs}
    By \eqref{eq:explicit_formula_delta}, one can easily see that 
    \begin{equs}
        \Pi_{\mathcal{H}_s^\perp}\delta ( \widehat{D\Pi_{\mathcal{H}_s^\perp}u}) = \delta ( \widehat{D_{\mathcal{H}_s^\perp}\Pi_{\mathcal{H}_s^\perp}u}).
    \end{equs}
    Using the above together with the Young and the Cauchy-Schwartz inequalities and rearranging, gives
    \begin{equs}
        \E^B_s \| \delta_{\mathcal{H}_s^\perp}(u)\|_V^p &  \lesssim  \E^B_s \left( \|\Pi_{\mathcal{H}_s^\perp}u\|^p_{V \otimes \mathcal{H}}+ 
        \|\delta ( \widehat{D_{\mathcal{H}_s^\perp}\Pi_{\mathcal{H}_s^\perp}u}) \|_{V \otimes \mathcal{H}} ^{p/2}\|\Pi_{\mathcal{H}_s^\perp}u \|_{V \otimes \mathcal{H}}^{p/2}
         \right).    \label{eq:delta_before_Y_H}
    \end{equs}
    By H\"older's and Young's inequalities we have 
    \begin{equs}
        \E^B_s \big( \|\delta ( \widehat{D_{\mathcal{H}_s^\perp}\Pi_{\mathcal{H}_s^\perp}u}) &\|_{V \otimes \mathcal{H}} ^{p/2}\|\Pi_{\mathcal{H}_s^\perp}u \|_{V \otimes \mathcal{H}}^{p/2}\big) \\& \leq  \big(\E^B_s \|\delta ( \widehat{D_{\mathcal{H}_s^\perp}\Pi_{\mathcal{H}_s^\perp}u}) \|_{V \otimes \mathcal{H}} ^{2p/3}\big)^{3/4} \big(\E^B_s\|\Pi_{\mathcal{H}_s^\perp}u \|_{V \otimes \mathcal{H}}^{2p} \big)^{1/4}
        \\
        & \lesssim  \big(\E^B_s \|\delta ( \widehat{D_{\mathcal{H}_s^\perp}\Pi_{\mathcal{H}_s^\perp}u}) \|_{V \otimes \mathcal{H}} ^{2p/3}\big)^{3/2}+  \big(\E^B_s\|\Pi_{\mathcal{H}_s^\perp}u \|_{V \otimes \mathcal{H}}^{2p} \big)^{1/2}.
    \end{equs}
     Combining this with \eqref{eq:delta_before_Y_H}, we get 
    \begin{equs}     \label{eq:delt_comb}
         \E^B_s \| \delta_{\mathcal{H}_s^\perp}(u)\|_V^p\lesssim  \big(\E^B_s \|\delta ( \widehat{D_{\mathcal{H}_s^\perp}\Pi_{\mathcal{H}_s^\perp}u}) \|_{V \otimes \mathcal{H}} ^{2p/3}\big)^{3/2}+   \big(\E^B_s\|\Pi_{\mathcal{H}_s^\perp}u \|_{V \otimes \mathcal{H}}^{2p} \big)^{1/2}.
    \end{equs}
    Next, notice that $ \widehat{D_{\mathcal{H}_s^\perp}\Pi_{\mathcal{H}_s^\perp}u} \in V \otimes \mathcal{H}_s^{\perp} \otimes \mathcal{H}_s^{\perp}$, which means that $\delta ( \widehat{D_{\mathcal{H}_s^\perp}\Pi_{\mathcal{H}_s^\perp}u}) =\delta_{\mathcal{H}_s^\perp} ( \widehat{D_{\mathcal{H}_s^\perp}\Pi_{\mathcal{H}_s^\perp}u})$. Therefore,  by the induction hypothesis, there exists $k' \in \mathbb{N}$  such that 
    \begin{equs}
        \E^B_s \|\delta ( \widehat{D_{\mathcal{H}_s^\perp}\Pi_{\mathcal{H}_s^\perp}u}) \|_{V \otimes \mathcal{H}} ^{2p/3}  & \lesssim  \sum_{l=0}^{k'}  \left(\E^B_s \| D^l_{\mathcal{H}_s^\perp} \widehat{D_{\mathcal{H}_s^\perp}\Pi_{\mathcal{H}_s^\perp}u}\|_{V \otimes \mathcal{H}^{ \otimes (l+2)}}^{4p/3} \right)^{1/2}
        \\
        &=  \sum_{l=0}^{k'+1}  \left(\E^B_s \| D^l_{\mathcal{H}_s^\perp} \Pi_{\mathcal{H}_s^\perp}u\|_{V \otimes \mathcal{H}^{ \otimes (l+1)}}^{4p/3} \right)^{1/2}.
    \end{equs}
    Combining this with \eqref{eq:delt_comb}, we get 
    \begin{equs}
         \E^B_s \| \delta_{\mathcal{H}_s^\perp}(u)\|_V^p& \lesssim  \sum_{l=0}^{k'+1}  \left(\E^B_s \| D^l_{\mathcal{H}_s^\perp} \Pi_{\mathcal{H}_s^\perp}u\|_{V \otimes \mathcal{H}^{ \otimes (l+1)}}^{4p/3} \right)^{3/4}+ \big(\E^B_s\|\Pi_{\mathcal{H}_s^\perp}u \|_{V \otimes \mathcal{H}}^{2p} \big)^{1/2}
         \\
         & \lesssim \sum_{l=0}^{k'+1}  \left(\E^B_s \| D^l_{\mathcal{H}_s^\perp} \Pi_{\mathcal{H}_s^\perp}u\|_{V \otimes \mathcal{H}^{ \otimes (l+1)}}^{2p} \right)^{1/2},
    \end{equs}
which finishes the proof. 
\end{proof}
Next, for $u \in \mathbb{D}^{m,p}(V)$, let us define 
\begin{equs}
    \| u\|_{\mathbb{D}^{m,p}_s(V)} = \left( \sum_{l=0}^m \E^B_s \| D^l_{\mathcal{H}_s^\perp} u \|_{V \otimes \mathcal{H}^{ \otimes l}}^p \right)^{1/p}.
\end{equs}
We have the following continuity property for the operator $\delta_{\cH_s^\perp}$. 
\begin{lemma}\label{lem:continuity-delta-high}
    For any $p \in [1, \infty)$ and $m \in \{0, 1, 2, \dots\}$, there exists $n=n(p,m) \in \mathbb{N}$, $n\geq m$, and a constant $C=(p,m)$, such that for any separable Hilbert space $V$, any $u \in \mathbb{D}^{m,p}(V \otimes \mathcal{H})$, and any $s \in [0,1]$,  we have with probability one
    \begin{equ}
         \| \delta_{\mathcal{H}_s^\perp}(u) \|_{\mathbb{D}^{m,p}_s(V)} \leq C \| \Pi_{\mathcal{H}_s^\perp}u\|_{\mathbb{D}^{n,2p}_s(V\otimes \mathcal{H})}.
    \end{equ}
\end{lemma}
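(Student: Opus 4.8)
The proof will not require a separate induction on $m$: the idea is to peel derivatives off $\delta_{\mathcal{H}_s^\perp}(u)$ using the conditional Heisenberg relation \eqref{eq:H_Heisenberg}, reducing everything to the zero-th order estimate \cref{lem:conditional_continuity_delta_1} applied once. By \cref{lem:orthogonal_density} the elements of $\mathcal{S}_{ON}(V\otimes\mathcal{H})$ are dense in $\mathbb{D}^{n,2p}(V\otimes\mathcal{H})$, and both sides of the asserted inequality are continuous under $\mathbb{D}^{n,2p}$-convergence: for the right-hand side combine \cref{lem:continuity_projection_Sobolev} with the $L_1(\Omega)$-continuity of $\E^B_s$ (passing to an a.s.\ convergent subsequence), while for the left-hand side use \eqref{est:delta} together with \cref{lem:continuity_projection_Sobolev} to see that $\delta_{\mathcal{H}_s^\perp}=\delta\circ\Pi_{\mathcal{H}_s^\perp}$ maps $\mathbb{D}^{n,2p}(V\otimes\mathcal{H})$ continuously into $\mathbb{D}^{m,p}(V)$ (take $n\geq m+1$), so that $D^l_{\mathcal{H}_s^\perp}\delta_{\mathcal{H}_s^\perp}(\cdot)$ converges in $L_p(\Omega)$, and again pass to a subsequence. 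If $\|\Pi_{\mathcal{H}_s^\perp}u\|_{\mathbb{D}^{n,2p}_s}=\infty$ a.s.\ there is nothing to prove, so we may assume $u\in\mathcal{S}_{ON}(V\otimes\mathcal{H})$.

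\emph{Iterating the commutation relation.} Set $w_0:=\Pi_{\mathcal{H}_s^\perp}u$ and, recursively, $w_{k+1}:=\widehat{D_{\mathcal{H}_s^\perp}w_k}$, where at each level $\widehat{\cdot}$ transposes the last two $\mathcal{H}$-slots, as in \eqref{eq:Heisenberg}. One verifies by induction that each $w_k$ again belongs to (the analogue over $V\otimes\mathcal{H}^{\otimes k}$ of) $\mathcal{S}_{ON}$, takes values in $V\otimes(\mathcal{H}_s^\perp)^{\otimes(k+1)}$ so that $\Pi_{\mathcal{H}_s^\perp}w_k=w_k$, and — since each $\widehat{\cdot}$ is a linear isometry acting on slots disjoint from those created by subsequent applications of $D_{\mathcal{H}_s^\perp}$ — that $\|D^j_{\mathcal{H}_s^\perp}w_k\|=\|D^{j+k}_{\mathcal{H}_s^\perp}\Pi_{\mathcal{H}_s^\perp}u\|$ for all $j\geq0$. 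Applying \eqref{eq:H_Heisenberg} to the cylindrical $w_k$ (using $\Pi_{\mathcal{H}_s^\perp}w_k=w_k$) gives $D_{\mathcal{H}_s^\perp}\delta_{\mathcal{H}_s^\perp}(w_k)=w_k+\delta_{\mathcal{H}_s^\perp}(w_{k+1})$, and iterating this identity yields, for every $0\leq l\leq m$,
\begin{equation*}
D^l_{\mathcal{H}_s^\perp}\delta_{\mathcal{H}_s^\perp}(u)=\sum_{j=0}^{l-1}D^{l-1-j}_{\mathcal{H}_s^\perp}w_j+\delta_{\mathcal{H}_s^\perp}(w_l),
\end{equation*}
where $\delta_{\mathcal{H}_s^\perp}(u)=\delta_{\mathcal{H}_s^\perp}(w_0)$ and the empty sum ($l=0$) is zero.

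\emph{Estimating the summands.} For the terms $D^{l-1-j}_{\mathcal{H}_s^\perp}w_j$ the isometry identity gives $\|D^{l-1-j}_{\mathcal{H}_s^\perp}w_j\|=\|D^{l-1}_{\mathcal{H}_s^\perp}\Pi_{\mathcal{H}_s^\perp}u\|$, so by conditional Jensen $\E^B_s\|D^{l-1-j}_{\mathcal{H}_s^\perp}w_j\|^p\leq(\E^B_s\|D^{l-1}_{\mathcal{H}_s^\perp}\Pi_{\mathcal{H}_s^\perp}u\|^{2p})^{1/2}\leq\|\Pi_{\mathcal{H}_s^\perp}u\|^p_{\mathbb{D}^{m,2p}_s}$ since $l-1<m$. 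For $\delta_{\mathcal{H}_s^\perp}(w_l)$, regard $w_l$ as a random variable with values in $\widetilde V\otimes\mathcal{H}$ with $\widetilde V:=V\otimes\mathcal{H}^{\otimes l}$, and apply \cref{lem:conditional_continuity_delta_1} with this separable Hilbert space $\widetilde V$; using $\Pi_{\mathcal{H}_s^\perp}w_l=w_l$ and the isometry identity once more,
\begin{equation*}
\E^B_s\|\delta_{\mathcal{H}_s^\perp}(w_l)\|^p_{\widetilde V}\lesssim\sum_{i=0}^{k(p)}\Big(\E^B_s\|D^{i+l}_{\mathcal{H}_s^\perp}\Pi_{\mathcal{H}_s^\perp}u\|^{2p}\Big)^{1/2}\leq(k(p)+1)\,\|\Pi_{\mathcal{H}_s^\perp}u\|^p_{\mathbb{D}^{k(p)+m,2p}_s}.
\end{equation*}
Summing over $0\leq l\leq m$ and taking $p$-th roots gives the claim with $n:=k(p)+m\geq m$ and $C=C(p,m)$.

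\emph{Main obstacle.} The only delicate point is bookkeeping: identifying precisely which tensor slots the projections $\Pi_{\mathcal{H}_s^\perp}$ and the transpositions $\widehat{\cdot}$ act on at each level of the recursion, and confirming that each $\widehat{\cdot}$ is a linear isometry commuting with the subsequent $D_{\mathcal{H}_s^\perp}$, so that it leaves all the conditional Sobolev norms $\|\cdot\|_{\mathbb{D}^{m,p}_s}$ above invariant. The density and subsequence arguments of the first step are routine.
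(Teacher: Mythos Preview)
Your proof is correct and follows essentially the same approach as the paper: both iterate the conditional Heisenberg identity \eqref{eq:H_Heisenberg} to reduce to the base case of \cref{lem:conditional_continuity_delta_1}. The paper organizes this as an induction on $m$ (applying the level-$m$ bound to $\widehat{D_{\mathcal{H}_s^\perp}\Pi_{\mathcal{H}_s^\perp}u}$), whereas you unroll the recursion into the explicit decomposition $D^l_{\mathcal{H}_s^\perp}\delta_{\mathcal{H}_s^\perp}(u)=\sum_{j=0}^{l-1}D^{l-1-j}_{\mathcal{H}_s^\perp}w_j+\delta_{\mathcal{H}_s^\perp}(w_l)$ and invoke \cref{lem:conditional_continuity_delta_1} only once at the end; both routes yield $n=k(p)+m$.
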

\begin{proof}
Clearly it suffices to show the inequality for $u \in \mathcal{S}_{ON}(V \otimes \mathcal{H}). $
    The statement is true for $m=0$, by virtue of  \cref{lem:conditional_continuity_delta_1}. Suppose now that it holds for some $m \in \{0, 1, 2,  \dots\}$ and let us show that it also holds for $m+1$. 
    By using \eqref{eq:H_Heisenberg}, we see that  
    \begin{equs}
        \E_s \| D^{m+1}_{\mathcal{H}_s^\perp}\delta_{\mathcal{H}_s^\perp}(u) \|_{V \otimes \mathcal{H}^{ \otimes (m+1)}}^p \lesssim \E_s \|D^m_{\mathcal{H}_s^\perp} \Pi_ {\mathcal{H}_s^\perp}u \|_{V \otimes \mathcal{H}^{ \otimes (m+1)}}^p + \E_s \|D^m_{\mathcal{H}_s^\perp} \delta_{{\mathcal{H}_s^\perp}}(\widehat{D_{\mathcal{H}_s^\perp}\Pi_{\mathcal{H}_s^\perp}u} ) \|_{V \otimes \mathcal{H}^{ \otimes (m+1)}}^p,
    \end{equs}
    and we only need to estimate the second term. By the induction hypothesis, we have for some $n' \in \mathbb{N}$
    \begin{equs}
      \E_s \|D^m_{\mathcal{H}_s^\perp} \delta_{{\mathcal{H}_s^\perp}}(\widehat{D_{\mathcal{H}_s^\perp}\Pi_{\mathcal{H}_s^\perp}u} ) \|_{V \otimes \mathcal{H}^{ \otimes (m+1)}}^p & \lesssim \| \Pi_{\mathcal{H}_s^\perp} \widehat{D_{\mathcal{H}_s^\perp}\Pi_{\mathcal{H}_s^\perp}u} \|_{\mathbb{D}^{n',2p}_s(V\otimes \mathcal{H} \otimes \mathcal{H})}
      \\
      & = \|  \widehat{D_{\mathcal{H}_s^\perp}\Pi_{\mathcal{H}_s^\perp}u} \|_{\mathbb{D}^{n',2p}_s(V\otimes \mathcal{H} \otimes \mathcal{H})}
      \\
      & \lesssim  \|  \Pi_{\mathcal{H}_s^\perp}u \|_{\mathbb{D}^{n'+1,2p}_s(V\otimes \mathcal{H})},
    \end{equs}
    which finishes the proof. 
\end{proof}
Our next task is the following. We want to obtain a lower bound for $\|\Pi_{\cH_s^\perp} f\|_{\cH}$ for $f \in \mathcal{H}$. To achieve this, we first show  
that $\|\Pi_{\cH_s^\perp} f\|_{\cH}$ can be written as a 2D Young integral against the covariance function of the process $t \mapsto B_t-\E^B_sB_t$. Then, given that the covariance function of the latter satisfies certain properties, the desired bound will follow from \cite[Corollary~6.10]{CHLT15}. Let us now be more precise. 

Recall that the inner product on $\cH$ is given by means of the covariance function $Q(\cdot, \cdot)$ of the process $B^1$. Let us set 
\begin{equs}    \label{eq:def_tilde_bar}
    \widetilde{B}^s_t:= B_t-\E^B_s B_t= \int_s^t K(t,r) \, dW_r, \qquad \overline{B}^s_t := \E_s^B B_t = \int_0^s K(t, r) \, dW_r 
\end{equs}
and let us denote by $\widetilde{B}^{s,1}$ and $\overline{B}^{s,1}$ the first of the $d_0$ components of $\widetilde{B}^s$ and $\overline{B}^s$, respectively. Further, let us denote by $Q_s(\cdot, \cdot)$ the  covariance function of $\widetilde{B}^{s,1}$. To study the covariance $Q_s$, let us recall some basic properties of the kernel $K$. The next lemma is a direct consequence of the definition of $K$ (see, \eqref{eq:mandelbrot}). 
\begin{lemma}                                           \label{lem:some_properties_of_K}
    The following hold:
    \begin{enumerate}[(i)]
        \item For all $(s,t) \in [0, 1]_{<}^2$, we have $K(t,s) \geq 0$.                                  \label{item:positivity_of_K}          
        \item For $s \in [0, 1)$,  the map $(s, 1] \ni t \to K(t,s)$ is continuously differentiable and      \label{item:derivative_K}
\begin{equs}     \label{eq:derivative_K}
    \D_t K(t, s) =c_H(H-\frac{1}{2}) \left( \frac{t}{s}\right)^{H-1/2}(t-s)^{H-3/2}.
\end{equs}
In particular,  $(s, 1] \ni t \to K(t,s)$ is decreasing.
\end{enumerate}
\end{lemma}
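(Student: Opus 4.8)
Both statements will follow by a direct computation from the explicit expression \eqref{eq:def_K} for $K$, so the plan is essentially to carry out the relevant differentiation and sign-counting, being careful about integrability and about the degenerate endpoint $s=0$ (where \eqref{eq:def_K} is only formal and one uses the convention $K(t,0):=0$, making both claims trivial there; so below I take $0<s<t\le 1$).

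For the positivity claim \emph{(i)} the key point is that, since $H<1/2$, every factor occurring in the two summands inside the bracket of \eqref{eq:def_K} has a fixed sign. The first summand $(t/s)^{H-1/2}(t-s)^{H-1/2}$ is a product of strictly positive quantities. In the second summand one rewrites $-(H-1/2)=1/2-H>0$; the integrand $u\mapsto u^{H-3/2}(u-s)^{H-1/2}$ is strictly positive on $(s,t)$ and integrable at the endpoint $u=s$ because $H-1/2>-1$, so $\int_s^t u^{H-3/2}(u-s)^{H-1/2}\,du$ is a positive finite number, and multiplying by $s^{1/2-H}>0$ preserves positivity. Hence the bracket is a sum of two nonnegative terms, and since $c_H>0$ we conclude $K(t,s)\ge 0$.

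For \emph{(ii)} I would fix $s\in(0,1)$ and differentiate \eqref{eq:def_K} in $t$ on $(s,1]$ termwise. The first summand $t\mapsto c_H s^{1/2-H}t^{H-1/2}(t-s)^{H-1/2}$ is smooth on $(s,1]$, and the product rule gives
\begin{equs}
\D_t\big(t^{H-1/2}(t-s)^{H-1/2}\big)=(H-\tfrac12)t^{H-3/2}(t-s)^{H-1/2}+(H-\tfrac12)t^{H-1/2}(t-s)^{H-3/2}.
\end{equs}
For the integral summand, the integrand $u\mapsto u^{H-3/2}(u-s)^{H-1/2}$ is continuous on $(s,1]$, so by the fundamental theorem of calculus
\begin{equs}
\D_t\Big(\int_s^t u^{H-3/2}(u-s)^{H-1/2}\,du\Big)=t^{H-3/2}(t-s)^{H-1/2}.
\end{equs}
Assembling these with the prefactors from \eqref{eq:def_K}, the two terms proportional to $t^{H-3/2}(t-s)^{H-1/2}$ cancel exactly, leaving $\D_t K(t,s)=c_H(H-\tfrac12)(t/s)^{H-1/2}(t-s)^{H-3/2}$, which is \eqref{eq:derivative_K}. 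Since the right-hand side of \eqref{eq:derivative_K} is continuous on $(s,1]$, the map $t\mapsto K(t,s)$ is $C^1$ there; and since $H-1/2<0$ while $c_H>0$ and $(t/s)^{H-1/2},(t-s)^{H-3/2}>0$, we have $\D_t K(t,s)<0$, so $t\mapsto K(t,s)$ is (strictly) decreasing on $(s,1]$.

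There is essentially no genuine obstacle here; the only points requiring a word of care are the integrability of $u^{H-3/2}(u-s)^{H-1/2}$ near $u=s$ (which holds precisely because $H>-1/2$) and the interpretation of \eqref{eq:def_K} at $s=0$.
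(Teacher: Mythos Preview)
Your proposal is correct and is exactly the kind of direct computation the paper has in mind: the paper does not give a proof at all, stating only that the lemma ``is a direct consequence of the definition of $K$''. Your termwise differentiation and the cancellation you identify are precisely what one needs, and your sign analysis for part (i) using $H<1/2$ is the right observation.
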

Let us continue with some properties of $K$. It is well known (see \cite[pp.281-284]{NUalart_Malliavin}) 
\begin{equs}
    \sup_{t \in [0,1]} \| K(t, \cdot)\|_{L_2([0,1])}< \infty,
\end{equs}
 and that for all  $s, t \in [0,1]$ we have
\begin{equs}       \label{eq:representation_covariance_fBM}
    Q(s,t) = \int_0^{s\wedge t} K(t,r) K(s,r) \, dr. 
\end{equs}
Further, the operator $K^* : \mathcal{E} \to L_2([0,1])$ given by 
\begin{equs}
    (K^*h)(s)= K(1, s)h(s)+ \int_s^1( h(t)-h(s) )\D_t K(t,s) \, dt 
\end{equs}
satisfies 
\begin{equ}     \label{eq:K_on_characteristic}
    (K^*\bone_{[0,t]})(r)= K(t, r)\bone_{[0, t]}(r), \qquad r \in [0,1],
\end{equ}
for all $t \in [0,1]$. Consequently, $K^* : \mathcal{E} \to L_2([0,1])$
is an linear isometry, and therefore it extends to an isometry  between $\mathcal{H}$ and a closed subspace of $L_2([0,1])$. 
It turns out that the map $K^* : \cH \to L_2([0,1])$ is in fact surjective. Indeed, if $f$ belongs to the orthogonal complement of $K^*(\cH)$, then for all $t \in [0,1]$ we have
\begin{equs}
    0= \langle K^*\bone_{[0,t]}, f \rangle_{L_2([0,1])}= \int_0^t K(t,r)f(r) \, dr. 
\end{equs}
The right hand side of the above relation admits the following representation in terms of Liouville integrals (see \cite[Theorem 2.1]{ustunel} and references therein):
\begin{equs}
   0=  \int_0^t K(t, r) f(r) \, dr = I^{2H}_{0^+}\Big(  (\cdot)^{1/2-H} I^{1/2-H}_{0^+}\big( (\Box)^{H-1/2}f(\Box) \big) (\cdot) \Big) (t).
\end{equs}
Since for any $\alpha>0$, the Liouville integrals $I^\alpha_{0^+}: L_1([0,1]) \to L_1([0,1])$ are injective, we conclude that $f \equiv 0$, which shows that $K^*$ is 
surjective.  

\begin{remark}     \label{rem:same_filtration}
From this discussion, it follows that   
 $(B((K^*)^{-1}\bone_{[0,t]}))_{t \in [0,1]}$ is a standard Brownian motion, it generates the same filtration as $(B_t)_{t\in[0,1]}$, and it coincides with the underlying Wiener process  $(W_t)_{t \in [0,1]}$. 
\end{remark}

\begin{lemma}
  The process $(\widetilde{B}^{s,1}_t)_{t \in [s, 1]}$ has negatively correlated increments. Moreover, for any $S \in [s, 1]$, and any partition $D= \{t_i\}_{i=1}^n$ of $[s, S]$, the $n\times n$ matrix $(Q^D_{ij})_{i,j=1}^n$ with entries 
  \begin{equs}
      Q^D_{ij}= \E (\widetilde{B}^{s,1}_{t_{i-1}, t_i}\widetilde{B}^{s,1}_{t_{j-1}, t_j})
  \end{equs}
  is diagonally dominant. 
\end{lemma}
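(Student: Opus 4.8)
The plan is to derive both assertions from two sign properties of the covariance $Q_s(t,t'):=\E(\widetilde B^{s,1}_t\widetilde B^{s,1}_{t'})$ of $\widetilde B^{s,1}$, which I compare throughout to the covariance $Q(t,t'):=\E(B^1_tB^1_{t'})=\tfrac12(t^{2H}+t'^{2H}-|t-t'|^{2H})$ of the underlying fractional Brownian motion. Starting from the representation $\widetilde B^{s,1}_t=\int_s^tK(t,r)\,dW^1_r$ of \eqref{eq:def_tilde_bar}, It\^o's isometry together with \eqref{eq:representation_covariance_fBM} gives, for all $t,t'\in[s,1]$,
\begin{equs}     \label{eq:Qs_splitting}
    Q_s(t,t')=\int_s^{t\wedge t'}K(t,r)K(t',r)\,dr=Q(t,t')-\int_0^sK(t,r)K(t',r)\,dr.
\end{equs}
The two facts I will establish are: (a) $\widetilde B^{s,1}$ has negatively correlated increments; and (b) for every $S\in[s,1]$ the map $t\mapsto Q_s(t,S)$ is non-decreasing on $[s,S]$. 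Fact (a) is precisely the first assertion, and combined with (b) it yields the diagonal dominance.

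For (a), let $s\le u\le v\le u'\le v'\le1$. Since $\E(\widetilde B^{s,1}_{u,v}\widetilde B^{s,1}_{u',v'})$ is the rectangular increment of $Q_s$ over $[u,v]\times[u',v']$, \eqref{eq:Qs_splitting} writes it as the rectangular increment of $Q$ over the same rectangle, minus $\int_0^s\big(K(v,r)-K(u,r)\big)\big(K(v',r)-K(u',r)\big)\,dr$. The first quantity equals $\E(B^1_{u,v}B^1_{u',v'})$, which is $\le0$ because $x\mapsto x^{2H}$ is concave for $H<1/2$; this is the classical negative correlation of fractional Brownian increments, and is checked in one line from the formula for $Q$. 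In the second quantity, for a.e. $r\in[0,s]$ one has $r<u<v$ and $r<u'<v'$, so by \cref{lem:some_properties_of_K} (monotonicity of $t\mapsto K(t,r)$) both factors of the integrand are $\le0$, hence the integral is $\ge0$. Subtracting a nonnegative quantity from a nonpositive one gives $\E(\widetilde B^{s,1}_{u,v}\widetilde B^{s,1}_{u',v'})\le0$.

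For (b), \eqref{eq:Qs_splitting} shows $t\mapsto Q_s(t,S)$ is continuous on $[s,S]$, and for $t\in(s,S)$ one may differentiate under the integral sign (the integrand $\partial_tK(t,r)K(S,r)$ is dominated on $[0,s]$ by an integrable function uniformly for $t$ in compact subintervals of $(s,S]$; the only delicate region is $r$ near $0$, where $K(S,r)$ blows up like $r^{H-1/2}$ while $\partial_tK(t,r)$, by \eqref{eq:derivative_K}, vanishes like $r^{1/2-H}$, so their product stays bounded). This gives
\begin{equs}
    \partial_tQ_s(t,S)=\big(Ht^{2H-1}+H(S-t)^{2H-1}\big)-\int_0^s\partial_tK(t,r)K(S,r)\,dr.
\end{equs}
The bracketed term is strictly positive, while by \cref{lem:some_properties_of_K} we have $\partial_tK(t,r)<0$ (this is where $H<1/2$ is used) and $K(S,r)\ge0$, so the integral is $\le0$; therefore $\partial_tQ_s(t,S)\ge0$ and $Q_s(\cdot,S)$ is non-decreasing. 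In particular, since $\widetilde B^{s,1}_s=0$, for any $s\le a<b\le S$ we get $\E(\widetilde B^{s,1}_{a,b}\widetilde B^{s,1}_{s,S})=Q_s(b,S)-Q_s(a,S)\ge0$.

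Finally, write $s=t_0<t_1<\cdots<t_n=S$ for the partition $D$. Telescoping the increments, $\sum_{j=1}^nQ^D_{ij}=\E\big(\widetilde B^{s,1}_{t_{i-1},t_i}\sum_{j=1}^n\widetilde B^{s,1}_{t_{j-1},t_j}\big)=\E(\widetilde B^{s,1}_{t_{i-1},t_i}\widetilde B^{s,1}_{s,S})\ge0$ by the previous display. Since $Q^D_{ij}\le0$ for $j\ne i$ by (a) and $Q^D_{ii}=\E\big((\widetilde B^{s,1}_{t_{i-1},t_i})^2\big)\ge0$, this gives $\sum_{j\ne i}|Q^D_{ij}|=-\sum_{j\ne i}Q^D_{ij}=Q^D_{ii}-\sum_{j=1}^nQ^D_{ij}\le Q^D_{ii}=|Q^D_{ii}|$, i.e. $(Q^D_{ij})$ is diagonally dominant. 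I expect the main (though still mild) obstacle to be spotting the splitting \eqref{eq:Qs_splitting} and recognising that it reduces both claims to sign properties of the fractional Brownian covariance and of $\partial_tK$ that are already available in \cref{lem:some_properties_of_K}; the rest --- the differentiation under the integral near $r=0$ and the elementary matrix bookkeeping --- is routine.
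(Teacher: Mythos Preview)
Your proof is correct and follows essentially the same route as the paper: both use the splitting $Q_s(t,t')=Q(t,t')-\int_0^sK(t,r)K(t',r)\,dr$ and reduce the two claims to the sign properties of $K$ and $\partial_tK$ from \cref{lem:some_properties_of_K}. The only minor differences are that you verify negative correlation via rectangular increments directly (the paper uses the equivalent second-derivative criterion $\partial^2_{tt'}Q_s\le0$ from \cite{CHLT15}), and you spell out the diagonal-dominance computation that the paper defers to \cite[p.~205]{CHLT15}; this makes your version slightly more self-contained but is not a genuinely different argument.
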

\begin{proof}
We start by showing that $(\widetilde{B}^{s,1}_t)_{t \in [s, 1]}$ has negatively correlated increments. A sufficient condition is that $\D^2_{t t'}Q_s(t, t') \leq 0 $ for $t < t'$ (see \cite[p. 205]{CHLT15}). To see that this is the case, notice that 
\begin{equs}
    Q_s(t,t') &= 
\E(\widetilde{B}^{s,1}_t \widetilde{B}^{s,1}_{t'}) 
\\
&= \E \left(\int_s^t K(t,r) dW^1_r\int_s^{t'}K(t',r) \, dW^1_r \right)
\\
&= Q(t, t') - \int_0^s K(t,r) K(t',r) \, dr. \label{eq:rep_Q_s}
\end{equs}
It is straightforward that $\D^2_{t t'}Q(t, t') \leq 0 $ for $t < t'$. Moreover, from \eqref{eq:derivative_K}, it follows that 
\begin{equs}
    \int_0^s \D_tK(t,r) \D_{t'}K(t',r) \, dr > 0. 
\end{equs}
Consequently, we have that $\D^2_{t t'}Q_s(t, t') \leq 0 $ for $t < t'$. 

Next, since $(\widetilde{B}^{s,1}_t)_{t \in [s, 1]}$ is a  centered Gaussian process with negatively correlated increments,  starting at zero, a sufficient condition for the matrix $(Q^D_{ij})_{i,j=1}^n$ to be diagonally dominant is that $\D_t Q_s(t, t') \geq 0$ for $t < t'$ (see \cite[p. 205]{CHLT15}). The latter follows again by \eqref{eq:rep_Q_s} combined with the fact that  $\D_t Q(t, t') \geq 0$ for $t < t'$ and that  $\D_tK(t,r)<0$, $K(t',r)>0$ for $r \in [0, s]$. This finishes the proof.
\end{proof}

\begin{lemma}
    There exists a constant $C=C(H)$  such that for all $(s,t, u, v, T)\in [0,1]^5_{\leq}$ we have 
    \begin{equs}\label{eq:FV-condition}
        \E \big( \widetilde{B}^{s,1}_{t, T}\widetilde{B}^{s,1}_{u, v} \big) \leq C |v-u|^{2H}. 
    \end{equs}
\end{lemma}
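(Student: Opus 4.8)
The plan is to reduce \eqref{eq:FV-condition} to a variance bound for the single increment $\widetilde{B}^{s,1}_{u,v}$, using the negative correlation of increments of $\widetilde{B}^{s,1}$ just established. Since $s\le t\le u\le v\le T$, the interval $[u,v]$ sits inside $[t,T]$, so I would first split $\widetilde{B}^{s,1}_{t,T}=\widetilde{B}^{s,1}_{t,u}+\widetilde{B}^{s,1}_{u,v}+\widetilde{B}^{s,1}_{v,T}$, multiply by $\widetilde{B}^{s,1}_{u,v}$, and take expectations:
\[
\E\big(\widetilde{B}^{s,1}_{t,T}\widetilde{B}^{s,1}_{u,v}\big)=\E\big((\widetilde{B}^{s,1}_{u,v})^2\big)+\E\big(\widetilde{B}^{s,1}_{t,u}\widetilde{B}^{s,1}_{u,v}\big)+\E\big(\widetilde{B}^{s,1}_{v,T}\widetilde{B}^{s,1}_{u,v}\big).
\]
The last two terms pair increments over $[t,u]$, resp. $[v,T]$, against the increment over $[u,v]$; since these intervals have disjoint interiors, the previous lemma (concretely, the inequality $\D^2_{ab}Q_s(a,b)\le0$ for $a<b$, integrated over the corresponding rectangles) shows that both are $\le0$. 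This leaves $\E(\widetilde{B}^{s,1}_{t,T}\widetilde{B}^{s,1}_{u,v})\le\E((\widetilde{B}^{s,1}_{u,v})^2)$.

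For the remaining term I would invoke \eqref{eq:def_tilde_bar}, which gives $\widetilde{B}^{s,1}_{u,v}=B^1_{u,v}-\E^B_sB^1_{u,v}$; that is, $\widetilde{B}^{s,1}_{u,v}$ is $B^1_{u,v}$ minus its $L_2(\Omega)$-orthogonal projection onto $L_2(\cF^B_s)$. By the Pythagorean identity $\E((B^1_{u,v})^2)=\E((\E^B_sB^1_{u,v})^2)+\E((\widetilde{B}^{s,1}_{u,v})^2)$, hence $\E((\widetilde{B}^{s,1}_{u,v})^2)\le\E((B^1_{u,v})^2)=|v-u|^{2H}$, the last equality being the defining covariance structure of the fractional Brownian motion. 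Combining the two displays proves \eqref{eq:FV-condition}, in fact with $C=1$.

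I do not expect a genuine obstacle here; the only point needing a word of care is that negative correlation of increments over intervals with merely \emph{disjoint interiors} (rather than strictly separated ones) still follows from the sign of $\D^2_{ab}Q_s$. This is fine because the diagonal $\{a=b\}$ meets a rectangle such as $[t,u]\times[u,v]$ only at a single corner point, which is Lebesgue-null for the double integral representing the covariance of the two increments. Everything else is elementary.
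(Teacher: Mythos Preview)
Your proof is correct and in fact sharper than the paper's, yielding $C=1$. The route, however, is genuinely different.

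The paper does not exploit the negative correlation established just before. Instead it uses the orthogonal decomposition $B=\widetilde{B}^s+\overline{B}^s$ to write
\[
\E\big(\widetilde{B}^{s,1}_{t,T}\widetilde{B}^{s,1}_{u,v}\big)=\E\big(B^1_{t,T}B^1_{u,v}\big)-\E\big(\overline{B}^{s,1}_{t,T}\overline{B}^{s,1}_{u,v}\big),
\]
bounds the first term by the standard estimate $|\E(B^1_{t,T}B^1_{u,v})|\lesssim|v-u|^{2H}$ for fBm covariance, and controls the second term by the integral representation via the kernel $K$, using that $K(t,\cdot)\ge0$ and is decreasing in the first variable (Lemma~\ref{lem:some_properties_of_K}) to reduce to $2(Q(s,u)-Q(s,v))\le 2|v-u|^{2H}$.

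Your argument is cleaner: you leverage the previous lemma directly, splitting $\widetilde{B}^{s,1}_{t,T}$ into three consecutive increments, discarding the cross terms by negative correlation, and bounding the remaining variance by the Pythagorean inequality $\E\big((\widetilde{B}^{s,1}_{u,v})^2\big)\le\E\big((B^1_{u,v})^2\big)=|v-u|^{2H}$. This avoids kernel computations entirely and makes the logical dependence on the preceding lemma explicit. The paper's approach, by contrast, is independent of the negative-correlation result and works directly from the structure of $K$; it would still go through even if the previous lemma were absent.
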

\begin{proof}
By orthogonality, we have 
\begin{equs}
     \E \big( \widetilde{B}^{s,1}_{t, T}\widetilde{B}^{s,1}_{u, v} \big)=  \E \big(B^1_{t, T}B^1_{u, v} \big)-  \E \big( \widebar{B}^{s,1}_{t, T} \widebar{B}^{s,1}_{u, v} \big).
\end{equs}
It follows from the covariance of $B$ (see also \cite[p. 407]{friz2006}) that $|\E \big( B^1_{t, T}B^1_{u, v} \big)| \lesssim |v-u|^{2H}$ so we only focus on bounding the second term on the right hand side of the above equality. 
    We have 
    \begin{equs}
        | \E \big( \widebar{B}^{s,1}_{t, T} \widebar{B}^{s,1}_{u, v} \big)| & = \Big|\int_0^s \big(K(T, r)-K(t,r)\big)\big( K(v,r)-K(u,r) \big) \, dr \Big| 
        \\
        & \leq 2 \int_0^s K(s, r) \big( K(u,r)-K(v,r)\big) \, dr,
    \end{equs}
    where we have used Lemma \ref{lem:some_properties_of_K}, that is,  the fact that $K$ is non-negative and decreasing in the first variable. Consequently, by the above and \eqref{eq:representation_covariance_fBM} we get 
   \begin{equs}
        |\E \big( \widebar{B}^{s,1}_{t, T} \widebar{B}^{s,1}_{u, v} \big)| & \leq 2( Q(s,u)-Q(s,v))
        \\
        &= |u|^{2H}-|u-s|^{2H}- |v|^{2H}+|v-s|^{2H} \leq 2 |u-v|^{2H},
    \end{equs}
    which finishes the proof.

\end{proof}

\begin{corollary}\label{cor:holder-controlled2}
    For any $s\in[0,1)$, the covariance function $Q_s$ is  uniformly of H\"older controlled $2$-dimensional $1/(2H)$-variation, that is, there exists a constant $C=C(H)$  such that for any $(s,u,t)\in[0,1]_\leq^3$ one has
    \begin{equ}\label{eq:holder-controlled2}
        \,[Q_s]_{\cV^{1/(2H)}([u,t]^2)}\leq C (t-u)^{1/(2H)}.
    \end{equ}
\end{corollary}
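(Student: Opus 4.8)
The plan is to deduce \eqref{eq:holder-controlled2} from \eqref{eq:rep_Q_s} together with the two preceding lemmas, which between them supply exactly the hypotheses classically used to control the $2$D $\rho$-variation of the covariance of fractional Brownian motion itself (see \cite{CHLT15, FV10}). First I would record that, by bilinearity of the covariance, the rectangular increment of $Q_s$ over a rectangle $[a,b]\times[c,d]\subseteq[u,t]^2$ is
\begin{equs}
Q_s(a,c)+Q_s(b,d)-Q_s(a,d)-Q_s(b,c)=\E\big(\widetilde{B}^{s,1}_{a,b}\,\widetilde{B}^{s,1}_{c,d}\big),
\end{equs}
so that on $[u,t]^2\subseteq[s,1]^2$ the function $Q_s$ is simply the covariance of the centered Gaussian path $(\widetilde{B}^{s,1}_r)_{r\in[u,t]}$, which starts from $0$ at time $s\le u$. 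Then I would invoke the structural facts already at our disposal: $(\widetilde{B}^{s,1}_r)_{r\in[s,1]}$ has negatively correlated increments; for every finite partition of a subinterval of $[s,1]$ the associated Gram matrix of $\widetilde{B}^{s,1}$-increments is diagonally dominant (this is the content of the previous lemma for partitions of $[s,S]$, and passing to a subinterval only deletes some rows and columns, which preserves diagonal dominance); and, applying \eqref{eq:FV-condition} with $(s,a,a,b,b)$ in place of $(s,t,u,v,T)$, one has $\E\big((\widetilde{B}^{s,1}_{a,b})^2\big)\le C(H)\,|b-a|^{2H}$ for all $s\le a\le b\le 1$. All the constants involved depend only on $H$, so any bound derived from them is automatically uniform in $s$.

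To see how the exponent comes out, set $\rho=1/(2H)\in(1,3/2)$ and first test \eqref{def:var-2} with $\cP_1=\cP_2=\{a_i\}_{i=0}^n$ a partition of $[u,t]$. Writing $M_{ij}=\E(\widetilde{B}^{s,1}_{a_{i-1},a_i}\widetilde{B}^{s,1}_{a_{j-1},a_j})$, diagonal dominance gives $\sum_{j\ne i}|M_{ij}|\le M_{ii}$, and since $\rho\ge 1$ we have $\sum_{j\ne i}|M_{ij}|^\rho\le\big(\sum_{j\ne i}|M_{ij}|\big)^\rho\le M_{ii}^\rho$, hence $\sum_j|M_{ij}|^\rho\le 2M_{ii}^\rho$. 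Summing over $i$ and using $M_{ii}\le C|a_i-a_{i-1}|^{2H}$ together with $2H\rho=1$ yields $\sum_{i,j}|M_{ij}|^\rho\le 2C^\rho\sum_i(a_i-a_{i-1})=2C^\rho(t-u)$, and taking $\rho$-th roots gives \eqref{eq:holder-controlled2}. The passage from arbitrary $\cP_1,\cP_2$ to a common partition is the only genuinely non-mechanical point: it relies on negative correlation of increments — over a pair of disjoint intervals every refined sub-contribution $\E(\widetilde{B}^{s,1}_{I'}\widetilde{B}^{s,1}_{J'})$ has the same (non-positive) sign, so passing to a coarser rectangle cannot reduce the relevant $\ell^\rho$ mass — and this reduction is exactly the standard argument for fractional Brownian motion that I would quote from \cite{CHLT15, FV10} rather than reprove.

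Hence the main obstacle is not in \cref{cor:holder-controlled2} itself, which is a short computation once the two preceding lemmas are in place, but rather in the (standard) reduction to equal partitions; and the uniformity in $s$ is automatic, since the constant in \eqref{eq:FV-condition} and the diagonal-dominance property hold with constants not depending on $s$.
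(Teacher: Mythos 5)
Your proposal takes essentially the same route as the paper — both rely on negative correlation of increments of $\widetilde B^{s,1}$ and the bound \eqref{eq:FV-condition}, and both defer the general $\rho$-variation machinery to \cite[Prop.~15.5]{FV10}; the only difference is that you carry out the equal-partition computation explicitly rather than citing it. That computation is correct, but watch the exponent: with $\rho=1/(2H)$ you derive $\sum_{i,j}|M_{ij}|^\rho\lesssim t-u$, hence $\big(\sum_{i,j}|M_{ij}|^\rho\big)^{1/\rho}\lesssim(t-u)^{1/\rho}=(t-u)^{2H}$, \emph{not} $(t-u)^{1/(2H)}$. Since $H\ne 1/2$ these differ, and $(t-u)^{2H}$ is what FV10's Condition 15.3 and \cite[Cor.~6.10]{CHLT15} actually require of a ``H\"older-controlled'' covariance — the exponent displayed in \eqref{eq:holder-controlled2} appears to be a typo, $\rho$ in place of $1/\rho$. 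So your calculation produces the right quantity even though it does not literally match the printed statement, and you should have flagged the mismatch rather than asserting it ``gives \eqref{eq:holder-controlled2}''.

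One further caution: the monotonicity you invoke for the passage from a common grid to arbitrary $\cP_1,\cP_2$ points the wrong way. Because off-diagonal sub-increments share a sign, superadditivity of $x\mapsto x^\rho$ for $\rho\ge1$ shows that the $\ell^\rho$ mass of a coarse rectangle is $\ge$ the sum over its refinement, i.e. the mass can only \emph{increase} under coarsening; that does not let you dominate a general product grid $\cP_1\times\cP_2$ by its common refinement. The argument in FV10 does not actually proceed by reduction to equal partitions; rather it bounds, for each fixed $I\in\cP_1$, the row sum $\sum_{J\in\cP_2}\big|\E\big(\widetilde B^{s,1}_I\,\widetilde B^{s,1}_J\big)\big|$ directly via the sign structure and \eqref{eq:FV-condition}, and then applies your $\rho\ge1$ trick row by row. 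Since you (like the paper) ultimately defer this step to FV10, the strategy is sound; but the justification you sketch for the reduction would not, as written, close the argument.
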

\begin{proof}
    Following the proof of \cite[Proposition~15.5]{FV10}, \eqref{eq:holder-controlled2} is a consequence of negatively correlated increments and \eqref{eq:FV-condition}.
\end{proof}

\begin{lemma}\label{lem:lower-var}
Let $s \in [0,1]$  and for $(u, v) \in [s, 1]^2_{\leq}$  set $\widetilde{\mathcal{G}}_{u,v}= \sigma( \widetilde{B}^{s,1}_{u, r} , r \in [u, v])$. Then, there exists $\vartheta= \vartheta(H) >0$ such that 
  \begin{equs}
      \inf_{ \substack{ s \in [0,1] \\ 
    s \leq  u < v \leq 1} }\frac{1}{(v-u)^\vartheta} \Var( \widetilde{B}^{s,1}_{u,v} | \widetilde{\mathcal{G}}_{s,u} \vee \widetilde{\mathcal{G}}_{v,1}) >0.
  \end{equs}
\end{lemma}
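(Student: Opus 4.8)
The plan is to reduce \cref{lem:lower-var} to a deterministic lower bound for an orthogonal projection in the Cameron--Martin space of $\widetilde B^{s,1}$, and then to invoke the non-degeneracy estimate of \cite[Corollary~6.10]{CHLT15} (in the spirit of \cite{Lyons-Cass-Litterer}), whose hypotheses have just been established for $\widetilde B^{s,1}$ \emph{uniformly in $s$}.

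First I would use Gaussianity. Since $\widetilde B^{s,1}$ is a centered Gaussian process with $\widetilde B^{s,1}_s=0$, the $\sigma$-algebra $\widetilde{\mathcal G}_{s,u}\vee\widetilde{\mathcal G}_{v,1}$ is generated by the Gaussian family $\{\widetilde B^{s,1}_{a,b}:[a,b]\subseteq[s,u]\text{ or }[a,b]\subseteq[v,1]\}$, so the conditional variance $\Var(\widetilde B^{s,1}_{u,v}\mid\widetilde{\mathcal G}_{s,u}\vee\widetilde{\mathcal G}_{v,1})$ is deterministic and equals the squared $L_2(\Omega)$-distance from $\widetilde B^{s,1}_{u,v}$ to the closed linear span of that family. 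Passing through the canonical isometry $\bone_{[a,b]}\mapsto\widetilde B^{s,1}_{a,b}$ onto the Cameron--Martin space of $\widetilde B^{s,1}$ --- the completion of the span of the step functions $\bone_{[a,b]}$ for the inner product $\langle\bone_{[a,b]},\bone_{[c,d]}\rangle:=\E(\widetilde B^{s,1}_{a,b}\widetilde B^{s,1}_{c,d})$, which by \eqref{eq:rep_Q_s} is the $2$D increment of $Q_s$ over $[a,b]\times[c,d]$ and, more generally, a $2$D Young integral against $Q_s$ that is well defined because $Q_s$ has finite $2$D $1/(2H)$-variation by \cref{cor:holder-controlled2} --- the above distance equals $\|\Pi^\perp\bone_{[u,v]}\|$, where $\Pi^\perp$ is the orthogonal projection onto the orthogonal complement of $\overline{\mathrm{span}}\{\bone_{[a,b]}:[a,b]\subseteq[s,u]\text{ or }[a,b]\subseteq[v,1]\}$. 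Thus the claim reduces to $\|\Pi^\perp\bone_{[u,v]}\|^2\ge c\,(v-u)^\vartheta$, uniformly over $0\le s\le u<v\le1$.

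That bound is exactly what \cite[Corollary~6.10]{CHLT15} delivers: for a centered continuous Gaussian process on a compact interval that starts at the origin, has negatively correlated increments, and whose covariance function is H\"older-controlled of finite $2$D $\rho$-variation for some $\rho\in[1,2)$, one has $\|\Pi^\perp\bone_{[u,v]}\|^2\ge c\,(v-u)^\vartheta$ with $c,\vartheta>0$ depending only on $\rho$ and on the constant in the H\"older-control bound. For $\widetilde B^{s,1}$ both hypotheses hold \emph{with constants independent of $s$}: negative correlation of the increments is the content of the lemma above stating that $(\widetilde B^{s,1}_t)_{t\in[s,1]}$ has negatively correlated increments, and H\"older-controlled $2$D $\rho$-variation holds with $\rho=1/(2H)\in(1,3/2)\subset[1,2)$ --- here one uses $H>1/3>1/4$ --- with the $s$-uniform constant $C(H)$ of \cref{cor:holder-controlled2}. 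The form of the conditioning also matches: since $\widetilde B^{s,1}_s=0$, the $\sigma$-algebra $\widetilde{\mathcal G}_{s,u}$ is generated by the increments of $\widetilde B^{s,1}$ over subintervals of $[s,u]$, and in any case enlarging the conditioning only decreases the conditional variance, so a lower bound valid for a possibly larger conditioning $\sigma$-algebra transfers to ours. Applying the cited estimate to $\widetilde B^{s,1}$ then gives $\Var(\widetilde B^{s,1}_{u,v}\mid\widetilde{\mathcal G}_{s,u}\vee\widetilde{\mathcal G}_{v,1})\ge c(H)\,(v-u)^{\vartheta(H)}$ for all $s\in[0,1]$ and all $s\le u<v\le1$, and taking the infimum over $s,u,v$ completes the proof.

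I expect the only genuine work in writing this out --- and the reason for the chain of preceding lemmas --- to be the \emph{$s$-uniform} verification of the two hypotheses above: the negative correlation of the increments of the \emph{conditioned} process $\widetilde B^{s,1}$ (which a priori need not hold, since conditioning can destroy the sign structure of increments) and the uniform H\"older control of the $2$D $\rho$-variation of $Q_s$, both of which rest on the representation \eqref{eq:rep_Q_s} together with the positivity and monotonicity of the Volterra kernel $K$ recorded in \cref{lem:some_properties_of_K}. Modulo these, \cref{lem:lower-var} is a direct invocation, and the remaining care is in the Gaussian-projection reduction and in checking that the conditioning $\sigma$-algebras line up as claimed.
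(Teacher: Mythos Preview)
Your argument is circular. You invoke \cite[Corollary~6.10]{CHLT15} to obtain the conditional-variance lower bound, but that corollary \emph{requires} the conditional-variance lower bound as a hypothesis --- in the language of \cite{CHLT15} it is their Condition~2, and the paper makes this explicit in the sentence immediately preceding \cref{cor:interpolation}: ``By \cref{lem:lower-var} and \cref{cor:holder-controlled2}, the conditions of \cite[Corollary~6.10]{CHLT15} are satisfied.'' Negative correlation of increments (or diagonal dominance) is not a substitute for this hypothesis; it is used in \cite{CHLT15} for other purposes, and the $\vartheta$ that appears in the interpolation bound of \cref{cor:interpolation} is precisely the $\vartheta$ coming from \cref{lem:lower-var}, not something produced by Corollary~6.10. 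So the step ``that bound is exactly what \cite[Corollary~6.10]{CHLT15} delivers'' fails.

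The paper's proof takes a completely different route that avoids analyzing $\widetilde B^{s,1}$ altogether: it shows the \emph{identity}
\[
\Var\big(\widetilde B^{s,1}_{u,v}\,\big|\,\widetilde{\mathcal G}_{s,u}\vee\widetilde{\mathcal G}_{v,1}\big)
=\Var\big(B^{1}_{u,v}\,\big|\,\mathcal G_{0,u}\vee\mathcal G_{v,1}\big),
\]
by exploiting the decomposition $B=\widetilde B^{s}+\overline B^{s}$, the $\cF^B_s$-measurability of $\overline B^{s}$, and the independence of $\widetilde B^{s}$ from $\cF^B_s$, to rewrite the relevant conditional expectations. The right-hand side is then handled by the known result for fBm \cite[Lemmata~4.1 and~4.2]{CHLT15}. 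This is both cleaner and logically prior to the application of Corollary~6.10.
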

\begin{proof}
    For $(t,t') \in [0, 1]_{<}^2$, let us set $\mathcal{G}_{t, t'}= \sigma ( B^1_{t, r}, r \in [t, t'] )$. One can easily see that $\mathcal{G}_{0,s}= \sigma(W^1_r, r \leq s)$, from which it easily follows that  for $(u,v) \in [0,1]^2_{\leq} $, we have 
    \begin{equs}
        \mathcal{G}_{0, u} \vee \mathcal{G}_{v, 1} =  \widetilde{\mathcal{G}}_{s,u} \vee \widetilde{\mathcal{G}}_{v,1} \vee \mathcal{G}_{0,s}.
    \end{equs}
    To ease the notation, let us set 
$\mathcal{G}= \mathcal{G}_{0, u} \vee \mathcal{G}_{v, 1}$, $\widetilde{\mathcal{G}}= \widetilde{\mathcal{G}}_{s,u} \vee \widetilde{\mathcal{G}}_{v,1}$. Hence, with this notation we have 
$\mathcal{G}= \widetilde{\mathcal{G}} \vee \mathcal{G}_{0,s}$.  Moreover, we have that  $\widetilde{\mathcal{G}}$ is independent of $\mathcal{G}_{0,s}$. 

Using the Gaussianity, we have 
\begin{equs}   \label{eq:variance_Z}
    \Var(\widetilde{B}^{s,1}_{u,v} | \widetilde{\mathcal{G}}_{s,u} \vee \widetilde{\mathcal{G}}_{v,1})& =  \Var(\widetilde{B}^{s,1}_{u,v} | \widetilde{\mathcal{G}})
    \\
    &= \E \Big( \widetilde{B}^{s,1}_{u,v}- \E[ \widetilde{B}^{s,1}_{u,v}|\widetilde{\mathcal{G}} ] \Big)^2. 
\end{equs}
Next, notice that 
\begin{equs}
    \E[ \widetilde{B}^{s,1}_{u,v}|\widetilde{\mathcal{G}} ]= \E[ B^1_{u,v}- \widebar{B}^{s,1}_{u,v}  |\widetilde{\mathcal{G}} ]= \E[ B^1_{u,v} |\widetilde{\mathcal{G}} ],
\end{equs}
where we have used that $ \widebar{B}^{s,1}_{u,v}$ is $\mathcal{G}_{0,s}$-measurable, hence, independent of $\widetilde{\mathcal{G}}$ and has zero mean. Hence, 
\begin{equs}          \label{eq:whatever1}
     \widetilde{B}^{s,1}_{u,v}- \E[ \widetilde{B}^{s,1}_{u,v}|\widetilde{\mathcal{G}}]= B_{u,v}- \widebar{B}^{s,1}_{u,v} -  \E[ B^1_{u,v} |\widetilde{\mathcal{G}}].
\end{equs}
Next, we claim that 
\begin{equs}
    \widebar{B}^{s,1}_{u,v} + \E[ B^1_{u,v} |\widetilde{\mathcal{G}}]= \E[ B^1_{u,v} |\widetilde{\mathcal{G}} \vee \mathcal{G}_{0,s}].          
\end{equs}
Indeed, we have 
\begin{equs}
    \E[ B^1_{u,v} |\widetilde{\mathcal{G}} \vee \mathcal{G}_{0,s}]&= \E[ B^1_{u,v} - \widebar{B}^{s,1}_{u,v}|\widetilde{\mathcal{G}} \vee \mathcal{G}_{0,s}]+ \widebar{B}^{s,1}_{u,v}
    \\
    &= \E[ B^1_{u,v} - \widebar{B}^{s,1}_{u,v}|\widetilde{\mathcal{G}} ]+ \widebar{B}^{s,1}_{u,v}
    \\
    &= \E[ B^1_{u,v} |\widetilde{\mathcal{G}} ]+ \widebar{B}^{s,1}_{u,v},
\end{equs}
where for the second equality we have used that $B^1_{u,v} - \widebar{B}^{s,1}_{u,v}$ is independent of $\mathcal{G}_{0,s}$, and for the last equality we have used that $\widebar{B}^{s,1}_{u,v}$ is independent of $\widetilde{\mathcal{G}}$ and has zero mean. By this combined with \eqref{eq:whatever1}, we get 
\begin{equs}
     \widetilde{B}^{s,1}_{u,v}- \E[ \widetilde{B}^{s,1}_{u,v}|\widetilde{\mathcal{G}}] &=  B^1_{u,v}- \E[ B^1_{u,v} |\widetilde{\mathcal{G}}\vee \mathcal{G}_{0,s} ]
     \\
      &= B^1_{u,v}- \E[ B^1_{u,v} | \mathcal{G}_{0, u} \vee \mathcal{G}_{v, 1}].
\end{equs}
Plugging this in \eqref{eq:variance_Z}, we conclude that 
\begin{equs}
     \Var( \widetilde{B}^{s,1}_{u,v} | \widetilde{\mathcal{G}}_{s,u} \vee \widetilde{\mathcal{G}}_{v,1})=  \Var(B^1_{u,v} | \mathcal{G}_{0,u} \vee \mathcal{G}_{v,1}).
\end{equs}
Finally, it is known that there exists constants $
\vartheta>0$
and $c >0 $ such that 
\begin{equs}
\inf_{0 \leq  u \leq v \leq 1} \frac{1}{(v-u)^\vartheta}\Var(B^1_{u,v} | \mathcal{G}_{0,u} \vee \mathcal{G}_{v,1}) = c >0 
\end{equs}
(see \cite[Lemmata 4.1 and 4.2]{CHLT15}), from which the conclusion follows.
\end{proof}
The following is from \cite[Proposition 5.2.1, p. 288]{NUalart_Malliavin}. In the statement, $D^W$ denotes the usual Malliavin derivative with respect to the underlying Wiener process $W$, and $\mathbb{D}^{1,2}_W$ denotes the corresponding Sobolev of all random variables  $X \in L_2(\Omega)$ for which $D^WX \in L_2(\Omega; L_2([0,1]))$. 
\begin{proposition}%\cite[Proposition 5.2.1]{NUalart_Malliavin}    
\label{prop:connection_two_Malliavin}
   It holds that $\mathbb{D}^{1,2}_W=\mathbb{D}^{1,2}$ and that for any $F \in \mathbb{D}^{1,2}$ we have 
\begin{equs}      
    K^*DF= D^WF. 
\end{equs}
\end{proposition}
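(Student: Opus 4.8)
The plan is to recognise that $B$ and $W$ are, in effect, the same isonormal Gaussian process read through the isometry $K^*$, and to transfer the Malliavin structure along it. First I would record the algebraic identity
\[
    B(h)=W(K^*h)\qquad\text{for all }h\in\mathcal{H},
\]
where on the right $W$ is viewed as the isonormal Gaussian process on $L_2([0,1])$. For $h=\bone_{[0,t]}$ this is precisely the representation $B_t=\int_0^t K(t,r)\,dW_r$ combined with \eqref{eq:K_on_characteristic}, namely $(K^*\bone_{[0,t]})(r)=K(t,r)\bone_{[0,t]}(r)$. Both $h\mapsto B(h)$ and $h\mapsto W(K^*h)$ are linear isometries from $\mathcal{H}$ into $L_2(\Omega)$ (the second because $K^*$ is an isometry of $\mathcal{H}$ into $L_2([0,1])$ and $W$ is an isometry on $L_2([0,1])$), so, agreeing on the total family $\{\bone_{[0,t]}:t\in[0,1]\}$, they agree on all of $\mathcal{H}$. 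Recall moreover, from the discussion preceding \cref{rem:same_filtration}, that $K^*:\mathcal{H}\to L_2([0,1])$ is in fact surjective, hence a Hilbert space isomorphism.

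Next I would compare the two calculi on smooth functionals. Let $F=f(B(h_1),\dots,B(h_n))\in\mathcal{S}$ with $f\in\mathcal{C}^\infty_{\pol}(\R^n)$ and $h_i\in\mathcal{H}$. By the previous step, $F=f\big(W(K^*h_1),\dots,W(K^*h_n)\big)$, so $F$ is a smooth functional of $W$; conversely, since $K^*$ is onto $L_2([0,1])$, every $f(W(g_1),\dots,W(g_n))$ with $g_i\in L_2([0,1])$ has this form. Hence $\mathcal{S}$ coincides, as a set of random variables, with the smooth class for $W$. By the definition of $D$ on $\mathcal{S}$ and the corresponding definition of $D^W$,
\[
DF=\sum_{k=1}^n(\partial_kf)\big(B(h_1),\dots,B(h_n)\big)\,h_k,
\qquad
D^WF=\sum_{k=1}^n(\partial_kf)\big(W(K^*h_1),\dots,W(K^*h_n)\big)\,K^*h_k,
\]
and applying the operator $K^*$ to the first relation gives $K^*DF=D^WF$. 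Since $K^*$ preserves the Hilbert norm, $\|DF\|_{\mathcal{H}}=\|D^WF\|_{L_2([0,1])}$ pointwise in $\omega$, whence $\|F\|_{\mathbb{D}^{1,2}}=\|F\|_{\mathbb{D}^{1,2}_W}$ for every $F\in\mathcal{S}$.

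Finally I would pass to the closure. Both $\mathbb{D}^{1,2}$ and $\mathbb{D}^{1,2}_W$ are, by definition, completions of the same set $\mathcal{S}$ with respect to norms that coincide on $\mathcal{S}$; therefore $\mathbb{D}^{1,2}=\mathbb{D}^{1,2}_W$, and the two norms agree on the whole space by density. For the derivative identity, take $F\in\mathbb{D}^{1,2}$ and $F_m\in\mathcal{S}$ with $F_m\to F$ in $\mathbb{D}^{1,2}$ (equivalently in $\mathbb{D}^{1,2}_W$). Then $DF_m\to DF$ in $L_2(\Omega;\mathcal{H})$, so $K^*DF_m\to K^*DF$ in $L_2(\Omega;L_2([0,1]))$ because $K^*$ is an isometry, while $K^*DF_m=D^WF_m\to D^WF$ by closedness of $D^W$; hence $K^*DF=D^WF$.

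Every step here is soft functional analysis, and I do not expect a genuine obstacle. The one input that is not purely formal is the surjectivity of $K^*:\mathcal{H}\to L_2([0,1])$, proved above via injectivity of the Liouville operators: this is exactly what promotes the a priori inclusion $\mathbb{D}^{1,2}\subseteq\mathbb{D}^{1,2}_W$ (which holds regardless, since smooth functionals of $B$ are always smooth functionals of $W$) to the asserted equality. The only point requiring a little care is thus the identification of the two smooth classes as one and the same set of random variables, which is where that surjectivity is used.
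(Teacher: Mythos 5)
Your argument is correct. Note that the paper does not actually prove this statement: it quotes it directly from Nualart's book (\cite[Proposition 5.2.1]{NUalart_Malliavin}), so there is no "paper proof" to compare against. Your reconstruction is the standard transfer principle, and it is precisely the argument that makes Nualart's cited result work: the identity $B(h)=W(K^*h)$ plus the fact, established in the paper just above the proposition, that $K^*:\cH\to L_2([0,1])$ is a \emph{surjective} isometry, make $\cS$ (smooth functionals of $B$) and the smooth class for $W$ literally the same set of random variables, with graph norms that agree termwise via $K^*DF=D^WF$, $\|DF\|_\cH=\|D^WF\|_{L_2}$. The only small thing worth flagging is your remark that "both spaces are, by definition, completions of the same set $\cS$": the paper defines $\mathbb{D}^{1,2}_W$ as the domain of the closed extension of $D^W$ rather than as a completion, so you are silently invoking the standard fact that the domain of the closure of a densely defined closable operator coincides with the completion of its initial domain in the graph norm. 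That step is routine, and with it everything you wrote goes through; the final passage by closedness of $D^W$ is also correct.
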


Next, we see how the operator $\Pi_{\mathcal{H}_s^\perp}$ can be characterised by means of $K^*$. 
\begin{lemma}            \label{lem:representation_projection}
    For any $h \in \cH$ and $s \in [0,1]$ we have
    \begin{equs}
        K^* \Pi_{\mathcal{H}_s^\perp} h =  \bone_{[s,1]} K^*h . 
    \end{equs}
\end{lemma}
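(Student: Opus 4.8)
The plan is to transport the whole statement to $L_2([0,1])$ through the isometry $K^*$. Recall from the discussion preceding \cref{rem:same_filtration} that $K^*\colon\cH\to L_2([0,1])$ is a linear isometry which is moreover surjective, hence unitary. Writing $L_2([0,s])$ for the closed subspace of $L_2([0,1])$ consisting of functions vanishing a.e.\ on $(s,1]$, I would first establish the key identity $K^*\mathcal{H}_s = L_2([0,s])$. Once this is available, the lemma is immediate: a unitary carries a closed subspace, its orthogonal complement, and the associated orthogonal projections onto those of the image, so $K^*$ intertwines $\Pi_{\mathcal{H}_s^\perp}$ with the orthogonal projection of $L_2([0,1])$ onto $(L_2([0,s]))^\perp = L_2([s,1])$, which is multiplication by $\bone_{[s,1]}$. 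Evaluating the resulting identity $K^*\Pi_{\mathcal{H}_s^\perp} = \bone_{[s,1]}\,K^*$ at $h\in\cH$ gives $K^*\Pi_{\mathcal{H}_s^\perp}h = \bone_{[s,1]}K^*h$, which is the claim.

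To prove the inclusion $K^*\mathcal{H}_s\subseteq L_2([0,s])$ I would use \eqref{eq:K_on_characteristic}: for $q\le s$ one has $K^*\bone_{[0,q]} = K(q,\cdot)\bone_{[0,q]}$, which is supported in $[0,q]\subseteq[0,s]$ and lies in $L_2$ because $\sup_{t}\|K(t,\cdot)\|_{L_2([0,1])}<\infty$. Since $\mathcal{H}_s$ is by definition the closed linear span of $\{\bone_{[0,q]}:q\le s\}$, since $K^*$ is continuous, and since $L_2([0,s])$ is closed, the inclusion follows by passing to closed linear spans.

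For the reverse inclusion it is enough to show that $K^*\mathcal{H}_s$ is dense in $L_2([0,s])$. I would take $f\in L_2([0,s])$, extended by zero to $[0,1]$, orthogonal to $K^*\mathcal{H}_s$; then $\int_0^t K(t,r)f(r)\,dr = \langle f,K^*\bone_{[0,t]}\rangle_{L_2([0,1])} = 0$ for all $t\le s$. This is exactly the relation appearing in the surjectivity proof of $K^*$, only now on the subinterval $[0,s]$, so re-running that argument verbatim — the representation of $t\mapsto\int_0^t K(t,r)f(r)\,dr$ through Liouville integrals together with the injectivity of $I^{\alpha}_{0^+}$ on $L_1([0,s])$ — forces $f\equiv 0$. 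Hence $K^*\mathcal{H}_s$ is dense in the closed space $L_2([0,s])$, and therefore equals it.

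The only point that requires genuine work is this reverse inclusion, namely that $K^*$ maps $\mathcal{H}_s$ \emph{onto} all of $L_2([0,s])$ and not merely into it; this does not follow formally from $K^*$ being unitary on $\cH$, and is obtained by localizing the Liouville-integral injectivity argument to $[0,s]$. Everything else is soft: the action of $K^*$ on the generators $\bone_{[0,q]}$, and the standard fact that a unitary sends a subspace, its orthogonal complement, and their orthogonal projections to those of the image.
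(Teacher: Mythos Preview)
Your proof is correct and takes a genuinely different route from the paper. The paper argues probabilistically on generators: for $h=\bone_{[0,t]}=DB_t$ it invokes the decomposition $B_t=\widetilde B^s_t+\overline B^s_t$, shows $D\overline B^s_t\in\mathcal H_s$ (since $\overline B^s_t$ is $\cF^B_s$-measurable) and $D\widetilde B^s_t\in\mathcal H_s^\perp$ (via the integration-by-parts formula and independence), so that $\Pi_{\mathcal H_s^\perp}\bone_{[0,t]}=D\widetilde B^s_t$; then it applies \cref{prop:connection_two_Malliavin} to compute $K^*D\widetilde B^s_t=D^W\widetilde B^s_t=K(t,\cdot)\bone_{[s,t]}=\bone_{[s,1]}K^*\bone_{[0,t]}$, and extends by linearity and density. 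Your approach is purely functional-analytic: you first establish the structural identity $K^*\mathcal H_s=L_2([0,s])$ by localising the Liouville-integral injectivity argument to $[0,s]$, and then transport the projection through the unitary $K^*$.

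What each buys: the paper's argument avoids re-running the surjectivity proof on subintervals and instead leverages the probabilistic decomposition $B=\widetilde B^s+\overline B^s$ that is already in play elsewhere in the paper. Your argument is cleaner from a Hilbert-space standpoint and yields the sharper intermediate fact $K^*\mathcal H_s=L_2([0,s])$, which makes the whole statement transparent as the conjugation of one orthogonal projection by a unitary. One small point to make explicit in your density step: you should note that $K^*\mathcal H_s$ is closed (as the image of a closed subspace under an isometry), so that density in $L_2([0,s])$ indeed forces equality.
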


\begin{proof}
    Suppose that $h=\bone_{[0,t]}$ and notice that $h=DB_t$. Recall that we have the decomposition $B_t= \widetilde{B}^s_t+ \widebar{B}^s_t$ (see \eqref{eq:def_tilde_bar}).  By \cref{prop:connection_two_Malliavin}, it follows that both $\widetilde{B}^s_t$ and $\widebar{B}^s_t$ belong to  $\mathbb{D}^{1,2}$ and their Malliavin derivatives are deterministic elements of $\cH$. Moreover,  we have that $D\widebar{B}^s_t \in \mathcal{H}_s$, while $D\widetilde{B}^s_t \in \cH_s^{\perp}$. Indeed, the first claim follows from the fact that  $\widebar{B}^s_t$ is $\cF^B_s$-measurable, while the second follows from the fact that for any  $g=\bone_{[0,q]}$, with $q \leq s$, we have 
    \begin{equs}
        \langle D\widetilde{B}^s_t, g \rangle_{\cH}& =  \E \langle D\widetilde{B}^s_t, g \rangle_{\cH}
        \\
        & = \E (\widetilde{B}^s_t  B_q)  = 0,
    \end{equs}
    where we have used \eqref{eq:ini-IBP} and the fact that $\widetilde{B}^s_t$ is independent of $\cF^B_s$. Consequently, we have 
    \begin{equs}
        \Pi_{\mathcal{H}_s^\perp} h=  \Pi_{\mathcal{H}_s^\perp} D B_t= D\widetilde{B}^s_t. 
    \end{equs}
    By  \cref{prop:connection_two_Malliavin} again, we have 
    \begin{equs}
         K^*\Pi_{\mathcal{H}_s^\perp} h= K^* \widetilde{B}^s_t= D^W\widetilde{B}^s_t= K(t, \cdot)\bone_{[s,t]}(\cdot)= \bone_{[s,1]} K^*h,
    \end{equs}
    where for the last equality we have used \ref{eq:K_on_characteristic}. By linearity the claim is true for all $h \in \mathcal{E}$ and by a standard density and continuity argument it also holds for all $h \in \cH$. 
\end{proof}

\begin{lemma}\label{lem:H-norm-integral}
 Let $(s, t) \in [0,1]^2_{\leq}$ and let $f,g \in \cH$ such that they both have support in $[s,t]$ and $f,g \in \mathcal{C}^\gamma([s,t])$ for some $\gamma>1-2H$. Then we have 
    \begin{equs}
        \langle  \Pi_{\mathcal{H}_s^\perp} f,  \Pi_{\mathcal{H}_s^\perp} g \rangle_{\mathcal{H}} =\int_s^t \int_s^t f(u) g(r) Q_s(du,dr).
    \end{equs}
    
\end{lemma}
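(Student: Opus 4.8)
The plan is to reduce the statement to a computation with the operator $K^*$ and then upgrade an elementary identity, valid for smooth integrands, to the general case by a density argument.

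\emph{Reduction via $K^*$.} Since $K^*$ is an isometric isomorphism from $\cH$ onto $L_2([0,1])$ and $K^*\Pi_{\mathcal{H}_s^\perp}h=\bone_{[s,1]}K^*h$ by \cref{lem:representation_projection}, we have for all $f,g\in\cH$
\begin{equs}
    \langle\Pi_{\mathcal{H}_s^\perp}f,\Pi_{\mathcal{H}_s^\perp}g\rangle_{\cH}=\langle \bone_{[s,1]}K^*f,\bone_{[s,1]}K^*g\rangle_{L_2([0,1])}=\int_s^1(K^*f)(w)\,(K^*g)(w)\,dw,
\end{equs}
so it suffices to show that the last expression equals $\int_s^t\int_s^t f(u)g(r)\,Q_s(du,dr)$.

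\emph{Continuity of both sides.} Fix $\gamma'\in(1-2H,\gamma)$. By \cref{cor:holder-controlled2} the covariance $Q_s$ has H\"older-controlled two-dimensional $1/(2H)$-variation with a constant independent of $s$; since $\gamma'+2H>1$, the standard two-dimensional Young estimate (see \cite{FV10}) shows that $\int_s^t\int_s^t f(u)g(r)\,Q_s(du,dr)$ is well defined whenever $f,g$ are supported in $[s,t]$ and lie in $\cC^{\gamma'}([s,t])$, and is a continuous bilinear form in $(f,g)$ for the $\cC^{\gamma'}$-topology. For the left-hand side, using $\|f\|_{\cH}=\|K^*f\|_{L_2([0,1])}$ together with the bound $|\D_vK(v,w)|\lesssim (v-w)^{H-3/2}$ coming from \eqref{eq:derivative_K}, the support property of $f$, and $\sup_{t}\|K(t,\cdot)\|_{L_2([0,1])}<\infty$, one checks that $\|f\|_{\cH}\lesssim\|f\|_{\cC^{\gamma'}([s,t])}$ for such $f$ (only $\gamma'>1/2-H$ is needed here), so the left-hand side of the above display is also continuous in the $\cC^{\gamma'}$-topology. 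Since any function that lies in $\cC^\gamma$ and is supported in $[s,t]$ vanishes at $s$ and $t$, a cutoff-and-mollification argument shows that $\cC^\infty$ functions supported in $[s,t]$ are $\cC^{\gamma'}$-dense in this class; hence it is enough to prove the identity for $f,g\in\cC^1$ with support in $[s,t]$.

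\emph{The smooth case.} Let $f\in\cC^1$ with $\supp f\subseteq[s,t]$, so $f(s)=f(t)=0$. Representing $f$, as an element of $\cH$, in the standard form $f=\int_s^t\dot f(u)\bone_{(u,t]}\,du$ and using \eqref{eq:K_on_characteristic} and the boundedness of $K^*$, we obtain, for $w\in[s,t]$,
\begin{equs}
    (K^*f)(w)=\int_s^t\dot f(u)\big(K(t,w)\bone_{[0,t]}(w)-K(u,w)\bone_{[0,u]}(w)\big)\,du=-\int_w^t\dot f(u)K(u,w)\,du,
\end{equs}
while $(K^*f)(w)=0$ for $w\in(t,1]$, and similarly for $g$. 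Substituting these into $\int_s^1(K^*f)(K^*g)\,dw$, applying Fubini, and using $Q_s(u,r)=\int_s^{u\wedge r}K(u,w)K(r,w)\,dw$ for $u,r\ge s$ (which follows from \eqref{eq:rep_Q_s} and \eqref{eq:representation_covariance_fBM}), we get
\begin{equs}
    \langle\Pi_{\mathcal{H}_s^\perp}f,\Pi_{\mathcal{H}_s^\perp}g\rangle_{\cH}=\int_s^t\int_s^t\dot f(u)\dot g(r)\,Q_s(u,r)\,du\,dr.
\end{equs}
Integration by parts for two-dimensional Young integrals, together with $f(s)=f(t)=g(s)=g(t)=0$, identifies the right-hand side with $\int_s^t\int_s^t f(u)g(r)\,Q_s(du,dr)$; combined with the previous paragraph this gives the claim.

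\emph{Main obstacle.} The delicate point is the passage from the transparent identity — immediate on step functions $\bone_{(a,b]}$ and, after the $K^*$-computation, on $\cC^1$ integrands — to arbitrary admissible $f,g$: step functions belong to $\cH$ but to no H\"older space, so the density argument must be routed through $\cC^1$ integrands, which forces the two auxiliary facts requiring care, namely the continuous embedding $\cC^{\gamma'}([s,t])\hookrightarrow\cH$ for functions supported in $[s,t]$ (obtained from the explicit form of $K^*$) and the two-dimensional Young integration by parts relating $\int\int\dot f\,\dot g\,Q_s$ to $\int\int f\,g\,dQ_s$.
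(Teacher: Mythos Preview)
Your density step contains a gap. You assert that any $f$ satisfying the hypotheses vanishes at $s$ and $t$, but the lemma only requires $f|_{[s,t]}\in\cC^\gamma([s,t])$, not $f\in\cC^\gamma([0,1])$; the zero extension of $f$ to $[0,1]$ may jump at the endpoints. The indicator $\bone_{[s,t]}$ already violates your claim, and the functions to which the lemma is actually applied (via \cref{cor:interpolation}) are of the form $v\mapsto\bone_{[s,r]}(v)\,z^*\big(\theta J^{v,x}_r\sigma(\phi^{s,x}_v)+\cdots\big)$, which typically have nonzero values at both endpoints. Since smooth functions supported in $[s,t]$ necessarily vanish at $s$ and $t$, they are \emph{not} $\cC^{\gamma'}([s,t])$-dense in the admissible class, so your reduction to the smooth case does not cover the full statement. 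Your formula $(K^*f)(w)=-\int_w^t\dot f(u)K(u,w)\,du$ likewise relies on $f(s)=f(t)=0$ and acquires boundary terms otherwise.

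The paper's proof avoids this by using step functions as the dense class: it verifies the identity for $f=\bone_{[s,u]}$, $g=\bone_{[s,v]}$ directly from $K^*$ and \cref{lem:representation_projection}, extends by linearity to $\cE$, and for general $f$ takes piecewise-constant approximations $f^n$. Convergence of the right-hand side is standard 2D Young integration; for the left-hand side, rather than invoking an a priori embedding $\cC^{\gamma'}\hookrightarrow\cH$ (which you also only sketch), the paper applies the already-established $s=0$ instance of the identity to express $\|f^n-f^m\|_{\cH}^2$ as a 2D Young integral against $Q$, concludes that $(f^n)$ is Cauchy in $\cH$, and identifies the limit with $f$ via $\cH\hookrightarrow L_1$ and pointwise convergence. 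Your route can be salvaged either by switching to this step-function approximation, or by redoing your smooth-case computation and the ensuing integration by parts without the endpoint assumption, keeping track of the resulting boundary contributions.
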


\begin{proof}
    For $u, v \in [s, t]$ and  $f=\bone_{[s,u]}, g = \bone_{[s,v]}$, by using the fact that $K^*$ is a linear isometry and  \cref{lem:representation_projection},  we have 
    \begin{equs}
       \langle  \Pi_{\mathcal{H}_s^\perp} f,  \Pi_{\mathcal{H}_s^\perp} g \rangle_{\mathcal{H}} & = \langle  \Pi_{\mathcal{H}_s^\perp} \bone_{[s,u]},  \Pi_{\mathcal{H}_s^\perp} \bone_{[s,v]} \rangle_{\mathcal{H}} 
     %  \\
    %   &=\langle  K^*(\Pi_{\mathcal{H}_s^\perp} \bone_{[0,u]}),  K^*(\Pi_{\mathcal{H}_s^\perp} \bone_{[0,v]}) \rangle_{L_2([0,1])}
       \\
       &= \langle  K^*(\Pi_{\mathcal{H}_s^\perp} \bone_{[s,u]}),  K^*(\Pi_{\mathcal{H}_s^\perp} \bone_{[s,v]}) \rangle_{L_2([0,1])}
       \\
       &= \int_s^{u \wedge v} K(u, r)K(v,r) \, dr 
       \\
       & = Q_s(u, v)
       \\
       &= \int_s^t \int_s^t f(r) g(r') Q_s(dr,dr').      \label{eq:H_Q}
    \end{equs}
    By linearity, the above relation holds for all $f,g \in \mathcal{E}$ which are supported in $[s,t]$. For a general $f$ as in the statement one can argue as in \cite[Remark 2.16]{GOT}, let $f^n$ be given by $f^n(x)=f(s+i(t-s)/n)$ for $x \in (s+i(t-s)/n, s+(i+1)(t-s)/n]$, $i =0, ..., n-1$. and let $f^n(x) =0$ for $x \notin [s,t]$, and define $g^n$ similarly.
    Obviously, we have 
    \begin{equs}                           \label{eq:to_pass_to_the_limit_soon...}
        \langle  \Pi_{\mathcal{H}_s^\perp} f^n,  \Pi_{\mathcal{H}_s^\perp} g^n \rangle_{\mathcal{H}} & =\int_s^t \int_s^t f^n(r) g^n(r') Q_s(dr,dr'). 
    \end{equs}
    Moreover,  it follows (see, \cite{2D_Young}) that 
    \begin{equs}
      \lim_{n, m \to \infty }  \int_s^t \int_s^t f^n(r) g^m(r') Q_s(dr,dr') & =
    \int_s^t \int_s^t f(r) g(r') Q_s(dr,dr')      \label{eq:after_limit_Q}
    \\
    \lim_{n, m \to \infty }  \int_0^t \int_0^t f^n(r) g^m(r') Q(dr,dr') & = \int_0^t \int_0^t f(r) g (r') Q(dr,dr')
\end{equs}
By the second equality above (with $f^m$ in place of $g^m$) and \eqref{eq:H_Q} with $s=0$ (in which case $\mathcal{H}_s^\perp=\mathcal{H}$), it follows that $f^n$ is Cauchy in $\mathcal{H}$. Similarly for $g^n$.  On the other hand $f^n \to f$ and $g^n \to g$ pointwise and by virtue of the embedding $\mathcal{H} \hookrightarrow L_1([0,1]) $ (see, \cite[Lemma 4.2]{CHLT15}), it follows that $f^n \to f, g^n \to g $ in $\mathcal{H}$. Consequently, the left hand side of \eqref{eq:to_pass_to_the_limit_soon...} converges to  $\langle  \Pi_{\mathcal{H}_s^\perp} f,  \Pi_{\mathcal{H}_s^\perp} g \rangle_{\mathcal{H}}$, which combined with \eqref{eq:after_limit_Q} finishes the proof. 
\end{proof}

By \cref{lem:lower-var} and \cref{cor:holder-controlled2}, the conditions of \cite[Corollary~6.10]{CHLT15} are satisfied by the covariance function $Q_s$. That and \cref{lem:H-norm-integral} give the following (recall that $\vartheta$ is introduced in \cref{lem:lower-var}). 
\begin{corollary}\label{cor:interpolation}
    Let $0\leq s<t\leq 1$. Let $\gamma>1-2H$. Let $f\in\cH$ such that   $\supp f\subset[s,t]$ and the restriction of $f$ to $[s,t]$ belongs to $\cC^\gamma$.
    Then there exists a constant $c=c(H,\gamma)>0$ such that the following bound holds:
    \begin{equ}
        \big\|\Pi_{\cH^\perp_s}f\|_{\cH}\geq c(t-s)^H\|f\|_{L^\infty([s,t])}\min\Big(1,\frac{\|f\|_{L^\infty([s,t])}}{\|f\|_{\cC^\gamma([s,t])}}\Big)^{\vartheta/2\gamma}.
    \end{equ}
\end{corollary}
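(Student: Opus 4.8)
The plan is to pass, via \cref{lem:H-norm-integral}, from the Cameron--Martin norm on the left-hand side to a two-dimensional Young integral against the covariance function $Q_s$, and then to invoke the non-degeneracy estimate \cite[Corollary~6.10]{CHLT15} for $Q_s$.

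First I would apply \cref{lem:H-norm-integral} with $g=f$: since $f\in\cH$, $\supp f\subset[s,t]$, and the restriction of $f$ to $[s,t]$ lies in $\cC^\gamma$ with $\gamma>1-2H$, it gives
\begin{equs}
\|\Pi_{\cH_s^\perp}f\|_{\cH}^2=\int_s^t\int_s^t f(u)f(r)\,Q_s(du,dr),
\end{equs}
the right-hand side being a well-defined two-dimensional Young integral precisely because the H\"older regularity $\gamma$ of $f$ and the controlled $1/(2H)$-variation of $Q_s$ (\cref{cor:holder-controlled2}) are complementary under the assumption $\gamma>1-2H$.

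Next I would verify that $Q_s$, viewed as the covariance of the Gaussian process $(\widetilde B^{s,1}_r)_{r\in[s,t]}$, meets the hypotheses of \cite[Corollary~6.10]{CHLT15}: the increments of $\widetilde B^{s,1}$ are negatively correlated, its covariance is of controlled $1/(2H)$-variation on subintervals (both shown above, the latter in \cref{cor:holder-controlled2}), and $\widetilde B^{s,1}$ satisfies the non-determinism bound
\begin{equs}
\Var\big(\widetilde B^{s,1}_{u,v}\bigm|\widetilde{\cG}_{s,u}\vee\widetilde{\cG}_{v,t}\big)\geq c\,(v-u)^{\vartheta},\qquad s\leq u<v\leq t,
\end{equs}
uniformly in $s$; the latter follows from \cref{lem:lower-var}, since enlarging the conditioning $\sigma$-field from $\widetilde{\cG}_{s,u}\vee\widetilde{\cG}_{v,t}$ to $\widetilde{\cG}_{s,u}\vee\widetilde{\cG}_{v,1}$ can only decrease the conditional variance. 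With these at hand, \cite[Corollary~6.10]{CHLT15} yields a constant $c=c(H,\gamma)>0$ for which
\begin{equs}
\int_s^t\int_s^t f(u)f(r)\,Q_s(du,dr)\geq c\,(t-s)^{2H}\,\|f\|_{L^\infty([s,t])}^2\,\min\Big(1,\frac{\|f\|_{L^\infty([s,t])}}{\|f\|_{\cC^\gamma([s,t])}}\Big)^{\vartheta/\gamma},
\end{equs}
and taking square roots gives the assertion.

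I do not expect a substantial obstacle here: the analytic content sits in the cited corollary and in the preparatory lemmas (especially the non-determinism \cref{lem:lower-var} and the controlled variation \cref{cor:holder-controlled2}), and the present statement is just their combination. The one point that needs care is that \cite[Corollary~6.10]{CHLT15} is applied on the subinterval $[s,t]$ with a constant depending on neither $s$ nor $t$; this is legitimate since all three inputs (negatively correlated increments, the controlled $1/(2H)$-variation, and the non-determinism exponent $\vartheta$) are available uniformly in $s$ by \cref{cor:holder-controlled2} and \cref{lem:lower-var}, and since \cref{cor:holder-controlled2} already records the correct dependence on the interval length on all subintervals, which is where the $(t-s)^H$ prefactor ultimately originates. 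Should \cite[Corollary~6.10]{CHLT15} be stated only for $\gamma\in(1-2H,1]$, one first applies it with exponent $\gamma\wedge 1$ and then upgrades to general $\gamma$ by interpolating $\cC^{\gamma\wedge 1}$ between $L^\infty$ and $\cC^\gamma$, which converts the $\vartheta/(2(\gamma\wedge 1))$-power into the claimed $\vartheta/(2\gamma)$-power.
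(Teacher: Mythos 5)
Your proposal is correct and follows essentially the same route as the paper's (very terse) proof: pass to the 2D Young integral via \cref{lem:H-norm-integral}, check the hypotheses of \cite[Corollary~6.10]{CHLT15} using \cref{cor:holder-controlled2} and \cref{lem:lower-var}, and conclude. The only addition you make is the observation that the conditioning $\sigma$-field in \cref{lem:lower-var} is larger than the one required for the local non-determinism condition on $[s,t]$, hence the Gaussian conditional variance can only decrease, which makes the lower bound still valid — a useful clarification, but not a different argument.
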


\section{Properties of the flow of the driftless equation}
In this section we derive properties of the flow of a driftless analogue to \eqref{eq:main}, that is, we consider 
\begin{equ}\label{eq:flow-def}
\phi_{t}^{s,x}=x+ \int_{s}^t \blue{\sigma(\phi_r^{s,x})}\,d\blue{B_r},\quad t \in [s, 1].
\end{equ} 
Recall that the flow $\phi^{s,x}_t$ exists and it  is smooth and invertible in $x$ (see \cite[Theorem 8.15]{Friz-Hairer}). We will be denoting its inverse by $\overleftarrow{\phi}^{s,x}_t$. More precisely,  building upon ideas introduced in \cite{Lyons-Cass-Litterer} and \cite{INAHAMA}, we obtain estimates for $\phi^{s,x}$, its Jacobian, the inverse of the Jacobian, and their Malliavin derivatives (see, \cref{lem:bounds_for_J_sigma(phi)}, \cref{lem:flow-D-with-power}, and \cref{lem:Jacobi-Malliavin-Derivatives}). One important feature is that these are almost sure estimates which hold uniformly in $(s,x)$. 
% In this section we build on ideas introduced in \cite{Lyons-Cass-Litterer} and \cite{INAHAMA}. 

\begin{lemma}      \label{lem:bound_controlled_flow}
There exist constants $C=C(H_-,\| \sigma\|_{\cC^2})$ and $q=q(H_-)$  such that for all $s \in [0, 1]$, $x \in \R^d$,  we have 
    \begin{equs}  \label{eq:controlled_phi_as}
      \,   [\blue{\phi^{s, x}}]_{\mathcal{D}^{2H_-} _B([s,1])}  \leq C (1+[\blue{B}]_{\cR^{H_-}})^q.
    \end{equs}
\end{lemma}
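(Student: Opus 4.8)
The plan is to prove the bound by establishing a priori control of the rough integral against $\blue{B}$ that defines the flow $\phi^{s,x}$, viewed as a fixed point of the map $\blue{\psi}\mapsto x+\int_s^\cdot\blue{\sigma(\psi_r)}\,d\blue{B_r}$ in the space of controlled paths. The standard strategy is a Gronwall-type argument on small intervals whose length depends only on $[\blue{B}]_{\cR^{H_-}}$, followed by patching via the additivity estimate \eqref{eq:sum_partition}.

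\emph{Step 1: Local estimate.} Fix $s$ and $x$. On a subinterval $[a,b]\subset[s,1]$ with $b-a=:h$ small, apply the composition estimate \eqref{eq:composition_estimate_controlled} to $\blue{\sigma(\phi^{s,x})}$ (legitimate since $\sigma\in\cC^\infty$, in particular $\cC^2$) together with the rough integration bound \eqref{eq:boundedness_rough_integration}. Since $\phi^{s,x}$ is a fixed point, $\blue{\phi^{s,x}}=(x+\int_a^\cdot\blue{\sigma(\phi^{s,x}_r)}\,d\blue{B_r},\ \sigma(\phi^{s,x}))$ on $[a,b]$, so
\begin{equs}
[\blue{\phi^{s,x}}]_{\cD^{2H_-}_B([a,b])}\le N\big(\|\blue{\sigma(\phi^{s,x})}\|_{\cD^{2H_-}_B([a,b])}+1\big)[\blue{B}]_{\cR^{H_-}([a,b])}.
\end{equs}
Bounding $\|\blue{\sigma(\phi^{s,x})}\|_{\cD^{2H_-}_B([a,b])}$ by \eqref{eq:composition_estimate_controlled} in terms of $\|\blue{\phi^{s,x}}\|_{\cD^{2H_-}_B([a,b])}$ and using $[\phi^{s,x}]_{\cC^{H_-}([a,b])}\le [R^{\blue{\phi^{s,x}}}]_{\cC^{2H_-}_2}h^{H_-}+\|\sigma(\phi^{s,x})\|_{\cC^0}[B]_{\cC^{H_-}}$ from \eqref{eq:holder_bound_by_rough}, one obtains, with $\sigma$ and $\nabla\sigma$ bounded,
\begin{equs}
[\blue{\phi^{s,x}}]_{\cD^{2H_-}_B([a,b])}\le N\big(1+[\blue{B}]_{\cR^{H_-}}\big)^{2}\,[\blue{B}]_{\cR^{H_-}([a,b])}\big(1+[\blue{\phi^{s,x}}]_{\cD^{2H_-}_B([a,b])}\,h^{H_-}\big),
\end{equs}
where I also used $[\blue{B}]_{\cR^{H_-}([a,b])}\le [\blue{B}]_{\cR^{H_-}}$ and $h\le 1$. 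Hence if $h$ is chosen so that $N(1+[\blue{B}]_{\cR^{H_-}})^{2}[\blue{B}]_{\cR^{H_-}}h^{H_-}\le \tfrac12$, the term on the right can be absorbed, giving $[\blue{\phi^{s,x}}]_{\cD^{2H_-}_B([a,b])}\le 2N(1+[\blue{B}]_{\cR^{H_-}})^{3}$. Note this threshold for $h$ is of the form $h\le c(1+[\blue{B}]_{\cR^{H_-}})^{-q_0}$ and is independent of $(s,x)$; it requires knowing a priori that $\phi^{s,x}$ is a controlled path with \emph{finite} $\cD^{2H_-}_B$ norm, which is granted by \cite[Theorem 8.15]{Friz-Hairer}, so the absorption is justified rather than circular.

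\emph{Step 2: Patching.} Partition $[s,1]$ into $M$ intervals $[u_{i-1},u_i]$ each of length $\le h$; clearly $M\le 1/h+1\le N(1+[\blue{B}]_{\cR^{H_-}})^{q_0}$. Applying \eqref{eq:sum_partition} with $u_0=s$, $u_M=1$,
\begin{equs}
[\blue{\phi^{s,x}}]_{\cD^{2H_-}_B([s,1])}\le 2\big(1+[B]_{\cC^{H_-}([s,1])}\big)\sum_{i=1}^{M}[\blue{\phi^{s,x}}]_{\cD^{2H_-}_B([u_{i-1},u_i])}\le 2(1+[\blue{B}]_{\cR^{H_-}})\cdot M\cdot 2N(1+[\blue{B}]_{\cR^{H_-}})^{3}.
\end{equs}
This gives the claimed bound with $C=C(H_-,\|\sigma\|_{\cC^2})$ and an explicit polynomial power $q=q(H_-)$ (one can take $q=q_0+4$), uniform in $(s,x)$.

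\emph{Main obstacle.} The only delicate point is making Step 1 rigorous without circularity: the absorption argument presumes $[\blue{\phi^{s,x}}]_{\cD^{2H_-}_B([a,b])}<\infty$, which we do have from the cited existence theory, but one should also check that the \emph{same} small interval length $h$ works simultaneously for all $(s,x)$ — this is immediate because the only data entering the threshold are $\|\sigma\|_{\cC^2}$ and $[\blue{B}]_{\cR^{H_-}}$, not the starting point. A secondary technical point is bookkeeping the index contractions in the composition/product estimates, but those are exactly \eqref{eq:composition_Holder_R_primitive}--\eqref{eq:controlled_product} and require no new ideas.
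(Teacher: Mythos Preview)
Your overall strategy --- derive a buckling inequality on short intervals and then patch via \eqref{eq:sum_partition} --- is exactly what the paper does; in fact your Step~2 is precisely the content of \cref{lem:buckling_D_norm}, which the paper states separately and invokes at the end of its proof.

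There is, however, a genuine gap in Step~1. The factor $h^{H_-}$ in your second displayed inequality does \emph{not} follow from the combination of \eqref{eq:boundedness_rough_integration}, \eqref{eq:composition_estimate_controlled}, and \eqref{eq:holder_bound_by_rough} that you cite. The estimate \eqref{eq:boundedness_rough_integration} only gives
\[
[\blue{\phi^{s,x}}]_{\cD^{2H_-}_B([a,b])}\le N\big(\|\blue{\sigma(\phi^{s,x})}\|_{\cD^{2H_-}_B([a,b])}+1\big)[\blue{B}]_{\cR^{H_-}([a,b])},
\]
and neither the H\"older rough-path seminorm $[\blue{B}]_{\cR^{H_-}([a,b])}$ nor the composition bound \eqref{eq:composition_estimate_controlled} produces a factor of $(b-a)^{H_-}$; both are scale-invariant in the interval length. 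The $h^{H_-}$ you need comes from the \emph{sharper} first-order remainder estimate \eqref{eq:rough_integral_first_order}, which gives
\[
[R^{\blue{\phi^{s,x}}}]_{\cC^{2H_-}_2([a,b])}\lesssim [\blue{B}]_{\cR^{H_-}}\big([\blue{\sigma(\phi^{s,x})}]_{\cD^{2H_-}_B([a,b])}\,h^{H_-}+1\big),
\]
because the $|t-s|^{3H_-}$ there becomes $h^{H_-}$ after dividing by $|t-s|^{2H_-}$. This is exactly how the paper proceeds: it starts from \eqref{eq:rough_integral_first_order}, not \eqref{eq:boundedness_rough_integration}, then feeds in \eqref{eq:composition_estimate_controlled} and \eqref{eq:holder_bound_by_rough} to close the buckling inequality, and finally applies \cref{lem:buckling_D_norm}. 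Once you replace your citation of \eqref{eq:boundedness_rough_integration} by \eqref{eq:rough_integral_first_order}, your argument is correct and coincides with the paper's.
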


\begin{proof}
Let $s \in [0, 1]$, $x \in \R^d$. For $(u,v) \in [s,1]^2_{\leq}$, we have    
    \begin{equs}
      \phi_{u,v}^{s,x}= \int_u^v \blue{\sigma( \phi^{s,x}_r)} \, d\blue{B_r}, 
    \end{equs}
    which gives 
    \begin{equs}
      | \phi_{u,v}^{s,x}-\sigma( \phi^{s,x}_u) B_{u, v}|  & = \Big| \int_u^v \blue{\sigma( \phi^{s,x}_r)} \, d\blue{B_r}-\sigma( \phi^{s,x}_u) B_{u, v} %B_{u, t}
      \Big| 
      \\
      &  \lesssim  [\blue{B}]_{\cR^{H_-}}\big( [\blue{\sigma( \phi^{s,x})}]_{\mathcal{D}^{2H_-}_B([u,v])} |u-v|^{3H_-} + |\nabla \sigma \sigma (\phi_u^{s,x})| |u-v|^{2H_-} \big),
    \end{equs}
    where we have used \eqref{eq:rough_integral_first_order}. This now implies that 
    \begin{equs}         \label{eq:bound_R^Y}
        \,  [R^\blue{ \phi^{s,x}}]_{\cC^{2H_-}_2([u,v])}  
      &  \lesssim [\blue{B}]_{\cR^{H_-}}\big( [\blue{\sigma( \phi^{s,x})}]_{\mathcal{D}^{2H_-}_B([u,v])} |u-v|^{H_-} + 1 \big),
    \end{equs}
    where we have used the fact that $\nabla \sigma \sigma (\phi_u^{s,x})$ is bounded by $\|\sigma\|_{\cC^1}^2$.  
    Next by using  \eqref{eq:composition_estimate_controlled}, we get 
    \begin{equs}        
      \,   [\blue{\sigma( \phi^{s,x})}]_{\mathcal{D}^{2H_-}_B([u,v])} &  \lesssim ( 1+\|\sigma( \phi^{s,x})\|^2_{\cC^0([s,1])}+ [B]^2_{\cC^{H_-}} ) ( [\blue{ \phi^{s,x}}]_{\mathcal{D}^{2H_-}_B([u,v])}+ \|\sigma( \phi^{s,x})\|_{\cC^0([s,1])})
      \\
     &  \lesssim ( 1+ [B]^2_{\cC^{H_-}} ) ( [\blue{ \phi^{s,x}}]_{\mathcal{D}^{2H_-}_B([u,v])}+1), 
    \end{equs}
    where we have used the boundedness of $\sigma$.  By replacing this in \eqref{eq:bound_R^Y}, we get 
    \begin{equs}               \label{eq:whatever}
        \,  [R^\blue{ \phi^{s,x}}]_{\cC^{2H_-}_2([u,v])}  
      &  \lesssim (1+[\blue{B}]^3_{\cR^{H_-}}) \big( [\blue{ \phi^{s,x}}]_{\mathcal{D}^{2H_-}_B([u,v])} |u-v|^{H_-} + 1 \big).
    \end{equs}
    Next, by using the regularity of $\sigma$ and   \eqref{eq:holder_bound_by_rough}  we have 
\begin{equs}         
   \, [\sigma( \phi^{s,x})]_{\cC^{H_-}([u,v])} & \lesssim [\phi^{s,x}]_{\cC^{H_-}([s,t])} \\&  \lesssim ([R^\blue{\phi^{s,x}}]_{\cC^{2H_-}_2([u,v])}|u-v|^{H_-}+ \|\sigma( \phi^{s,x})\|_{\cC^0([s,1])} [B]_{\cC^{H_-}})
   \\
   &  \lesssim ([R^\blue{\phi^{s,x}}]_{\cC^{2H_-}_2([u,v])}|u-v|^{H^-}+ [B]_{\cC^{H_-}}).
\end{equs}
 Adding the above inequality to 
\eqref{eq:whatever}, gives 
\begin{equs}              
        \,  [\blue{ \phi^{s,x}}]_{\mathcal{D}^{2H_-}_B([u,v])}
      &  \leq C (1+[\blue{B}]^3_{\cR^{H_-}}) \big( [\blue{ \phi^{s,x}}]_{\mathcal{D}^{2H_-}_B([u,v])} |u-v|^{H_-} + 1 \big),
    \end{equs}
    where $C$ depends only on $H_-$ and $\| \sigma\|_{\cC^2}$. The result now follows from \cref{lem:buckling_D_norm}. 
\end{proof}
The next result follows immediately from \cref{lem:bound_controlled_flow} and \eqref{eq:holder_bound_by_rough}. 
\begin{corollary}            \label{cor:phi-x}
    There exist constants $C=C(H_-,\| \sigma\|_{\cC^2})$ and $q=q(H_-)$ such that for all $(s,t) \in [0, 1]^2_{\leq}$, $x \in \R^d$,  we have 
    \begin{equs}  \label{eq:phi^x-x}
      |\phi^{s,x}_t-x| \leq C (1+[\blue{B}]_{\cR^{H_-}})^q|t-s|^{H_-}.
    \end{equs}
\end{corollary}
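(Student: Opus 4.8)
The plan is simply to unpack the two ingredients cited in the statement. Since $\phi^{s,x}_t-x=\phi^{s,x}_{s,t}$, the trivial bound $|\phi^{s,x}_{s,t}|\leq[\phi^{s,x}]_{\cC^{H_-}([s,t])}|t-s|^{H_-}$ reduces everything to controlling the H\"older seminorm of the flow on $[s,t]$. Applying \eqref{eq:holder_bound_by_rough} with $f=\phi^{s,x}$, Gubinelli derivative $f'=\sigma(\phi^{s,x})$, reference path $g=B$, exponent $\gamma=H_-$, and interval $[s,t]$ gives
\begin{equs}
[\phi^{s,x}]_{\cC^{H_-}([s,t])}\leq[R^{\blue{\phi^{s,x}}}]_{\cC^{2H_-}_2([s,t])}|t-s|^{H_-}+\|\sigma(\phi^{s,x})\|_{\cC^0([s,t])}[B]_{\cC^{H_-}([s,t])}.
\end{equs}

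Next I would dispose of each term by a uniform bound. The Gubinelli-derivative factor satisfies $\|\sigma(\phi^{s,x})\|_{\cC^0([s,t])}\leq\|\sigma\|_{\cC^0}$, with no dependence on $s$ or $x$, and $[B]_{\cC^{H_-}([s,t])}\leq[\blue{B}]_{\cR^{H_-}}$. For the remainder, one uses that the $\mathcal{D}^{2H_-}_B$-seminorm restricted to a subinterval is dominated by the one on the whole interval, so that $[R^{\blue{\phi^{s,x}}}]_{\cC^{2H_-}_2([s,t])}\leq[\blue{\phi^{s,x}}]_{\mathcal{D}^{2H_-}_B([s,t])}\leq[\blue{\phi^{s,x}}]_{\mathcal{D}^{2H_-}_B([s,1])}$, and then \cref{lem:bound_controlled_flow} bounds the last quantity by $C(1+[\blue{B}]_{\cR^{H_-}})^q$. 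Combining the two estimates, using $|t-s|\leq1$ to replace the factor $|t-s|^{2H_-}$ coming from the remainder term by $|t-s|^{H_-}$, and absorbing the linear term $\|\sigma\|_{\cC^0}[\blue{B}]_{\cR^{H_-}}$ into a power of $1+[\blue{B}]_{\cR^{H_-}}$ (enlarging $q$ to $\max(q,1)$ and $C$ accordingly), yields
\begin{equs}
|\phi^{s,x}_t-x|\leq C'(1+[\blue{B}]_{\cR^{H_-}})^{q'}|t-s|^{H_-},
\end{equs}
with $C'$ and $q'$ depending only on $H_-$ and $\|\sigma\|_{\cC^2}$.

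There is no genuine obstacle here: this is a routine corollary of \cref{lem:bound_controlled_flow}. The only point worth a moment's attention is that all bounds must be uniform in $(s,x)$, but this is inherited verbatim --- from \cref{lem:bound_controlled_flow} for the remainder term, and from the $(s,x)$-independent bound $\|\sigma\|_{\cC^0}$ for the Gubinelli-derivative term --- while the passage from $[s,t]$ to $[s,1]$ only loses constants in the favourable direction.
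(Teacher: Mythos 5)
Your proof is correct and is precisely the unpacking of the paper's one-line justification (``follows immediately from \cref{lem:bound_controlled_flow} and \eqref{eq:holder_bound_by_rough}''): you apply \eqref{eq:holder_bound_by_rough} to $\blue{\phi^{s,x}}$ on $[s,t]$, bound the Gubinelli derivative by $\|\sigma\|_{\cC^0}$, the remainder by the $\mathcal{D}^{2H_-}_B([s,1])$-seminorm via monotonicity and then by \cref{lem:bound_controlled_flow}, and absorb everything into $C(1+[\blue{B}]_{\cR^{H_-}})^q$ using $|t-s|\leq 1$. This is exactly the intended argument, with the correct uniformity in $(s,x)$.
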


\begin{proposition}\label{Prop:sta-ini}
    Let $(\phi_{t}^{s,x})_{t\in[s,1]}$ be given by \eqref{eq:flow-def}, let X be a solution to the SDE \eqref{eq:main}. Then, for any $p \geq 1$ there exists a constant $C=C( H_-, p, C_{D,p}^X, C_{S,2p}^X, \|\sigma\|_{\cC^2}) $, such that for all $(s,t) \in [0, 1]^2_{\leq}$ we have 
    \begin{equs}
        \|X_t-\phi^{s, X_s}_t\|_{L_p(\Omega)} \leq C|t-s|^{1+\alpha H}.
    \end{equs}
\end{proposition}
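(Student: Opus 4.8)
The plan is to compare $X$ with the driftless flow $\phi^{s,X_s}$ started at the (random) point $X_s$, exploiting that the two paths agree at time $s$ \emph{together with their first-order, i.e. Gubinelli, behaviour}; this is exactly what buys the extra factor $|t-s|^{H_-}$ when the relevant rough integral is estimated. Fix $p\geq1$ and $s\in[0,1]$, set $Z:=X-\phi^{s,X_s}$ on $[s,1]$, and note $Z_s=0$ since $\phi^{s,X_s}_s=X_s$ by \eqref{eq:flow-def}. Subtracting \eqref{eq:flow-def} (with $x=X_s$) from the identity (iv) of \cref{def_solution}, splitting the rough integral at $s$ and using its linearity in the (controlled) integrand, one gets for $t\in[s,1]$
\begin{equs}
 Z_{s,t}=D^X_{s,t}+\int_s^t\blue{f_r}\,d\blue{B_r},\qquad \blue{f}:=\blue{\sigma(X)}-\blue{\sigma(\phi^{s,X_s})},
\end{equs}
where $\blue{\sigma(X)}=(\sigma(X),\nabla\sigma(X)\sigma(X))$ and $\blue{\sigma(\phi^{s,X_s})}=(\sigma(\phi^{s,X_s}),\nabla\sigma(\phi^{s,X_s})\sigma(\phi^{s,X_s}))$ are controlled paths over $B$ on $[s,1]$ (by the composition rule recalled after \eqref{eq:sum_partition}), hence so is their difference $\blue{f}=(f,f')$. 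The key point is that, since $\phi^{s,X_s}_s=X_s$,
\begin{equs}
 f_s=\sigma(X_s)-\sigma(\phi^{s,X_s}_s)=0,\qquad f'_s=\nabla\sigma(X_s)\sigma(X_s)-\nabla\sigma(\phi^{s,X_s}_s)\sigma(\phi^{s,X_s}_s)=0.
\end{equs}
Feeding $f_s=0$ and $f'_s=0$ into the first-order rough-integral estimate \eqref{eq:rough_integral_first_order} (with $\gamma=H_-$, which is admissible since $H_-\in(1/3,1/2)$) then gives, \emph{pathwise},
\begin{equs}
 \Big|\int_s^t\blue{f_r}\,d\blue{B_r}\Big|\leq N\,[\blue{B}]_{\cR^{H_-}([s,t])}\,[\blue{f}]_{\cD^{2H_-}_B([s,t])}\,|t-s|^{3H_-}.
\end{equs}

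The next step is to control $[\blue{f}]_{\cD^{2H_-}_B([s,t])}$ in $L_{p'}(\Omega)$ for every $p'$, uniformly in $s,t$. Since the $\cD^{2H_-}_B$-seminorm can only shrink when the interval does,
\begin{equs}
 [\blue{f}]_{\cD^{2H_-}_B([s,t])}\leq[\blue{\sigma(X)}]_{\cD^{2H_-}_B([0,1])}+[\blue{\sigma(\phi^{s,X_s})}]_{\cD^{2H_-}_B([s,1])}=:\mathcal{C}.
\end{equs}
Applying the composition estimate \eqref{eq:composition_estimate_controlled} (with $F=\sigma$) to each summand, together with the boundedness of $\sigma$ and its derivatives, the bound \eqref{eq:bound_X_controlled} on $\|\blue{X}\|_{\cD^{2H_-}_B([0,1])}$, the \emph{uniform-in-$x$} flow bound $[\blue{\phi^{s,x}}]_{\cD^{2H_-}_B([s,1])}\leq C(1+[\blue{B}]_{\cR^{H_-}})^q$ from \cref{lem:bound_controlled_flow} (which therefore applies also with $x=X_s$), and the fact that the Gaussian rough path $\blue{B}$ has moments of all orders, one obtains $\sup_{s\in[0,1]}\|\mathcal{C}\|_{L_{p'}(\Omega)}<\infty$ for every $p'\geq1$, with the dependence advertised in the statement.

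Combining the two displays above with the Cauchy--Schwarz inequality and $[\blue{B}]_{\cR^{H_-}([s,t])}\leq[\blue{B}]_{\cR^{H_-}}$ yields
\begin{equs}
 \Big\|\int_s^t\blue{f_r}\,d\blue{B_r}\Big\|_{L_p(\Omega)}\leq N\big\|[\blue{B}]_{\cR^{H_-}}\big\|_{L_{2p}(\Omega)}\,\|\mathcal{C}\|_{L_{2p}(\Omega)}\,|t-s|^{3H_-}\leq C_1|t-s|^{3H_-}.
\end{equs}
Since $\|D^X_{s,t}\|_{L_p(\Omega)}\leq C^X_{D,p}|t-s|^{1+\alpha H}$ by \eqref{eq:bound:D^X}, the decomposition of $Z_{s,t}$ gives
\begin{equs}
 \|X_t-\phi^{s,X_s}_t\|_{L_p(\Omega)}=\|Z_{s,t}\|_{L_p(\Omega)}\leq C^X_{D,p}|t-s|^{1+\alpha H}+C_1|t-s|^{3H_-},
\end{equs}
and, as $3H_->1$ by \eqref{eq:Choice of H-}, $|t-s|\leq1$, and $1+\alpha H\leq1\leq 3H_-$ in the regime under consideration, the second term is dominated by the first, which is the claim. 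I expect the only genuinely delicate point to be the \emph{uniformity in $s$} of the $L_{p'}$-bound on $\mathcal{C}$, which is precisely where one relies on the uniform-in-$(s,x)$ character of the flow estimates of this section (\cref{lem:bound_controlled_flow} and \cref{cor:phi-x}); everything else is routine manipulation with the rough-path estimates collected in the preliminaries. (A more conceptual alternative would be the rough variation-of-constants identity $X_t-\phi^{s,X_s}_t=\int_s^t J^{r,X_r}_t\,dD^X_r$, with $J$ the Jacobian of the flow, but making the cancellation of the two $\blue{B}$-integrals rigorous at this level of generality is more cumbersome than the direct estimate above.)
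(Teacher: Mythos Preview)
Your proof is correct and follows essentially the same route as the paper's: decompose $X_t-\phi^{s,X_s}_t$ into the drift increment $D^X_{s,t}$ plus the rough integral of $\blue{f}=\blue{\sigma(X)}-\blue{\sigma(\phi^{s,X_s})}$, exploit $f_s=f'_s=0$ together with \eqref{eq:rough_integral_first_order} to get the $|t-s|^{3H_-}$ factor, bound $[\blue{f}]_{\cD^{2H_-}_B}$ via the composition estimate \eqref{eq:composition_estimate_controlled} and the uniform-in-$x$ flow bound of \cref{lem:bound_controlled_flow}, and conclude from $3H_->1\geq 1+\alpha H$. The paper's argument is the same up to notation (their $Y,Y',Y''$ are your $Z,f,f'$).
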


\begin{proof}
    Let us fix $(s, t) \in [0, 1]^2_{\leq}$ and for $v \in [s, t]$ let us introduce the notation
\begin{equs}
    Y_v:=  X_v- \phi_v^{s,X_s}, \qquad Y'_v:= \sigma(X_v)- \sigma(\phi_v^{s,X_s}), \qquad Y''_v:= \nabla \sigma \sigma (X_v)- \nabla \sigma \sigma (\phi_v^{s,X_s}),
\end{equs}
and 
\begin{equs}
    \blue{Y}=(Y, Y'),  \qquad  \blue{Y'}=(Y', Y'').
\end{equs}
Notice that
    \begin{equs}
      Y_{s, t}=D^X_{s,t}- \int_s^t \blue{Y_r'} \, d\blue{B_r}, 
    \end{equs}
    which gives 
    \begin{equs}
      |Y_{s, t}-Y'_s B_{s, t}|  & =|D^X_{s, t}|+ \Big|  \int_s^t \blue{Y_r'} \, d\blue{B_r} -Y'_s B_{s, t}\Big| 
      \\
      & \lesssim  |D^X_{s,t}|+  [\blue{B}]_{\cR^{H_-}}\big( [\blue{Y'}]_{\mathcal{D}_B^{2H_-}([s,t])} |t-s|^{3H_-} + |Y''_s| |t-s|^{2H_-} \big),
    \end{equs}
    where we have used \eqref{eq:rough_integral_first_order}. 
    Next, notice that  $Y'_s=Y''_s=0$, which combined with the inequality above gives  
    \begin{equs}         \label{eq:estimate_Y_s_t}
        |Y_{s,t}| \lesssim  |D^X_{s,t}|+  [\blue{B}]_{\cR^{H_-}}[\blue{Y'}]_{\mathcal{D}^{2H_-}_B([u,v])} |t-s|^{3H_-}. 
    \end{equs}
    By  \eqref{eq:composition_estimate_controlled}, we get 
    \begin{equs}
      \,  [\blue{Y'}]_{\mathcal{D}^{2H_-}_B([s,t])} & \leq [ \blue{\sigma (X)}]_{\mathcal{D}^{2H_-}_B([s,t])} + [\blue{\sigma ( \phi^{s,X_s})}]_{\mathcal{D}^{2H_-}_B([s,t])} 
      \\
      & + \lesssim  \big( 1+  [B]^2_{\cC^{H_-}} \big) \big( 1+ [ \blue{X}]_{\mathcal{D}^{2H_-}_B([s,t])} + [\blue{\phi^{s,X_s}}]_{\mathcal{D}^{2H_-}_B([s,t])} \big)
    \end{equs}
    where we have used the fact that the Gubinelli derivatives of $X$ and $\phi^{s,x}$ are bounded by $\| \sigma\|_{\cC^0}$. Replacing the above in \eqref{eq:estimate_Y_s_t}, gives
    \begin{equs}
        |Y_{s,t}| \lesssim |D^X_{s,t}|+  \big(1+[\blue{B}]^3_{\cR^{H_-}}\big)\big( 1+ [ \blue{X}]_{\mathcal{D}^{2H_-}_B([s,t])} + [\blue{\phi^{s,X_s}}]_{\mathcal{D}^{2H_-}_B([s,t])} \big)  |t-s|^{3H_-}.
    \end{equs}
   By H\"older's inequality we obtain  
    \begin{equs}
        \|Y_{s,t}\|_{L_p(\Omega)} \lesssim \|D^X_{s,t}\|_{L_p(\Omega)}+  \big( \| [\blue{X}]_{\mathcal{D}^{2H_-} _B([s,t])} \|_{L_{2p}(\Omega)}+\| [\blue{\phi^{s,X_s}}]_{\mathcal{D}^{2H_-} _B([s,t])} \|_{L_{2p}(\Omega)} +1 \big)  |t-s|^{3H_-}.
    \end{equs}
    By \eqref{eq:bound:D^X}, \eqref{eq:bound_X_controlled}, and \cref{lem:bound_controlled_flow},  we get 
   \begin{equs}
       \|Y_{s,t}\|_{L_p(\Omega)} \lesssim  |t-s|^{1+\alpha H} +|t-s|^{3H_-}.
   \end{equs}
   The claim now follows by the observations that $Y_{s,t}= X_t- \phi_t^{s,X_s}$ and  $3H_->1> 1+\alpha H$. 
    \end{proof}

Next, we want to derive estimates for the jacobian matrix of the flow, that is, for  $J^{s,x}_t := \nabla_x \phi^{s,x}_t$. It is known that with probability one, for all $s \in [0,1], x \in \R^d$, we have that $\blue{J^{s,x}}:=(J^{s,x}, \nabla\sigma( \phi^{s,x}) J^{s,x} ), \blue{(J^{s,x})^{-1}}:=((J^{s,x})^{-1}, -\nabla \sigma( \phi^{s,x}) (J^{s,x})^{-1} ) \in \mathcal{D}_B^{2H_-}([s, 1])$, and satisfy the following equations (see, e.g., \cite[Theorem 8.15, p. 148]{Friz-Hairer})
\begin{equs}
    J^{s,x}_t & =  \mathbb{I}_{d}+ \int_s^t \blue{ \nabla\sigma(\phi^{s,x}_r) J^{s,x}_r} \, d \blue{B_r}, \qquad  t \in [s, 1],
    \\
    (J^{s,x}_t)^{-1} &= \mathbb{I}_{d} -\int_s^t \blue{ \nabla\sigma(\phi^{s,x}_r) (J^{s,x}_r)^{-1}} \, d \blue{B_r}, \qquad  t \in [s, 1],
\end{equs}
where $\mathbb{I}_d$ is the identity matrix.
It follows from the above (or simply by using the chain rule and the flow property $\phi^{s,x}_t=\phi_t^{0,\overleftarrow{\phi}^{0,x}_s}$) that
\begin{equ}\label{eq:Jacobian"flow"}
    J^{s,x}_t=J^{0,y}_t (J^{0,y}_s)^{-1}|_{y=\overleftarrow{\phi}^{0,x}_s}.
\end{equ} 
To obtain estimates simultaneously for both $J^{s,x}$ and $(J^{s,x})^{-1}$, let $E$ be a finite dimensional normed linear space, $F\in \cC^2(\R^d;  \mathcal{L}(E; \mathcal{L}(\R^{d_0} ; E))$ and $s \in [0,t]$, $x \in \R^d$,  $\blue{M}= (M, M') \in \mathcal{D}^{2H_-}_B([s,1]; \mathcal{L}(\R^{d_0}; E ))$ and consider the equation
\begin{equs} \label{eq:general_equation_Jacobian}
    V_t=V_0+ \int_s^t (\blue{F(\phi^{s,x}_r) V_r+M_r} )\, d\blue{B_r}, \qquad t \in [s, 1].
\end{equs}
Let us set $1/\rho=H_-$. We will first obtain estimates in $\rho$-variation norms by using some results by \cite{Lyons-Cass-Litterer} and those estimates will be upgraded to H\"older estimates at a second step. For this, let us recall some facts and some let us introduce some notation related to $\rho$-variation norms.  For $\blue{X}=(X,X') \in \mathcal{D}_B^{2H_-}$, 
recall that one has the estimate 
\begin{equs}
\big| \int_s^t \blue{X_r} \, d\blue{B_r} -X_sB_{s,t}-X'_s\mathbb{B}_{s,t}\big| 
    &  \leq C \big( [R^\blue{X}]_{\mathcal{V}^{\rho/2}_2([s,t])}[B]_{\mathcal{V}^{\rho}([s,t])}+ [X']_{\mathcal{V}^{\rho}([s,t])}[\mathbb{B}]_{\mathcal{V}^{\rho/2}_2([s,t])} \big).
\end{equs}
which implies 
\begin{equs}
  \big| \int_s^t \blue{X_r} \, d\blue{B_r} -X_sB_{s,t}\big| 
    & \leq C \big( [R^X]_{\mathcal{V}^{\rho/2}_2([s,t])}[B]_{\mathcal{V}^{\rho}([s,t])}+ ([X']_{\mathcal{V}^{\rho}([s,t])}+|X'_s|)[\mathbb{B}]_{\mathcal{V}^{\rho/2}_2([s,t])} \big) ,    
    \\
    \label{est:rough_integral_p_variation}
\end{equs}
for a constant $C=C(H_-)$. Further, let us set 
\begin{equs}   \label{eq:def_bold_B_p_variation}
   \,  [\blue{B}]_{\mathcal{V}^{\rho}([s,t])}:= \big( [B]^\rho_{\mathcal{V}^{\rho}([s,t])}+ [\mathbb{B}]^{\rho/2}_{\mathcal{V}^{\rho/2}_2([s,t])} \big)^{1/\rho},
\end{equs}
and for $\blue{X}=(X, X') \in \mathcal{D}^{2H_-}_B$, let us set 
\begin{equs}
   \,  [\blue{X}]_{\mathbf{V}^{\rho/2}_B([s, t])} & : = [R^\blue{X}]_{\mathcal{V}^{\rho/2}_2([s,t])} + [X']_{\mathcal{V}^{\rho}([s,t])}
   \\
    \|\blue{X}\|_{\mathbf{V}^{\rho/2}_B([s, t])} & : =   [\blue{X}]_{\mathcal{V}^{\rho/2}_B([s, t])}+ \|X\|_{\cC^0([s,t])}+\|X'\|_{\cC^0([s,t])}
\end{equs}
The following lemma is quite standard and its proof is straightforward. 
\begin{lemma}      
  Set $\rho=1/H_- $. There exists a constant $C=C(H_-)$ such that  for all $(s,t) \in [0,1]^2_{\leq}$, $\blue{X}=(X,X') \in \mathcal{D}^{2H_-}_B([s,t])$, and $F \in \mathcal{C}^2$,  we have 
    \begin{equs}
  \,    [X]_{\mathcal{V}^\rho([s,t])} & \leq C([R^{\blue{X}}]_{\mathcal{V}^{\rho/2}_2([s,t])} + \|X'\|_{\cC^0([s,t])}[B]_{\mathcal{V}^\rho([s,t])}),   \label{eq:p_var_bounded_remainder}
\\         
\,   [R^{\blue{F(X)}}]_{\mathcal{V}^{\rho/2}_2([s,t])}  &  \leq C\|F\|_{\mathcal{C}^2} \big(  [R^{\blue{X}}]_{\mathcal{V}^{\rho/2}_2([s,t])}+ \|X'\|_{\mathcal{C}^0([s,t])}[R^\blue{X}]_{\mathcal{V}^{\rho/2}_2([s,t])} [B]_{\mathcal{V}^\rho([s,t])}
  \\ 
  & \qquad +\|X'\|_{\mathcal{C}^0([s,t])}^2 [B]^2_{\mathcal{V}^\rho([s,t])} \big). 
    \label{eq:composition_bound_p_variation}
\end{equs}
\end{lemma}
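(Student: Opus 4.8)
Both estimates are the $\rho$-variation transcriptions of the H\"older bounds \eqref{eq:holder_bound_by_rough}, \eqref{eq:composition_Holder_R_primitive} and \eqref{eq:composition_Holder_R}, and the plan is to repeat those arguments with every H\"older seminorm replaced by the corresponding $\mathcal V^\rho$- or $\mathcal V^{\rho/2}_2$-seminorm. Two elementary facts make the transcription mechanical. First, for any single subinterval $[u,v]\subseteq[s,t]$ one has $|B_{u,v}|\le[B]_{\mathcal V^\rho([s,t])}$ and $|R^{\blue X}_{u,v}|\le[R^{\blue X}]_{\mathcal V^{\rho/2}_2([s,t])}$, obtained by evaluating the defining supremum on a partition containing $[u,v]$ and using $R^{\blue X}_{u,u}=0$. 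Second, since $H_-<1/2$ we have $\rho\ge2$, so $\rho/2\le\rho$ and $2/\rho\le1$; in particular $\sum_i|a_i|^\rho\le(\sup_i|a_i|)^{\rho/2}\sum_i|a_i|^{\rho/2}$ and $(a+b+c)^{2/\rho}\le a^{2/\rho}+b^{2/\rho}+c^{2/\rho}$ for nonnegative reals.

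For \eqref{eq:p_var_bounded_remainder} I would use the controlled decomposition $X_{u,v}=R^{\blue X}_{u,v}+X'_uB_{u,v}$. For any partition $\cP\in\pi_{[s,t]}$, summing $\rho$-th powers and using the two facts above,
\[
\sum_{[u,v]\in\cP}|X_{u,v}|^\rho\;\lesssim\;\sum_{[u,v]\in\cP}|R^{\blue X}_{u,v}|^\rho+\|X'\|_{\cC^0([s,t])}^\rho\sum_{[u,v]\in\cP}|B_{u,v}|^\rho\;\le\;[R^{\blue X}]_{\mathcal V^{\rho/2}_2([s,t])}^\rho+\|X'\|_{\cC^0([s,t])}^\rho[B]_{\mathcal V^\rho([s,t])}^\rho;
\]
taking the supremum over $\cP$ and the $\rho$-th root gives \eqref{eq:p_var_bounded_remainder}.

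For \eqref{eq:composition_bound_p_variation} I would start from the remainder identity for the controlled path $\blue{F(X)}=(F(X),\nabla F(X)X')$, namely $R^{\blue{F(X)}}_{u,v}=\nabla F(X_u)R^{\blue X}_{u,v}+\bigl(F(X_v)-F(X_u)-\nabla F(X_u)X_{u,v}\bigr)$, and bound the bracket by $\tfrac12\|F\|_{\cC^2}|X_{u,v}|^2$ via second-order Taylor. Writing $|X_{u,v}|^2\le(|X'_u||B_{u,v}|+|R^{\blue X}_{u,v}|)^2$, raising the resulting pointwise bound for $|R^{\blue{F(X)}}_{u,v}|$ to the power $\rho/2$, summing over $\cP$ (treating the mixed sum by Cauchy--Schwarz, $\sum|B_{u,v}|^{\rho/2}|R^{\blue X}_{u,v}|^{\rho/2}\le[B]_{\mathcal V^\rho([s,t])}^{\rho/2}[R^{\blue X}]_{\mathcal V^{\rho/2}_2([s,t])}^{\rho/2}$, and the other two sums by the two facts above), and taking the $2/\rho$-th root then delivers \eqref{eq:composition_bound_p_variation}; the rearrangement is the exact $\rho$-variation analogue of the passage from \eqref{eq:composition_Holder_R_primitive} to \eqref{eq:composition_Holder_R}, with the r\^ole of the absorbing factor $|T-S|^\gamma\le1$ there played here by the pointwise bounds $|B_{u,v}|\le[B]_{\mathcal V^\rho([s,t])}$, $|R^{\blue X}_{u,v}|\le[R^{\blue X}]_{\mathcal V^{\rho/2}_2([s,t])}$.

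As the statement itself advertises, there is no genuine obstacle here: the lemma is the $\rho$-variation mirror of estimates already available in H\"older scale. The only thing demanding a little care is the bookkeeping flagged above --- keeping straight which factor of each product is pulled out as a global $\mathcal V$-seminorm on $[s,t]$ and which remains inside the partition sum, and the repeated interchange of the exponents $\rho$ and $\rho/2$ --- together with the (standard) observation that $R^{\blue X}$ is almost additive, $\delta R^{\blue X}_{u,w,v}=X'_{u,w}B_{w,v}$, which is what makes $[R^{\blue X}]_{\mathcal V^{\rho/2}_2}$ a meaningful quantity in the first place.
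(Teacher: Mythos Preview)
The paper omits the proof, calling the lemma standard, so your plan is in the right spirit and your argument for \eqref{eq:p_var_bounded_remainder} is correct.

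For \eqref{eq:composition_bound_p_variation} there is a small slip. If you bound the Taylor remainder by $\tfrac12\|F\|_{\cC^2}|X_{u,v}|^2$ and then expand $|X_{u,v}|^2\le(|X'_u||B_{u,v}|+|R^{\blue X}_{u,v}|)^2$, the expansion produces, in addition to the cross term and the $|X'_u|^2|B_{u,v}|^2$ term, a $|R^{\blue X}_{u,v}|^2$ term. After raising to power $\rho/2$, summing over the partition, and taking the $2/\rho$-th root, that last term contributes $[R^{\blue X}]_{\mathcal V^{\rho/2}_2([s,t])}^2$ on the right-hand side, which is absent from the stated bound and cannot in general be absorbed into the other three terms. (In the H\"older passage \eqref{eq:composition_Holder_R_primitive} $\Rightarrow$ \eqref{eq:composition_Holder_R} no $[R^{\blue f}]^2$ term ever appears, so the ``absorbing factor'' analogy you invoke does not apply here.)

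The clean fix is the route you allude to at the end: first prove the $\rho$-variation analogue of \eqref{eq:composition_Holder_R_primitive}. Insert $X_{u,v}=R^{\blue X}_{u,v}+X'_uB_{u,v}$ into the expression $F(X)_{u,v}=\int_0^1\nabla F(X_u+\theta X_{u,v})\,d\theta\cdot X_{u,v}$ to obtain
\[
R^{\blue{F(X)}}_{u,v}=\Big(\int_0^1\nabla F(X_u+\theta X_{u,v})\,d\theta\Big)R^{\blue X}_{u,v}+\Big(\int_0^1\big(\nabla F(X_u+\theta X_{u,v})-\nabla F(X_u)\big)\,d\theta\Big)X'_uB_{u,v},
\]
whence $|R^{\blue{F(X)}}_{u,v}|\lesssim\|F\|_{\cC^2}\bigl(|R^{\blue X}_{u,v}|+|X'_u|\,|X_{u,v}|\,|B_{u,v}|\bigr)$. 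Your partition argument (Cauchy--Schwarz on the second sum) then gives
\[
[R^{\blue{F(X)}}]_{\mathcal V^{\rho/2}_2([s,t])}\le C\|F\|_{\cC^2}\bigl([R^{\blue X}]_{\mathcal V^{\rho/2}_2([s,t])}+\|X'\|_{\cC^0([s,t])}[X]_{\mathcal V^\rho([s,t])}[B]_{\mathcal V^\rho([s,t])}\bigr),
\]
and substituting \eqref{eq:p_var_bounded_remainder} for $[X]_{\mathcal V^\rho}$ yields \eqref{eq:composition_bound_p_variation} exactly, with no spare square term.
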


Next, we  prove a variant of 
\cref{lem:bound_controlled_flow}. 
\begin{lemma}             \label{lem:R^phi_order_one}
 Set $\rho=1/H_-$.  There exists a constant $\delta=\delta(H_-,\|\sigma||_{\cC^2})  \in (0, 1)$ such that the following holds: for all $x \in \R^d$, $(s, u, v) \in [0, 1]^3_{\leq}$, if $[\blue{B}]_{\mathcal{V}^{\rho}([u,v])} \leq \delta$, then $[R^{\blue{\phi^{s,x}_{\cdot}}}]_{\mathcal{V}^{\rho/2}_2([u,v])} \leq 1$. 
\end{lemma}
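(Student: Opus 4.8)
The plan is to run a buckling argument for the $\rho/2$-variation of $R^{\blue{\phi^{s,x}}}$, in complete analogy with the proof of \cref{lem:bound_controlled_flow}, the only difference being that the ``small interval'' is now measured by the hypothesis $[\blue B]_{\cV^\rho([u,v])}\le\delta$ rather than by $|v-u|$. Fix $x\in\R^d$ and $(s,u,v)\in[0,1]^3_\leq$, abbreviate $\phi:=\phi^{s,x}$, and recall that $\blue{\phi}=(\phi,\sigma(\phi))\in\cD^{2H_-}_B([s,1])$; in particular, since $\rho/2=1/(2H_-)$ and a $2H_-$-H\"older two-index function has finite $1/(2H_-)$-variation, \cref{lem:bound_controlled_flow} gives the a priori bound $[R^{\blue\phi}]_{\cV^{\rho/2}_2([u,v])}\le[R^{\blue\phi}]_{\cC^{2H_-}_2([s,1])}\,|v-u|^{2H_-}<\infty$. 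This finiteness is the only thing needed in order to absorb the remainder at the end. On any $[p,q]\subseteq[u,v]$ one has $\phi_{p,q}=\int_p^q\blue{\sigma(\phi_r)}\,d\blue{B_r}$, so, reading $\blue{\sigma(\phi)}=(\sigma(\phi),\nabla\sigma(\phi)\sigma(\phi))$ as the integrand, $R^{\blue\phi}_{p,q}=\int_p^q\blue{\sigma(\phi_r)}\,d\blue{B_r}-\sigma(\phi_p)B_{p,q}$.

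The first step is to convert the pointwise estimate \eqref{est:rough_integral_p_variation} into a variation estimate: applying it on each cell of an arbitrary partition of $[p,q]$, bounding $|\nabla\sigma(\phi_\cdot)\sigma(\phi_\cdot)|\le\|\sigma\|_{\cC^1}^2$, and using that on the right-hand side $[R^{\blue{\sigma(\phi)}}]_{\cV^{\rho/2}_2}^{\rho/2}$ and $[\mathbb B]_{\cV^{\rho/2}_2}^{\rho/2}$ are super-additive in the interval while $[B]_{\cV^\rho}^{\rho}$ and $[\nabla\sigma(\phi)\sigma(\phi)]_{\cV^\rho}^{\rho}$ are monotone, one arrives at
\begin{equs}
[R^{\blue\phi}]_{\cV^{\rho/2}_2([p,q])}\le C\Big([R^{\blue{\sigma(\phi)}}]_{\cV^{\rho/2}_2([p,q])}[B]_{\cV^\rho([p,q])}+\big([\nabla\sigma(\phi)\sigma(\phi)]_{\cV^\rho([p,q])}+\|\sigma\|_{\cC^1}^2\big)[\mathbb B]_{\cV^{\rho/2}_2([p,q])}\Big)
\end{equs}
for every $[p,q]\subseteq[u,v]$, with $C=C(H_-)$. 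The second step is to estimate the right-hand side: $[R^{\blue{\sigma(\phi)}}]_{\cV^{\rho/2}_2([p,q])}$ via \eqref{eq:composition_bound_p_variation} with $F=\sigma$, $X=\phi$ and Gubinelli derivative $\sigma(\phi)$ (whose $\cC^0$-norm is $\le\|\sigma\|_{\cC^0}$), and $[\nabla\sigma(\phi)\sigma(\phi)]_{\cV^\rho([p,q])}$ by writing it as $G(\phi)$ with $G=(\nabla\sigma)\sigma\in\cC^\infty$, so that $[G(\phi)]_{\cV^\rho([p,q])}\le\|\nabla G\|_{\cC^0}[\phi]_{\cV^\rho([p,q])}$ and then bounding $[\phi]_{\cV^\rho([p,q])}$ via \eqref{eq:p_var_bounded_remainder}; both bounds are affine in $[R^{\blue\phi}]_{\cV^{\rho/2}_2([p,q])}$ plus powers of $[B]_{\cV^\rho([p,q])}$.

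Write $\cR:=[R^{\blue\phi}]_{\cV^{\rho/2}_2([p,q])}$ and $\cB:=[\blue B]_{\cV^\rho([p,q])}$, and recall from \eqref{eq:def_bold_B_p_variation} that $[B]_{\cV^\rho([p,q])}\le\cB$ and $[\mathbb B]_{\cV^{\rho/2}_2([p,q])}\le\cB^2$. Taking $\delta\le1$, so that $\cB\le[\blue B]_{\cV^\rho([u,v])}\le1$ on every $[p,q]\subseteq[u,v]$ by monotonicity of $\cB$ in the interval, one substitutes the estimates of the second step into the display and discards the superfluous higher powers of $\cB$ to obtain
\begin{equs}
\cR\le C_1\,\cR\,\cB+C_1\,\cB\qquad\text{for all }[p,q]\subseteq[u,v],
\end{equs}
with $C_1=C_1(H_-,\|\sigma\|_{\cC^2})$, which we may assume $\ge1$. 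Finally, applying this with $[p,q]=[u,v]$, where $\cB\le\delta$ and $\cR<\infty$ by the a priori bound, and choosing $\delta:=1/(2C_1)\in(0,1)$, one has $C_1\cB\le1/2$, hence $\cR(1-C_1\cB)\le C_1\cB$, and therefore $\cR\le2C_1\cB\le2C_1\delta=1$, as claimed.

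The step I expect to require the most care is the very first one: one must check precisely that every factor on the right of \eqref{est:rough_integral_p_variation} is a control over $[u,v]$ — super-additive for the $\rho/2$-variations of $R^{\blue{\sigma(\phi)}}$ and of $\mathbb B$, merely monotone for the $\rho$-variations of $B$ and of $\nabla\sigma(\phi)\sigma(\phi)$ — so that summing the cellwise bounds over a partition and passing to the supremum reproduces the same right-hand side, now on $[u,v]$. This is the $\rho$-variation counterpart of the (trivial) division by $|v-u|^{2H_-}$ used in the H\"older proof of \cref{lem:bound_controlled_flow}; the remaining ingredients (the composition estimates and the buckling) are mechanical.
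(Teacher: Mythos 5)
Your proof is correct and takes essentially the same route as the paper: a buckling/absorption argument in $\rho$-variation norms using \eqref{est:rough_integral_p_variation} together with \eqref{eq:composition_bound_p_variation} and \eqref{eq:p_var_bounded_remainder}. The only differences are expository---you spell out the superadditivity/monotonicity reasoning that converts the pointwise estimate into a variation estimate, and you supply the a priori finiteness of $[R^{\blue{\phi^{s,x}}}]_{\cV^{\rho/2}_2([u,v])}$ (via the H\"older bound of \cref{lem:bound_controlled_flow}) needed to justify the final absorption, both of which the paper leaves implicit.
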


\begin{proof}
To ease the notation, let set 
\begin{equs}
    Y_r:= \phi_r^{s,X_s}, \qquad Y'_r:= \sigma(\phi_r^{s,X_s}), \qquad  Y''_r:= \nabla \sigma \sigma (\phi_r^{s,X_s}), \qquad  \blue{Y}= (Y, Y'), \qquad \blue{Y'}= (Y', Y''). 
\end{equs}
    With the same arguments as in  %Lemma 
    \cref{lem:bound_controlled_flow}, this time using \eqref{est:rough_integral_p_variation}, we get 
    \begin{equs}
       \,  [R^\blue{Y}&]_{\mathcal{V}^{\rho/2}_2([u,v])} \\&\lesssim  [R^{\blue{Y'}}]_{\mathcal{V}^{\rho/2}_2([u,v])} [B]_{\mathcal{V}^{\rho}([u,v])}+([Y']_{\mathcal{V}^{\rho}([u,v])}+\|Y'\|_{\cC^0([u,v])})[\mathbb{B}]_{\mathcal{V}^{\rho/2}_2([u,v])}.  \label{eq:buckle_R^Y_p_var}
    \end{equs}
   Moreover, by 
    \eqref{eq:composition_bound_p_variation} we have
    \begin{equs}
  \,   [R^{\blue{Y'}}]_{\mathcal{V}^{\rho/2}_2([u,v])}  \lesssim  [R^\blue{Y}]_{\mathcal{V}^{\rho/2}_2([u,v])} + [R^\blue{Y}]_{\mathcal{V}^{\rho/2}_2([u,v])} [B]_{\mathcal{V}^\rho([u,v])} +[B]^2_{\mathcal{V}^\rho([u,v])}.   \label{eq:bound_1_Y'_p_variation}
\end{equs}
Using the regularity and the boundedness of $\sigma$ and its derivatives, we see that 
\begin{equs}
  \,   [Y']_{\mathcal{V}^{\rho}([u,v])} \lesssim [Y]_{\mathcal{V}^{\rho}([u,v])} \lesssim  [R^Y]_{\mathcal{V}^{\rho/2}_2([u,v])}+ [B]_{\mathcal{V}^{\rho}([u,v])}.
\end{equs}
Plugging the above and \eqref{eq:bound_1_Y'_p_variation} in \eqref{eq:buckle_R^Y_p_var}, and keeping in mind that $\|Y'\|_{\cC^0([u,v])} \leq \|\sigma\|_{\cC^0}$, we get 
   \begin{equs}
       \,  [&R^\blue{Y}]_{\mathcal{V}^{\rho/2}_2([u,v])} \\& \leq C  \big([R^{\blue{Y}}]_{\mathcal{V}^{\rho/2}_2([u,v])} + [R^{\blue{Y}}]_{\mathcal{V}^{\rho/2}_2([u,v])} [B]_{\mathcal{V}^{\rho}([u,v])}+[B]^2_{\mathcal{V}^{\rho}([u,v])} \big) [B]_{\mathcal{V}^{\rho}([u,v])}
       \\
       & \quad+ C\big( 1+ [R^{\blue{Y}}]_{\mathcal{V}^{\rho/2}_2([u,v])}+ [B]_{\mathcal{V}^{\rho}([u,v])} \big) [\mathbb{B}]_{\mathcal{V}^{\rho/2}_2([u,v])},
    \end{equs} 
    where $C$ is a constant depending only on $H_-$ and $\|\sigma\|_{\cC^2}$ and can be assumed to be greater than $1$.
If $[\blue{B}]_{\mathcal{V}^{\rho}([u,v])} \leq \delta:= (10 C)^{-1}$, then we can rearrange the above to get 
that $[R^\blue{Y}]_{\mathcal{V}^{\rho/2}_2([u,v])} \leq 1$. This finishes the proof. 
\end{proof}

\begin{proposition}  \label{pro:bound_V}
   Let $F \in \cC^2$ and set $\rho=1/H_-$. There exists a random variable $\Theta$ such that for all $x \in \R^d$, $s\in [0,1]$ and $\blue{M}=(M, M') \in \mathcal{D}^{2H_-}_B([s,1])$, if $V$ is a solution of \eqref{eq:general_equation_Jacobian}, then 
   \begin{equs}                      
       \|V\|_{\cC^0([s,1])}  \leq \Theta (|V_s|+ \|\blue{M}\|_{\mathbf{V}^{\rho/2}_B([s, 1])}).
       \label{eq:sup estimate V}
   \end{equs}
   Moreover, for any $p \geq 1$, there exists a constant $C=C(p, \|F\|_{\cC^2}, \|\sigma\|_{\cC^2}, H_-)$, such that  $\|\Theta\|_{L_p(\Omega)}\leq C$.
\end{proposition}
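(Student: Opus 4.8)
The plan is to follow \cite{Lyons-Cass-Litterer}: first establish a \emph{deterministic} bound on intervals on which the driving rough path has small $\rho$-variation, and then glue together finitely many such intervals, the number of which is controlled in every $L_p(\Omega)$ by the Cass--Litterer--Lyons integrability theorem. For the first step I would show that there exist $\delta_0\in(0,\delta]$, with $\delta$ as in \cref{lem:R^phi_order_one}, and $C_0=C_0(\|F\|_{\cC^2},\|\sigma\|_{\cC^2},H_-)$ such that for all $x\in\R^d$, $s\in[0,1]$, all $(u,v)\in[s,1]^2_\leq$ with $[\blue{B}]_{\mathcal{V}^\rho([u,v])}\leq\delta_0$, and every $\blue{M}=(M,M')\in\mathcal{D}^{2H_-}_B([s,1])$, any solution $V$ of \eqref{eq:general_equation_Jacobian} obeys
\[
 \|\blue{V}\|_{\mathbf{V}^{\rho/2}_B([u,v])}\leq C_0\big(|V_u|+\|\blue{M}\|_{\mathbf{V}^{\rho/2}_B([u,v])}\big),
\]
where $\blue{V}=(V,\,F(\phi^{s,x})V+M)$ is the canonical lift of $V$ to a controlled path over $B$ (the Gubinelli derivative is $V'=F(\phi^{s,x})V+M$, itself a controlled path by the product and composition rules, using $F\in\cC^2$ and $\blue{\phi^{s,x}}\in\mathcal{D}^{2H_-}_B$ with derivative $\sigma(\phi^{s,x})$). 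To prove this I would argue exactly as in \cref{lem:bound_controlled_flow}: apply \eqref{est:rough_integral_p_variation} to $R^V_{u',v'}=V_{u',v'}-V'_{u'}B_{u',v'}$, bound $[R^{\blue{F(\phi^{s,x})V}}]_{\mathcal{V}^{\rho/2}_2}$ by the $\rho$-variation analogues of \eqref{eq:reminder_product} and \eqref{eq:composition_bound_p_variation} while invoking \cref{lem:R^phi_order_one} to keep $[R^{\blue{\phi^{s,x}}}]_{\mathcal{V}^{\rho/2}_2([u,v])}\leq1$, and use boundedness of $\sigma,F$ and their derivatives for the $\cC^0$-factors. Shrinking $\delta_0$ so that the coefficient of $[R^V]_{\mathcal{V}^{\rho/2}_2([u,v])}$ on the right is $\leq\tfrac12$, a buckling argument (cf.\ \cref{lem:buckling_D_norm}) closes the bound for $[R^V]_{\mathcal{V}^{\rho/2}_2}$ and $[V']_{\mathcal{V}^\rho}$, and \eqref{eq:p_var_bounded_remainder} together with $\|V'\|_{\cC^0}\leq\|F\|_{\cC^0}\|V\|_{\cC^0}+\|M\|_{\cC^0}$ and $[\blue{B}]_{\mathcal{V}^\rho([u,v])}\leq\delta_0$ upgrades this to the $\cC^0$-bounds.

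Fixing such a $\delta_0$, for each $s$ I would build the greedy partition $\tau_0^s=s$, $\tau_{i+1}^s=\inf\{t\in(\tau_i^s,1]:[\blue{B}]_{\mathcal{V}^\rho([\tau_i^s,t])}\geq\delta_0\}\wedge1$, and let $N_s$ be the first index with $\tau_{N_s}^s=1$. Since $[s,1]\subset[0,1]$ and $\rho$-variation is monotone under restriction of the interval, $N_s\leq N:=N_{\delta_0}([0,1])$, the greedy count over the whole interval, which does not depend on $(s,x)$. The covariance of each scalar coordinate of $B$ has finite two-dimensional $\rho/2$-variation (classically, see also \cref{cor:holder-controlled2}, and $\rho/2=1/(2H_-)<2$), so the Cass--Litterer--Lyons estimate \cite{Lyons-Cass-Litterer} gives $\E\exp(cN^\beta)<\infty$ for some $c,\beta>0$ depending only on $H_-$ and $\delta_0$; in particular $N\in\bigcap_{p\geq1}L_p(\Omega)$, with norms bounded solely in terms of $p,\|\sigma\|_{\cC^2},\|F\|_{\cC^2},H_-$.

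It then remains to patch. On each $[\tau_i^s,\tau_{i+1}^s]$ one has $[\blue{B}]_{\mathcal{V}^\rho([\tau_i^s,\tau_{i+1}^s])}\leq\delta_0$, so Step~1 applies; using $|V_{\tau_{i+1}^s}|\leq\|V\|_{\cC^0([\tau_i^s,\tau_{i+1}^s])}$ and the monotonicity $\|\blue{M}\|_{\mathbf{V}^{\rho/2}_B([\tau_i^s,\tau_{i+1}^s])}\leq\|\blue{M}\|_{\mathbf{V}^{\rho/2}_B([s,1])}$ one obtains $|V_{\tau_{i+1}^s}|\leq C_0\big(|V_{\tau_i^s}|+\|\blue{M}\|_{\mathbf{V}^{\rho/2}_B([s,1])}\big)$, hence by induction $\max_i|V_{\tau_i^s}|\leq(2C_0)^{N}\big(|V_s|+\|\blue{M}\|_{\mathbf{V}^{\rho/2}_B([s,1])}\big)$, and one more application of Step~1 gives
\[
 \|V\|_{\cC^0([s,1])}=\max_{0\leq i<N_s}\|V\|_{\cC^0([\tau_i^s,\tau_{i+1}^s])}\leq(2C_0)^{N+1}\big(|V_s|+\|\blue{M}\|_{\mathbf{V}^{\rho/2}_B([s,1])}\big).
\]
Setting $\Theta:=(2C_0)^{N+1}$ yields \eqref{eq:sup estimate V}, and $\|\Theta\|_{L_p(\Omega)}\leq C(p,\|F\|_{\cC^2},\|\sigma\|_{\cC^2},H_-)$ by the previous paragraph, everything being uniform in $(s,x)$ because neither $N$ nor $C_0$ depends on $(s,x)$.

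The main obstacle is Step~1: carrying out the buckling estimate for the \emph{inhomogeneous} linear RDE while keeping every constant uniform in $(s,x)$ — which is exactly what makes \cref{lem:R^phi_order_one} (the uniform-in-$(s,x)$ smallness of $[R^{\phi^{s,x}}]$ on short intervals) indispensable. Step~2 is essentially a citation, modulo checking the hypotheses of \cite{Lyons-Cass-Litterer}, which are furnished by \cref{cor:holder-controlled2} (the lower bound \cref{lem:lower-var} is not needed here).
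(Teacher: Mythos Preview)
Your proposal is correct and follows essentially the same route as the paper: a local buckling estimate in $\rho$-variation on intervals where $[\blue{B}]_{\mathcal{V}^\rho}$ is small (using \cref{lem:R^phi_order_one} to control $[R^{\blue{\phi^{s,x}}}]$ uniformly in $(s,x)$, and the $\rho$-variation product/composition bounds, which the paper packages as \cref{lem:inequalities_FV}), followed by concatenation along the greedy partition and the Cass--Litterer--Lyons tail bound on the number of intervals. The only cosmetic differences are that the paper builds the greedy partition from $0$ and then restricts to $[s,1]$ (equivalent to your $N_s\le N$ observation), and that it records only the $\cC^0$ bound locally rather than the full $\mathbf{V}^{\rho/2}_B$ norm you state; neither changes the argument.
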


To prove the proposition above we will need a collection of estimates. 

\begin{lemma}    \label{lem:inequalities_FV}
   Let $F \in \cC^2$ and set $\rho=1/H_-$. There exists a constant $C=C(\|F\|_{\cC^2}$, $\|\sigma\|_{\cC^2},H_-)$ such that for all $x \in \R^d$, $s\in [0,1]$ and $\blue{M}=(M, M') \in \mathcal{D}^{2H_-}_B([s,1])$, if $V$ is a solution of \eqref{eq:general_equation_Jacobian}, then for all $(u,v) \in [s,1]_{\leq}$ we have 
    \begin{equs}
     \,    [R^{\blue{F(\phi^{s,x}) V}}]_{\mathcal{V}^{\rho/2}_2([u,v])}  \leq& C \big(  [R^\blue{V}]_{\mathcal{V}^{\rho/2}_2([u,v])} + \|V\|_{\cC^0([u,v])} [R^{\blue{F(\phi^{s,x})}}]_{\mathcal{V}^{\rho/2}_2([u,v])} \big) 
  \\  & +C[B]_{\mathcal{V}^{\rho}([u,v])}\Big([R^\blue{V}]_{\mathcal{V}^{\rho/2}_2([u,v])}+(\|V\|_{\cC^0([u,v])}\\  &\qquad\qquad\qquad\qquad+ \|M\|_{\cC^0([u,v])})[B]_{\mathcal{V}^{\rho}([u,v])} \Big),
    \\
   \,  [(F(\phi^{s,x}) V)']_{\mathcal{V}^{\rho}([u,v])}\leq &C \Big( [R^\blue{V}]_{\mathcal{V}^{\rho/2}_2([u,v])} + (\|V\|_{\cC^0([u,v])}+\|M\|_{\cC^0([u,v])})
   \\
   &\quad \times ([R^{\blue{\phi^{s,x}}}]_{\mathcal{V}^{\rho/2}_2([u,v])}+[B]_{\mathcal{V}^{\rho}([u,v])}) \Big),
    \\
 \|(F(\phi^{s,x}) V)'\|_{\cC^0([u,v])}  \leq   & C ( \|V\|_{\cC^0([u,v])}+\|M\|_{\cC^0([u,v])}).
     \end{equs}
\end{lemma}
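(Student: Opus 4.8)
The plan is to reduce all three estimates to the product rule \eqref{eq:def_blue_product} for controlled paths, the associated remainder identity (the $\rho$-variation analogue of the one displayed just before \eqref{eq:reminder_product}), and the $\rho$-variation composition and remainder estimates \eqref{eq:p_var_bounded_remainder}--\eqref{eq:composition_bound_p_variation}, after first reading off the controlled structure of $V$ from the equation it solves. For \emph{Step 1 (controlled structures)}: since the rough integral $\int_s^\cdot\blue{Y_r}\,d\blue{B_r}$ is canonically controlled by $B$ with Gubinelli derivative equal to $Y$, a solution $V$ of \eqref{eq:general_equation_Jacobian} is controlled by $B$ with $\blue V=(V,V')$ and $V'=F(\phi^{s,x})V+M$; in particular $\|V'\|_{\cC^0([u,v])}\le\|F\|_{\cC^0}\|V\|_{\cC^0([u,v])}+\|M\|_{\cC^0([u,v])}$. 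Since $\phi^{s,x}$ is controlled by $B$ with Gubinelli derivative $\sigma(\phi^{s,x})$ by \eqref{eq:flow-def}, the chain rule for controlled paths gives $\blue{F(\phi^{s,x})}=(F(\phi^{s,x}),\nabla F(\phi^{s,x})\sigma(\phi^{s,x}))$, and then $\blue{F(\phi^{s,x})V}$ is defined through \eqref{eq:def_blue_product}, with Gubinelli derivative $(F(\phi^{s,x})V)'=\nabla F(\phi^{s,x})\sigma(\phi^{s,x})\,V+F(\phi^{s,x})\,V'$. All sup-norms of $F(\phi^{s,x})$ and $\nabla F(\phi^{s,x})\sigma(\phi^{s,x})$ are bounded by constants depending only on $\|F\|_{\cC^2}$ and $\|\sigma\|_{\cC^2}$.

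\emph{Step 2 (remainder bound).} I would use the identity $R^{\blue{F(\phi^{s,x})V}}_{u,v}=F(\phi^{s,x}_u)R^{\blue V}_{u,v}+R^{\blue{F(\phi^{s,x})}}_{u,v}V_u+(\nabla F(\phi^{s,x}_u)\sigma(\phi^{s,x}_u)B_{u,v})V_{u,v}$, take $\mathcal V^{\rho/2}_2$-seminorms, and estimate the last term by $\|\nabla F(\phi^{s,x})\sigma(\phi^{s,x})\|_{\cC^0}[B]_{\mathcal V^\rho([u,v])}[V]_{\mathcal V^\rho([u,v])}$ via H\"older in the partition sum (using $2/\rho=1/\rho+1/\rho$). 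It then remains to control $[V]_{\mathcal V^\rho([u,v])}$, which by \eqref{eq:p_var_bounded_remainder} together with $\|V'\|_{\cC^0}\lesssim\|V\|_{\cC^0}+\|M\|_{\cC^0}$ is $\lesssim[R^{\blue V}]_{\mathcal V^{\rho/2}_2([u,v])}+(\|V\|_{\cC^0([u,v])}+\|M\|_{\cC^0([u,v])})[B]_{\mathcal V^\rho([u,v])}$; substituting gives the first displayed inequality, with $[R^{\blue{F(\phi^{s,x})}}]_{\mathcal V^{\rho/2}_2}$ left unexpanded.

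\emph{Step 3 (Gubinelli-derivative and sup bounds, plus the obstacle).} Starting from $(F(\phi^{s,x})V)'=\nabla F(\phi^{s,x})\sigma(\phi^{s,x})V+F(\phi^{s,x})V'$, the third bound is immediate from Step 1. For the second I would bound $[(F(\phi^{s,x})V)']_{\mathcal V^\rho([u,v])}$ termwise: each of $[\nabla F(\phi^{s,x})\sigma(\phi^{s,x})]_{\mathcal V^\rho}$ and $[F(\phi^{s,x})]_{\mathcal V^\rho}$ is $\lesssim[\phi^{s,x}]_{\mathcal V^\rho}\lesssim[R^{\blue{\phi^{s,x}}}]_{\mathcal V^{\rho/2}_2}+[B]_{\mathcal V^\rho}$ (composition with the $\cC^1$ maps $F$ and $\nabla F\cdot\sigma$, then \eqref{eq:p_var_bounded_remainder} with $\|\sigma\|_{\cC^0}$ bounding the Gubinelli derivative of $\phi^{s,x}$), while $[V]_{\mathcal V^\rho}$ and $[V']_{\mathcal V^\rho}$ are handled as in Step 2 using $V'=F(\phi^{s,x})V+M$; collecting the terms and using the sup-norm bounds of Step 1 yields the stated right-hand side. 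There is no conceptual obstacle here: the whole argument is a bookkeeping exercise on top of the product and composition estimates already recorded. The only place needing attention is Step 1 — correctly identifying the Gubinelli derivative of $V$ from \eqref{eq:general_equation_Jacobian} as $F(\phi^{s,x})V+M$, rather than the Gubinelli derivative of that integrand — together with the care of arranging all right-hand sides so that they are linear in the remainder quantities $[R^{\blue V}]_{\mathcal V^{\rho/2}_2}$ and $[R^{\blue{\phi^{s,x}}}]_{\mathcal V^{\rho/2}_2}$ with all superlinearity carried by powers of $[B]_{\mathcal V^\rho}$, which is exactly the form consumed by the buckling argument in the proof of \cref{pro:bound_V}.
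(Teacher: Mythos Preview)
Your proposal is correct and follows essentially the same route as the paper's proof: the paper also uses the product--remainder identity (the $\rho$-variation analogue of \eqref{eq:reminder_product}) for the first inequality, then controls $[V]_{\mathcal V^\rho}$ via \eqref{eq:p_var_bounded_remainder} with $V'=F(\phi^{s,x})V+M$, and for the second and third inequalities writes out the Leibniz formula $(F(\phi^{s,x})V)'=\nabla F(\phi^{s,x})\sigma(\phi^{s,x})\,V+F(\phi^{s,x})\big(F(\phi^{s,x})V+M\big)$ and bounds termwise using Lipschitzness of $F$, $\nabla F\cdot\sigma$ in $\phi^{s,x}$ together with \eqref{eq:p_var_bounded_remainder}. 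The only cosmetic difference is that the paper substitutes $V'=F(\phi^{s,x})V+M$ into the Leibniz formula before taking $\rho$-variation, whereas you keep $V'$ and expand it afterwards; the bookkeeping is the same.
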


\begin{proof}
We start with the first inequality. Similarly to \eqref{eq:reminder_product}, we have 
\begin{equs}   
   \,  [R^{\blue{F(\phi^{s,x}) V}}]_{\mathcal{V}^{\rho/2}_2([u,v])} & \lesssim \| F(\phi^{s,x})\|_{\cC^0([u,v])}[R^\blue{V}]_{\mathcal{V}^{\rho/2}_2([u,v])}
   \\
   & \qquad +\|V\|_{\cC^0([u,v])}[R^{\blue{F(\phi^{s,x})}}]_{\mathcal{V}^{\rho/2}_2([u,v])}
   \\
   & \qquad +\| \nabla F(\phi^{s,x})\sigma(\phi^{s,x}) \|_{\cC^0([u, v])}[B]_{\mathcal{V}^{\rho}_{u,v}}[V]_{\mathcal{V}^{\rho}([u,v])}
   \\
   & \lesssim [R^\blue{V}]_{\mathcal{V}^{\rho/2}_2([u,v])}
  +\|V\|_{\cC^0([u,v])}[R^{\blue{F(\phi^{s,x})}}]_{\mathcal{V}^{\rho/2}_2([u,v])}
 \\
 & \qquad +[B]_{\mathcal{V}^{\rho}_{u,v}}[V]_{\mathcal{V}^{\rho}([u,v])} \label{eq:bound_R^F}
\end{equs}
By \eqref{eq:p_var_bounded_remainder}we have 
\begin{equs}
 \,    [V]_{\mathcal{V}^{\rho}([u,v])} &\lesssim  [R^\blue{V}]_{\mathcal{V}^{\rho/2}_2([u,v])} + \| F(\phi^{s,x})V+M\|_{\cC^0([u,v])} [B]_{\mathcal{V}^{\rho}([u,v])}
 \\
 &\lesssim [R^\blue{V}]_{\mathcal{V}^{\rho/2}_2([u,v])} + (\|V\|_{\cC^0([u,v])}+\|M\|_{\cC^0([u,v])} )[B]_{\mathcal{V}^{\rho}([u,v])}. 
\end{equs}
Hence, combining this with \eqref{eq:bound_R^F} gives the first desired inequality. 
For the second inequality,  we have that 
\begin{equs}
    \big(F(\phi^{s,x})V\big)'_u z= (\nabla F(\phi^{s,x}_u) \sigma (\phi^{s,x}_u) z)V_u +  F(\phi^{s,x}_u)\big( (F(\phi^{s,x}_u)V_u+M_u)\big)z,
    \label{eq:Leibniz_derivative_FV}
\end{equs}
for $z \in \R^{d_0}$, from which, by using the boundedness of $F$ and $\sigma$ and their derivatives, we get 
\begin{equs}
   \,   [(F(\phi^{s,x}) V)']_{\mathcal{V}^{\rho}([u,v])} \lesssim [V]_{\mathcal{V}^{\rho}([u,v])}  + (\|V\|_{\cC^0([u,v])}+\|M\|_{\cC^0([u,v])})[\phi^{s,x}]_{\mathcal{V}^{\rho}([u,v])}.
\end{equs}
The above inequality, combined with \eqref{eq:p_var_bounded_remainder} gives 
\begin{equs}
   \,   &[(F(\phi^{s,x}) V)']_{\mathcal{V}^{\rho}([u,v])}\\& \lesssim [R^\blue{V}]_{\mathcal{V}^{\rho/2}_2([u,v])} + (\|V\|_{\cC^0([u,v])}+\|M\|_{\cC^0_{u,v}})([R^{\blue{\phi^{s,x}}}]_{\mathcal{V}^{\rho/2}_2([u,v])}+[B]_{\mathcal{V}^{\rho}([u,v])}). 
\end{equs}
Lastly, the third inequality follows trivially from \eqref{eq:Leibniz_derivative_FV}. 
\end{proof}
%%%%%%%%%%%%%%%%%%%%%%%%%%%%%%%%%%%%%%%%%%%%%29.07.5:30 pm
Similarly, one can get the corresponding estimates in the H\"older scale. 
\begin{lemma}    \label{lem:inequalities_FV_Holder}
    Let $F \in \cC^2$.   There exists a constant $C=C(\|F\|_{\cC^2}, \|\sigma\|_{\cC^2}, H_-)$ such that for all $x \in \R^d$, $s\in [0,1]$ and $\blue{M}=(M, M') \in \mathcal{D}^{2H_-}_B([s,1])$, if $V$ is a solution of \eqref{eq:general_equation_Jacobian}, then for all $(u,v) \in [s,1]_{\leq}$ we have 
    \begin{equs}
     \,    [R^{\blue{F(\phi^{s,x}) V}}]_{\cC^{2H_-}_2([u,v])} 
    & \leq C \big(  [R^\blue{V}]_{\cC^{2H_-}_2([u,v])} + \|V\|_{\cC^0([u,v])} [R^{\blue{F(\phi^{s,x})}}]_{\cC^{2H_-}_2([u,v])} \big)
     \\
    & \qquad +C[B]_{\cC^{H_-}([u,v])}\Big([R^\blue{V}]_{\cC^{2H_-}_2([u,v])}+(\|V\|_{\cC^0([u,v])} \\
    & \qquad\qquad\qquad\qquad\qquad+ \|M\|_{\cC^0([u,v])})[B]_{\cC^{H_-}([u,v])} \Big),
    \\
   \,  [(F(\phi^{s,x}) V)']_{\cC^{H_-}([u,v])} 
    & \leq C \big( [R^\blue{V}]_{\cC^{2H_-}_2([u,v])} + (\|V\|_{\cC^0([u,v])}+\|M\|_{\cC^0([u,v])})\\
    &\qquad\qquad\qquad\qquad\qquad([R^{\blue{\phi^{s,x}}}]_{\cC^{2H_-}_2([u,v])}+[B]_{\cC^{H_-}([u,v])}) \big),
    \\
    \|(F(\phi^{s,x}) V)'\|_{\cC^0([u,v])}  &\leq C ( \|V\|_{\cC^0([u,v])}+\|M\|_{\cC^0([u,v])}).
     \end{equs}
\end{lemma}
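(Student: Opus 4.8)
The plan is to transcribe the proof of \cref{lem:inequalities_FV} into the H\"older scale: every $\rho$-variation seminorm is replaced by the corresponding H\"older one ($[\,\cdot\,]_{\mathcal{V}^{\rho}}$ by $[\,\cdot\,]_{\cC^{H_-}}$ and $[\,\cdot\,]_{\mathcal{V}^{\rho/2}_2}$ by $[\,\cdot\,]_{\cC^{2H_-}_2}$), and the $\rho$-variation tools \eqref{eq:p_var_bounded_remainder} and \eqref{eq:composition_bound_p_variation}, together with the $\rho$-variation rough-integral estimate, are replaced by their H\"older counterparts \eqref{eq:holder_bound_by_rough}, \eqref{eq:composition_Holder_R}, \eqref{eq:composition_estimate_controlled} and \eqref{eq:reminder_product}. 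Since all the intervals $[u,v]$ in question are subintervals of $[0,1]$, the only cosmetic difference is harmless extra factors $|v-u|^{H_-}\le 1$.

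For the first inequality I would view $\blue{F(\phi^{s,x})V}$ as the controlled-path product, in the sense of \eqref{eq:def_blue_product}, of $\blue{F(\phi^{s,x})}$ (whose Gubinelli derivative is $\nabla F(\phi^{s,x})\sigma(\phi^{s,x})$) and of $\blue{V}=(V,V')$ with $V'=F(\phi^{s,x})V+M$, the latter being the integrand of the defining rough integral in \eqref{eq:general_equation_Jacobian}. Applying \eqref{eq:reminder_product} and the boundedness of $F$, $\nabla F$, $\sigma$ gives
\begin{equs}
  [R^{\blue{F(\phi^{s,x})V}}]_{\cC^{2H_-}_2([u,v])}
  \lesssim [R^{\blue{V}}]_{\cC^{2H_-}_2([u,v])}
  + \|V\|_{\cC^0([u,v])}[R^{\blue{F(\phi^{s,x})}}]_{\cC^{2H_-}_2([u,v])}
  + [V]_{\cC^{H_-}([u,v])}[B]_{\cC^{H_-}([u,v])}.
\end{equs}
Bounding $[V]_{\cC^{H_-}}$ by \eqref{eq:holder_bound_by_rough} applied to $\blue{V}$, and using $\|V'\|_{\cC^0}\le\|F\|_{\cC^0}\|V\|_{\cC^0}+\|M\|_{\cC^0}$, turns the last term into $\bigl([R^{\blue{V}}]_{\cC^{2H_-}_2}+(\|V\|_{\cC^0}+\|M\|_{\cC^0})[B]_{\cC^{H_-}}\bigr)[B]_{\cC^{H_-}}$, and collecting terms yields the claimed bound.

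For the second and third inequalities the starting point is the explicit Leibniz identity \eqref{eq:Leibniz_derivative_FV} for $(F(\phi^{s,x})V)'$. The third is read off from it directly, using only the boundedness of $F$, $\sigma$ and their first derivatives. For the second, I would take H\"older increments in \eqref{eq:Leibniz_derivative_FV}: each summand is a product of a bounded function of $\phi^{s,x}$ with $V$ (respectively with $V'=F(\phi^{s,x})V+M$), so by the Leibniz estimate its $\cC^{H_-}$ seminorm is controlled by $[V]_{\cC^{H_-}}+(\|V\|_{\cC^0}+\|M\|_{\cC^0})[\phi^{s,x}]_{\cC^{H_-}}$; I would then replace $[V]_{\cC^{H_-}}$ and $[\phi^{s,x}]_{\cC^{H_-}}$ by the remainder-plus-$[B]$ expressions via \eqref{eq:holder_bound_by_rough} (as in the first part, and as done for $\phi^{s,x}$ in \cref{lem:bound_controlled_flow}). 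This reproduces the $\rho$-variation argument verbatim.

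There is essentially no genuine obstacle here --- the statement is deliberately flagged as ``similarly'' to \cref{lem:inequalities_FV}. The only point deserving a moment of care is that the H\"older composition estimate \eqref{eq:composition_Holder_R} has a slightly different shape than \eqref{eq:composition_bound_p_variation}, but it still controls $[R^{\blue{F(\phi^{s,x})}}]_{\cC^{2H_-}_2}$ by $[R^{\blue{\phi^{s,x}}}]_{\cC^{2H_-}_2}$ and $[B]_{\cC^{H_-}}$ once one uses that the Gubinelli derivative $\sigma(\phi^{s,x})$ of $\phi^{s,x}$ and the composed coefficient $\nabla\sigma(\phi^{s,x})\sigma(\phi^{s,x})$ are bounded (by $\|\sigma\|_{\cC^0}$ and $\|\sigma\|_{\cC^1}^2$), so the substitution goes through with no new idea.
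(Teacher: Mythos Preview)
Your proposal is correct and takes exactly the same approach as the paper, which simply states that the estimates ``can be obtained similarly'' to \cref{lem:inequalities_FV} in the H\"older scale. Your transcription of that argument---using \eqref{eq:reminder_product} and \eqref{eq:holder_bound_by_rough} in place of their $\rho$-variation analogues, and reading off the last two inequalities from the Leibniz identity \eqref{eq:Leibniz_derivative_FV}---is precisely what the paper intends.
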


\begin{remark}
From the above lemma,  \eqref{eq:composition_Holder_R}, and the fact that the Gubinelli derivative of $\phi^{s,x}$ is bounded by $\| \sigma\|_{\cC^0}$, we get that 
\begin{equs}
   \, & [\blue{F(\phi^{s,x}) V}]_{\mathcal{D}^{2H_-}_B([u,v])} 
\\& \leq C(1+[B]_{\cC^{H_-}([u,v])}) [R^\blue{V}]_{\cC^{2H_-}_2([u,v])}
\\
& \qquad +C( \|V\|_{\cC^0([u,v])}+\|M\|_{\cC^0([u,v])})(1+[R^{\blue{\phi^{s,x}}}]_{\cC^{2H_-}_2([u,v])}+ [B]_{\cC^{H_-}([u,v])})^2,      \label{eq:bound_F(phi)V_cleaner}
\end{equs}
where $C$ is as in %Lemma 
    \cref{lem:inequalities_FV_Holder}. 
\end{remark}

\begin{proof}[Proof of  \cref{pro:bound_V}]
    By using the equation and  \eqref{est:rough_integral_p_variation} we have that any  $(u,v) \in [s, 1]_{\leq}^2 $
    \begin{equs}
      \,   [R^\blue{V}]_{\mathcal{V}^{\rho/2}_2([u,v])} &\leq C\big( [R^{\blue{F(\phi^{s,x}) V+M}}]_{\mathcal{V}^{\rho/2}_2([u,v])}[B]_{\mathcal{V}^{\rho}([u,v])} \big) 
        \\
       & \qquad  + C\big([(F(\phi^{s,x}) V+M)']_{\mathcal{V}^{\rho}([u,v])}+|(F(\phi^{s,x}) V+M)'_u|\big)[\mathbb{B}]_{\mathcal{V}^{\rho/2}_2([u,v])}
       \\
       & \leq  C\big( [R^{\blue{F(\phi^{s,x}) V}}]_{\mathcal{V}^{\rho/2}_2([u,v])}[B]_{\mathcal{V}^{\rho}([u,v])} \big) 
        \\
       & \qquad  + C\big([(F(\phi^{s,x}) V)']_{\mathcal{V}^{\rho}([u,v])}+\|(F(\phi^{s,x}) V)'\|_{\cC^0([u,v])}\big)[\mathbb{B}]_{\mathcal{V}^{\rho/2}_2([u,v])}
       \\
       & \qquad +C [R^\blue{M}]_{\mathcal{V}^{\frac{\rho}{2}}_2([u,v])}[B]_{\mathcal{V}^{\rho}([u,v])} + \|\blue{M}\|_{\mathbf{V}^{\rho/2}_B([u, v])}[\mathbb{B}]_{\mathcal{V}^{\rho/2}_2([u,v])}.
    \end{equs}
    Next, let us assume that $u,v$ are close enough so that $[\blue{B}]_{\mathcal{V}^{\rho}([u,v])} \leq \delta \wedge \tilde{\delta}=: \xi \in (0,1)$, where $\tilde{\delta} \in (0,1)$ will be determined later and $\delta$ is provided by %Lemma 
    \cref{lem:R^phi_order_one}. Then, the above simplifies to 
    \begin{equs}
      \,   [R^\blue{V}]_{\mathcal{V}^{\rho/2}_2([u,v])} &\leq \tilde{\delta} C\big( [R^{\blue{F(\phi^{s,x}) V}}]_{\mathcal{V}^{\rho/2}_2([u,v])}+
       [(F(\phi^{s,x}) V)']_{\mathcal{V}^{\rho}([u,v])}+\|(F(\phi^{s,x}) V)'\|_{\cC^0([u,v])})
       \\
       & \qquad +C \|\blue{M}\|_{\mathbf{V}^{\rho/2}_B([u, v])}.
    \end{equs}
    Next, by %Lemma 
    \cref{lem:inequalities_FV} combined with the fact that $[\blue{B}]_{\mathcal{V}^{\rho}_{s,t}} \leq \delta \wedge \tilde{\delta}$ and %Lemma 
    \cref{lem:R^phi_order_one}, we get 
 \begin{equs}                 
      \,   [R^\blue{V}]_{\mathcal{V}^{\rho/2}_2([u,v])} &\leq \tilde{\delta} C\big( [R^\blue{V}]_{\mathcal{V}^{\rho/2}_2([u,v])}+ 
       \|V\|_{\cC^0([u,v])}(1+[R^{\blue{F(\phi^{s,x})}}]_{\mathcal{V}^{\rho/2}_2([u,v])})
       \\
       & \qquad +C\|\blue{M}\|_{\mathbf{V}^{\rho/2}_B([s, 1])}.
       \\
       \label{eq:R^V_delta_tilde}
       \end{equs}
       Moreover, again  the fact that $[\blue{B}]_{\mathcal{V}^{\rho}([s,t])} \leq \delta \wedge \tilde{\delta}$ combined with  
    \eqref{eq:composition_bound_p_variation} and %Lemma 
    \cref{lem:R^phi_order_one} implies that we can drop the term $[R^{\blue{F(\phi^{s,x})}}]_{\mathcal{V}^{\rho/2}_2([u,v])}$
       in \eqref{eq:R^V_delta_tilde}, to arrive at 
      \begin{equs}                   
      \,   [R^\blue{V}]_{\mathcal{V}^{\rho/2}_2([u,v])} &\leq \tilde{\delta} C\big( [R^\blue{V}]_{\mathcal{V}^{\rho/2}_2([u,v])}+ 
       \|V\|_{\cC^0([u,v])}\big)+C \|\blue{M}\|_{\mathbf{V}^{\rho/2}_B([s, 1])}.
       \end{equs}
       Provided that $\tilde{\delta}$ is sufficiently small, this gives 
      \begin{equs}
      \,   [R^\blue{V}]_{\mathcal{V}^{\rho/2}_2([u,v])} &\leq \tilde{\delta} C 
       \|V\|_{\cC^0([u,v])}+C\|\blue{M}\|_{\mathbf{V}^{\rho/2}_B([s, 1])}.
       \end{equs}
       In addition, by using \eqref{eq:p_var_bounded_remainder} and the above inequality, we see that 
       \begin{equs}
            \|V\|_{\cC^0([u,v])} &\leq |V_u|+  [V]_{\mathcal{V}^{\rho}([u,v])} 
            \\
            & \leq |V_u|+ C\big([R^\blue{V}]_{\mathcal{V}^{\rho/2}_2([u,v])}+ (\|V\|_{\cC^0([u,v])}+\|M\|_{\cC^0([u,v])}) [B]_{\mathcal{V}^{\rho}([u,v])} \big)
            \\
            & \leq |V_u|+ C\big(\tilde{\delta} C 
       \|V\|_{\cC^0([u,v])}+C \|\blue{M}\|_{\mathbf{V}^{\rho/2}_B([s, 1])}+ \tilde{\delta}(\|V\|_{\cC^0([u,v])}+\|M\|_{\cC^0([u,v])})  \big),
       \end{equs}
       which, provided again that $\tilde{\delta}$ is sufficiently small, gives 
       \begin{equs}       \label{eq:to be itereted}
            \|V\|_{\cC^0([u,v])} &\leq 2 |V_u|+  C\|\blue{M}\|_{\mathbf{V}^{\rho/2}_B([s, 1])}.
       \end{equs}
       We can now define inductively
          $\tau_0=0$, $\tau_{i+1}:= \inf\{ t\geq \tau_i : [\blue{B}]_{\mathcal{V}^{\rho}_{[\tau_i,t]}} \geq \delta \wedge \tilde{\delta} \}$
       and set $N= \sup \{ i : \tau_i \geq 1\}$ ($N=N(\omega)$ is an almost surely finite random variable, see, \cite[Lemma 4.9]{Lyons-Cass-Litterer}). Further, let us set $u_0=s$ and $u_i= \tau_{k+i}$, for $i \geq 1$, where $k= \inf\{i : \tau_i >s\}$. Then, by applying \eqref{eq:to be itereted} with $u=u_i$ and $v=u_{i+1}$ and iterating, it follows that 
       \begin{equs}
         \|V\|_{\cC^0([s,1])} \leq C 2^N(|V_s|+\|\blue{M}\|_{\mathbf{V}^{\rho/2}_B([s, 1])}).
       \end{equs}
       This shows \eqref{eq:sup estimate V} with $\Theta= C2^N$. Finally, from 
       \cite[Theorem 6.3]{Lyons-Cass-Litterer} (by choosing $q=(1/p+1/2)^{-1}$ in that theorem, see also Corollary 5.5 therein), it follows that there exist positive constants $C_1$, $c_2$ such that $\mathbb{P}(N >n) \leq C_1 \exp( -c_2 n^{1+2/p})$, which in particular implies that $\Theta$ has finite moments of any order. This finishes the proof. 
\end{proof}
Next, we show that the supremum estimates for $V$ can be upgraded to estimates in the H\"older scale. 

\begin{proposition}\label{pro:bound_V_Holder}
  Let $F \in \cC^2$. There exists a constants $C=C(\|F\|_{\cC^2}, \|\sigma\|_{\cC^2}, H_-)$ and $q=q(H_-)$ such that for all $x \in \R^d$, $s\in [0,1]$ and $\blue{M}= (M, M') \in \mathcal{D}^{2H_-}_B([s,1])$, if $V$ is a solution of \eqref{eq:general_equation_Jacobian}, then 
       \begin{equs}
\|\blue{V}\|_{\mathcal{D}^{2H_-}_B([s,1])} \leq C (1+[\blue{B}]_{\cR^{H_-}})^q ( \|V\|_{\cC^0([s,1])}+\|\blue{M}\|_{\mathcal{D}^{2H_-}_B([s,1])}).
   \end{equs}
\end{proposition}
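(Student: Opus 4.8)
\emph{Strategy.} The proof is a buckling argument on intervals whose length is controlled by a negative power of $[\blue{B}]_{\cR^{H_-}}$, followed by a summation over a partition whose cardinality is controlled by a positive power of the same quantity. First observe that, by the definition of the rough integral in \eqref{eq:general_equation_Jacobian}, the Gubinelli derivative of $V$ is the value of the integrand, i.e. $\blue{V}=(V,V')$ with $V'=F(\phi^{s,x})V+M$; being a solution, $\blue{V}$ is in particular an element of $\mathcal{D}^{2H_-}_B([s,1])$, so all seminorms below are a priori finite. Since $\|V'\|_{\cC^0([s,1])}\le\|F\|_{\cC^0}\|V\|_{\cC^0([s,1])}+\|\blue{M}\|_{\mathcal{D}^{2H_-}_B([s,1])}$, it suffices to bound the seminorm $[\blue{V}]_{\mathcal{D}^{2H_-}_B([s,1])}=[R^\blue{V}]_{\cC^{2H_-}_2([s,1])}+[V']_{\cC^{H_-}([s,1])}$. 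Write $\bar K:=[\blue{B}]_{\cR^{H_-}([0,1])}$ and recall from \cref{lem:bound_controlled_flow} that $[\blue{\phi^{s,x}}]_{\mathcal{D}^{2H_-}_B([s,1])}\le C(1+\bar K)^q$ uniformly in $(s,x)$; this is what turns every occurrence of $[R^\blue{\phi^{s,x}}]_{\cC^{2H_-}_2}$ appearing below into a fixed power of $\bar K$.

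\emph{Local estimate.} Fix an interval $I=[u,v]\subset[s,1]$. Applying \eqref{eq:rough_integral_first_order} to $V_{u,v}=\int_u^v\blue{(F(\phi^{s,x}_r)V_r+M_r)}\,d\blue{B_r}$ and bounding $[\blue{B}]_{\cR^{H_-}(I)}\le\bar K$ gives $[R^\blue{V}]_{\cC^{2H_-}_2(I)}\lesssim\bar K\big([\blue{F(\phi^{s,x})V+M}]_{\mathcal{D}^{2H_-}_B(I)}|v-u|^{H_-}+\|(F(\phi^{s,x})V+M)'\|_{\cC^0(I)}\big)$. Using subadditivity of the $\mathcal{D}^{2H_-}_B$-seminorm, \eqref{eq:bound_F(phi)V_cleaner} together with the flow bound for the term $[\blue{F(\phi^{s,x})V}]_{\mathcal{D}^{2H_-}_B(I)}$, the third estimate of \cref{lem:inequalities_FV_Holder} for $\|(F(\phi^{s,x})V)'\|_{\cC^0(I)}$, and $\|M'\|_{\cC^0(I)}\le\|\blue{M}\|_{\mathcal{D}^{2H_-}_B(I)}$, one reaches an inequality of the form $[R^\blue{V}]_{\cC^{2H_-}_2(I)}\le C(1+\bar K)^2|v-u|^{H_-}[R^\blue{V}]_{\cC^{2H_-}_2(I)}+C(1+\bar K)^{q_1}\big(\|V\|_{\cC^0(I)}+\|\blue{M}\|_{\mathcal{D}^{2H_-}_B(I)}\big)$. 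Setting $h_0:=(2C(1+\bar K)^2)^{-1/H_-}$, for every $I$ with $|v-u|\le h_0$ the first term is absorbed, whence $[R^\blue{V}]_{\cC^{2H_-}_2(I)}\le 2C(1+\bar K)^{q_1}(\|V\|_{\cC^0(I)}+\|\blue{M}\|_{\mathcal{D}^{2H_-}_B(I)})$. For $[V']_{\cC^{H_-}(I)}=[F(\phi^{s,x})V+M]_{\cC^{H_-}(I)}$ one applies \eqref{eq:holder_bound_by_rough} to the controlled paths $\blue{F(\phi^{s,x})V}$ and $\blue{M}$, then the first and third estimates of \cref{lem:inequalities_FV_Holder} and the $[R^\blue{V}]$-bound just obtained, getting a bound of the same type. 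Thus, for all $I\subset[s,1]$ with $|I|\le h_0$, $[\blue{V}]_{\mathcal{D}^{2H_-}_B(I)}\le C(1+\bar K)^{q_2}\big(\|V\|_{\cC^0(I)}+\|\blue{M}\|_{\mathcal{D}^{2H_-}_B(I)}\big)$.

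\emph{Globalisation.} As $h_0$ depends on $\bar K$ only through a negative power, the equispaced partition $s=u_0<\dots<u_m=1$ of mesh $\le h_0$ has $m\le\lceil 1/h_0\rceil\le C(1+\bar K)^{2/H_-}$ intervals. Applying \eqref{eq:sum_partition}, bounding $[\blue{B}]_{\cC^{H_-}([s,1])}\le\bar K$, and using that $[\blue{M}]_{\mathcal{D}^{2H_-}_B}$ and $\|V\|_{\cC^0}$ do not increase under restriction to subintervals, gives $[\blue{V}]_{\mathcal{D}^{2H_-}_B([s,1])}\le 2(1+\bar K)\,m\,C(1+\bar K)^{q_2}\big(\|V\|_{\cC^0([s,1])}+\|\blue{M}\|_{\mathcal{D}^{2H_-}_B([s,1])}\big)\le C(1+\bar K)^{q}\big(\|V\|_{\cC^0([s,1])}+\|\blue{M}\|_{\mathcal{D}^{2H_-}_B([s,1])}\big)$ for a suitable $q=q(H_-)$. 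Adding the (trivial) contribution $\|V\|_{\cC^0([s,1])}$ and the contribution $\|V'\|_{\cC^0([s,1])}$ bounded at the start yields the claimed estimate for $\|\blue{V}\|_{\mathcal{D}^{2H_-}_B([s,1])}$. The only delicate point is the bookkeeping of the powers of $\bar K$ across the buckle and the summation — in particular feeding \cref{lem:bound_controlled_flow} in at the outset so that the $[R^\blue{\phi^{s,x}}]$-terms from \cref{lem:inequalities_FV_Holder} and \eqref{eq:bound_F(phi)V_cleaner} become fixed powers of $\bar K$; everything else is a routine assembly, and all invoked constants are uniform in $(s,x)$, making the final bound uniform as well.
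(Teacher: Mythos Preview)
Your proof is correct and follows essentially the same route as the paper's: a local buckling estimate for $[R^\blue{V}]_{\cC^{2H_-}_2}$ and $[V']_{\cC^{H_-}}$ on intervals of length $\lesssim(1+[\blue{B}]_{\cR^{H_-}})^{-2/H_-}$, feeding in \cref{lem:bound_controlled_flow} to convert all $[R^\blue{\phi^{s,x}}]$-terms into fixed powers of $[\blue{B}]_{\cR^{H_-}}$, and then globalising via \eqref{eq:sum_partition}. The only cosmetic difference is that the paper first combines the $[R^\blue{V}]$ and $[V']$ bounds into a single inequality for $[\blue{V}]_{\mathcal{D}^{2H_-}_B([u,v])}$ and then invokes the packaged \cref{lem:buckling_D_norm}, whereas you carry out the partition-and-sum argument by hand.
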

\begin{proof}
As usual, by using \eqref{eq:rough_integral_first_order} we get 
\begin{equs}
    \, &  [R^\blue{V}]_{\mathcal{C}^{2H_-}_2([u,v])} 
    \\
     \lesssim &  [\blue{B}]_{\cR^{H_-}}\big([\blue{F(\phi^{s,x}_r)V+M}]_{\mathcal{D}^{2H_-}_B([u,v])}|u-v|^{H_-} +\|V\|_{\cC^0([u,v])}+\|M\|_{\cC^0([u,v])}\big)
     \\
      \lesssim  & [\blue{B}]_{\cR^{H_-}}\big([\blue{F(\phi^{s,x}_r)V}]_{\mathcal{D}^{2H_-}_B([u,v])}|u-v|^{H_-}  + \|\blue{M}\|_{\mathcal{D}^{2H_-}_B([u,v])}+\|V\|_{\cC^0([u,v])}\big).
    \end{equs}
Further, by using \eqref{eq:bound_F(phi)V_cleaner} we have 
\begin{equs}
    \, [R^\blue{V}]_{\mathcal{C}^{2H_-}_2([u,v])}\lesssim
     &  (1+ [\blue{B}]_{\cR^{H_-}})^2  [R^\blue{V}]_{\mathcal{C}^{2H_-}_2([u,v])}|u-v|^{H_-} +  K_0, 
    \end{equs}
    where 
    \begin{equs}
     K_0 = ( \|V\|_{\cC^0([s,1])}+\|\blue{M}\|_{\mathcal{D}^{2H_-}_B([s,1])})(1+[R^{\blue{\phi^{s,x}}}]_{\cC^{2H_-}_2([s,1])}+ [B]_{\cC^{H_-}([s,1])})^3.
    \end{equs}
    Further, by \cref{lem:bound_controlled_flow} we see that 
    \begin{equs}
    K_0 \lesssim  K:=   ( \|V\|_{\cC^0([s,1])}+\|\blue{M}\|_{\mathcal{D}^{2H_-}_B([s,1])})(1+ [\blue{B}]_{\cR^{H_-}})^{(12+9/H_-)},  
    \end{equs}
    hence we conclude that 
   \begin{equs}
    \, [R^\blue{V}]_{\mathcal{C}^{2H_-}_2([u,v])}\lesssim
     &  (1+ [\blue{B}]_{\cR^{H_-}})^2  [R^\blue{V}]_{\mathcal{C}^{2H_-}_2([u,v])}|u-v|^{H_-} +  K. 
\label{eq:towards_gronwal_holder_jacobian}
    \end{equs}

    In addition, we see that 
    \begin{equs}
     \,    [V']_{\cC^{H_-}([u,v])}& = [F(\phi^{s,x})V+M]_{\cC^{H_-}([u,v])}
     \\ 
     & \leq \|F\|_{\cC^0}[V]_{\cC^{H_-}([u,v])}+ \|V\|_{\cC^0([u,v])}\|F\|_{\cC^1}[\phi^{s,x}]_{\cC^{H_-}([u,v])} + [M]_{\cC^{H_-}([u,v])}
     \\
     & \lesssim [V]_{\cC^{H_-}([u,v])}+ \|V\|_{\cC^0([u,v])}[\phi^{s,x}]_{\cC^{H_-}([u,v])}+ [M]_{\cC^{H_-}([u,v])}
     \\
     & \lesssim [R^\blue{V}]_{\mathcal{C}^{2H_-}_2([u,v])} |u-v|^{H_-}+ \| F(\phi^{s,x})V+M\|_{\cC^0([u,v])}
     \\
     & \qquad +\|V\|_{\cC^0([u,v])}( [R^{\blue{\phi^{s,x}}}]_{\mathcal{C}^{2H_-}_2([s,1])}+ [B]_{\cC^{H_-}})
     \\
     & \qquad + [R^\blue{M}]_{\mathcal{C}^{2H_-}_2([s,1])}+ \|M'\|_{\cC^0([s,1])}[B]_{\cC^{H_-}}
     \\
     & \lesssim [R^\blue{V}]_{\mathcal{C}^{2H_-}_2([u,v])} |u-v|^{H_-}+K, 
    \end{equs}
    where for the third inequality we have used \eqref{eq:holder_bound_by_rough}. From this and \eqref{eq:towards_gronwal_holder_jacobian}, we get that 
    \begin{equs}
      \,   [\blue{V}]_{\mathcal{D}^{2H_-}_B([u,v])} \leq C  (1+ [\blue{B}]_{\cR^{H_-}})^2[\blue{V}]_{\mathcal{D}^{2H_-}_B([u,v])} |u-v|^{H-}+CK,
    \end{equs}
    for some constant depending only on $\|F\|_{\cC^2}$, $\|\sigma\|_{\cC^2}$, and $H_-$. By \ref{lem:buckling_D_norm} we get some other constant $C$ (with the same dependence) that 
   \begin{equs}
      \,   [\blue{V}]_{\mathcal{D}^{2H_-}_B([s,1])} \leq C  (1+ [\blue{B}]_{\cR^{H_-}})^{1+(2/H_-)} K.       \label{eq:bound_V_D_byC_0}
    \end{equs}
   In addition, we clearly have that $\|V\|_{\cC^0([s,1])}$ and $\|F(\phi^{s,x}) V+M\|_{\cC^0([s,1])}$ can be bounded by the right hand side of \eqref{eq:bound_V_D_byC_0}. Consequently, the claim follows. 
\end{proof}   
The following is a direct consequence of  \cref{pro:bound_V}, \cref{pro:bound_V_Holder}, and the fact that $[\cdot]_{\mathcal{V}^{\rho}} \leq [\cdot]_{\mathcal{C}^{H_-}}$ and $[\cdot]_{\mathcal{V}^{\rho/2}_2} \leq [\cdot]_{\mathcal{C}^{2H_-}_2}$. 
\begin{corollary}     \label{cor:V_by_M}
    Let $F \in \cC^2$. There exists a random variable $\Theta$ such that for all $x \in \R^d$, $s\in [0,1]$ and $\blue{M}=(M, M') \in \mathcal{D}^{2H_-}_B([s,1])$, if $V$ is a solution of \eqref{eq:general_equation_Jacobian}, then 
   \begin{equs}                      
       \|\blue{V}\|_{\cD^{2H_-}_B([s, 1])} \leq \Theta (|V_s|+ \|\blue{M}\|_{\cD^{2H_-}_B([s, 1])}). 
   \end{equs}
    Moreover, for any $p \geq 1$, there exists a constant $C=C(p, \|F\|_{\cC^2}, \|\sigma\|_{\cC^2}, H_-)$, such that  $\|\Theta\|_{L_p(\Omega)}\leq C$.
\end{corollary}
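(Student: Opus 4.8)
The plan is to combine \cref{pro:bound_V} and \cref{pro:bound_V_Holder}, the only genuinely new (and entirely routine) input being the comparison between the $\rho$-variation quantities appearing in the former and the H\"older quantities appearing in the latter.

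First I would apply \cref{pro:bound_V} to the given $F$: it provides a random variable $\Theta_0$, not depending on $s$, $x$ or $\blue M$, with $\|\Theta_0\|_{L_p(\Omega)}\le C(p,\|F\|_{\cC^2},\|\sigma\|_{\cC^2},H_-)$ for every $p\ge 1$ and
\begin{equs}
\|V\|_{\cC^0([s,1])}\le \Theta_0\big(|V_s|+\|\blue M\|_{\mathbf V^{\rho/2}_B([s,1])}\big).
\end{equs}
Next, setting $\rho=1/H_-$ and using that for any $2H_-$-H\"older $2$-index function $g$ on a subinterval $[s,t]\subset[0,1]$ one has $\sum_{[a,b]\in\cP}|g_{a,b}|^{\rho/2}\le [g]_{\cC^{2H_-}_2([s,t])}^{\rho/2}\sum_{[a,b]\in\cP}|b-a|\le [g]_{\cC^{2H_-}_2([s,t])}^{\rho/2}$, hence $[g]_{\cV^{\rho/2}_2([s,t])}\le[g]_{\cC^{2H_-}_2([s,t])}$, and likewise $[f]_{\cV^{\rho}([s,t])}\le[f]_{\cC^{H_-}([s,t])}$ for $H_-$-H\"older $f$, the very definitions of $\|\cdot\|_{\mathbf V^{\rho/2}_B}$ and $\|\cdot\|_{\cD^{2H_-}_B}$ give a universal constant $N$ with $\|\blue M\|_{\mathbf V^{\rho/2}_B([s,1])}\le N\|\blue M\|_{\cD^{2H_-}_B([s,1])}$. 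Therefore
\begin{equs}
\|V\|_{\cC^0([s,1])}\le N\Theta_0\big(|V_s|+\|\blue M\|_{\cD^{2H_-}_B([s,1])}\big).
\end{equs}

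Then I would feed this bound into \cref{pro:bound_V_Holder}, which (with constants $C,q$ of the stated dependence, again uniform in $s,x,\blue M$) yields
\begin{equs}
\|\blue V\|_{\cD^{2H_-}_B([s,1])}\le C(1+[\blue B]_{\cR^{H_-}})^q\big(\|V\|_{\cC^0([s,1])}+\|\blue M\|_{\cD^{2H_-}_B([s,1])}\big).
\end{equs}
Combining the last two displays and setting $\Theta:=C(1+[\blue B]_{\cR^{H_-}})^q(1+N\Theta_0)$ gives the asserted inequality $\|\blue V\|_{\cD^{2H_-}_B([s,1])}\le \Theta(|V_s|+\|\blue M\|_{\cD^{2H_-}_B([s,1])})$, with $\Theta$ the same for all $s$, $x$ and $\blue M$. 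It remains to bound the moments of $\Theta$: the Gaussian rough path lift satisfies $\E[\exp(c[\blue B]_{\cR^{H_-}}^2)]<\infty$ for some $c>0$ (see, e.g., \cite{FV10}), so $(1+[\blue B]_{\cR^{H_-}})^q$ has finite moments of every order, and since $\|\Theta_0\|_{L_p(\Omega)}\le C(p,\|F\|_{\cC^2},\|\sigma\|_{\cC^2},H_-)$, H\"older's inequality gives $\|\Theta\|_{L_p(\Omega)}\le C(p,\|F\|_{\cC^2},\|\sigma\|_{\cC^2},H_-)$, as required. There is no serious obstacle here, since this is essentially a corollary; the only points needing a little care are that $\Theta$ can be chosen independently of $s,x,\blue M$ (already guaranteed by the two propositions) and the moment estimate, which follows from H\"older's inequality once the integrability of $[\blue B]_{\cR^{H_-}}$ is recalled.
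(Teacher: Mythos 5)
Your proof is correct and follows precisely the route the paper indicates: the paper's own ``proof'' of this corollary is the one-line statement that it is a direct consequence of \cref{pro:bound_V}, \cref{pro:bound_V_Holder}, and the comparisons $[\cdot]_{\mathcal{V}^{\rho}} \leq [\cdot]_{\mathcal{C}^{H_-}}$, $[\cdot]_{\mathcal{V}^{\rho/2}_2} \leq [\cdot]_{\mathcal{C}^{2H_-}_2}$, which is exactly what you spell out. One small caveat not affecting the argument: the claim $\E[\exp(c[\blue B]_{\cR^{H_-}}^2)]<\infty$ is too strong as stated (the second-level part $[\mathbb{B}]_{\cC^{2H_-}_2}$ lives in the second Wiener chaos and so typically has only exponential, not Gaussian, tails, giving $\E[\exp(c[\blue B]_{\cR^{H_-}})]<\infty$ rather than the squared version), but the conclusion you actually use — that $(1+[\blue B]_{\cR^{H_-}})^q$ has all moments finite — is correct.
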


\begin{lemma}  \label{lem:bounds_for_J_sigma(phi)}
    Let $F \in \cC^2$. There exists a random variable $\Psi$ such that for all $s \in [0, 1], x \in \R^d$, we have 
    \begin{equs}
        \| \blue{F (\phi^{s,x})}\|_{\cD^{2H_-}_B([s, 1])}& + \| \blue{J^{s,x}}\|_{\cD^{2H_-}_B([s, 1])} + \| \blue{(J^{s,x})^{-1}}\|_{\cD^{2H_-}_B([s, 1])}
        \\
        & + \| \blue{\nabla J^{s,x}}\|_{\cD^{2H_-}_B([s, 1])} + \|  \blue{\nabla(J^{s,x})^{-1}}\|_{\cD^{2H_-}_B([s, 1])}\leq \Psi.
    \end{equs}
    Moreover, for any $p \geq 1$, there exists a constant $C=C( H_-, p, \| \sigma\|_{\cC^4}, \| F\|_{\cC^2})$ such that $\| \Psi \|_{L_p(\Omega)} \leq C$. 
\end{lemma}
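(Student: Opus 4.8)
\emph{Strategy.} The plan is to bound each of the five summands separately and then take $\Psi$ to be (a constant multiple of) their sum. For the first term, $\blue{F(\phi^{s,x})}$, I would use the composition estimate \eqref{eq:composition_estimate_controlled} (whose constant depends only on $\|F\|_{\cC^2}$) together with \cref{lem:bound_controlled_flow}: the latter gives $[\blue{\phi^{s,x}}]_{\cD^{2H_-}_B([s,1])}\le C(1+[\blue{B}]_{\cR^{H_-}})^q$, while the Gubinelli derivative $\sigma(\phi^{s,x})$ of $\phi^{s,x}$, and the quantities $F(\phi^{s,x})$ and $\nabla F(\phi^{s,x})\sigma(\phi^{s,x})$, are bounded by $\|\sigma\|_{\cC^0}$, $\|F\|_{\cC^0}$ and $\|F\|_{\cC^1}\|\sigma\|_{\cC^0}$; this yields $\|\blue{F(\phi^{s,x})}\|_{\cD^{2H_-}_B([s,1])}\le C(1+[\blue{B}]_{\cR^{H_-}})^{q'}$ with $C=C(\|F\|_{\cC^2},\|\sigma\|_{\cC^1},H_-)$.

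\emph{The Jacobian and its inverse.} For $\blue{J^{s,x}}$ and $\blue{(J^{s,x})^{-1}}$ I would observe that the equations for $J^{s,x}$ and $(J^{s,x})^{-1}$ recalled before \eqref{eq:general_equation_Jacobian} are both special cases of \eqref{eq:general_equation_Jacobian}, with $F$ taken to be $\pm\nabla\sigma$ (for a suitable choice of the auxiliary space $E$), inhomogeneity $\blue{M}\equiv 0$, and initial value $\mathbb{I}_d$. Thus \cref{cor:V_by_M} directly gives $\|\blue{J^{s,x}}\|_{\cD^{2H_-}_B([s,1])}+\|\blue{(J^{s,x})^{-1}}\|_{\cD^{2H_-}_B([s,1])}\le\Theta_1$ for a random variable $\Theta_1$ with $\|\Theta_1\|_{L_p(\Omega)}\le C(p,\|\sigma\|_{\cC^3},H_-)$ for every $p\ge1$.

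\emph{The derivatives of the Jacobian.} For $\blue{\nabla J^{s,x}}$ and $\blue{\nabla(J^{s,x})^{-1}}$ I would differentiate the two linear equations in $x$; this is legitimate by the smooth dependence of rough differential equations on parameters and initial data (see, e.g., \cite[Theorem~8.15]{Friz-Hairer}), and it shows that $\nabla J^{s,x}$ and $\nabla(J^{s,x})^{-1}$ again solve equations of the form \eqref{eq:general_equation_Jacobian} with $F=\pm\nabla\sigma$, zero initial value, and inhomogeneities obtained, up to contraction of indices, from $\nabla^2\sigma(\phi^{s,x})\,J^{s,x}\otimes J^{s,x}$ and $-\nabla^2\sigma(\phi^{s,x})\,J^{s,x}\otimes(J^{s,x})^{-1}$. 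Since $\nabla^2\sigma\in\cC^2$ --- this is exactly where the $\cC^4$ hypothesis in the statement is needed --- the composition estimate \eqref{eq:composition_estimate_controlled} and \cref{lem:bound_controlled_flow} bound $\blue{\nabla^2\sigma(\phi^{s,x})}$ in $\cD^{2H_-}_B([s,1])$ by a polynomial in $[\blue{B}]_{\cR^{H_-}}$, and then the product estimate \eqref{eq:controlled_product}, together with the bound on $\blue{J^{s,x}}$ and $\blue{(J^{s,x})^{-1}}$ from the previous step, bounds the $\cD^{2H_-}_B([s,1])$-norms of these inhomogeneities by a random variable $\Theta_2$ with $\|\Theta_2\|_{L_p(\Omega)}\le C(p,\|\sigma\|_{\cC^4},H_-)$. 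A final application of \cref{cor:V_by_M} then gives $\|\blue{\nabla J^{s,x}}\|_{\cD^{2H_-}_B([s,1])}+\|\blue{\nabla(J^{s,x})^{-1}}\|_{\cD^{2H_-}_B([s,1])}\le\Theta_3\Theta_2$ with $\Theta_3$ as in \cref{cor:V_by_M}.

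\emph{Conclusion and the main difficulty.} I would then set $\Psi$ to be the sum of these four bounds; each one is a deterministic constant depending only on $H_-$, $\|\sigma\|_{\cC^4}$ and $\|F\|_{\cC^2}$, times a polynomial in $[\blue{B}]_{\cR^{H_-}}$, times a product of the $\Theta_i$. Since Gaussian rough path norms are integrable of all orders, $[\blue{B}]_{\cR^{H_-}}$ has moments of every order, and so do the $\Theta_i$; H\"older's inequality then gives $\|\Psi\|_{L_p(\Omega)}\le C(H_-,p,\|\sigma\|_{\cC^4},\|F\|_{\cC^2})$ for every $p\ge1$, which is the claim. I expect the only genuinely delicate point to be the derivation of the variational equations for $\nabla J^{s,x}$ and $\nabla(J^{s,x})^{-1}$ as $\cD^{2H_-}_B$-valued rough differential equations, and the attendant bookkeeping of tensor indices ensuring that their inhomogeneous terms have precisely the structure required by \eqref{eq:general_equation_Jacobian}; once this is in place, the remaining estimates are immediate consequences of \cref{lem:bound_controlled_flow}, \cref{cor:V_by_M}, and the composition and product estimates for controlled paths already recorded in the paper.
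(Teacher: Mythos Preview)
Your proposal is correct and follows essentially the same route as the paper: bound $\blue{F(\phi^{s,x})}$ via the composition estimate and \cref{lem:bound_controlled_flow}, then treat $J^{s,x}$, $(J^{s,x})^{-1}$, $\nabla J^{s,x}$, $\nabla(J^{s,x})^{-1}$ as instances of \eqref{eq:general_equation_Jacobian} and apply \cref{cor:V_by_M}, combining with the product estimate \eqref{eq:controlled_product} for the inhomogeneous terms. Your identification of the inhomogeneity for $\nabla J^{s,x}$ as (a contraction of) $\nabla^2\sigma(\phi^{s,x})\,J^{s,x}\otimes J^{s,x}$ is in fact slightly more precise than the paper's shorthand $\blue{M}=\blue{\nabla^2\sigma(\phi^{s,x})J^{s,x}}$, but the ensuing estimate is identical.
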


\begin{proof}
    By \eqref{eq:composition_estimate_controlled} and by the boundedness of $\sigma$,  there exists  $N=N(\|F\|_{\cC^2}, \| \sigma\|_{\cC^0})$ such that for all $s \in [0, 1], x \in \R^d$,  we have 
    \begin{equs}
    \, [\blue{F(\phi^{s,x})}]_{\mathcal{D}_B^{2H_-}([s, 1])}  &\leq N ( 1+ [B]^2_{\cC^{H_-}} ) ( [\blue{\phi^{s,x}}]_{\mathcal{D}_{B}^{2H_-}([s,1])}+ 1). 
\end{equs}
Moreover, it is straight forward that 
$\| F(\phi^{s,x})\|_{\cC^0} \leq \| F\|_{\cC^0}$ and $\| \nabla F (\phi^{s,x}) \sigma(\phi^{s,x})\|_{\cC^0} \leq \| F\|_{\cC^1} \| \sigma\|_{\cC^0}$, which combined with the above and  \cref{lem:bound_controlled_flow}  gives 
\begin{equs}
\|\blue{F(\phi^{s,x})}\|_{\mathcal{D}_B^{2H_-}([s, 1])}  &\leq N ( 1+ [B]^2_{\cC^{H_-}} ) ( [\blue{\phi^{s,x}}]_{\mathcal{D}_{B}^{2H_-}([s,1])}+ 1)
\\
& \leq N  (1+[\blue{B}]_{\cR^{H_-}})^{6+(3/H_-)}=:\Psi_0,     \label{eq:bound_F(phi)}
\end{equs}
for some $N=N(H_-,\|F\|_{\cC^2}, \| \sigma\|_{\cC^2})$. 
Next, notice that $J^{s,x}$ satisfies \eqref{eq:general_equation_Jacobian} with $F= \nabla\sigma$ and $\blue{M}=0$ and recall that $J^{s,x}_s= \mathbb{I}_d$. By  \cref{cor:V_by_M}, there exists a random variable  $\Psi_1$ whose $L_p$-norm depends only on $p, \|\sigma\|_{\cC^3}$, and $H_-$, such that for all $s \in [0,1], x \in \R^d$ we have 
     \begin{equs}
\|\blue{J^{s,x}}\|_{\mathcal{D}^{2H_-}_B([s,1])} & \leq \Psi_1.    \label{eq:bound_J_Psi1}
\end{equs}
   
   Next, recall that  $\nabla J^{s,x}$ satisfies \eqref{eq:general_equation_Jacobian} with $F= \nabla \sigma$ and $\blue{M}=\blue{\nabla^2 \sigma (\phi^{s,x}) J^{s,x}}$ (\cite[Theorem 8.15, p. 148]{Friz-Hairer}) and notice that $\nabla J^{s,x}_s=0$. Hence, by  \cref{cor:V_by_M} again, there exists a random variable  $\tilde{\Psi}_2$ whose $L_p$-norm depends only on $p, \|\sigma\|_{\cC^3}$, and $H_-$, such that for all $s \in [0,1], x \in \R^d$ we have 
     \begin{equs}
\|\blue{ \nabla J^{s,x}}\|_{\mathcal{D}^{2H_-}_B([s,1])} & \leq \tilde{\Psi}_2 \|\blue{\nabla^2 \sigma (\phi^{s,x}) J^{s,x}}\|_{\mathcal{D}^{2H_-}_B([s,1])}. 
\end{equs}
Further, by using \eqref{eq:controlled_product}, the result obtained in \eqref{eq:bound_F(phi)} with $F=\nabla^2\sigma$, and \eqref{eq:bound_J_Psi1}, for some random variable $\bar{\Psi}_0$  whose $L_p$-norm depends only on $p, \|\sigma\|_{\cC^4}$, and $H_-$, we get that 
 \begin{equs}
\|\blue{ \nabla J^{s,x}}\|_{\mathcal{D}^{2H_-}_B([s,1])} & \leq \tilde{\Psi}_2 \|\blue{\nabla^2 \sigma (\phi^{s,x}) J^{s,x}}\|_{\mathcal{D}^{2H_-}_B([s,1])}
\\
& \leq \tilde{\Psi}_2 (1+[\blue{B}]_{\cR^{H_-}})^2\|\blue{\nabla^2 \sigma (\phi^{s,x}) }\|_{\mathcal{D}^{2H_-}_B([s,1])}\| \blue{J^{s,x}}\|_{\mathcal{D}^{2H_-}_B([s,1])}
\\
& \leq \tilde{\Psi}_2 (1+[\blue{B}]_{\cR^{H_-}})^2\|\blue{\nabla^2 \sigma (\phi^{s,x}) }\|_{\mathcal{D}^{2H_-}_B([s,1])}\| \blue{J^{s,x}}\|_{\mathcal{D}^{2H_-}_B([s,1])}
\\
& \leq \tilde{\Psi}_2 (1+[\blue{B}]_{\cR^{H_-}})^2\bar{\Psi}_0 \Psi_1=: \Psi_2. 
\end{equs}
Finally, in exactly the same manner one shows the existence of random variables $\Psi_3, \Psi_4$ whose 
whose $L_p$-norm depends only on $p, \|\sigma\|_{\cC^4}$, and $H_-$, and for all $s \in [0,1], x \in \R^d$ we have  $\| \blue{ (J^{s,x})^{-1}}\|_{\cD^{2H_-}_B([s, 1])} \leq \Psi_3$ and  $\|  \blue{\nabla(J^{s,x})^{-1}}\|_{\cD^{2H_-}_B([s, 1])}\leq \Psi_4$. Consequently, setting $\Psi= \sum_{i=0}^4\Psi_i$ finishes the proof. 
\end{proof}

    \begin{lemma}\label{lem:flow-D-with-power}
 For all $(s, t) \in [0, 1]^2_{\leq}$ and $x \in \R^d$, we have that $\phi^{s, x}_t \in \mathbb{D}^\infty$. In addition, for any $ n \geq 1$, there exists a random variable $\Lambda_n$ such that for all  $(s, t) \in [0, 1]^2_{\leq}$, $x \in \R^d$, with probability one,  we have 
\begin{equs}
    \|D^n \phi^{s, x}_t\|_{\R^d \otimes \mathcal{H}^{\otimes n}} \leq \Lambda_n |t-s|^{H_-},
\end{equs}
 Moreover,  for any $p \geq 1$, there exists a  constant $C=C(n, p, \|\sigma\|_{\cC^{n+4}},H_-)$, such that  $\|\Lambda_n\|_{L_p(\Omega)}\leq C$.       
 Finally, one has the identity
 \begin{equ}\label{eq:malliavin-and-jacobi}
    D_r\phi^{s,x}_t=J^{r,x}_t\sigma(\phi^{s,x}_r)\bone_{r\in[s,t]}.
\end{equ}
    \end{lemma}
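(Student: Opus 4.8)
The plan is to realise the iterated Malliavin derivatives of $\phi^{s,x}_t$ as pathwise solutions of a chain of linear rough differential equations of the type \eqref{eq:general_equation_Jacobian}, to estimate them with the machinery of this section, and to convert the resulting pathwise bounds into $\mathcal{H}^{\otimes n}$-bounds via the covariance estimates of \cref{sec:partial-malliavin}. That $\phi^{s,x}_t\in\mathbb{D}^\infty:=\bigcap_{m,p}\mathbb{D}^{m,p}$, and that the pathwise objects below genuinely compute its Malliavin derivatives, I would take as known from the Malliavin smoothness theory of Gaussian RDE flows, obtained by smooth approximation of $\blue{B}$ (\cite{CHLT15,CF10,Lyons-Cass-Litterer,INAHAMA}); the quantitative statement is then essentially self-contained.

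First I would record the structure of the derivatives. Since $\phi^{s,x}_t$ depends on $\blue{B}$ only through its restriction to $[s,t]$, the $\R^d$-valued map $(r_1,\dots,r_n)\mapsto (D^n\phi^{s,x}_t)_{r_1,\dots,r_n}$ is supported in $[s,t]^n$; and differentiating \eqref{eq:flow-def} $n$ times along the Cameron--Martin directions (using that $D$ commutes with the integral) shows that, for $r_1<\dots<r_n$ in $[s,t]$, the path $t\mapsto (D^n\phi^{s,x}_t)_{r_1,\dots,r_n}$ solves on $[r_n,1]$ an equation of the type \eqref{eq:general_equation_Jacobian} with $F=\nabla\sigma$, whose inhomogeneity $\blue{M}$ is a finite sum of $\cD^{2H_-}_B$-products of lower-order Malliavin derivatives of $\phi^{s,x}$, of $J^{\cdot,x}$, $(J^{\cdot,x})^{-1}$ and their $x$-derivatives of order $\le n$, and of $\nabla^k\sigma(\phi^{s,x})$ with $k\le n+1$; for $n=1$ the inhomogeneity vanishes and the value at $t=r$ is $\sigma(\phi^{s,x}_r)$. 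The need to control $x$-derivatives of $J^{s,x}$ of order up to $n$, each costing one extra derivative of $\sigma$ over the $\cC^4$ baseline of \cref{lem:bounds_for_J_sigma(phi)}, is precisely what forces $\sigma\in\cC^{n+4}$.

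Next I would estimate. Iterating \cref{cor:V_by_M}, the evident extension of \cref{lem:bounds_for_J_sigma(phi)} to higher $x$-derivatives of the Jacobian, the product bound \eqref{eq:controlled_product}, and the structure of $\blue{M}$, I obtain a single random variable $\Lambda_n'$ with $\|\Lambda_n'\|_{L_p(\Omega)}\le C(n,p,\|\sigma\|_{\cC^{n+4}},H_-)$ for all $p$ (its $L_p$-moments inherited from those of the $\Theta,\Psi$ of \cref{cor:V_by_M}, \cref{lem:bounds_for_J_sigma(phi)}, and thus ultimately from the Cass--Litterer--Lyons integrability invoked there), such that $\|\blue{(D^n\phi^{s,x})_{r_1,\dots,r_n}}\|_{\cD^{2H_-}_B([r_n,1])}\le\Lambda_n'$ uniformly in $(s,x,r_1,\dots,r_n)$; in particular, by \eqref{eq:holder_bound_by_rough}, the map $(r_1,\dots,r_n)\mapsto(D^n\phi^{s,x}_t)_{r_1,\dots,r_n}$ is almost surely supported in $[s,t]^n$ and belongs to $\cC^{H_-}([s,t]^n)$ with norm $\le\Lambda_n'$. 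Then, by the $n$-fold, un-projected variant of \cref{lem:H-norm-integral} — namely $\langle f,g\rangle_{\mathcal{H}^{\otimes n}}=\int_{[s,t]^{2n}}(f\otimes g)\,dQ^{\otimes n}$ for $f,g$ supported in $[s,t]^n$ and $\cC^{H_-}$ there, proved by the same step-function approximation — together with \cref{cor:holder-controlled2} and a $2n$-dimensional Young estimate (valid since $H_-+2H>1$), one gets
\begin{equs}
  \|D^n\phi^{s,x}_t\|_{\R^d\otimes\mathcal{H}^{\otimes n}}\ \lesssim\ \Lambda_n'\,(t-s)^{1/(4H)}\ \le\ \Lambda_n'\,(t-s)^{H_-},
\end{equs}
the last step because $1/(4H)\ge1/2>H_-$ (as $H<1/2$) and $t-s\le1$. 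Setting $\Lambda_n$ accordingly and using that all objects involved admit versions jointly continuous in $(s,t,x)$, a routine argument makes a single null set serve for all $(s,t,x)$. Finally, \eqref{eq:malliavin-and-jacobi} is the $n=1$ case of the above: for $r\in[s,t]$ the path $D_r\phi^{s,x}_\cdot$ solves $D_r\phi^{s,x}_t=\sigma(\phi^{s,x}_r)+\int_r^t\blue{\nabla\sigma(\phi^{s,x}_u)D_r\phi^{s,x}_u}\,d\blue{B_u}$, which by the cocycle property $\phi^{s,x}_u=\phi^{r,\phi^{s,x}_r}_u$ ($u\ge r$) is the linear RDE solved by the Jacobian of the driftless flow restarted at $r$ along the trajectory $\phi^{s,x}$; by \eqref{eq:Jacobian"flow"} and uniqueness of linear RDEs this equals $J^{r,x}_t\sigma(\phi^{s,x}_r)$, and it vanishes for $r\notin[s,t]$.

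The main obstacle is the rigorous justification that iterated Malliavin differentiation commutes with the rough integral against $\blue{B}$ and produces precisely that chain of linear equations with the stated inhomogeneities — classical via smooth approximation, but requiring care to keep all constants uniform in $(s,x)$ and to track the loss $\sigma\in\cC^{n+4}$; everything else is bookkeeping with \cref{cor:V_by_M}, \cref{lem:bounds_for_J_sigma(phi)}, \cref{lem:H-norm-integral} and \cref{cor:holder-controlled2}.
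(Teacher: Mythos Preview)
Your overall strategy is reasonable but differs from the paper's, and the conversion step from the pathwise estimate to the $\cH^{\otimes n}$-norm has a real gap.

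What the paper actually does is invoke Inahama's independent-copy representation: it takes an independent fBm $\hat B$, lifts $W=(B,\hat B)$ to a joint geometric rough path, and defines processes $\Xi^{(n),s,x}_t$ as explicit rough integrals against $\blue{W}$ (the first one being $\Xi^{(1)}=J^{0,x}\!\int_s^\cdot (J^{0,x})^{-1}\sigma(\phi^{s,x})\,d\blue{\hat B}$, with the higher ones given by an inductive hierarchy involving $\nabla^l\sigma(\phi^{s,x})$ for $l\le n$). Inahama's result gives the pointwise bound $\|D^n\phi^{s,x}_t\|_{\R^d\otimes\cH^{\otimes n}}\le C_0(\hat\E|\Xi^{(n),s,x}_t|^2)^{1/2}$. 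The $\Xi^{(n)}$ are then estimated entirely with the controlled-path machinery (\cref{lem:bounds_for_J_sigma(phi)}, \eqref{eq:controlled_product}, \eqref{eq:boundedness_rough_integration}), producing a polynomial bound $\|\blue{\Xi^{(n),s,x}}\|_{\cD^{2H_-}_W([s,1])}\le P_n(\Psi_0,\dots,\Psi_n,[\blue{W}]_{\cR^{H_-}})$. Since $\Xi^{(n),s,x}_s=0$, the H\"older bound \eqref{eq:holder_bound_by_rough} immediately yields the factor $|t-s|^{H_-}$. The $s>0$ case and the identity \eqref{eq:malliavin-and-jacobi} are handled by an explicit time-shift isometry $\Theta_s:\tilde\cH\to\cH$ that reduces to the $s=0$ results of \cite{INAHAMA,GOT}.

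The point is that the $\cH^{\otimes n}$-norm is already ``precomputed'' by Inahama's formula, so only regularity in the single variable $t$ is needed. In your approach, by contrast, the iteration of \cref{cor:V_by_M} gives you a uniform bound on $\|\blue{(D^n\phi^{s,x})_{r_1,\dots,r_n}}\|_{\cD^{2H_-}_B([r_n,1])}$, which is regularity \emph{in $t$} on $[r_n,1]$ for each fixed $(r_1,\dots,r_n)$. You then assert that $(r_1,\dots,r_n)\mapsto (D^n\phi^{s,x}_t)_{r_1,\dots,r_n}$ lies in $\cC^{H_-}([s,t]^n)$, but this is regularity \emph{in the $r$-variables}, which your RDE estimates have not established. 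For $n=1$ one can get it directly from the explicit formula and \eqref{eq:Jacobian"flow"}, but for $n\ge 2$ you would need to track regularity in each $r_i$ separately (including across the diagonals $r_i=r_j$), and that is a substantial additional argument you have not supplied. The $2n$-dimensional Young estimate and your exponent $(t-s)^{1/(4H)}$ are therefore unjustified as written. Also, the inhomogeneity in the iterated equation for $D^n\phi$ involves lower-order \emph{Malliavin} derivatives and higher $\nabla^l\sigma$, not $x$-derivatives of the Jacobian; your regularity counting for the $\cC^{n+4}$ requirement is accordingly off-track (in the paper it comes from needing $F=\nabla^l\sigma\in\cC^2$ for $l\le n$ in \cref{lem:bounds_for_J_sigma(phi)}).
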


    \begin{proof}
         The fact that $\phi^{s, x}_t \in \mathbb{D}^\infty$ would follow from \cite[Theorem 1.2]{INAHAMA} and \eqref{eq:malliavin-and-jacobi} would follow from \cite[Proposition 2.26]{GOT} if $s$ would be zero.
         To see the general case, one can shift the whole problem as follows.
         One can easily see that the process $\tilde{\phi}_t^{u,x} =\phi^{s+u, x}_{s+t} $ , $t \geq u\geq 0$, is the flow of the rough equation with coefficient $\sigma$ starting at time zero, driven by the geometric rough path $\blue{\tilde{B}}$, given by the natural lift of the process $\tilde{B}_t:= B_{s+t}-B_s$, $t \geq 0$.
         For brevity we denote $\tilde \phi_t=\tilde \phi^{0,x}_t$.
         Then, by \cite[Theorem 1.2]{INAHAMA} one has that for any $l \in \mathbb{N}$ and 
$p \geq 1$, $\| \tilde{D}^l \tilde{\phi}_t \|_{L_p(\Omega; \tilde{\mathcal{H}}^{\otimes l})}< \infty $, where $\tilde{\mathcal{H}}$ is the analogue of the space $\mathcal{H}$ for the process $\tilde{B}$.
%i.e., the closure in $\mathcal{H}$ of the linear span of the indicator functions $r \mapsto \bone_{[s,u]}(s+r)$, for $u \in [s, 1]$.
It is not hard to see that the map $\Theta_s:\tilde\cH\to\cH$ given by 
\begin{equ}
    (\Theta_s\tilde h)(r)
    =\begin{cases}
        \tilde h(r-s)&\text{ if } r\geq s\\0 &\text{ if }r<s
    \end{cases}
\end{equ}
is an isometry. Indeed, denote by $\hat Q(u,v)=\E(B_{u+s}B_{v+s})$ and note that $\hat Q(du,dv)=\tilde Q(du,dv)$. Therefore if $\tilde h$ is a step function with support in $[0,T]$ for some $T>0$ then
by a simple change of variables we have 
\begin{equs}
    \|\Theta_s\tilde h\|_{\cH}^2&=\int_s^{s+T}\int_s^{s+T}\tilde h(u-s)\tilde h(v-s)Q(du,dv)
    \\&=\int_0^T\int_0^T\tilde h(u)\tilde h(v)\hat Q(du,dv)=\int_0^T\int_0^T\tilde h(u)\tilde h(v)\tilde Q(du,dv)=\|\tilde h\|_{\tilde\cH}^2.
\end{equs}
It is then clear that if a random variable $X$ is measurable with respect to the $\sigma$-algebra generated by the increments of $\tilde{B}$  it is also measurable with respect to the $\sigma$-algebra generated by the increments of $B$ and for any $l \in \mathbb{N}$ we have that 
 $D^l X=\Theta_s^{\otimes l} \tilde{D}^l X$, whenever the right hand side is meaningful.  Hence, indeed we have $\phi^{s, x}_t \in \mathbb{D}^\infty$. Similarly,  from $D\phi^{s,x}_t=\Theta_s\tilde D\phi^{s,x}_t$ we immediately get the $r<s$ case of \eqref{eq:malliavin-and-jacobi}, the $r>t$ case is trivial, while for $r\in[t,s]$ we invoke \cite[Proposition 2.26]{GOT} for the process $\tilde B$ to get
 \begin{equs}
    D_r\phi^{s,x}_t=\tilde D_{r-s}\tilde{\phi}^{0,x}_{t-s}=\tilde J^{r-s,x}_{t-s}\sigma(\tilde\phi^{0,x}_{r-s})\bone_{r-s\leq t-s}=J^{r,x}_t\sigma(\phi^{s,x}_r)\bone_{r\in[s,t]},
\end{equs}
where $\tilde J$ is the Jacobian of the flow $\tilde \phi$.

Next, the desired estimate can be shown again by  following the arguments of \cite{INAHAMA}: Let us consider an independent copy of $B$, denoted by $\hat B$, and (with a slight abuse of notation) consider $\blue{W}= (W, \mathbb{W})$, the natural lift of $W=(B, \hat{B})$ to a geometric rough path (here $W$ should not be confused with the underlying Wiener process). Then, if $\blue{X}=(X, X') \in \mathcal{D}^{2H_- }_{B}([0, 1])$ is controlled by $B$, it can also be seen as a path controlled by $W$, namely, with a slight abuse of notation, $\blue{X}= (X, (X',0)) \in  \mathcal{D}^{2H_- }_{W}([0, 1])$, and similarly for $\blue{Y}= (Y, Y') \in \mathcal{D}^{2H_- }_{\hat{B}}([0, 1])$.  By this consideration, if $\blue{Z} \in  \mathcal{D}^{2H_- }_{W}([0, 1])$, $\blue{X} \in \mathcal{D}^{2H_- }_{B}([0, 1])$, $ Y  \in \mathcal{D}^{2H_- }_{\hat{B}}([0, 1])$, then integrals of the form $\int \blue{f(X,Y, Z) }\, d\blue{B}$  or $\int \blue{f(X,Y, Z)} \, d\blue{\hat{B}}$ can be defined as rough integrals against $\blue{W}$.  We then set 
\begin{equs}
    \Xi^{(1),s ,x }_t &=   J^{0, x}_t \int_s^t  \blue{(J^{0, x}_r)^{-1} \sigma (\phi^{s, x}_r )} \, d\blue{\hat{B}_r},
   \\
    \Xi^{(2), s, x}_t &=  J^{0, x}_t\int_s^t  \blue{(J^{0, x}_r)^{-1} \nabla^2\sigma (\phi^{s, x}_r) (\Xi^{(1), s, x}_r,\Xi^{(1), s, x}_r, \, \cdot )} d\blue{\hat{B}_r} 
    \\
    & + 2 J^{0, x}_t\int_s^t \blue{(J^{0, x}_r)^{-1} \nabla \sigma (\phi^{s, x}_r)(\Xi^{(1), s, x}_r, \cdot ) }d\blue{\hat{B}_r} 
\end{equs}
and inductively, for $n \geq 3$
\begin{equs}
 \Xi^{(n), s, x}_t &=  J^{0, x}_t\int_s^t  \blue{(J^{0, x}_r)^{-1}\sum_{l=2}^n  \sum_{i_1+...+i_l=n} C_{i_1, ..., i_l}\nabla^l\sigma (\phi^{s, x}_r) (\Xi^{(i_1), s, x}_r,..., \Xi^{(i_l), s, x}_r, \cdot )}  d\blue{B_r}
 \\
 & + J^{0, x}_t\int_s^t \blue{(J^{0, x}_r)^{-1}\sum_{l=1}^{n-1} \sum_{i_1+...+i_l=n-1} \nabla^l\sigma (\phi^{s, x}_r)(\Xi^{(i_1), s, x}_r, ...,\Xi^{(i_l), s, x}_r, \cdot   )} \, d \blue{\hat{B}_r}.
\end{equs}
Then, it is shown in \cite{INAHAMA} (Proposition 3.3, Lemma 4.8, and Section 4.3)   that for any $n \geq 1$, there exists a constant $C_0=C_0(n)$, such that almost surely, we have 
\begin{equs}        \label{eq:bound_Malliavin_Xi}
    \|D^n \phi^{s,x}_t\|_{\R^d \otimes \cH^{\otimes n }} \leq  C_0 \big( \hat{\E} |\Xi^{(n), s, x}_t|^2 \big)^{1/2},
\end{equs}
where $\hat{\E}$ denotes the expectation with respect to $\hat{B}$. 
We derive estimates for the right hand side. We start for $n=1$. 
By 
\eqref{eq:controlled_product} and \eqref{eq:boundedness_rough_integration}, we get 
\begin{equs}
    \|  \blue{\Xi^{(1),s ,x }}\|_{\cD^{2H_-}_W([s, 1])} &\lesssim   \| \blue{J^{0, x}}\|_{\cD^{2H_-}_W([s, 1])} \Big\| \int_s^\cdot  \blue{ (J^{0, x}_r)^{-1} \sigma (\phi^{s, x}_r )} \, d\blue{\hat{B}_r} \Big\| _{\cD^{2H_-}_W([s, 1])} (1+[ \blue{W}]_{\cR^{H_-}})^2
    \\
& \lesssim   \| \blue{J^{0, x}}\|_{\cD^{2H_-}_W([s, 1])}  (\|\blue{ (J^{0, x})^{-1} \sigma( \phi^{s,x})}\|_{\cD^{2H_-}_B([s, 1])} +1) (1+[ \blue{W}]_{\cR^{H_-}})^3. 
\end{equs}
Therefore, by \eqref{eq:controlled_product} and \cref{lem:bounds_for_J_sigma(phi)} we get 
\begin{equs}
   \|  \blue{ \Xi^{(1),s ,x }}\|_{\cD^{2H_-}_W([s, 1])}  \leq P_1( \Psi_0, [ \blue{W}]_{\cR^{H_-}}),   \label{eq:estimate_Xi1}
\end{equs}
for some polynomial $P_1$, where $\Psi_0$ is the random variable from the conclusion of \cref{lem:bounds_for_J_sigma(phi)} corresponding to the choice $F= \sigma$. By this bound, a repetition of the above argument gives 
\begin{equs}
   \|   \blue{\Xi^{(2),s ,x }}\|_{\cD^{2H_-}_W([s, 1])}  \leq P_2( \Psi_0, \Psi_1, \Psi_2,  [ \blue{W}]_{\cR^{H_-}}), 
\end{equs}
for some polynomial $P_2$, where $\Psi_1$, $\Psi_2$ are the random variable from the conclusion of \cref{lem:bounds_for_J_sigma(phi)} corresponding to the choices $F= \nabla \sigma$ and $F=\nabla^2 \sigma$, respectively. By induction, one gets 
\begin{equs}
   \|   \blue{\Xi^{(n),s ,x }}\|_{\cD^{2H_-}_W([s, 1])}  \leq P_n( \Psi_0, \Psi_1, \Psi_2, ..., \Psi_n,  [ \blue{W}]_{\cR^{H_-}}),      \label{eq:estimate_Xi_n}
\end{equs}
for some polynomial $P_n$,  where $\Psi_l$ is the random variable from the conclusion of \cref{lem:bounds_for_J_sigma(phi)} corresponding to the choice $F= \nabla^l \sigma$, $l=0, ..., n$.
Then, by \eqref{eq:holder_bound_by_rough} it follows that 
\begin{equs}
    \,   [\Xi^{(n),s ,x } ]_{\cC^{H_-}([s, 1])} &  \leq  \|  \blue{ \Xi^{(n),s ,x }}\|_{\cD^{2H_-}_W([s, 1])}  (1+ [ \blue{W}]_{\cR^{H_-}})
    \\
   &  \leq P_n( \Psi_1, \Psi_2, ..., \Psi_n,  [ \blue{W}]_{\cR^{H_-}})  (1+ [ \blue{W}]_{\cR^{H_-}}) =: \Lambda_n. 
\end{equs}
The claim follows from the above inequality combined with \eqref{eq:bound_Malliavin_Xi} and  the fact that $\Xi^{(n), s, x}_s=0$. 
    \end{proof}

    \begin{lemma}\label{lem:Jacobi-Malliavin-Derivatives}
For all $(s, t) \in [0, 1]^2_{\leq}$, $x \in \R^d$, we have that $J^{s, x}_t \in \mathbb{D}^{m,p}$ for all $m \in \mathbb{N}$ and $p \geq 1$.   In addition, for any $ n   \in  \mathbb{N}$, there exists a random variable $\Theta_n$ such that for all  $(s, t) \in [0, 1]^2_{\leq}$,  $x \in \R^d$, with probability one,  we have 
\begin{equs}
    \|D^n J^{s, x}_t\|_{\R^{d\times d} \otimes \mathcal{H}^{\otimes n}} \leq \Theta_n. 
\end{equs}
 Moreover, for any $p \geq 1$, there exists a constant $C=C(d, n, p, \|\sigma\|_{\cC^{n+5}}, H_-)$ such that  $\|\Theta_n\|_{L_p(\Omega)}\leq C$.
    \end{lemma}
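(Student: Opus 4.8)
The plan is to mirror the proof of \cref{lem:flow-D-with-power}, working now with the linear rough equation satisfied by the Jacobian in place of the flow itself. First I would reduce to $s=0$ by the same shift: replacing $B$ by $\tilde{B}_\cdot=B_{s+\cdot}-B_s$ and using that the associated map $\Theta_s\colon\tilde{\cH}\to\cH$ is an isometry, one gets $D^nJ^{s,x}_t=\Theta_s^{\otimes n}\tilde{D}^n\tilde{J}^{0,x}_{t-s}$, so a bound at $s=0$ which is uniform in $x\in\R^d$ transfers to all $s$ with the same $L_p(\Omega)$ constants. Hence it suffices to treat $s=0$, and I abbreviate $J_t=J^{0,x}_t$, $\phi_t=\phi^{0,x}_t$; recall from \eqref{eq:general_equation_Jacobian} that $J$ solves the \emph{linear} RDE with $F=\nabla\sigma$, inhomogeneity $\blue{M}=0$ and $J_0=\mathbb{I}_d$, and that the fundamental solution of its homogeneous part is $J$ itself, which is precisely the structure exploited in \cref{pro:bound_V}--\cref{cor:V_by_M}.

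Next I would Malliavin-differentiate this equation exactly as is done for $\phi$ in \cite{INAHAMA}. Introducing an independent copy $\hat{B}$ of $B$ and letting $\blue{W}=(W,\mathbb{W})$ be the natural lift of $W=(B,\hat{B})$, one obtains, by differentiating \eqref{eq:general_equation_Jacobian} in the directions of $\cH$, processes $\Upsilon^{(n),0,x}$, $n\geq 1$, given by \emph{explicit} rough integrals against $\blue{W}$ whose integrands are finite sums of products of $J$, $J^{-1}$, of $\nabla^l\sigma(\phi)$ for $1\leq l\leq n+1$, of the processes $\Xi^{(k),0,x}$ ($k\leq n$) from the proof of \cref{lem:flow-D-with-power}, and of the lower-order $\Upsilon^{(k),0,x}$ ($k<n$); equivalently, each $\Upsilon^{(n),0,x}$ solves a linear RDE of the form \eqref{eq:general_equation_Jacobian} driven by $\blue{W}$, with $F=\nabla\sigma(\phi)$, vanishing initial condition, and a controlled inhomogeneity built from the same products. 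As in \eqref{eq:bound_Malliavin_Xi} one then has the representation
\begin{equs}
\|D^nJ_t\|_{\R^{d\times d}\otimes\cH^{\otimes n}}\lesssim\big(\hat{\E}\,|\Upsilon^{(n),0,x}_t|^2\big)^{1/2},
\end{equs}
with implicit constant depending only on $n$, where $\hat{\E}$ is expectation with respect to $\hat{B}$. That $J_t\in\mathbb{D}^{m,p}$ for all $m,p$, which is needed to make these manipulations rigorous, follows from the same closedness-and-approximation argument as in \cref{lem:flow-D-with-power}, using $\phi\in\mathbb{D}^\infty$ and the linearity of the equation for $J$; alternatively it follows by differentiating \eqref{eq:malliavin-and-jacobi} in $x$, once one checks that $\partial_x$ and $D^n$ may be interchanged.

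Then I would bound the right-hand side by induction on $n$, estimating $\|\blue{\Upsilon^{(n),0,x}}\|_{\cD^{2H_-}_W([0,1])}$ by a polynomial in $[\blue{W}]_{\cR^{H_-}}$, in the random variables supplied by \cref{lem:bounds_for_J_sigma(phi)} applied with $F=\nabla^l\sigma$ (this is where $\|\sigma\|_{\cC^{l+4}}$ enters, up to $l=n+1$, hence $\|\sigma\|_{\cC^{n+5}}$), and in the controlled-path norms of the $\Xi^{(k),0,x}$ already bounded in the proof of \cref{lem:flow-D-with-power}. The tools are precisely those used there: the product bound \eqref{eq:controlled_product} and the rough-integration bound \eqref{eq:boundedness_rough_integration} to control the integrands and the integrals defining $\Upsilon^{(n),0,x}$ (or, via \cref{cor:V_by_M} applied with driver $\blue{W}$, to pass from the inhomogeneity and zero initial value to the solution), together with \cref{lem:bounds_for_J_sigma(phi)} to control $\|\blue{J}\|$, $\|\blue{J^{-1}}\|$ and $\|\blue{\nabla^l\sigma(\phi)}\|$. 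Since $\Upsilon^{(n),0,x}_0=0$, \eqref{eq:holder_bound_by_rough} upgrades this to a bound on $\|\Upsilon^{(n),0,x}\|_{\cC^0([0,1])}$, which combined with the representation above gives $\|D^nJ_t\|\leq\Theta_n^{(0)}$ for a random variable $\Theta_n^{(0)}$ that does not depend on $x$ or $t$ (one even obtains a factor $|t-s|^{H_-}$ for free, though only boundedness is claimed). All ingredients have finite moments of every order -- $[\blue{W}]_{\cR^{H_-}}$ by the Fernique-type estimate for Gaussian rough path norms, the rest by \cref{lem:bounds_for_J_sigma(phi)} and \cref{lem:flow-D-with-power} -- so $\Theta_n^{(0)}\in L_p(\Omega)$ for all $p$, with a norm depending only on $d,n,p,\|\sigma\|_{\cC^{n+5}},H_-$; transferring back through $\Theta_s$ yields the desired $\Theta_n$.

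The main obstacle is the bookkeeping in the second step: writing out the auxiliary processes $\Upsilon^{(n),0,x}$ and checking, term by term, that their integrands are products of objects already estimated in $\cD^{2H_-}_W$ with all moments finite. This is entirely parallel to the derivation in \cite{INAHAMA} and to the $\Xi^{(n)}$ computation already carried out in the proof of \cref{lem:flow-D-with-power}; the only structural difference is that the base equation is the linear RDE for $J$ rather than the bounded-coefficient RDE for $\phi$, which is why \cref{cor:V_by_M} plays here the role that \cref{lem:bound_controlled_flow} played there. Since \cref{cor:V_by_M} already supplies controlled-path estimates with finite moments for linear RDEs with controlled inhomogeneities, this substitution introduces no genuinely new difficulty.
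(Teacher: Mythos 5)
Your proposal goes through Inahama-type auxiliary processes for $J$ itself, whereas the paper never Malliavin-differentiates the Jacobian equation directly. The paper's actual strategy is to work exclusively with the flow's processes $\Xi^{(n),s,x}$, establish a Lipschitz-in-the-initial-condition bound $\|\blue{\Xi^{(n),s,x}-\Xi^{(n),s,y}}\|_{\cD^{2H_-}_W}\lesssim|x-y|P_n(\Psi_0,\ldots,\Psi_{n+1},[\blue{W}]_{\cR^{H_-}})$ by induction, and feed this back into Inahama's inequality to get $\|\phi^{s,x}_t-\phi^{s,y}_t\|_{\mathbb{D}^{m,p}}\leq C|x-y|$. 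Membership of $J^{s,x}_t=\partial_x\phi^{s,x}_t$ in $\mathbb{D}^{m,p}$ is then deduced by a difference-quotient argument: the quotients $\ell(\phi^{s,x+\ell^{-1}e_i}_t-\phi^{s,x}_t)$ are bounded in the reflexive space $\mathbb{D}^{m,p}$, hence weakly subconvergent, and since they converge a.s.\ to $\partial_i\phi^{s,x}_t$ the weak limit is identified; Mazur's lemma upgrades this to strong convergence, and Fatou gives the uniform pointwise bound on $D^n J^{s,x}_t$. No new RDE for $D^nJ$ is ever written down.

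This matters because your plan has a genuine gap precisely at the point you wave at. To Malliavin-differentiate the linear RDE for $J$ and assert a representation $\|D^nJ_t\|\lesssim(\hat{\E}|\Upsilon^{(n)}_t|^2)^{1/2}$, you must already know $J_t\in\mathbb{D}^{n,2}$; the references you invoke (Inahama's Proposition 3.3, Lemma 4.8, Section 4.3, quoted in the paper for $\phi$) concern the flow, not its $x$-derivative, and do not come with a Jacobian version for free. Your first suggested fix — ``the same closedness-and-approximation argument as in \cref{lem:flow-D-with-power}, using $\phi\in\mathbb{D}^\infty$ and linearity'' — is not an argument: in that lemma the membership $\phi\in\mathbb{D}^\infty$ is quoted from Inahama's Theorem 1.2, not proved by approximation, and linearity of the $J$-equation gives no a priori Malliavin regularity of its solution. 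Your alternative — differentiate \eqref{eq:malliavin-and-jacobi} in $x$, ``once one checks that $\partial_x$ and $D^n$ may be interchanged'' — is circular: that interchange is exactly the content of the statement, and the paper's reflexivity/Mazur argument is its proof. The remainder of your sketch (the shift to $s=0$, use of \cref{cor:V_by_M} and \cref{lem:bounds_for_J_sigma(phi)} to control the would-be $\Upsilon^{(n)}$, Fernique for $[\blue{W}]_{\cR^{H_-}}$) is sound in spirit and would indeed give the $L_p$ bounds once the representation is available, but the foundational step of establishing Malliavin smoothness of $J$ and the existence of the $\Upsilon^{(n)}$ representation is missing, and it is not a mere bookkeeping issue.
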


    \begin{proof}
    Let us fix $m \in \mathbb{N}$ and $p \geq 1$. It follows again by \cite{INAHAMA} (Proposition 3.3, Lemma 4.8, and Section 4.3)  that for $n \in \mathbb{N}$ there exists a constant $C_0(n)$ such that for all   $x, y \in \R^d$,  with probability one, we have
    \begin{equs}      \label{eq:inahama_Xi}
    \|D^n \phi^{s,x}_t-D^n \phi^{s,y}_t \|_{\R^d \otimes \cH^{\otimes n }}  \leq C_0 \big( \hat{\E} |  \Xi^{(n), s, x}_t - \Xi^{(n), s, y}_t|^2 \big)^{1/2},
    \end{equs}
    where $\Xi^{(n)}$ are as in the proof of  \cref{lem:flow-D-with-power}. 
    Next, we claim that there exists a polynomial $P_n$ such that for all $(s,t) \in [0,1]^2_{\leq}$, $x,y \in \R^d$, with probability one, we have 
    \begin{equs}
      |  \Xi^{(n), s, x}_t - \Xi^{(n), s, y}_t|  \leq  |x-y| P_n(\Psi_0,  \Psi_1, \Psi_2, ..., \Psi_{n+1},   [ \blue{W}]_{\cR^{H_-}}),   \label{eq:Lip_Xi_initial}
    \end{equs}
      where $\Psi_l$, $l \in \{0, 1, ..., n+1\}$, are the random variables from the conclusion of \cref{lem:bounds_for_J_sigma(phi)} corresponding to the choice $F= \nabla^l \sigma$. Indeed, for $n=1$, similarly to the derivation of \eqref{eq:estimate_Xi1}, we have 
   \begin{equs}
& \,  \| \blue{\Xi^{(1), s, x}- \Xi^{(1), s, y}} \|_{\cD^{2H_-}_W([s, 1])}
   \\
   = & \, \Big\| \blue{ J^{0, x} \int_s^\cdot  (J^{0, x}_r)^{-1} \sigma (\phi^{s, x}_r ) \, d\hat{B}_r -J^{0, y} \int_s^\cdot  (J^{0, y}_r)^{-1} \sigma (\phi^{s, y}_r ) \, d\hat{B}_r }\Big\|_{\cD^{2H_-}_W([s, 1])}
   \\
    \lesssim  \, &   (1+[ \blue{W}]_{\cR^{H_-}})^2 \Bigg( \| \blue{ J^{0, x}-J^{0, y}}\|_{\cD^{2H_-}_W([s, 1])}  \Big\| \blue{\int_s^\cdot  (J^{0, y}_r)^{-1} \sigma (\phi^{s, y}_r ) \, d\hat{B}_r} \Big\|_{\cD^{2H_-}_W([s, 1])} 
    \\
    & \, + \| \blue{J^{0, x} }\|_{\cD^{2H_-}_W([s, 1])}\Big\| \blue{\int_s^\cdot  \Big( (J^{0, x}_r)^{-1} \sigma (\phi^{s, x}_r )- (J^{0, y}_r)^{-1} \sigma (\phi^{s, y}_r )\Big)  \, d\hat{B}_r }\Big\|_{\cD^{2H_-}_W([s, 1])} \Bigg).   \label{eq:towrads_Lip_Xi_1}
   \end{equs} 
   Then, by the fundamental theorem of calculus, it follows that 
   \begin{equs}
\| \blue{ J^{0, x}-J^{0, y}}\|_{\cD^{2H_-}_W([s, 1])} \leq   |x-y| \int_0^1 \|\blue{ \nabla J^{0, \theta x+(1-\theta y)}} \|_{\cD^{2H_-}_W([s, 1])} \, d \theta \leq |x-y| \Psi_0,
   \end{equs}
   where for the last inequality we have used  \cref{lem:bounds_for_J_sigma(phi)}.  By \eqref{eq:boundedness_rough_integration}, \eqref{eq:controlled_product}, and  \cref{lem:bounds_for_J_sigma(phi)}, we get 
  \begin{equs}
      \Big\| \blue{\int_s^\cdot  (J^{0, y}_r)^{-1} \sigma (\phi^{s, y}_r ) \, d\hat{B}_r} \Big\|_{\cD^{2H_-}_W([s, 1])} \leq G_1(\Psi_0, [ \blue{W}]_{\cR^{H_-}}),
  \end{equs}
  for some polynomial $G_1$. Again by \eqref{eq:boundedness_rough_integration}, the fundamental theorem of calculus,  \eqref{eq:controlled_product}, and  \cref{lem:bounds_for_J_sigma(phi)}, one obtains 
  \begin{equs}
      \Big\| \blue{\int_s^\cdot  \Big( (J^{0, x}_r)^{-1} \sigma (\phi^{s, x}_r )- (J^{0, y}_r)^{-1} \sigma (\phi^{s, y}_r )\Big)  \, d\hat{B}_r }\Big\|_{\cD^{2H_-}_W([s, 1])} \leq |x-y| G_2(\Psi_0, \Psi_1, [ \blue{W}]_{\cR^{H_-}}),
  \end{equs}
  for some polynomial $G_2$. Hence, by plugging the three estimates above back into \eqref{eq:towrads_Lip_Xi_1} and using again  \cref{lem:bounds_for_J_sigma(phi)}, we get 
  \begin{equs}
      \| \blue{\Xi^{(1), s, x}- \Xi^{(1), s, y}} \|_{\cD^{2H_-}_W([s, 1])} \leq  |x-y| P_1( \Psi_0, \Psi_1,  [ \blue{W}]_{\cR^{H_-}}), 
  \end{equs}
  for some polynomial $P_1$. We can now go back to the definition of $\blue{\Xi^{(2), s, x}}$, and by the above bound combined  with \eqref{eq:estimate_Xi1}, a repetition of the above argument one gets 
  \begin{equ}
       \| \blue{\Xi^{(2), s, x}- \Xi^{(2), s, y}} \|_{\cD^{2H_-}_W([s, 1])} \leq  |x-y| P_2( \Psi_0, \Psi_1, \Psi_2, \Psi_3  [ \blue{W}]_{\cR^{H_-}}). 
  \end{equ}
  From here, with the help of  \cref{lem:bounds_for_J_sigma(phi)} and estimates \eqref{eq:estimate_Xi_n}, by using induction one obtains 
  \begin{equs}
       \| \blue{\Xi^{(n), s, x}- \Xi^{(n), s, y}} \|_{\cD^{2H_-}_W([s, 1])} \leq |x-y| P_n(\Psi_0,  \Psi_1, \Psi_2, ..., \Psi_{n+1},   [ \blue{W}]_{\cR^{H_-}}),
  \end{equs}
  which in particular implies \eqref{eq:Lip_Xi_initial}. 
  From this and \eqref{eq:inahama_Xi}, we get that for any $m  \in \mathbb{N}$ and $p \geq 1$,  there exists a constant  there exists a constant $C_1=C_1(m, p, \| \sigma\|_{\mathcal{C}^{m+1}}, H_- )$ such that for all $(s, t) \in [0, 1]^2_{\leq}$, $x, y \in \R^d$, we have 
  \begin{equs}\label{est:stability-phi}
     \| \phi^{s,x}_t- \phi^{s,y}_t\|_{\mathbb{D}^{m,p}} \leq C |x-y|. 
  \end{equs}
  By choosing $y=x+\ell^{-1}e_i$, with $e_i$ being the standard unit vector in the $i$th direction in $\R^d$, we see that the sequence  $\psi_\ell := \ell(\phi^{s,x}_t- \phi^{s,x+\ell^{-1}e_i}_t)$ is bounded in $\mathbb{D}^{m,p}$ by $C$. Since $\mathbb{D}^{m,p}$ is reflexive, there is  subsequence $\psi_{\ell_k}$ which converges weakly to some $\psi\in\mathbb{D}^{m,p}$.  On the other hand, by definition we know that $\psi_{\ell_k} \to \D_i\phi^{s,x}_t$ almost surely,  and by \cref{lem:bounds_for_J_sigma(phi)}, also in $L_p$. Hence, we conclude that $\D_i\phi^{s,x}_t = \psi  \in \mathbb{D}^{m,p}$. Moreover, there exists a sequence $\bar{\psi_k}$ consisting of convex combinations of $\psi_{n_k}$ such that $\bar{\psi}_k \to \D_i\phi^{s,x}_t$ strongly in $\mathbb{D}^{m,p}$. In particular, for any $n \in \{1, ..., m\}$ we have 
  $ D^{n} \bar{\psi}_k\to  D^n \D_i\phi^{s,x}_t$ in probability. By \eqref{eq:inahama_Xi} and \eqref{eq:Lip_Xi_initial}, it follows that with probability one, for  all $k \in \mathbb{N}$, $(s,t) \in [0,1]^2_{\leq}$, $x \in \R^d$,  we have %\chengcheng{Here $\hat \E$ again} \KD{Yes, it is corrrect as it is. One integrates with respect to $\hat{B}$ only. Agree}
  \begin{equs}
      \| D^{n} \bar{\psi}_k \|_{\R^d \otimes \cH^{\otimes n }}  \leq  C_0 \big( \hat{\E} |P_n(\Psi_0,  \Psi_1, \Psi_2, ..., \Psi_{n+1},   [ \blue{W}]_{\cR^{H_-}})|^2 \big)^{1/2},
  \end{equs}
  and upon passing to a further subsequence which converges almost surely and using Fatou's lemma, with probability one, we have 
  \begin{equs}
      \|D^n \D_i\phi^{s,x}_t \|_{\R^d \otimes \cH^{\otimes n }}  \leq  C_0 \big( \hat{\E} |P_n(\Psi_0,  \Psi_1, \Psi_2, ..., \Psi_{n+1},   [ \blue{W}]_{\cR^{H_-}})|^2 \big)^{1/2}.
  \end{equs}
  We can now define $\Theta_n : = d C_0 \big( \hat{\E} |P_n(\Psi_0,  \Psi_1, \Psi_2, ..., \Psi_{n+1},   [ \blue{W}]_{\cR^{H_-}})|^2 \big)^{1/2}$, and the claim follows. 
\end{proof}

\begin{lemma}\label{lem:continuous-version}
    Let $s \in [0,1]$, $k\in\N$ and $f \in \cC^1(\R^{kd})$. Then, for each $x_1,\ldots,x_k \in \R^d, t_1,\ldots,t_k \in [s,1]$, with probability one, we have
    \begin{equs}                       \label{eq:swap_filtrations}
        \E_s f(\phi^{s,x_1}_{t_1},\ldots,\phi^{s,x_k}_{t_k})= \E_s^Bf(\phi^{s,x_1}_{t_1},\ldots,\phi^{s,x_k}_{t_k}). 
    \end{equs}
    Moreover, the random fields in \eqref{eq:swap_filtrations} have continuous modifications in the variables $x_1,\ldots,x_k$ and $t_1,\ldots,t_k$.
    %, the map $(\omega, x_1,\ldots,x_k, t_1,\ldots,t_k) \mapsto  \E_s f(\phi^{s,x_1}_{t_1},\ldots,\phi^{s,x_k}_{t_k})$ has a continuous modification in the variables $x_1,\ldots,x_k,t_1,\ldots,t_k$.
    %and $(\omega, x, t) \mapsto  \E_s^B f(\phi^{s,x}_t)$ have versions continuous in $(x,t)\in \R^d \times [s,1]$. 
\end{lemma}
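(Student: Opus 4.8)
The plan is to establish the two assertions separately: the identity \eqref{eq:swap_filtrations} is essentially a statement about conditional independence, and the existence of a jointly continuous modification is an application of Kolmogorov's criterion.

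\emph{The identity.} It suffices to prove that $\E_s\xi=\E^B_s\xi$ almost surely for every bounded $\cF^B_1$-measurable random variable $\xi$, and then to apply this to $\xi=f(\phi^{s,x_1}_{t_1},\ldots,\phi^{s,x_k}_{t_k})$, which is $\cF^B_1$-measurable (the flow $\phi^{s,x}$ is driven by $\blue B$, hence $\mathbb F^B$-adapted) and bounded (since $f\in\cC^1$ is bounded). To prove the identity for such $\xi$, recall from \cref{rem:same_filtration} that $B$ and $W$ generate the same filtration, so $\cF^B_1$ is the completion of $\sigma(W_r:r\in[0,1])=\cF^B_s\vee\mathcal J$ with $\mathcal J:=\sigma(W_r-W_s:r\in[s,1])$; since $W$ is an $\bF$-Wiener process, $\mathcal J$ is independent of $\cF_s$, while $\cF^B_s\subset\cF_s$. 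Writing $\xi$ (by Doob--Dynkin) as $\xi=\Psi(\zeta,\eta)$ with $\zeta$ an $\cF^B_s$-measurable coordinate and $\eta$ a $\mathcal J$-measurable coordinate, the standard independence lemma applied once with the $\sigma$-algebra $\cF_s$ and once with $\cF^B_s$ shows that both $\E_s\xi$ and $\E^B_s\xi$ coincide with $\omega\mapsto\E[\Psi(\zeta(\omega),\eta)]$, proving \eqref{eq:swap_filtrations}. (Equivalently, one may use the decomposition $B=\widetilde B^s+\widebar B^s$ from \eqref{eq:def_tilde_bar}: $\widetilde B^s$ is independent of $\cF_s$, $\widebar B^s$ is $\cF^B_s$-measurable, and $\phi^{s,x}$ is a measurable functional of the pair, the two mixed second-level iterated integrals being well-defined Young integrals because $\widebar B^s$ is $C^1$ on $[s,1]$ and $2H_->1$.)

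\emph{The continuous modification.} Fix $p$ so large that $pH_->k(d+1)$. By conditional Jensen's inequality $\|\E_s Z\|_{L_p(\Omega)}\le\|Z\|_{L_p(\Omega)}$, and since $f$ is Lipschitz with constant $\|\nabla f\|_{\cC^0}$,
\begin{equ}
\big\|\E_s f(\phi^{s,\vec x}_{\vec t})-\E_s f(\phi^{s,\vec y}_{\vec u})\big\|_{L_p(\Omega)}\le\|\nabla f\|_{\cC^0}\sum_{j=1}^k\big\|\phi^{s,x_j}_{t_j}-\phi^{s,y_j}_{u_j}\big\|_{L_p(\Omega)}.
\end{equ}
For each $j$, assuming without loss of generality $u_j\le t_j$, I split $\phi^{s,x_j}_{t_j}-\phi^{s,y_j}_{u_j}=(\phi^{s,x_j}_{t_j}-\phi^{s,x_j}_{u_j})+(\phi^{s,x_j}_{u_j}-\phi^{s,y_j}_{u_j})$. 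For the temporal part I use the flow property $\phi^{s,x}_t=\phi^{u,\phi^{s,x}_u}_t$ together with \cref{cor:phi-x} (applied with initial time $u$ and initial point $\phi^{s,x}_u$) to get $|\phi^{s,x}_t-\phi^{s,x}_u|\le C(1+[\blue B]_{\cR^{H_-}})^q|t-u|^{H_-}$, hence, by the Fernique-type integrability of $[\blue B]_{\cR^{H_-}}$, $\|\phi^{s,x}_t-\phi^{s,x}_u\|_{L_p(\Omega)}\le C_p|t-u|^{H_-}$. For the spatial part I write $\phi^{s,x}_u-\phi^{s,y}_u=\int_0^1 J^{s,y+\theta(x-y)}_u\,d\theta\,(x-y)$ and use that $\sup_{s,z}\|J^{s,z}\|_{\cC^0([s,1])}\le\Psi$ with $\Psi$ having moments of all orders (\cref{lem:bounds_for_J_sigma(phi)}; alternatively invoke \eqref{est:stability-phi} directly) to obtain $\|\phi^{s,x}_u-\phi^{s,y}_u\|_{L_p(\Omega)}\le C_p|x-y|$. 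Combining, and noting that on $\{|\vec x-\vec y|+|\vec t-\vec u|\le1\}$ one has $|\vec x-\vec y|+|\vec t-\vec u|^{H_-}\le 2(|\vec x-\vec y|+|\vec t-\vec u|)^{H_-}$, I conclude $\|\E_s f(\phi^{s,\vec x}_{\vec t})-\E_s f(\phi^{s,\vec y}_{\vec u})\|_{L_p(\Omega)}\le C_p(|\vec x-\vec y|+|\vec t-\vec u|)^{H_-}$ with $C_p$ independent of the parameters. Kolmogorov's continuity theorem then provides a modification of $(\vec x,\vec t)\mapsto\E_s f(\phi^{s,\vec x}_{\vec t})$ that is continuous on each ball of $\R^{kd}\times[s,1]^k$; patching over an exhausting sequence of balls yields a globally continuous modification, which by the first part is simultaneously a continuous modification of $(\vec x,\vec t)\mapsto\E^B_s f(\phi^{s,\vec x}_{\vec t})$.

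\emph{Main obstacle.} There is no deep difficulty here; the two points needing care are the measurable factorisation of $\xi$ through the part of $B$ independent of $\cF_s$ and the $\cF^B_s$-measurable part (for which the decomposition of \eqref{eq:def_tilde_bar} can be used), and making sure that every constant entering the Kolmogorov estimate is uniform in $s$ and $x$ --- which it is, thanks to the $(s,x)$-uniform bounds of \cref{lem:bound_controlled_flow}, \cref{cor:phi-x} and \cref{lem:bounds_for_J_sigma(phi)}.
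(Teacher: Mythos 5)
Your proof is correct, but it takes a genuinely different route from the paper's for the filtration-swap identity \eqref{eq:swap_filtrations}. The paper works at the rough-path level: it introduces the singular weighted H\"older space $\mathcal{C}^{\beta+}_{H-1}$, the reassembling map $T:\mathcal{R}^{H_-}_{\geo}\times\mathcal{C}^{\beta+}_{H-1}\to\mathcal{R}^{H_-}$, constructs an explicit lift $\blue{\widetilde{B}}$ of $\widetilde B^s$ via \eqref{eq:tilde_second_order}, shows (via approximation by linear interpolation and Lusin--Suslin) that $\blue{\widetilde{B}}$ is a Polish-space-valued random variable independent of $\cF_s$, writes $\blue B=T(\blue{\widetilde{B}},\overline B)$, composes with the It\^o--Lyons map, and only then invokes the disintegration lemma of Pr\'ev\^ot--R\"ockner. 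You instead decompose at the Wiener-process level: $\cF^B_1$ is the completion of $\cF^B_s\vee\mathcal J$ with $\mathcal J=\sigma(W_r-W_s,\,r\in[s,1])$ independent of $\cF_s$, and the abstract Doob--Dynkin/independence lemma gives $\E_s\xi=\E^B_s\xi$ at once for every bounded $\cF^B_1$-measurable $\xi$. Your route is shorter and bypasses the rough-path machinery entirely; the price is that you must take for granted that $\blue B$ (and hence $\phi^{s,x}_t$) is $\mathbb F^B$-adapted, which is standard but is exactly what the paper's explicit construction makes self-contained. The paper's approach also yields, as a by-product, the concrete continuous functional representation $\E_s f(\cdot)=\Phi(\overline B)$, though this is not used elsewhere. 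For the continuous modification the paper offers only one sentence; your Kolmogorov computation with conditional Jensen, the flow identity and \cref{cor:phi-x} for the temporal increment, and the Jacobian bound of \cref{lem:bounds_for_J_sigma(phi)} (or \eqref{est:stability-phi}) for the spatial one is a correct filling-in of the details.

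Two small inaccuracies in your parenthetical aside (which is not load-bearing): $\overline B^s$ is $C^1$ only on $(s,1]$, not on $[s,1]$ --- its derivative blows up like $(t-s)^{H-1}$ as $t\downarrow s$, which is precisely why the paper needs the weighted space $\mathcal{C}^\beta_{H-1}$ --- and the stated condition ``$2H_->1$'' is false (we have $H_-<1/2$); the correct Young condition is the one encoded in \cref{lem:Young_with_blow_up}, namely that the weighted regularity of $\overline B^s$ plus $H_-$ exceeds~$1$.
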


\begin{proof}
  Let $\beta \in (0,1)$ be sufficiently close to $1$ so that $\beta-1+H+H_-  > 2H_-$. Let us consider the space $\mathcal{C}^\beta_{H-1}([s, 1])$ of all functions $w:[s,1] \to \R^{d_0}$ such that 
  \begin{equs}\label{eq:weighted-Holder}
    \,   \|w\|_{\mathcal{C}^\beta_{H-1}([s, 1])}:=|w(s)|+ \sup_{(u, t) \in (s, 1]^2_{\leq}} \frac{|w_t-w_u|}{|u-s|^{H-1}|t-u|^{\beta}} < \infty. 
  \end{equs}
  Also,  let $\mathcal{C}^{\beta+}_{H-1}([s, 1])$
 be the closure of $\cC^1([s,1])$ in $\mathcal{C}^\beta_{H-1}([s, 1])$. 
 Then, let us consider the map $T:\mathcal{R}^{H_-}_{\geo}([s,1])\times\mathcal{C}^{\beta+}_{H-1}([s, 1]) \to  \mathcal{R}^{H_-}([s,1])$ defined as follows: for $\blue{g}=(g, \mathbb{g}) \in \mathcal{R}^{H_-}_{\geo}([s,1])$ and $w \in \mathcal{C}^{\beta+}_{H-1}([s, 1])$, set 
 \begin{equs}
     T(\blue{g}, w)=\Big( g+w, \mathbb{g}+ \int g \otimes dw +\int w \otimes dg+ \int w\otimes \, dw \Big),
 \end{equs}
 where in the above, by virtue of \cref{lem:Young_with_blow_up}, the integrals  $\int_s^t g_{s,r} \otimes dw_r$ and  $\int_s^t w_{s,r} \otimes dw_r$ are canonically defined as (singular) Young integrals and as functions of $(s,t)$ belong to $\cC^{2H_-}_2$, while the ``integral" $\int w \otimes dg$ is defined as follows 
 \begin{equs}
     \int_s^t w_{s, r} \otimes dg_r:=w_{s,t}\otimes g_{s,t}- \Big(\int_s^t g_{s, r}\otimes dw_r \Big)^*.
 \end{equs}
 Moreover, again by \cref{lem:Young_with_blow_up}, it is easy to see that $T$ is continuous. 

Next, recall that we have the decomposition 
\begin{equs}                
    B_t = \int_0^s K(t, r) \, dW_r+ \int_s^t K(t,r) \, dW_r =:\overline{B}_t+ \widetilde{B}_t,
\end{equs}
where we dropped the $s$ dependence on the right hand side for notational convenience. 
Then clearly we have that the process $\overline{B}_t$ is $\cF_s$-measurable, while $\widetilde{B}_t$ is independent of $\cF_s$. Moreover, the same properties hold for $\cF^B_s$ in place of $\cF_s$, since the filtration generated by $B$ coincides with the filtration generated by $W$. 

It is known that $\overline{B} \in \mathcal{C}^{\beta+}_{H-1}([s, 1])$ with probability one (it follows, for example, by \cite[(5.46) p. 295]{NUalart_Malliavin} and Kolmogorov's continuity criterion). Moreover, since $\overline{B}_t$ is $\cF_s$-measurable for each $t \in [s, 1]$ and the Borel $\sigma$-algebra on $C([s,1])$ coincides with the cylindrical one, it follows that $\overline{B}$ is an $\cF_s$-measurable $C([s,1])$-valued random variable. By the Lusin-Suslin theorem, since the embedding $\mathcal{C}^{\beta+}_{H-1}([s, 1]) \subset C([s,1])$ is continuous, it follows that $\overline{B}$ is an $\cF_s$-measurable $\mathcal{C}^{\beta+}_{H-1}([s, 1])$-valued random variable.

Next, we lift $\widetilde{B}$ to a rough path $ \blue{\widetilde{B}}=(\widetilde{B}, \widetilde{\mathbb{B}})$ by setting 
\begin{equs}                  \label{eq:tilde_second_order}
     \widetilde{\mathbb{B}}_{u,t}=\mathbb{B}_{u,t}-\int_u^t\overline{B}_{u,r} \otimes d \overline{B}_r- \int_u^t \overline{B}_{u,r} \otimes d \widetilde{B}_r-\int_u^t \widetilde{B}_{u,r} \otimes d\overline{B}_r.
\end{equs}
Then, we have that $\blue{\widetilde{B}} \in  \mathcal{R}^{H_-}_{\geo}([s,1])$ with probability one (see, e.g., \cite[Lemma 2.12]{Avi_Toyomu}). We claim that $\blue{\widetilde{B}}$ is a $\mathcal{R}^{H_-}_{\geo}([s,1])$-valued random variable that is independent of $\mathcal{F}_s$.  To see this, let us consider $\widetilde{B}^n$ and $\overline{B}^n$, given by linear interpolation of values $\widetilde{B}^n_t$ and $\overline{B}^n_t$, respectively, for $t\in \{s+k(1-s)/n : k=1,\ldots ,n\}$. Further, let us set $B^n=\overline{B}^n+\widetilde{B}^n$. Then, for each $(u, t) \in [s,1]^2$, we have 
\begin{equs}
 \,    & \int_u^t\widetilde{B}^n_{u,r} \otimes d \widetilde{B}^n_r
    \\
    = \, & \int_u^t B^n_{u,r} \, dB^n_r -\int_u^t\overline{B}
^n_{u,r} \otimes d \overline{B}^n_r- \int_u^t \overline{B}^n_{u,r} \otimes d \widetilde{B}^n_r-\int_u^t \widetilde{B}^n_{u,r} \otimes d\overline{B}^n_r. 
\end{equs}
The left hand side is clearly independent of $\cF_s$. On the other hand, each term on the right hand side converges in probability as $n \to \infty$ to the corresponding term at the right hand side of \eqref{eq:tilde_second_order}. This shows that for $(u, t) \in [s,1]^2$, $\widetilde{\mathbb{B}}_{u,t}$ is independent of $\cF_s$. Moreover, since the map $(u,t) \mapsto \widetilde{\mathbb{B}}_{u,t}$ is continuous (can be shown by Chen's relation combined with the fact that $\widetilde{B} \in \cC^{H_-}$, $\widetilde{\mathbb{B}}\in \cC^{2H_-}_2$), it follows   that the pair $(\widetilde{B}, \widetilde{\mathbb{B}})$ is a $C([s,1]) \times C([s,1]^2)$ valued random variable, independent of $\cF_s$. Moreover, the identity map from  $\mathcal{R}^{H_-}_{\geo}([s,1])$ to $C([s,1]) \times C([s,1]^2)$ is continuous, and since these are Polish spaces, it follows by the Lusin-Suslin theorem that $\blue{\widetilde{B}}$ is a $ \mathcal{R}^{H_-}_{\geo}([s,1])$-valued random variable, independent of $\cF_s$.

Next notice that $\blue{B}=T(\blue{\widetilde{B}}, \overline{B})$. Hence, we have that $(\phi^{s,x_1}_{t_1},\ldots,\phi^{s,x_k}_{t_k})=\mathcal{I}(T(\blue{\widetilde{B}}, \overline{B}))$ with a map $\mathcal{I}$, where by the continuity of the It\^o-Lyons map $\mathcal{I}$ is also continuous.  Consequently, by using \cite[Proposition 2.2.2]{Rockner_Prevot} we have 
\begin{equs}
    \E_s f(\phi^{s,x_1}_{t_1},\ldots,\phi^{s,x_k}_{t_k})= \E_s f\big(\mathcal{I}(T(\blue{\widetilde{B}}, \overline{B}))\big)= \Phi(\overline{B}), 
\end{equs}
where $\Phi(\zeta):= \E f\big(\mathcal{I}(T(\blue{\widetilde{B}}, \zeta))\big)$  for $\zeta \in \mathcal{C}^{\beta+}_{H-1}([s, 1])$. Notice that the filtration $(\cF^B)_{t\in [0,1]}$ could have been chosen as $(\cF_t)_{t \in [0,1]}$ from the beginning, so that we also get 
\begin{equs}
    \E^B_sf(\phi^{s,x_1}_{t_1},\ldots,\phi^{s,x_k}_{t_k})=  \Phi(\overline{B}),
\end{equs}
which shows \eqref{eq:swap_filtrations}. The existence of continuous modifications follows easily by Kolmogorov's continuity criterion. 
\end{proof}

\section{Emerging driving signal of the linearised equation}
The goal of this section is to construct (a truncated version of) the object $L$ anticipated in the introduction \eqref{eq:L-early}. Note that in \eqref{eq:L-early} the distribution $\nabla b$ is integrated along the process $\theta X+(1-\theta)Y$, and a priori we do not know that such a convex combination of the nondegenerate processes (i.e. its Malliavin matrix is invertible) $X$ and $Y$ is also nondegenerate. One can only guarantee this as long as the two solutions do not separate too much. This will be formulated by an appropriate cutoff function.

Given $\sigma$ satisfying \cref{asn:sigma}, let
\begin{equ}\label{eq:rho def}
\rho=\frac{\sqrt{\lambda}}{10^5\|\sigma\|_{\cC^1}}.
\end{equ}
One then has the property that if $x_1,x_2,x_3,x_4\in\R^d$ are such that $|x_1-x_2|,|x_1-x_3|,|x_1-x_4|\leq 24\rho$, then for any $\theta_1,\ldots,\theta_4\in[0,1]$ such that $\theta_1+\cdots+\theta_4=1$, one has 
\begin{equs}
\Big(&\sum_{i=1}^4 \theta_i\sigma(x_i)\Big)\Big(\sum_{i=1}^4 \theta_i\sigma(x_i)\Big)^*
\\
&=\Big(\sigma(x_1)-\sum_{i=2}^4\theta_i\big(\sigma(x_1)-\sigma(x_i)\big)\Big)
\Big(\sigma(x_1)-\sum_{i=2}^4\theta_i\big(\sigma(x_1)-\sigma(x_i)\big)\Big)^*
\\&\succeq \sigma(x_1)\sigma(x_1)^*-2\sum_{i=2}^4\big(\sigma(x_1)-\sigma(x_i)\big)\big(\sigma(x_1)-\sigma(x_i)\big)^*\succeq\frac{1}{2}\,\,\lambda \mathbb{I}_{d}.\label{eq:convex-combination-nondegenerate}
\end{equs}
The seemingly arbitrary factor $24$ is simply the constant that comes up in the relevant part of the proof.
Furthermore,
set $\cut_d$ to be the set of smooth functions $\chi:\R^d\to\R$ that are identically $1$ on the ball of radius $1/2$ around the origin and identically $0$ on the complement of the ball of radius $2$ around the origin. For $\chi\in\cut_d$ and $a>0$ we then denote $\chi_a(x)=\chi(a^{-1}x)$. To reduce the many parameters to keep in mind, the reader may take $\alpha-1$ in place of $\gamma$ below as the main example.
\begin{theorem}\label{thm:L}
Let \cref{asn:b} and \cref{asn:sigma} hold. Let $\chi\in\cut_d$,  $p\in[1,\infty)$, $\theta\in[0,1]$,  and let $X$ and $Y$ be two solutions to \eqref{eq:main}. For $f\in \cC^1$ define the process
\begin{equ}\label{eq:J}
J_t=\int_0^t\chi_{\rho}(X_r-Y_r)f\big(\theta X_r+(1-\theta)Y_r\big)\,dr.
\end{equ}
Then for any $\gamma\in(-1/(2H),0)$ 
there exists a constant $C$ such that for all $(s,t)\in[0,1]^2$ one has
\begin{equ}\label{eq:L-bound}
\|J_{t}-J_s\|_{L^p(\Omega)}\leq C\|f\|_{\cC^{\gamma}}|t-s|^{1+\gamma H}
\end{equ}
and $C=C(\alpha,\gamma,p,d, \chi, \lambda,H,\|\sigma\|_{\cC^5},C^X_{D, p}, C^X_{S, 2p}, C^Y_{D,p}, C^Y_{S, 2p})$.
\end{theorem}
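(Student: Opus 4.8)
The plan is to apply the stochastic sewing lemma, \cref{lem:SSL-vanila}, to the germ
\[
A_{s,t}:=\int_s^t\E_s\Big[\chi_\rho\big(\phi^{s,X_s}_r-\phi^{s,Y_s}_r\big)\,f\big(\theta\phi^{s,X_s}_r+(1-\theta)\phi^{s,Y_s}_r\big)\Big]\,dr,
\]
where $\phi$ is the driftless flow \eqref{eq:flow-def}, which is a far better local proxy for the true solutions than $X_s+\sigma(X_s)(B_r-B_s)$. Set $h(p,q):=\chi_\rho(p-q)f(\theta p+(1-\theta)q)$; then $h\in\cC^\gamma(\R^{2d})$ with $\|h\|_{\cC^\gamma}\lesssim_{\chi,\rho}\|f\|_{\cC^\gamma}$ and $\supp h\subset\{|p-q|\le2\rho\}$, and since $X_s,Y_s$ are $\cF_s$-measurable, $A_{s,t}$ is well defined (the conditional expectation having a continuous version in the frozen initial data by \cref{lem:continuous-version}), $\cF_t$-measurable, with $A_{S,S}=0$. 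We verify the two hypotheses of \cref{lem:SSL-vanila} with $\Gamma_1,\Gamma_2\lesssim\|f\|_{\cC^\gamma}$, $\beta_1=1+\gamma H$ and $\beta_2=2+(\gamma+\alpha-1)H$. One has $\beta_1>1/2$ exactly because $\gamma>-1/(2H)$, while $\beta_2>1$ and $\beta_2\ge\beta_1$ hold exactly under \cref{asn:b} (see below), so conclusion (III), together with $|t-s|\le1$, yields a process $\cA$ with $\|\cA_t-\cA_s\|_{L^p}\lesssim\|f\|_{\cC^\gamma}|t-s|^{1+\gamma H}$ for all $p\ge2$ — which suffices, since for $p\in[1,2)$ one may replace $p$ by $2$. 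Finally $\cA=J-J_0$ follows from the Riemann-sum description of the sewing: the crude bound $\|J_t-J_s-A_{s,t}\|_{L^p}\lesssim(\|f\|_{\cC^1}+\|f\|_{\cC^\gamma})|t-s|^{1+\gamma H}$ (with $1+\gamma H>1/2$) plus $\|\E_s(J_t-J_s-A_{s,t})\|_{L^p}\lesssim\|f\|_{\cC^\gamma}|t-s|^{\beta_2}$ — proved just like hypothesis (ii) below, replacing $X_r,Y_r$ by $\phi^{s,X_s}_r,\phi^{s,Y_s}_r$ via \cref{Prop:sta-ini} — make the telescoping sums converge, and \eqref{eq:L-bound} is the bound on $\cA$, whose constant involves only $\Gamma_1,\Gamma_2$ (not $\|f\|_{\cC^1}$).

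\emph{Hypothesis (i).} The heart of the matter is the pointwise-in-$\omega$ estimate
\[
\big|\E_s[h(\phi^{s,x}_r,\phi^{s,y}_r)]\big|=\big|\E^B_s[h(\phi^{s,x}_r,\phi^{s,y}_r)]\big|\le N\,\|f\|_{\cC^\gamma}\,(r-s)^{\gamma H}\,\Upsilon_{s,r}\qquad\text{a.s.},
\]
valid for every deterministic $x,y,\theta$ (the first equality is \cref{lem:continuous-version}), with $\Upsilon_{s,r}$ a random constant bounded in every $L^q(\P)$ uniformly in $(s,r,x,y,\theta)$. It is obtained by heat-kernel regularisation $f=P_\tau f+(f-P_\tau f)$ followed by a bounded number of \emph{conditional} integrations by parts (the count depending only on $\gamma,H$; since $\sigma$ is smooth, \cref{lem:flow-D-with-power} and \cref{lem:Jacobi-Malliavin-Derivatives} provide as many Malliavin derivatives of $\phi$ and $J$ as needed, with uniform moments), using the conditional divergence $\delta_{\cH_s^\perp}$ and its continuity \cref{lem:continuity-delta-high}; optimising in $\tau$ balances $\tau^{\gamma/2}$ against conditional Malliavin weights of size $(r-s)^{-2H}$, producing the exponent $\gamma H$. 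The decisive input is a uniform lower bound on the conditional Malliavin covariance matrix $\cM^s_{Z_r}:=\big(\langle\Pi_{\cH_s^\perp}DZ^i_r,\Pi_{\cH_s^\perp}DZ^j_r\rangle_{\cH}\big)_{ij}$ of $Z_r:=\theta\phi^{s,x}_r+(1-\theta)\phi^{s,y}_r$ \emph{on the support of the cutoff}: by \eqref{eq:malliavin-and-jacobi} the path $u\mapsto v^\top D_uZ_r$ on $[s,r]$ belongs to $\cC^{\gamma'}$ for some $\gamma'\in(1-2H,H_-)$ (nonempty since $H_->1/3>1-2H$), with $\cC^{\gamma'}$-norm controlled on $\{\chi_\rho\ne0\}$ by the flow and Jacobian bounds of \cref{lem:bounds_for_J_sigma(phi)}, while its value at $u=r$ equals $v^\top\big(\theta\sigma(\phi^{s,x}_r)+(1-\theta)\sigma(\phi^{s,y}_r)\big)$, of norm $\ge\sqrt{\lambda/2}\,|v|$ by \eqref{eq:convex-combination-nondegenerate} (applicable because $\chi_\rho\ne0$ forces $|\phi^{s,x}_r-\phi^{s,y}_r|\le2\rho\le24\rho$); \cref{cor:interpolation} then gives $\cM^s_{Z_r}\succeq\kappa_r(r-s)^{2H}\mathbb{I}_d$ with $\kappa_r^{-1}$ bounded in every $L^q(\P)$. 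Crucially this uses only spatial proximity, hence holds for all $r-s$. Substituting and integrating, $\int_s^t(r-s)^{\gamma H}\,dr\lesssim|t-s|^{1+\gamma H}$ (as $\gamma H>-1/2>-1$), and Minkowski's inequality gives $\|A_{s,t}\|_{L^p}\lesssim\|f\|_{\cC^\gamma}|t-s|^{1+\gamma H}$.

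\emph{Hypothesis (ii).} Using the tower property, \cref{lem:continuous-version}, and the flow identity $\phi^{s,X_s}_r=\phi^{u,\phi^{s,X_s}_u}_r$,
\[
\E_s\delta A_{s,u,t}=\int_u^t\E_s\big[G_r(\phi^{s,X_s}_u,\phi^{s,Y_s}_u)-G_r(X_u,Y_u)\big]\,dr,\qquad G_r(x,y):=\E_u[h(\phi^{u,x}_r,\phi^{u,y}_r)].
\]
For fixed $x,y$ the function $G_r$ is $\cF_u$-measurable with a continuous version, and is \emph{smooth} in $(x,y)$ with
\[
\|\nabla_{x,y}G_r\|_{\cC^0}\le N\,\|f\|_{\cC^\gamma}\,(r-u)^{(\gamma-1)H}\,\Upsilon'_{u,r},
\]
the random $\Upsilon'_{u,r}$ bounded in every $L^q(\P)$: this is the estimate of hypothesis (i) applied to $\nabla h\in\cC^{\gamma-1}$, costing one extra conditional integration by parts and the factor $(r-u)^{-H}$. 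By the fundamental theorem of calculus, the Cauchy--Schwarz inequality, and \cref{Prop:sta-ini} (which yields $\|(\phi^{s,X_s}_u,\phi^{s,Y_s}_u)-(X_u,Y_u)\|_{L^{2p}}\lesssim|u-s|^{1+\alpha H}$),
\[
\|\E_s\delta A_{s,u,t}\|_{L^p}\lesssim\|f\|_{\cC^\gamma}\,|u-s|^{1+\alpha H}\int_u^t(r-u)^{(\gamma-1)H}\,dr\lesssim\|f\|_{\cC^\gamma}\,|t-s|^{\,2+(\gamma+\alpha-1)H},
\]
the $r$-integral converging because $(\gamma-1)H>-1$, i.e. $\gamma>1-1/H$, which follows from $\gamma>-1/(2H)$ and $H<1/2$. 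Hence $\beta_2=2+(\gamma+\alpha-1)H$, and $\beta_2>1\Longleftrightarrow\alpha>1-1/(2H)$, i.e. \cref{asn:b} — this is exactly where the drift hypothesis enters the present theorem; the same inequality ($1+(\alpha-1)H>0$) also gives $\beta_2\ge\beta_1$.

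\emph{Main obstacle.} The principal difficulty is the conditional density estimate underpinning hypothesis (i): giving quantitative meaning, with moment bounds uniform in the (random) starting points and in $s$, to ``$\E_s$ of a Besov distribution composed with the rough flow'' in a genuinely non-Markovian situation. This is precisely what the partial Malliavin calculus of \cref{sec:partial-malliavin} — the conditional integration by parts \eqref{eq:conditional_IBP}, the continuity \cref{lem:continuity-delta-high} of $\delta_{\cH_s^\perp}$, and the interpolation lower bound \cref{cor:interpolation} — together with the uniform-in-$(s,x)$ Malliavin estimates on the flow and its Jacobian (\cref{lem:flow-D-with-power}, \cref{lem:Jacobi-Malliavin-Derivatives}, \cref{lem:bounds_for_J_sigma(phi)}) are built to feed. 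Assembling them so that the conditional Malliavin matrix of the convex combination degenerates no faster than $(r-s)^{2H}$, uniformly — the single point where the cutoff is indispensable, since a convex combination of two \emph{individually} nondegenerate flows can become degenerate once the flows separate — is the technical core of the argument.
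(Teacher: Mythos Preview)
Your proof is correct and uses the paper's germ $A_{s,t}$, the same exponents $\beta_1=1+\gamma H$, $\beta_2=2+(\gamma+\alpha-1)H$, and the same mechanism for hypothesis (i): the conditional integration-by-parts \cref{lem:IBP-with-cutoff} with the cutoff furnishing nondegeneracy of the conditional Malliavin matrix of $\theta\phi^{s,x}_r+(1-\theta)\phi^{s,y}_r$ via \cref{cor:interpolation}, followed by interpolation into $\cC^\gamma$ (the paper packages your ``heat-kernel plus optimise in $\tau$'' into \cref{lem:interpolation}, applied in $L^p$ \emph{after} substituting the random initial data).

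Where you diverge is hypothesis (ii). You apply the fundamental theorem of calculus in the \emph{initial data} of $G_r(x,y)=\E^B_u[h(\phi^{u,x}_r,\phi^{u,y}_r)]$; then $\nabla_{x,y}G_r$ carries the Jacobian $J^{u,\cdot}_r$ as the weight $\cY$, and the built-in cutoff $\chi_\rho(\phi^{u,x}_r-\phi^{u,y}_r)$ inside $h$ already localises to where the \emph{two}-point combination $\theta\phi^{u,x}_r+(1-\theta)\phi^{u,y}_r$ is nondegenerate. The paper instead applies Newton--Leibniz at the level of the \emph{flow values}, producing $\nabla f$ evaluated at a \emph{four}-point combination $\theta'\psi^{u,(a,b)}_r+(1-\theta')\psi^{u,(z,w)}_r$, whose Malliavin matrix is not controlled by $\chi_\rho(\phi^{u,a}_r-\phi^{u,b}_r)$ alone; this forces an auxiliary cutoff $\tilde{\mathbb I}^{(a,b,z,w)}_{u,u,r}=\chi_\rho(\phi^{u,a}_r-\phi^{u,z}_r)\chi_\rho(\phi^{u,b}_r-\phi^{u,w}_r)$ and a three-term split $\tilde B^1+\tilde B^2+\hat B$, each needing a separate application of \cref{lem:IBP-with-cutoff}. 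Your route is shorter and avoids this bookkeeping; the paper's route keeps the small factor $|a-z|+|b-w|$ inside the weight $\cY$ from the start, which is slightly more explicit but heavier. Both yield the same $\beta_2$.

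One small correction on the identification step: your claim that $\|\E_s(J_t-J_s-A_{s,t})\|_{L^p}\lesssim\|f\|_{\cC^\gamma}|t-s|^{\beta_2}$ follows ``just like hypothesis (ii)'' does not go through as written, because $X_r$ is not a driftless flow started at time $s$ and hence cannot be inserted into $G_r$. The paper instead uses the crude Lipschitz bound $|h(X_r,Y_r)-h(\phi^{s,X_s}_r,\phi^{s,Y_s}_r)|\le\|\nabla h\|_\infty\big(|X_r-\phi^{s,X_s}_r|+|Y_r-\phi^{s,Y_s}_r|\big)$ and \cref{Prop:sta-ini}, obtaining $\lesssim(1+\|f\|_{\cC^1})|t-s|^{2+\alpha H}$; since $2+\alpha H\ge\beta_2$, this verifies (II) with an $f$-dependent $K_2$, and the uniqueness clause of \cref{lem:SSL-vanila} then gives \eqref{eq:L-bound} with constants depending only on $\Gamma_1,\Gamma_2\sim\|f\|_{\cC^\gamma}$, exactly as you note.
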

The theorem is proved after a preparatory lemma. The following corollary is immediate. 
\begin{corollary}\label{cor:L}
    Let \cref{asn:b} and \cref{asn:sigma} hold. Let  $\chi\in\cut_d$, $p\in[1,\infty)$, and let $X$ and $Y$ be two solutions to \eqref{eq:main}. Then the map $f\mapsto\cJ f$ defined for $f\in\cC^2(\R^d; \R^d)$ by
    \begin{equ}
        (\cJ f )_t=\int_0^1\int_0^t\chi_{\rho}(X_r-Y_r)\nabla f\big(\theta X_r+(1-\theta)Y_r\big)\,dr\,d\theta
    \end{equ}
    extends as a continuous linear map from $\cC^{\alpha+}(\R^d; \R^d)$ to $\cC^{1+(\alpha-1)H}([0,1]; L_p(\Omega; \R^{d\times d}))$, whose norm depends only on $\alpha,p,d,\lambda,H,\chi,\|\sigma\|_{\cC^5},C^X_{D, p}, C^X_{S, 2p}, C^Y_{D,p}, C^Y_{S, 2p}$.
    \end{corollary}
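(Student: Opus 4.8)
The plan is to deduce the statement as a routine corollary of \cref{thm:L}. Since the map $f\mapsto\cJ f$ is manifestly linear on $\cC^2(\R^d;\R^d)$, since $\cC^\infty$ is dense in $\cC^{\alpha+}(\R^d;\R^d)$ by the very definition of the latter, and since $\cC^{1+(\alpha-1)H}([0,1];L_p(\Omega;\R^{d\times d}))$ is a Banach space, it suffices by the standard bounded-linear-extension argument to prove, for $f\in\cC^\infty$, the a priori estimate
\begin{equ}
    \|\cJ f\|_{\cC^{1+(\alpha-1)H}([0,1];L_p(\Omega;\R^{d\times d}))}\leq C\,\|f\|_{\cC^\alpha}
\end{equ}
with $C$ of the asserted form; the continuous extension of $\cJ$ to $\cC^{\alpha+}$, its uniqueness, and the bound on its operator norm are then immediate. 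Adaptedness of the extended process is preserved since it is an $L_p(\Omega)$-limit of adapted processes, and its continuity in $t$ follows from the Hölder bound obtained below.

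To prove the displayed estimate, fix $f\in\cC^\infty$ and put $\gamma:=\alpha-1$, which lies in $(-1/(2H),0)$ by \cref{asn:b} and is therefore admissible in \cref{thm:L}. For each fixed $\theta\in[0,1]$ and each scalar entry $\partial_j f^i\in\cC^1$ of $\nabla f$, \cref{thm:L} applied with $\partial_j f^i$ in the role of the function ``$f$'' there, and with the same cutoff parameter $\theta$, gives for all $(s,t)\in[0,1]^2$ the bound \eqref{eq:L-bound} in the form
\begin{equ}
    \Big\|\int_s^t\chi_{\rho}(X_r-Y_r)\,\partial_j f^i\big(\theta X_r+(1-\theta)Y_r\big)\,dr\Big\|_{L^p(\Omega)}\leq C\,\|\partial_j f^i\|_{\cC^{\alpha-1}}\,|t-s|^{1+(\alpha-1)H},
\end{equ}
with $C$ independent of $\theta,i,j$. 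Summing over $i,j$, using the elementary embedding $\|\partial_j f^i\|_{\cC^{\alpha-1}}\lesssim\|f\|_{\cC^\alpha}$ (valid since $\alpha-1<0$; this is the boundedness of $\nabla:\cC^s\to\cC^{s-1}$), and then integrating in $\theta$ by Minkowski's integral inequality yields
\begin{equs}
    \|(\cJ f)_t-(\cJ f)_s\|_{L^p(\Omega)}&\leq\int_0^1\Big\|\int_s^t\chi_{\rho}(X_r-Y_r)\,\nabla f\big(\theta X_r+(1-\theta)Y_r\big)\,dr\Big\|_{L^p(\Omega)}\,d\theta
    \\&\leq C\,\|f\|_{\cC^\alpha}\,|t-s|^{1+(\alpha-1)H}.
\end{equs}
Since $(\cJ f)_0=0$, the choice $s=0$ also bounds $\sup_{t\in[0,1]}\|(\cJ f)_t\|_{L^p(\Omega)}$, and the two bounds together control the full $\cC^{1+(\alpha-1)H}([0,1];L_p(\Omega))$-norm of $\cJ f$ by $C\,\|f\|_{\cC^\alpha}$; the dependence of $C$ is inherited from \cref{thm:L} upon setting $\gamma=\alpha-1$, so the $\gamma$-dependence drops out and one is left with $\alpha,p,d,\lambda,H,\chi,\|\sigma\|_{\cC^5}$ and the solution constants.

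I do not anticipate a genuine obstacle in this step — all the difficulty lies in \cref{thm:L} itself and the conditional Malliavin-calculus estimates feeding into it. The two points needing care are that the constant in \cref{thm:L} does not depend on the cutoff parameter $\theta$, so that integrating over $\theta\in[0,1]$ is legitimate, and that the specialisation $\gamma=\alpha-1$ respects the constraint $\gamma\in(-1/(2H),0)$ — precisely the threshold $\alpha>1-1/(2H)$ of \cref{asn:b}, which is also why the construction degenerates as $\alpha\downarrow 1-1/(2H)$.
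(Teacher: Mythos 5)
Your proof is correct and takes the same route as the paper's (extremely terse) proof: apply \cref{thm:L} componentwise to $\nabla f$ with $\gamma=\alpha-1$, use the standard boundedness of $\nabla:\cC^\alpha\to\cC^{\alpha-1}$, and integrate in $\theta$; the expanded justification of density, uniformity in $\theta$, and the extension by continuity to $\cC^{\alpha+}$ is exactly what the paper's one-line proof implicitly relies on.
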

    \begin{proof}
        Apply \eqref{eq:L-bound} with $\nabla f$ in place of $f$ 
        and then integrate in $\theta$.
    \end{proof}

    The most important output of the present section is worth a separate notation: in the setting of \cref{cor:L} we define
    \begin{equ}
L:=\cJ b.
    \end{equ}
which is a truncated version of the object anticipated in the introduction \eqref{eq:L-early}.

To prepare for the proof of \cref{thm:L} we state a version of the integration by parts formula.
Although this is a classical and central tool in Malliavin calculus, there are two aspects that complicate matters in the present context. First, for stochastic sewing we shall need \emph{conditional} bounds, therefore the formalism of \cref{sec:partial-malliavin}  is needed and used.
Second, since we do not know if the random variable  $\cX$ below (roughly corresponding to $\theta X_r+(1-\theta)Y_r$ in \eqref{eq:J}) is nondegenerate,
the possible degeneracy is handled by the cutoff function.

\begin{lemma}\label{lem:IBP-with-cutoff}
Let $s\in[0,1]$, $a>0$, $d,d_1, k\in\N$, $\chi'\in\cut_{d_1}$,
%$p\in[1,\infty)$, 
and $h\in\cC^k(\R^d)$. 
%Let further  $g:\R^{d_1}\to\R$ be a radially symmetric smooth function with compact support that is identically $1$ on the ball of radius $1$ around the origin and is identically $0$ on the complement of the ball of radius $2$ around the origin. Set $g_a(x)=g(a^{-1}x)$.
%Define $p^*(k)=....${\color{red} put here the largest $p$ that appears after $k$ integration by parts in the quantity below}
There exists $p^*=p^*(k)\in[1,\infty)$ and $k^*=k^*(k)\in\N$ such that the following hold.
Let $\cX$, $\cY$ be $\R^d$-valued random variables, $\cZ$ be an $\R^{d_1}$-valued random variable, all three belonging to $\bD_s^{k^*,p^*}$, and $\Gamma,\mu$ be  $(0,\infty)$-valued random variables such that $\mu$ is $\cF_s$-measurable and one has
$\E_s^B\Gamma^{-p^*}\leq \mu^{p^*}$ almost surely
%$\|\Gamma^{-1}\|_{L^{p^*}|\cF_s}\leq \mu$
and on the event $\{|\cZ|\leq 2^{k}a\}$ one has 
\begin{equ}\label{eq:cond-Mall-nondegenerate}
    \cM_s:=\big(\scal{D_{\cH^\perp_s} \cX^i,D_{\cH^\perp_s} \cX^j}_{\CH_s^{\perp}}\big)_{i,j=1,\ldots,d}\succeq \Gamma\, \id.
\end{equ}
%be random variables with value satisfy the following:
%\begin{enumerate}[(i)]
%\item  $X,Y\in\mathbb{R}^d$,  $Z\in\R^{d_1}$ and  $\Gamma\in\R$ are all $\mathcal{F}_t$-measurable,
%    \item On the event that $|Z|\leq 2^{k-1}a$, the Malliavin matrix $\mathcal{M}^{X}$ of $X$ satisfies
%    is bounded from below by  $\Gamma\id$, 
    %all finite order moments of $\Gamma^{-1}$ are bounded by $C$.     
    %\item There exists a random variable $C<\infty$ such that for any $\bar p\leq p^*(k)$, $\|\Gamma^{-1}\|_{L^p|\cF_s}\leq C$.
%\end{enumerate}
Then 
there exists a constant $N=N(k,d,d_1,a,\chi')$ such that for any $i_1,\ldots,i_k\in\{1,\ldots,d\}$ one has almost surely%\mate{The term with the $\chi_a(Z)$ is ugly and not important, which is a bad combination. Can think about simplifying it}
    \begin{equs}
\big|&\E_s^B(\cY\chi'_a(\cZ)\partial_{i_1,\ldots,i_k}h(\cX))\big|\\&\leq N\|h\|_{L^\infty}\mu^{k/2}\|\cY\|_{\bD_s^{k^*,p^*}}
%\max_{\tilde a\in[a,2^{k-1}a]}\|\chi_{\tilde a}(Z)\|_{\mathbb{D}_s^{k^*,p^*}}
\max(1,\|\cZ\|_{\bD_s^{k^*,p^*}})^{k^*}
\max(1,\mu\|D_{\cH_s^\perp}\cX\|_{\bD_s^{k^*,p^*}}^2)^{k^*}.
%        \\&\leq N \lambda^{-k}\gamma_{s,t}^k\|h\|_{L^\infty}\Vert DX\Vert_{\mathbb{D}^{k,2^{k+1}q'}[s,t]}^k\Vert Y\Vert_{\mathbb{D}^{k,2^{k+1}q'}([s,t])}\|\chi_{2^{k}a}(Z)\|_{\mathbb{D}^{k,2^{k+2}q}([s,t])}^k,
       \label{est:IBP-main}
    \end{equs}
    %{\color{red}Possible alternative: let $N$ also depend on $a$ and replace the thing involving $Z$ by $\max(1,\|Z\|_{\bD_s^{k^*,p^*}})^{k^*}$. Not much simpler but this way the estimate looks a bit more uniform. Idk..}
 %   where $\gamma_{s,t}:={\max(1,\frac{\|D X\|_{\mathbb{D}^{k,4^{k+2}q'}}^2([s,t])}{\lambda_{s,t}}})^{k}$, $\frac{1}{q'}+\frac{1}{q}=\frac{1}{p}$,   $p,q,q'\in[2,\infty)$.
    %and $\frac{1}{l}+\frac{1}{8k}=1$,
\end{lemma}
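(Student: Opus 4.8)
The plan is the standard Malliavin integration-by-parts argument for $\E_s^B(\cY\chi'_a(\cZ)\partial_{i_1,\dots,i_k}h(\cX))$, but carried out with the \emph{partial} Malliavin calculus developed in \cref{sec:partial-malliavin} so that all the resulting estimates are conditional on $\cF_s^B$, and with the cutoff $\chi'_a(\cZ)$ absorbed into the ``test'' random variable so that the possible degeneracy of $\cM_s$ is never actually invoked off the event $\{|\cZ|\le 2^k a\}$. First I would set up the inductive formulation: prove a one-derivative version, i.e. show that for a suitable test random variable $G\in\bD_s^{1,q}$ (playing the role of $\cY\chi'_a(\cZ)$) and $j\in\{1,\dots,d\}$,
\begin{equs}
\E_s^B\big(G\,\partial_j h(\cX)\big)=\E_s^B\big(h(\cX)\,\cH_j(\cX,G)\big),
\end{equs}
where $\cH_j(\cX,G)$ is the Skorokhod-type weight $\delta_{\cH_s^\perp}\big(G\sum_{\ell}(\cM_s^{-1})_{j\ell}D_{\cH_s^\perp}\cX^\ell\big)$. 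This is the conditional analogue of the classical formula and follows from the chain rule \eqref{eq:chain}, the definition of $\cM_s$ in \eqref{eq:cond-Mall-nondegenerate}, and the conditional integration by parts \eqref{eq:conditional_IBP}: one writes $\partial_j h(\cX)=\sum_\ell (\cM_s^{-1})_{j\ell}\,\scal{D_{\cH_s^\perp}(h(\cX)),D_{\cH_s^\perp}\cX^\ell}_{\cH_s^\perp}$ and then transfers the $D_{\cH_s^\perp}$ onto the rest via \eqref{eq:conditional_IBP}.

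Iterating this $k$ times gives
\begin{equs}
\E_s^B\big(\cY\chi'_a(\cZ)\,\partial_{i_1,\dots,i_k}h(\cX)\big)=\E_s^B\big(h(\cX)\,\Xi\big)
\end{equs}
for an explicit weight $\Xi$ that is a finite sum of iterated $\delta_{\cH_s^\perp}$-expressions built from $\cY$, $\chi'_a(\cZ)$, $\cM_s^{-1}$, $D_{\cH_s^\perp}\cX$ and their $D_{\cH_s^\perp}$-derivatives up to some order $m=m(k)$. Bounding $\|h(\cX)\Xi\|$ then reduces, by $\|h\|_{L^\infty}$ and $\E_s^B|\Xi|$, to a conditional-Sobolev estimate for $\Xi$. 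Here the key analytic inputs are: the conditional continuity of $\delta_{\cH_s^\perp}$ in the norms $\|\cdot\|_{\bD_s^{n,p}}$, namely \cref{lem:continuity-delta-high}, which trades order $m$ for some larger order $n=n(p,m)$ and doubles the integrability $p\mapsto 2p$ — this fixes $k^*$ and $p^*$; a conditional Leibniz rule for $D_{\cH_s^\perp}$ applied to products; and a bound on the $\bD_s^{n,2p}$-norm of $\cM_s^{-1}$ and its $D_{\cH_s^\perp}$-derivatives. For the latter one uses the algebraic identity $D(\cM_s^{-1})=-\cM_s^{-1}(D\cM_s)\cM_s^{-1}$ repeatedly, together with Hölder's inequality for conditional expectations, to reduce everything to (a) the assumed moment bound $\E_s^B\Gamma^{-p^*}\le\mu^{p^*}$, which controls $\|\cM_s^{-1}\|$ through $\cM_s\succeq\Gamma\,\id$, and (b) conditional Sobolev norms of the entries of $\cM_s$, i.e. of $\scal{D_{\cH_s^\perp}\cX^i,D_{\cH_s^\perp}\cX^j}_{\cH_s^\perp}$, which are controlled by $\|D_{\cH_s^\perp}\cX\|_{\bD_s^{k^*,p^*}}$. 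Tracking the powers of $\mu$ and of $\|D_{\cH_s^\perp}\cX\|_{\bD_s^{k^*,p^*}}$ through the $k$ iterations, each step producing one factor of $\cM_s^{-1}$ (hence one factor $\mu$ and at most two factors of $\|D_{\cH_s^\perp}\cX\|$), yields exactly the stated dependence: $\mu^{k/2}$ overall (one half power per derivative, since $\cM_s^{-1}$ scales like the square of $D_{\cH_s^\perp}\cX$ and carries one factor $\mu$), times $\max(1,\|\cZ\|_{\bD_s^{k^*,p^*}})^{k^*}$ from differentiating $\chi'_a(\cZ)$ (the $L^\infty$-bounds on $\chi'$ and its derivatives are absorbed into $N$), times $\max(1,\mu\|D_{\cH_s^\perp}\cX\|_{\bD_s^{k^*,p^*}}^2)^{k^*}$ from the remaining $\cM_s^{-1}$-factors and their derivatives.

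One point requiring care is that the integration by parts \eqref{eq:conditional_IBP} is an identity for \emph{fixed} random variables, whereas here $\cX,\cY,\cZ$ are genuinely random and the estimate must hold \emph{almost surely} with $\cF_s^B$-conditioning; this is handled exactly as in \cite[Proposition 3.9]{Malliavin_Hairer} — the one referenced for \cref{lem:conditional_continuity_delta_1} — working first with $u\in\mathcal S_{ON}$-type simple elements and passing to the limit using \cref{lem:orthogonal_density}, \cref{lem:continuity_projection_Sobolev}, and continuity of conditional expectation on $L^1$. A second subtlety is that $\cM_s^{-1}$ is only controlled on $\{|\cZ|\le 2^k a\}$, so at each stage one must commute the cutoff past the differentiation: differentiating $\chi'_a(\cZ)$ any number of times produces a factor supported in $\{|\cZ|\le 2^k a\}$ as long as $a$ is dilated appropriately at each step (this is the reason $\chi'\in\cut_{d_1}$ is chosen to be $1$ on a ball and $0$ outside a ball twice as large, and the reason the event in \eqref{eq:cond-Mall-nondegenerate} is $\{|\cZ|\le 2^k a\}$ rather than $\{|\cZ|\le a\}$). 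Concretely one writes $\chi'_a(\cZ)=\chi'_a(\cZ)\tilde\chi_{2a}(\cZ)=\cdots$ with a nested family of cutoffs, so that every $\cM_s^{-1}$ appearing is multiplied by a function of $\cZ$ supported where the nondegeneracy bound is valid, and on the complement the integrand vanishes. The main obstacle, then, is not any single estimate but the bookkeeping: organizing the iterated Skorokhod weights so that (i) every $\cM_s^{-1}$ carries its protecting cutoff, (ii) the order $m(k)$ of $D_{\cH_s^\perp}$-derivatives needed is finite and determines $k^*,p^*$ via \cref{lem:continuity-delta-high}, and (iii) the final Hölder split in the conditional expectation distributes the $2p$- and $p$-integrabilities so as to land precisely on $\|\cY\|_{\bD_s^{k^*,p^*}}$, $\|\cZ\|_{\bD_s^{k^*,p^*}}$, $\mu$, and $\|D_{\cH_s^\perp}\cX\|_{\bD_s^{k^*,p^*}}$ with the stated exponents.
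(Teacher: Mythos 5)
Your proposal follows essentially the same route as the paper's proof: the conditional integration by parts \eqref{eq:conditional_IBP} iterated $k$ times with a nested family of dilated cutoffs $\chi'_a,\chi'_{2a},\dots,\chi'_{2^{k-1}a}$ protecting each occurrence of $\cM_s^{-1}$, then bounding the resulting iterated Skorokhod weight by alternating \cref{lem:continuity-delta-high} with Cauchy--Schwarz, and controlling $\cM_s^{-1}$ via the identity $D(\cM_s^{-1})=-\cM_s^{-1}(D\cM_s)\cM_s^{-1}$ together with the assumed bound $\E^B_s\Gamma^{-p^*}\le\mu^{p^*}$. The only points you gloss are technical: the paper makes the first IBP step well-posed by never writing $\cM_s^{-1}$ without an attached cutoff (your displayed identity $\partial_j h(\cX)=\sum_\ell(\cM_s^{-1})_{j\ell}\scal{\cdots}$ is ill-defined off the nondegeneracy event), and it regularizes $\cM_s^{-1}$ by $(\cM_s+\eps\,\id)^{-1}$ before differentiating and passes to $\eps\to0$ at the end, which is needed to justify the Leibniz expansion rigorously.
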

\begin{proof}
%\emph{Step 1 (Smuggling cutoffs)}. 
Denote, for any $i=1,\ldots,d$ and $\tilde a\in[a,2^{k-1}a]$,
\begin{equ}
    \Xi_{i,\tilde a}=\chi'_{\tilde a}(\cZ)\sum_{j=1}^m (\mathcal{M}_{s}^{-1})_{i,j}D_{\cH^\perp_s}\cX^j\in \cH^\perp_s.
\end{equ}
Notice that the presence of the cutoff $\chi'_{\tilde a}(\cZ)$ is essential, since we only assume control (or even, the existence) of $\cM^{-1}$ where this cutoff is nonzero.
With this notation, we have
$\chi'_{ a}(\cZ)\partial_ih(\cX)=\langle D_{\cH^\perp_s} h(\cX),\Xi_{i,a}\rangle_{\cH^\perp_s}$ as an immediate consequence of the chain rule.
This implies
$$
\E_s^B \big(\cY \chi'_{a}(\cZ)\partial_ih(\cX)\big)
=\E_s^B\big(h(\cX)\delta_{\cH^\perp_s}(\Xi_{i, a}\cY)\big)
$$
by \eqref{eq:conditional_IBP}. However, in order to iterate this procedure, by the above remark,  further cutoffs are needed.
We claim that one can smuggle in a larger cutoff for free and one has in fact
\begin{equ}\label{eq:more-cutoff}
    \E_s^B \big( \cY \chi'_a(\cZ)\partial_ih(\cX)\big)=\E_s^B\big(\chi'_{2a}(\cZ)h(\cX)\delta_{\cH^\perp_s}(\Xi_{i,a}\cY)\big).
\end{equ}
To this end, note the identities $(\nabla \chi'_{2a})\chi'_{a}=0$ and $\chi'_{2a}\chi'_{a}=\chi'_{a}$. Then by the chain rule we have
\begin{equs}
     \chi'_{a}(\cZ)D h(\cX)&=\chi'_{2a}(\cZ)\chi'_{a}(\cZ)D h(\cX)+(\nabla \chi'_{2a})(\cZ)\chi'_{a}(\cZ) h(\cX) D\cZ
     \\
     &=\chi'_{a}(\cZ)D\big(\chi'_{2a}(\cZ)h(\cX)\big).
\end{equs}
After applying $\Pi_{\cH^\perp_s}$, the same holds with $D_{\cH^\perp_s}$ in place of $D$.
Therefore, 
\begin{equs}
    \E_s^B \big( \cY \chi'_{a}(\cZ)\partial_ih(\cX)\big)&=\E_s^B \big( \cY \langle D_{\cH^\perp_s} h(\cX),\Xi_{i, a}\rangle_{\cH^\perp_s}\big)
=\E_s^B \big( \cY \langle D_{\cH^\perp_s} (\chi'_{2a}(\cZ)h(\cX)),\Xi_{i, a}\rangle_{\cH^\perp_s}\big).
\end{equs}
Integrating by parts by \eqref{eq:conditional_IBP} again, we get  \eqref{eq:more-cutoff} as claimed.

To treat $\partial_{i_1,\ldots,i_k}$ in place of $\partial_i$, we iterate this procedure: define the random variables $W_{i_1,\ldots, i_\ell}$, $\ell=1,\ldots,k$, inductively by
\begin{equ}
    W_{i_1}=\delta_{\cH^\perp_s}(\Xi_{i_1,a}\cY),\qquad W_{i_1,\ldots,i_\ell}=\delta_{\cH^\perp_s}\big(\Xi_{i_\ell,2^{\ell-1}a}W_{i_1,\ldots,i_{\ell-1}}\big).
\end{equ}
Then the above arguments yield
\begin{equs}\label{eq:IBP-main}
   \E_s^B\big(\cY\chi'_a(\cZ)\partial_{i_1,\ldots,i_k} h(\cX)\big)=\E_s^B\big(h(\cX)W_{i_1,\ldots,i_{k}}\big).
  \end{equs}
From \eqref{eq:IBP-main} it is clear that
\begin{equ}\label{hallo}
\big|\E_s^B(\cY\chi'_a(\cZ)\partial_{i_1,\ldots,i_k} h(X))\big|
\leq \|h\|_{L^\infty}\|W_{i_1,\ldots,i_k}\|_{\mathbb{D}^{0,1}_{s}}.
\end{equ}
To bound the right-hand side, we apply \cref{lem:continuity-delta-high} recursively. 
Let $n_1,\ldots, n_k$ be defined as follows. We set $n_1$ to be the $n$ of \cref{lem:continuity-delta-high} with the choice $m=0$, $p=1$. Then for $\ell>1$ we set $n_\ell$ to be the $n$ of \cref{lem:continuity-delta-high}
with the choice $m=n_{\ell-1}$, $p=4^{\ell-1}$.
%We shall bound certain conditional Malliavin norms of $W_{i_1,\ldots,i_k}$ recursively in $k$.
We then write, by alternately applying \cref{lem:continuity-delta-high} and the Cauchy-Schwarz inequality
\begin{equs}
    \|W_{i_1,\ldots,i_k}\|_{\mathbb{D}^{0,1}_{s}}
    &\lesssim \big\|\Xi_{i_k,2^{k-1}a}W_{i_1,\ldots,i_{k-1}}\big\|_{\bD^{n_1,2}_s}
    \\
&\leq     \|\Xi_{i_k,2^{k-1}a}\|_{\bD^{n_1,4}_s}
\big\|W_{i_1,\ldots,i_{k-1}}\big\|_{\bD^{n_1,4}_s}
\\
&\lesssim\|\Xi_{i_k,2^{k-1}a}\|_{\bD^{n_1,4}_s}
\big\|\Xi_{i_{k-1},2^{k-2}a}W_{i_1,\ldots,i_{k-2}}\big\|_{\bD^{n_2,8}_s}
\\
&\vdots
\\
&\lesssim \Big(\prod_{\ell=1}^k\|\Xi_{i_\ell,2^{\ell-1}a}\|_{\bD_s^{n_{k+1-\ell},4^{k+1-\ell}}}\Big)\|\cY\|_{\bD_s^{n_k,4^k}}.\label{hullo}
\end{equs}
It remains to bound the terms in the above product.
To simplify notation, we fix $i\in\{1,\ldots,d\}$ and $\tilde a\in[a,2^{k-1}a]$, let $n= n_k$ and $p= 4^k$, and aim to bound $\|\Xi_{i,\tilde a}\|_{\mathbb{D}_s^{n,p}}$.
Starting with the Cauchy-Schwarz inequality
\begin{equ}\label{hello}
    \|\Xi_{i,\tilde a}\|_{\mathbb{D}_s^{n,p}}
    \leq \|\chi'_{\tilde a}(\cZ) \cM_{s}^{-1}\|_{\mathbb{D}_s^{n,2p}}
    \|D_{\cH^\perp_s}\cX\|_{\mathbb{D}_s^{n,2p}}.
\end{equ}
So it remains to bound 
$\|\chi'_{\tilde a}(\cZ) \cM_{s}^{-1}\|_{\mathbb{D}_s^{n,2p}}$.
%for $p\leq 2^{k+1}$ and $0\leq \ell\leq k$.
%which we do by induction on $\ell=0,\ldots,k$.
As a warm-up, note that for $n=0$ we have by assumption
\begin{equ}
    \|\chi'_{\tilde a}(\cZ) \cM_{s}^{-1}\|_{\mathbb{D}_s^{0,2p}}\leq \big(\E_s\Gamma^{-2p}\big)^{1/2p}\leq\mu,
    %=\|g_{\tilde L}(Z) (\cM_{s,t}^X)^{-1}\|_{L^p|\cF_s}\leq \lambda^{-1}\|\Gamma^{-1}\|_{L^p|\cF_s}\leq \lambda^{-1}C.
\end{equ}
provided $p^*$ is chosen to be bigger than $2\cdot4^k$. We now bound the higher order Malliavin derivatives. To avoid the issues of the possible singularity of $\cM_s^{-1}$, we add a small perturbation $\eps\,\id$, $\eps>0$. 
Note that from the identity $    D((\mathcal{M}_{s}+\eps \,\id)^{-1})=(\mathcal{M}_{s}+\eps \,\id)^{-1}D \mathcal{M}_{s}{(\mathcal{M}_{s}+\eps \,\id)^{-1}}$
and Leibniz’s rule we get%\mate{Here $\otimes$ is meant between elements of $\cH$ and not as matrices. Hopefully understandable.}
\begin{equs}%\label{eq:M-1}
D^\ell&((\mathcal{M}_{s}+\eps \,\id)^{-1})
\\
&=\sum_{j_1+j_2+j_3=\ell-1}\binom{\ell-1}{j_1,j_2,j_3}D^{j_1}((\mathcal{M}_{s}+\eps\,\id)^{-1})\otimes D^{j_2+1}(\mathcal{M}_{s})\otimes D^{j_3}((\mathcal{M}_{s}+\eps\,\id)^{-1}).
\end{equs}
One can iterate this formula for $D^{j_1}((\mathcal{M}_{s}+\eps\,\id)^{-1})$ and $D^{j_3}((\mathcal{M}_{s}+\eps\,\id)^{-1})$, and notice that at each step the number of Malliavin derivatives on $(\mathcal{M}_{s}+\eps\,\id)^{-1}$ strictly decreases. Therefore after at most $\ell$ steps one expresses $D^\ell((\mathcal{M}_{s}+\eps\,\id)^{-1})$ as a sum, where both the number of terms and the coefficients of the terms only depend on $\ell$, and each term is a product of $m$ instances of $(\mathcal{M}_{s}+\eps\,\id)^{-1}$ and $m-1$ instances of Malliavin derivatives of $\cM_{s}$ of order $q_1,\ldots,q_{m-1}$, respectively, where $2\leq m\leq \ell+1$, $1\leq q_i\leq \ell$, and $\sum q_i=\ell$.
After applying $\Pi_{\cH^\perp_s}$ in all $\ell$ variables, the same holds with $D_{\cH_s^\perp}$ in place of $D$.
To treat the latter terms, note that 
  \begin{equs}
    D^{j}_{\cH_s^\perp}\mathcal{M}_{s}=\sum_{j'=0}^{j} \binom{j}{j'}\langle D^{j}_{\cH_s^\perp} \cX,D^{j-j'+1}_{\cH_s^\perp} \cX\rangle_{\mathcal{H}_{s}^\perp,1},
  \end{equs}
  where $\langle h_1\otimes\cdots\otimes h_k,\bar h_1\otimes\cdots\otimes h_m\rangle_{\cH_s^\perp,1}=\langle h_1,\bar h_1\rangle_{\cH_s^\perp}h_2\otimes \cdots\otimes h_{k}\otimes \bar h_2\otimes\cdots\otimes\bar h_{m}.$
  In particular, for any $p'\in[1,\infty)$ one has
  \begin{equ}
\big\|   D^{j}_{\cH_s^{\perp}} \mathcal{M}_{s} \big\|_{\bD_s^{0,p'}}\lesssim \|D_{\cH_s^\perp}\cX\|^2_{\mathbb{D}_{s}^{j,2p'}}.
  \end{equ}
  We now write, with the the sum over $q=(q_1,\ldots,q_{m-1})$ understood as above,
\begin{equs}
   \big\|&D^{\ell'}_{\cH_s^\perp}\big(\chi'_{\tilde a}(\cZ) (\cM_{s}+\eps\,\id)^{-1}\big)\big\|_{\mathbb{D}_s^{0,2p}} 
   \\
   &
   \lesssim\sum_{\ell=0}^{\ell'}\Big\|D^{\ell'-\ell}_{\cH_s^\perp}(\chi'_{\tilde a}(\cZ))\otimes D^{\ell}_{\cH_s^\perp}((\cM_{s}+\eps\,\id)^{-1})\|_{\mathbb{D}_s^{0,2p}} 
   \\
   &\lesssim \sum_{\ell=0}^{\ell'}\|\chi'_{\tilde a}(\cZ)\|_{\mathbb{D}_s^{\ell'-\ell,6p}}
   \sum_{m=2}^{\ell+1}\sum_q\Big\|\bone_{|\cZ|\leq 2\tilde a}\big|(\cM_{s}+\eps\,\id)^{-1}\big|^m\Big\|_{\mathbb{D}_s^{0,6p}}
   \Big\|
   \prod_{j=1}^{m-1} D^{q_j}_{\cH_s^{\perp}}\cM_s
   \Big\|_{\mathbb{D}_s^{0,6p}}
   \\
   &\lesssim \sum_{\ell=0}^{\ell'}\|\chi'_{\tilde a}(\cZ)\|_{\mathbb{D}_s^{\ell'-\ell,6p}}\sum_{m=2}^{\ell+1}\big(\E^B_s\Gamma^{-6mp}\big)^{1/6p}\|D_{\cH_s^\perp}\cX\|_{\bD_s^{\ell,12\ell p}}^{2(m-1)}.
\end{equs}
Summing up over $\ell'=0,\ldots n$, and taking $p^*\geq 12(n+1)p$, we have
\begin{equs}
    %\|\chi_{\tilde a}(Z) \cM_{s}^{-1}\|_{\mathbb{D}_s^{n,2p}}
    %\leq\liminf_{\eps\to0} 
    \|\chi'_{\tilde a}(\cZ) (\cM_{s}+\eps\,\id)^{-1}\|_{\mathbb{D}_s^{n,2p}}
    &\lesssim 
    \|\chi'_{\tilde a}(\cZ)\|_{\mathbb{D}_s^{n,6p}}
    \sum_{m=1}^{n+1}\mu^m\|D_{\cH_s^\perp}\cX\|_{\bD_s^{n,12n p}}^{2(m-1)}
    \\&\lesssim \|\chi'_{\tilde a}(\cZ)\|_{\mathbb{D}_s^{n,6p}} \mu \max(1,\mu\|D_{\cH_s^\perp}\cX\|_{\bD_s^{n,12n p}}^2)^{n}.
\end{equs}
By the lower semicontinuity of the conditional norm and the almost sure convergence $\chi'_{\tilde a}(\cZ) (\cM_{s}+\eps\,\id)^{-1}\to\chi'_{\tilde a}(\cZ) \cM_{s}^{-1}$ we can pass to the $\eps\to0$ limit and get
\begin{equ}\label{hollo}
    \|\chi'_{\tilde a}(\cZ) \cM_{s}^{-1}\|_{\mathbb{D}_s^{n,2p}}\lesssim \|\chi'_{\tilde a}(\cZ)\|_{\mathbb{D}_s^{n,6p}} \mu \max(1,\mu\|D_{\cH_s^\perp}\cX\|_{\bD_s^{n,12n p}}^2)^{n}.
\end{equ}
Combining \eqref{hallo}, \eqref{hullo}, \eqref{hello}, and \eqref{hollo}, the proof is finished.
\end{proof}

\begin{proof}[Proof of \cref{thm:L}]
Define the germ
\begin{equ}\label{eq:L-A-def}
    A_{s,t}=\E_s\int_s^t\chi_{\rho}(\phi_r^{s,X_s}-\phi_r^{s,Y_s})f(\theta \phi_r^{s,X_s}+(1-\theta) \phi_r^{s,Y_s})\,dr
\end{equ}
for $(s,t)\in[0,1]_{\leq}^2$. 
We aim to verify the conditions of \cref{lem:SSL-vanila}.
Introduce a couple of shorthand notation: for $(s,t)\in[0,1]_\leq^2$, $x,y\in\R^d$, $i\in\{1,2\}$ set
\begin{equs} 
\psi_{r}^{s,(x,y)}&=\theta \phi^{s,x}_{r}+(1-\theta)\phi^{s,y}_{r},
%&\qquad\bar\phi_r^{s,(x,y)}&=\phi_r^{s,x}-\phi_r^{s,y},\\
\\
\bar\phi_r^{s,(x,y)}&=\phi_r^{s,x}-\phi_r^{s,y},\\
\I_{s,r}^{(x,y)}&=\chi_\rho(\phi^{s,x}_{r}-\phi^{s,y}_{r}).
%&\qquad  \bar {\mathbb{I}}_{s,u,r}^i&=\chi_\rho(\phi_r^{s,X_s^i}-\phi_r^{u,X_u^i}),&\qquad 
%\tilde{\mathbb{
%I}}_{s,u,r}&=\bar {\mathbb{I}}_{s,u,r}^1\bar {\mathbb{I}}_{s,u,r}^2.
\end{equs}
   Then we can rewrite \eqref{eq:L-A-def} as
    \begin{equ}
    A_{s,t}=\int_s^t\E_s\big(\I_{s,r}^{(x,y)}f(\psi_{r}^{s,(x,y)})\big)\big|_{(x,y)=(X_s,Y_s)}\,dr.
\end{equ}
%When viewing $A_{s,t}$ as a linear operator mapping $f\in \cC^0$ to $L^\infty(\Omega)$, we also write $A_{s,t}f$.
%Similarly, 
For $(s,r)\in[0,1]_\leq^2$ and $x,y\in\R^d$, consider the linear operator on from $\cC^\infty$ to $L^p(\Omega)$ defined by
\begin{equ}
    B_{s,r}^{(x,y)}f=\E_s\big(\I_{s,r}^{(x,y)}f(\psi_{r}^{s,(x,y)})\big)=\E_s^B\big(\I_{s,r}^{(x,y)}f(\psi_{r}^{s,(x,y)})\big),
    \end{equ}
    using \cref{lem:continuous-version} in the second equality. Also from \cref{lem:continuous-version} we
    have that $B_{s,r}^{(x,y)}f$ has a continuous modification in $x,y$. In the sequel we will need to estimate its supremum in $x,y$, for which therefore it suffices to take $x,y\in\Q^d$.
One has first of all the trivial bound 
\begin{equ}
    |B_{s,r}^{(x,y)}f|\leq \|f\|_{\cC^0}.
\end{equ}
Next we bound $B_{s,r}^{(x,y)}f$ (and consequently of $A_{s,t}f$) through the norm of $f$ in a negative regularity space. Consider first the case $f=\partial_{i_1}\partial_{i_2}g$.
We apply \cref{lem:IBP-with-cutoff} with the choices
\begin{equ}
    k=2,\quad h=g,\quad a=\rho, \quad d_1=d,\quad\chi'=\chi,\quad \cY=1,\quad \cZ=\bar \phi^{s,(x,y)}_r,\quad \cX=\psi^{s,(x,y)}_r.
\end{equ}
Take the $p^*$ and $k^*$ obtained from therein.
To find $\Gamma$ such that \eqref{eq:cond-Mall-nondegenerate} is satisfied, take $z\in\R^d$ and note that
\begin{equ}
    z^*\cM_sz=\|\Pi_{\cH_s^{\perp}}z^*D\cX\|_{\cH}^2.
\end{equ}
By \eqref{eq:malliavin-and-jacobi}
we have
\begin{equ}
   z^*D_v\cX= \bone_{v\in[s,r]}z^*\big(\theta J^{v,x}_r\sigma(\phi^{s,x}_r)+(1-\theta)J^{v,y}_r\sigma(\phi^{s,y}_r)\big).
\end{equ}
The right-hand side is clearly continuous on $(s,r)$ so its $L^\infty$ norm is bounded from below by its left limit as $v\to r$.
Therefore on the event $\{|\cZ|\leq 4\rho\}$
we can write, by \eqref{eq:convex-combination-nondegenerate},
\begin{equ}
    \|z^*D\cX\|_{L^\infty([s,r])}^2\geq \big|z^*\big(\theta\sigma(\phi^{s,x}_r)+(1-\theta)\sigma(\phi^{s,y}_r)\big)\big|^2
    %\big(\theta\sigma(\phi^{s,x}_r)+(1-\theta)\sigma(\phi^{s,y}_r)\big)^*z
    \geq(1/2)\lambda |z|^2,
\end{equ}
and thus fixing $H>\gamma>1-2H$, by \cref{cor:interpolation} we have 
\begin{equ}
    \|\Pi_{\cH_s^{\perp}}z^*D\cX\|_{\cH}^2\geq c |z|^2|r-s|^{2H}\min\Big(1,\frac{|z|}{\|z^*D\cX\|_{\cC^\gamma([s,r])}}\Big)^{\vartheta/\gamma}
\end{equ}
with some constant $c>0$ such that $c^{-1}\lesssim 1$.
We therefore take
$$
\Gamma:=%|r-s|^{2H}\bar \Gamma:=
|r-s|^{2H}\frac{c}{1+\|D\cX\|_{\cC^\gamma([s,r])}^{\vartheta/\gamma}}.
$$
%where $c>0$ is the implicit constant coming from the preceding inequality.
Hence it is natural to choose $\mu=|t-s|^{-2H}(\bar\mu_x+\bar\mu_y)$, with
\begin{equ}\label{eq:bar-mux-def}
    \bar\mu_x:=c^{-1}\Big(\E_s^B\big|1+ \|D\phi^{s,x}_r\|_{\cC^\gamma([s,r])}^{\vartheta/\gamma}\big|^{p^*}\Big)^{1/p^*}.
\end{equ}
%{\color{red}(to put later where it is actually true) Let us remark that it is crucial that the random variable $\bar\mu$ does not depend on $x$ and $y$.}

Next we bound the terms appearing on the right-hand side of \eqref{est:IBP-main}, starting with $\bar\mu_x$.
For the remainder of the proof we use the convention that $p'$ denotes an exponent depending on $\alpha,\gamma,H$.  Further,  $\Xi'$ will denote a random variable which might depend on $s$ and the usual parameters but not on $x$. In addition,   $\|\Xi'\|_{L_m(\Omega)}\lesssim_m 1$ for all $m\geq 1$, that is, its $L_m$ norms depend only on the parameters given in the statement of theorem and $m$. 

Recalling \eqref{eq:malliavin-and-jacobi} we can start with the
straightforward bound
\begin{equ}\label{eq:minor0}
    \bar\mu_x\lesssim \E_s^B \big|1+\|J^{\cdot,x}_r\|_{\cC^{\gamma}([s,r])}\big|^{p'}\E_s^B\big|1+\|\sigma(\phi^{s,x}_\cdot)\|_{\cC^{\gamma}([s,r])}\big|^{p'}.
\end{equ}
From the boundedness of $\sigma$ and $\nabla\sigma$ and \eqref{eq:controlled_phi_as} we have
\begin{equ}
\big|1+\|\sigma(\phi^{s,x}_\cdot)\|_{\cC^{\gamma}([s,r])}\big|\lesssim\big|1+[\blue{B}]_{\cR^{H_-}}\big|^{p'}
\end{equ}
and so 
\begin{equ}
    \E_s^B\big|1+\|\sigma(\phi^{s,x}_\cdot)\|_{\cC^{\gamma}}\big|^{p'}\leq \Xi'.
\end{equ}
As for the first term in \eqref{eq:minor0}, we have from 
\cref{lem:bounds_for_J_sigma(phi)}
that almost surely
\begin{equ}
    \E^B_s\sup_{u\in[s,r]}|J^{u,x}_r|^{p'}\leq \Xi'.
\end{equ}
We emphasize that $\Xi'$ can be chosen to not depend on $x\in\Q^d$. 
As for the seminorm, we have from
\eqref{eq:Jacobian"flow"} and the continuity of all of the random fields that for any $x\in\R^d$
\begin{equs}      \,[J^{\cdot,x}_r]_{\cC^{\gamma}([s,r])}&\lesssim \sup_{y\in\Q^d}|\nabla J^{0,y}_r|\sup_{y\in\Q^d}|(J^{0,y}_s)^{-1}|\sup_{y\in\Q^d}[\overleftarrow{\phi}^{0,y}_{\cdot}]_{\cC^{\gamma}([s,r])}
    \\
    &\quad+\sup_{y\in\Q^d}| J^{0,y}_r|\sup_{y\in\Q^d}|\nabla(J^{0,y}_s)^{-1}|\sup_{y\in\Q^d}[\overleftarrow{\phi}^{0,y}_{\cdot}]_{\cC^{\gamma}([s,r])}
    \\
    &\quad+\sup_{y\in\Q^d}| J^{0,y}_r|\sup_{y\in\Q^d}[(J^{0,y}_\cdot)^{-1}
    ]_{\cC^{\gamma}([s,r])}
    \\
    &\leq \Xi'(1+\sup_{y\in\Q^d}[\overleftarrow{\phi}^{0,y}_{\cdot}]_{\cC^{\gamma}([s,r])}),
\end{equs}
using \cref{lem:bounds_for_J_sigma(phi)} again
in the last inequality.
As for the last term, notice that for $u<v$
\begin{equs}
  \big|\overleftarrow{\phi}^{0,y}_{u}
  -
  \overleftarrow{\phi}^{0,y}_{v}\big|
  &=
  \big|\overleftarrow{\phi}^{0,\phi^{u,y}_v}_{v}
  -
  \overleftarrow{\phi}^{0,y}_{v}\big|
  \leq \sup_{z\in\Q^d}|(J^{0,z}_v)^{-1}| |\phi^{u,y}_v-y|
  \leq \Xi'|u-v|^\gamma
\end{equs}
by the same arguments as above. So we can conclude
\begin{equ}
    \mu\leq |t-s|^{-2H}\Xi'.
\end{equ}
Moving forward in \eqref{est:IBP-main}, since $\cY=1$, the next nontrivial term is $\cZ=\bar\phi^{s,(x,y)}_r$. Here we invoke \cref{lem:bound_controlled_flow} and \cref{lem:flow-D-with-power}  to conclude
\begin{equ}
   \max(1,\|\cZ\|_{\bD_s^{k^*,p^*}})^{k^*}
 \leq \Xi'.
\end{equ}
Finally, applying \cref{lem:flow-D-with-power} again we get
\begin{equ}
    \|D_{\cH_s^\perp} \cX\|_{\bD_s^{k^*,p^*}}^2\leq |t-s|^{2H}\Xi'
\end{equ}
and therefore
\begin{equ}
\max(1,\mu\|D_{\cH_s^\perp}\cX\|_{\bD_s^{k^*,p^*}}^2)^{k^*}\leq \Xi'.
\end{equ}
Putting these bounds together, \eqref{est:IBP-main} yields 
\begin{equ}
   | B^{(x,y)}_{s,r}\partial_i\partial_j g|\leq \Xi'\|g\|_{L^\infty}|r-s|^{-2H}.
\end{equ}
Since this holds uniformly in $(x,y)\in\Q^{2d}$, it also holds uniformly in $(x,y)\in \R^{2d}$, and thus after substituting $(x,y)=(X_s,Y_s)$ we get
\begin{equ}
 \Big\|\big(B^{(x,y)}_{s,r}\partial_i\partial_j g\big)\big|_{(x,y)=(X_s,Y_s)}\Big\|_{L^p(\Omega)}\lesssim\|g\|_{L^\infty}|r-s|^{-2H}.
\end{equ}
It remains to interpolate. That is, by applying \cref{lem:interpolation} (with $Tf=\big(B^{(x,y)}_{s,r}f\big)\big|_{(x,y)=(X_s,Y_s)}$, $k=2$, $\ell=0$, $\Gamma_0=1$, and $\Gamma_{-2}=C|r-s|^{-2H}$, and $\beta=\gamma$) and integrating in $r$ we get that
\begin{equ}\label{eq:estimate-A-for-L}
    \|A_{s,t}\|_{L^p(\Omega)}\lesssim \|f\|_{\cC^{\alpha-1}}|t-s|^{1+(\alpha-1)H}.
\end{equ}
Since $1+\gamma H>1/2$ by our standing assumption $\gamma>-1/(2H)$,
the condition \cref{lem:SSL-vanila} (i) is satisfied with $\beta_1=1+\gamma H$ and $\Gamma_1=C\|f\|_{\cC^{\alpha-1}}$. 
\\

Next we move on to the treatment of $\E_s\delta A_{s,u,t}$. For this part we introduce the further cutoff function
\begin{equ}
    \tilde{\mathbb{I}}_{s,u,r}^{(x,y,z,w)}=\chi_\rho(\phi_r^{s,x}-\phi_r^{u,z})\chi_\rho(\phi_r^{s,y}-\phi_r^{u,w}).
\end{equ}
To help the reader's intuition, let us remark that, with the choices $(z,w)=(X_u,Y_u)$ and $(x,y)=(X_s,Y_s)$ that we make below, one expects $\tilde{\mathbb{I}}_{s,u,r}^{(z,w,x,y)}$ to be mostly $1$ regardless of whether the two solutions are close.
The purpose of this cutoff, similarly to the previous argument, is to ensure the invertability of a certain Malliavin matrix. In fact, both in this function and in others we often use the rewriting
\begin{equ}
    \tilde{\mathbb{I}}_{s,u,r}^{(x,y,z,w)}=\tilde{\mathbb{I}}_{u,u,r}^{(a,b,z,y)}|_{(a,b)=(\phi^{s,x}_u,\phi^{s,y}_u)}.
\end{equ}
We write
\begin{equs}
     \E_s\delta A_{s,u,t}     
     &=\E_s\int_u^t\bbI_{s,r}^{(X_s,Y_s)}f(\psi_{r}^{s,(X_s,Y_s)})-\bbI_{u,r}^{(X_u,Y_u)}f(\psi_{r}^{u,(X_u,Y_u)})\,dr
     \\&=\E_s\int_u^t\bbI_{u,r}^{(\phi^{s,X_s}_u,\phi^{s,Y_s}_u)}f(\psi_{r}^{u,(\phi^{s,X_s}_u,\phi^{s,Y_s}_u)})-\bbI_{u,r}^{(X_u,Y_u)}f(\psi_{r}^{u,(X_u,Y_u)})\,dr
     \\
     &=:\E_s\int_u^t C^{(a,b,z,w)}_{u,r}f|_{(a,b,z,w)=(\phi^{s,X_s}_u,\phi^{s,Y_s}_u,X_u,Y_u)}
     %\big|_{(x,y)=(X^1_s,X^2_s)}
     \,dr,
     \end{equs}
     where the last equality serves as the definition of the linear map $f\mapsto C^{(a,b,z,w)}_{u,r}f$.
Define furthermore
\begin{equ}
    \tilde{C}^{(a,b,z,w)}_{u,r}f=C^{(a,b,z,w)}_{u,r}f\tilde{\mathbb{I}}^{(a,b,z,w)}_{u,u,r}, \qquad\hat{C}^{(a,b,z,w)}_{u,r}f=C^{(a,b,z,w)}_{u,r}f-\tilde{C}^{(a,b,z,w)}_{u,r}f.
\end{equ}
By conditional Jensen's inequality and ``taking out what is known'' we have
\begin{equs}
    \big\|&\E_s\big(C^{(a,b,z,w)}_{u,r}f|_{(a,b,z,w)=(\phi^{s,X_s}_u,\phi^{s,Y_s}_u,X_u,Y_u)}\big)\big\|_{L^p(\Omega)}
    \\&\leq
    \big\|\E_u\big(C^{(a,b,z,w)}_{u,r}f|_{(a,b,z,w)=(\phi^{s,X_s}_u,\phi^{s,Y_s}_u,X_u,Y_u)}\big)\big\|_{L^p(\Omega)}
    \\
    &=\big\|\E_u\big(C^{(a,b,z,w)}_{u,r}f\big)|_{(a,b,z,w)=(\phi^{s,X_s}_u,\phi^{s,Y_s}_u,X_u,Y_u)}\big\|_{L^p(\Omega)}. 
\end{equs}
The main objects of interests are therefore
\begin{equ}
B^{(a,b,z,w)}_{u,r}f:=\E_uC^{(a,b,z,w)}_{u,r}f=\E_u^BC^{(a,b,z,w)}_{u,r}f%,\qquad B^{(x,y,z,w)}_{s,u,r}f=\E_s\big( C^{(a,b,z,w)}_{u,r}f|_{(a,b)=(\phi^{s,x}_u,\phi^{s,y}_u)}\big)
\end{equ}
and the analogous objects with tilde and hat.

The main steps will be bounding $\tilde B^{(a,b,z,w)}_{u,r}f$ and $\hat B^{(a,b,z,w)}_{u,r}f$ by a random variable uniformly in the parameters $(a,b,z,w)$, whose $L^p(\Omega)$ norm behaves in the expected way in terms of $|r-u|$ once the above substitutions are performed. As before, since it is clear that continuous modifications (in $a,b,z,w$) exist, we assume all coordinates to be rational.

For the two terms we use different decompositions of $C$. For $\tilde B^{(a,b,z,w)}_{u,r}f$, we write
\begin{equs}
    C^{(a,b,z,w)}_{u,r}f&=C^{1,(a,b,z,w)}_{u,r}f+C^{2,(a,b,z,w)}_{u,r}f
    \\
    &:=\bbI^{(a,b)}_{u,r}\big(f(\psi^{u,(a,b)}_r)-f(\psi^{u,(z,w)}_r)\big)+\big(\bbI^{(a,b)}_r-\bbI^{(z,w)}_r\big)f(\psi^{u,(z,w)}_r).
\end{equs}
With the corresponding $\tilde C^{i}$ and $\tilde B^{i}$, we now estimate $\tilde B^1$.
\begin{equs}
    \tilde B^{1,(a,b,z,w)}_{u,r}f=\int_0^1&\E_u\Big(\tilde{\bbI}^{(a,b,z,w)}_{u,u,r}\bbI^{(a,b)}_{u,r}
    \\&
    \times\nabla f\big(\theta'\psi_r^{u,(a,b)}+(1-\theta')\psi_r^{u,(z,w)}\big)\big(\psi_r^{u,(a,b)}-\psi_r^{u,(z,w)}\big)
    \Big)d\theta'.
\end{equs}
We estimate the above quantity for each coordinate (denote by $j$) of $\nabla$ and each $\theta'$ separately, that is, we bound the quantity
\begin{equ}
    \E_u^B\Big(\tilde{\bbI}^{(a,b,z,w)}_{u,u,r}\bbI^{(a,b)}_{u,r}\partial_j f\big(\theta'\psi_r^{u,(a,b)}+(1-\theta')\psi_r^{u,(z,w)}\big)\big(\psi_r^{u,(a,b)}-\psi_r^{u,(z,w)}\big)
    \Big)
\end{equ}
using
 \cref{lem:IBP-with-cutoff}. Let us assume $f=\partial_{i_1}\partial_{i_2}g$. The the objects in \cref{lem:IBP-with-cutoff} are chosen as
\begin{equ}
    \cX=\theta'\psi_r^{u,(a,b)}+(1-\theta')\psi_r^{u,(z,w)},
\end{equ}
\begin{equ}
    k=3,\quad h=g,\quad a=\rho,\quad \cY=\psi_r^{u,(a,b)}-\psi_r^{u,(z,w)},
\end{equ}
\begin{equ}
d_1=3d,\,\,\chi'(x_1,x_2,x_3)=\chi(x_1)\chi(x_2)\chi(x_3),\,\, \cZ=(\phi^{u,a}_r-\phi^{u,z}_r,\phi^{u,b}_r-\phi^{u,w}_r,\phi^{u,a}_r-\phi^{u,b}_r).
\end{equ}
Take the corresponding $k^*$ and $p^*$ from the lemma.
By the same argument as in the previous case, by \eqref{eq:convex-combination-nondegenerate} we have that on the event $\{|\cZ|<8\rho\}$
\begin{equ}
    \|z^*D\cX\|_{L^\infty([u,r])}^2
    \geq(1/2)\lambda |z|^2.
\end{equ}
Therefore we can again take
\begin{equ}
    \Gamma:=|r-u|^{2H}\frac{c}{1+\|D\cX\|_{\cC^\gamma([u,r])}^{\alpha/\gamma}},
\end{equ}
where $c>0$ is a constant and $c^{-1}\lesssim 1$.
Similarly to the previous case, we choose $\mu=|r-u|^{-2H}(\bar\mu_a+\bar\mu_b+\bar\mu_z+\bar\mu_w)$, where we recall the definition \eqref{eq:bar-mux-def} (with $s$ therein replaced by $u$). As already argued in the previous case, we have
\begin{equ}\label{eq:onestep1}
    \mu\leq |r-u|^{-2H}\Xi'.
\end{equ}
The estimates on $\cX$ and $\cZ$ are also very analogous to the previous case, yielding
\begin{equ}\label{eq:onestep2}
   \max(1,\|\cZ\|_{\bD_u^{k^*,p^*}})^{k^*},\,\max(1,\mu\|D_{\cH_u^\perp}\cX\|_{\bD_u^{k^*,p^*}}^2)^{k^*}
 \leq \Xi'.
\end{equ}
What \emph{is} different compared to the previous case is the bound on $\cY$, since this time it allows us to gain a factor that is small when $a-z$ and $b-w$ are small.
We group terms like
\begin{equ}
    \cY=\theta\big(\phi^{u,a}_r-\phi^{u,z}_r\big)+(1-\theta)\big(\phi^{u,b}_r-\phi^{u,w}_r\big),
\end{equ}
and estimate the two terms in an identical way, so we focus on the first one.
One has
\begin{equs}
    \|\phi^{u,a}_r-\phi^{u,z}_r\|_{\mathbb{D}_u^{k^*,p^*}}&=\Big\|\int_0^1(a-z)J^{u,\theta'' a+(1-\theta'')z}_r\,d\theta''\Big\|_{\mathbb{D}_u^{k^*,p^*}}
    \\
    &\leq |a-z|\int_0^1\big\|J^{u,\theta'' a+(1-\theta'')z}_r\big\|_{\mathbb{D}_u^{k^*,p^*}}\,d\theta''.\label{eq:onestep2.5}
\end{equs}
By \cref{lem:Jacobi-Malliavin-Derivatives} for each $\theta''$, one has 
$\big\|J^{u,\theta'' a+(1-\theta'')z}_r\big\|_{\mathbb{D}_u^{k^*,p^*}}\leq \Xi'$ almost surely and therefore
\begin{equ}\label{eq:onestep3}
    \|\cY\|_{\mathbb{D}_u^{k^*,p^*}}\leq (|a-z|+|b-w|)\Xi'
\end{equ}
almost surely (the nullset may depend on $a,b,z,w$ but that doesn't bother us).
By \cref{lem:IBP-with-cutoff}, \eqref{eq:onestep1}, \eqref{eq:onestep2}, \eqref{eq:onestep3} we get
\begin{equ}
\big| \tilde B^{1,(a,b,z,w)}_{u,r}\partial_{i_1}\partial_{i_2}g\big|\leq\|g\|_{L^\infty}|r-u|^{-3H}(|a-z|+|b-w|)\Xi'
\end{equ}
almost surely for all $a,b,z,w$. By continuity, this in fact holds for all $a,b,z,w$ simultaneously almost surely, and therefore
\begin{equs}
\big\| \big(&\tilde B^{1,(a,b,z,w)}_{u,r}\partial_{i_1}\partial_{i_2}g\big)|_{(a,b,z,w)=(\phi^{s,X_s}_u,\phi^{s,Y_s}_u,X_u,Y_u)}\big\|_{L^p(\Omega)}
\\&\leq\|g\|_{L^\infty}|r-u|^{-3H}\big\|(|\phi^{s,X_s}_u-X_u|+|\phi^{s,Y_s}_u-Y_u|)\Xi'\big\|_{L^p(\Omega)}
\\
&\lesssim\|g\|_{L^\infty}|r-u|^{-3H}|s-u|^{1+\alpha H}
\end{equs}
using Cauchy-Schwarz inequality and \cref{Prop:sta-ini} to get the last line.

A much simpler bound can be obtained by simply writing
\begin{equ}
   \big| \tilde B^{1,(a,b,z,w)}_{u,r}g\big|\leq\|\nabla g\|_{L^\infty} \|\cY\|_{\mathbb{D}^{0,1}_u}\leq\|\nabla g\|_{L^\infty} (|a-z|+|b-w|)\Xi',
\end{equ}
which, by the same argument as before, yields
\begin{equs}
\big\| \big(\tilde B^{1,(a,b,z,w)}_{u,r}g\big)|_{(a,b,z,w)=(\phi^{s,X_s}_u,\phi^{s,Y_s}_u,X_u,Y_u)}\big\|_{L^p(\Omega)}\lesssim \|\nabla g\|_{L^\infty}|s-u|^{1+\alpha H}.
\end{equs}
Interpolation, i.e. \cref{lem:interpolation} with $Tf=\big(\tilde B^{1,(a,b,z,w)}_{u,r}g\big)|_{(a,b,z,w)=(\phi^{s,X_s}_u,\phi^{s,Y_s}_u,X_u,Y_u)}$, $k=2$, $\ell=1$, $\Gamma_{-2}=C|r-u|^{-3H}|s-u|^{1+\alpha H}$, $\Gamma_1=C|s-u|^{1+\alpha H}$, $\beta=\gamma$, yields
\begin{equs}
\big\| \big(\tilde B^{1,(a,b,z,w)}_{u,r}f\big)|_{(a,b,z,w)=(\phi^{s,X_s}_u,\phi^{s,Y_s}_u,X_u,Y_u)}\big\|_{L^p(\Omega)}\lesssim \|f\|_{\cC^{\gamma}}|r-u|^{(\gamma-1)H}|s-u|^{1+\alpha H}.
\end{equs}
Since $(\gamma-1)H>-1/2-H>-1$, $|r-u|^{(\gamma-1)H}$ is integrable in $r$, and we get
\begin{equ}
    \int_{u}^t\big\| \big(\tilde B^{1,(a,b,z,w)}_{u,r}f\big)|_{(a,b,z,w)=(\phi^{s,X_s}_u,\phi^{s,Y_s}_u,X_u,Y_u)}\big\|_{L^p(\Omega)}\,dr\lesssim
    \|f\|_{\cC^{\alpha-1}}|t-s|^{2+(\gamma+\alpha-1)H}.
\end{equ}

Moving on to the next term $\tilde B^{2,(a,b,z,w)}_{u,r}f$, observe  that 
\begin{equ}
 (\mathbb{I}_{u,r}^{(a,b)}-\mathbb{I}_{u,r}^{(z,w)})\tilde{\mathbb{
I}}_{u,u,r}^{(a,b,z,w)}
=  (\mathbb{I}_{u,r}^{(a,b)}-\mathbb{I}_{u,r}^{(z,w)})\tilde{\mathbb{
I}}_{u,u,r}^{(a,b,z,w)}\chi_{6\rho}\big(\phi_r^{u,z}-\phi_r^{u,w}\big).
\end{equ}
Indeed, on the set where $\chi_{6\rho}\big(\phi_r^{u,z}-\phi_r^{u,w}\big)$ is not $1$, that is, $|\phi_r^{u,z}-\phi_r^{u,w}|>6\rho$, at least one of $|\phi_r^{u,a}-\phi_r^{u,z}|$, $|\phi_r^{u,b}-\phi_r^{u,w}|$, or $|\phi_r^{u,a}-\phi_r^{u,b}|$ is bigger than $2\rho$, and thus $ (\mathbb{I}_{u,r}^{(a,b)}-\mathbb{I}_{u,r}^{(z,w)})\tilde{\mathbb{
I}}_{u,u,r}^{(a,b,z,w)}=0$.
With this observation, we can rewrite $\tilde B^{2,(a,b,z,w)}_{u,r}f$ as
\begin{equs}
    \tilde B^{2,(a,b,z,w)}_{u,r}f&=\int_0^1\E_u\Big(\bbI_{u,u,r}^{(a,b,z,w)}\chi_{6\rho}\big(\phi_r^{u,z}-\phi_r^{u,w}\big)f(\psi_r^{u,(z,w)})
    \\
    &\qquad\times \nabla\chi_\rho\big(\theta'\phi^{u,a}_r-\theta'\phi^{u,b}_r+(1-\theta')\phi^{u,z}_r-(1-\theta')\phi^{u,w}_r\big)
    \\
    &\qquad\times \big(\phi^{u,a}_r-\phi^{u,b}_r-\phi^{u,z}_r+\phi^{u,w}_r\big)
    \Big)\,d\theta'.
\end{equs}
To estimate the integrand, we follow the previous steps. First we replace $f$ by $\partial_{i_1}\partial_{i_2}g$ and
use \cref{lem:IBP-with-cutoff} once again, now with the choices
\begin{equ}
    k=2,\quad h=g,\quad a=6\rho,\quad    \cX=\psi_r^{u,(z,w)};
    \end{equ}
    \begin{equ}
       \cY= \partial_j\chi_\rho\big(\theta'\phi^{u,a}_r-\theta'\phi^{u,b}_r+(1-\theta')\phi^{u,z}_r-(1-\theta')\phi^{u,w}_r\big)
\big(\phi^{u,a}_r-\phi^{u,b}_r-\phi^{u,z}_r+\phi^{u,w}_r\big);
\end{equ}
\begin{equ}
d_1=3d,\,\,\chi'(x_1,x_2,x_3)=\chi_{1/6}(x_1)\chi_{1/6}(x_2)\chi(x_3),\,\, \cZ=(\phi^{u,a}_r-\phi^{u,z}_r,\phi^{u,b}_r-\phi^{u,w}_r,\phi^{u,z}_r-\phi^{u,w}_r).
\end{equ}
We have already seen in the treatment of $A_{s,t}$, choosing $\Gamma$ and $\mu$ analogously to therein, one has
\begin{equ}\label{eq:B2onestep1}
    \mu\lesssim |r-u|^{-2H}\Xi'
\end{equ}
and
\begin{equ}\label{eq:B2onestep2}
\max(1,\|\cZ\|_{\bD_s^{k^*,p^*}})^{k^*},\,\max(1,\mu\|D_{\cH_s^\perp}\cX\|_{\bD_s^{k^*,p^*}}^2)^{k^*}
 \leq \Xi'.
\end{equ}
As for the bound on $\cY$, it follows from \cref{lem:bound_controlled_flow} and \cref{lem:flow-D-with-power} that
\begin{equ}
  \big\|\partial_j\chi_\rho\big(\theta'\phi^{u,a}_r-\theta'\phi^{u,b}_r+(1-\theta')\phi^{u,z}_r-(1-\theta')\phi^{u,w}_r\big)\big\||_{\mathbb{D}_u^{k^*,q}}  \leq \Xi'(q).
\end{equ}
Combined with \eqref{eq:onestep2.5}, this yields
\begin{equ}
    \|\cY\|_{\mathbb{D}_u^{k^*,p^*}}\leq (|a-z|+|b-w|)\Xi'
\end{equ}
for all $q\in[1,\infty)$.
So in fact we are in a very similar setting as before, but with even one less derivative hitting $f$. Repeating the previous arguments therefore yields
\begin{equ}
    \int_{u}^t\big\| \big(\tilde B^{2,(a,b,z,w)}_{u,r}f\big)|_{(a,b,z,w)=(\phi^{s,X_s}_u,\phi^{s,Y_s}_u,X_u,Y_u)}\big\|_{L^p(\Omega)}\,dr\lesssim
    \|f\|_{\cC^{\gamma}}|t-s|^{2+(\alpha+\gamma) H}.
\end{equ}
At this point the only remaining term that is left to bound is
\begin{equs}
    \hat B^{(a,b,z,w)}_{u,r}f&= \E_u^B\Big(\mathbb{I}_{u,r}^{(a,b)}f(\psi_{r}^{u,(a,b)})\big(1-\tilde{\mathbb{
I}}_{u,u,r}^{(a,b,z,w)}\big)\Big)-
\E_u^B\Big(\mathbb{I}_{u,r}^{(z,w)}f(\psi_{r}^{u,(z,w)})\big(1-\tilde{\mathbb{
I}}_{u,u,r}^{(a,b,z,w)}\big)\Big).
%\\&-\int_u^t \E\Big(\E\big(\mathbb{I}_{u,r}^{(z,w)}f(\psi_{r}^{u,(z,w)})\big(1-\tilde{\mathbb{I}}_{s,u,r}\big)\big|_{(a,b,z,w)=(\phi_u^{s,x},\phi_u^{s,y},X_u^1,X_u^2)}\big)\big|_{(x,y)=(X_s^1,X_s^2)}\Big)dr
\end{equs}
The two terms are completely identical under the relabeling $a\leftrightarrow z$, $b\leftrightarrow w$, so we bound using the notation of the first one. Rather unsurprisingly, we use \cref{lem:IBP-with-cutoff} once again. Take $f=\partial_{i_1}\partial_{i_2}g$ and make the choices
\begin{equ}
    k=2,\quad h=g,\quad a=\rho, \quad d_1=d,\quad\chi'=\chi,\quad \cZ=\bar \phi^{s,(a,b)}_r,\quad \cX=\psi^{s,(a,b)}_r,
\end{equ}
\begin{equ}
    \cY=1-\chi_\rho(\phi_r^{u,a}-\phi_r^{u,z})\chi_\rho(\phi_r^{u,b}-\phi_r^{u,w}).
\end{equ}
This is in fact almost the same setup as in the treatment of $A_{s,t}$,
the only difference is the choice of $\cY$. % To estimate the Malliavin norm of this new $\cY$, it clearly suffices to bound the Malliavin norm of $\chi_\rho(\phi_r^{u,a}-\phi_r^{u,z})$ with certain higher integrability exponents.
Note that since $1-\chi_\rho$ is smooth and bounded and is $0$ on a neighborhood of the origin, one has $|1-\chi_\rho(x)\chi(y)|\lesssim |x|+|y|$, and so by the chain rule
 there exists $q^*$ such that
\begin{equs}
    \|\cY\|_{\mathbb{D}_u^{k^*,p^*}}&\lesssim\max(1,\|\phi_r^{u,a}-\phi_r^{u,z}\|_{\mathbb{D}_u^{k^*,q^*}}^{q^*},\|\phi_r^{u,b}-\phi_r^{u,w}\|_{\mathbb{D}_u^{k^*,q^*}}^{q^*}\big)
    \\
    &\qquad\times\big(\|\phi_r^{u,a}-\phi_r^{u,z}\|_{\mathbb{D}_u^{k^*,q^*}}+\|\phi_r^{u,b}-\phi_r^{u,w}\|_{\mathbb{D}_u^{k^*,q^*}}\big)
    \\
    &\leq (|a-z|+|b-w|)\Xi',
\end{equs}
using \eqref{eq:onestep2.5} in the last inequality.
Therefore in the application of \cref{lem:IBP-with-cutoff} we have precisely the same bounds as for the term $\tilde B^{2,(a,b,z,w)}_{u,r}$ before.
Hence, just as for that term, we can conclude
\begin{equ}
    \int_{u}^t\big\| \big(\hat B^{(a,b,z,w)}_{u,r}f\big)|_{(a,b,z,w)=(\phi^{s,X_s}_u,\phi^{s,Y_s}_u,X_u,Y_u)}\big\|_{L^p(\Omega)}\,dr\lesssim
    \|f\|_{\cC^{\gamma}}|t-s|^{2+(\alpha+\gamma) H}.
\end{equ}
We have bounded all components of $\E_s\delta A_{s,u,t}$. We can conclude
\begin{equ}
    \|\E_s\delta A_{s,u,t}\|_{L^p(\Omega)}\lesssim \|f\|_{\cC^{\gamma}}|t-s|^{2+(\alpha+\gamma-1) H}.
\end{equ}
From the assumptions $\alpha>1-1/(2H)$ and $\gamma>-1/(2H)$ it follows that the exponent is larger than $1$, and so
the condition \cref{lem:SSL-vanila} (ii)
is satisfied with $\beta_2=2+(\alpha+\gamma-1) H$ and $\Gamma_2=C\|f\|_{\cC^{\gamma}}$.

We now claim that with the process $J$ playing the role of $\cA$, the inequalities \cref{lem:SSL-vanila} (I)-(II) are satisfied. Note that this concludes the proof, since \cref{lem:SSL-vanila} (III) is precisely the claim of \cref{thm:L}. 

The inequality \cref{lem:SSL-vanila} (I) trivially holds with $K_1=\|f\|_{L^\infty}$. As for \cref{lem:SSL-vanila} (II), we have
\begin{equs}
    \big\|&\E_s \big(J_t-J_s-A_{s,t})\|_{L^p(\Omega)}
    \\&=\Big\|\E_s\int_s^t \chi_{\rho}(X_r-Y_r)f\big(\theta X_r+(1-\theta)Y_r\big)
    \\&\qquad-\chi_{\rho}(\phi_r^{s,X_s}-\phi_r^{s,Y_s})f(\theta \phi_r^{s,X_s}+(1-\theta) \phi_r^{s,Y_s})\,dr\Big\|_{L^p(\Omega)}
    \\
    &\lesssim (1+\|f\|_{\cC^1})|t-s|\sup_{r\in[s,t]}\big(\|X_r-\phi^{s,X_s}_t\|_{L^p(\Omega)}+\|Y_r-\phi^{s,Y_s}_t\|_{L^p(\Omega)}\big)
    \\
    &\lesssim (1+\|f\|_{\cC^1})|t-s|^{2+\alpha H}.
\end{equs}
Therefore inequality \cref{lem:SSL-vanila} (II) also holds with $K_2=C(1+\|f\|_{\cC^1})$. The proof is finished.
\end{proof}

\section{The iterated integral and the joint rough path}
We now move on to the construction of (a truncated version of) the rough path lift above $(L,B)$.
For this, we construct the integral of $B$ against $L$ anticipated in the introduction \eqref{eq:Q-intro}.
Note that by imposing geometricity, this also gives a meaning to the integral of $L$ against $B$ (which is therefore an integral against $B$ that is neither rough nor Young).
\begin{theorem}\label{thm:K}
Let \cref{asn:b} and \cref{asn:sigma} hold. Let $\chi\in\cut_d$, $p\in[1,\infty)$, $\theta\in[0,1]$, $\ell\in\{1,\ldots,d\}$ and let $X$ and $Y$ be two solutions to \eqref{eq:main}. For $f\in \cC^1$ define the two-parameter process
\begin{equ}\label{eq:K}
K_{s,t}=\int_s^t(B_r^\ell-B_s^\ell)\Big(\chi_\rho(X_r-Y_r)f\big(\theta X_r+(1-\theta)Y_r\big)\Big)\,dr.
\end{equ}
Then there exists a constant $C=C(\alpha,p,d,\lambda,H,\|\sigma\|_{\cC^5},C^X_{D, p}, C^X_{S, 2p}, C^Y_{D,p}, C^Y_{S, 2p})$ such that
\begin{equ}\label{eq:K-bound}
\|K_{s,t}\|_{L^p(\Omega)}\leq N\|f\|_{\cC^{\alpha-1}}|t-s|^{1+\alpha H}.
\end{equ}
\end{theorem}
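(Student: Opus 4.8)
The plan is to apply the stochastic sewing lemma (\cref{lem:SSL-vanila}) to the two-parameter process $K_{s,t}$, exactly mirroring the strategy of the proof of \cref{thm:L}. Concretely, I would set
\begin{equ}
\mathcal{A}_{s,t}=\E_s\int_s^t(B_r^\ell-B_s^\ell)\,\chi_\rho(\phi_r^{s,X_s}-\phi_r^{s,Y_s})\,f\big(\theta\phi_r^{s,X_s}+(1-\theta)\phi_r^{s,Y_s}\big)\,dr,
\end{equ}
using the flow $\phi$ of the driftless equation as the approximation of $X,Y$, just as in \eqref{eq:L-A-def}. The point is that $K_{s,t}$ itself is the candidate for the sewn process in the sense of the two-parameter sewing: since $K_{s,u}+K_{u,t}-K_{s,t}=-(B_u^\ell-B_s^\ell)(J_t-J_u)$ where $J$ is the process from \cref{thm:L}, $K$ is not additive, but the failure of additivity is an honest stochastic object that fits into the sewing framework once one checks conditions (i)–(ii) for the germ $\mathcal{A}$ and conditions (I)–(II) relating $K$ to $\mathcal{A}$.

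For condition (i) of \cref{lem:SSL-vanila}, I would bound $\|\mathcal{A}_{s,t}\|_{L^p}$ by first conditioning, taking out $(B_r^\ell-B_s^\ell)$ inside a further conditional expectation, and applying H\"older in $\Omega$ to separate the Gaussian increment $B^\ell_{r}-B^\ell_s$ (whose $L^{2p}$ norm is $\lesssim|r-s|^{H}$) from the factor estimated exactly as in \cref{thm:L}: namely, by \cref{lem:IBP-with-cutoff} with $\cX=\psi_r^{s,(X_s,Y_s)}$, the nondegeneracy coming from \eqref{eq:convex-combination-nondegenerate} and \cref{cor:interpolation}, and the interpolation \cref{lem:interpolation} to trade the $\|g\|_{L^\infty}|r-s|^{-2H}$ bound for smooth $f$ against the $\|f\|_{\cC^0}$ bound, producing $\|f\|_{\cC^{\alpha-1}}|r-s|^{(\alpha-1)H}$. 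Multiplying by the Gaussian factor $|r-s|^H$ and integrating in $r$ over $[s,t]$ gives $\|\mathcal{A}_{s,t}\|_{L^p}\lesssim\|f\|_{\cC^{\alpha-1}}|t-s|^{1+\alpha H}$, so condition (i) holds with $\beta_1=1+\alpha H>1/2$ and $\Gamma_1=C\|f\|_{\cC^{\alpha-1}}$.

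For condition (ii), I would compute $\delta\mathcal{A}_{s,u,t}$ and split it as in \cref{thm:L}. There is one new algebraic wrinkle: the increment $B^\ell_r-B^\ell_s$ appearing in $\mathcal{A}_{s,\cdot}$ versus $B^\ell_r-B^\ell_u$ in $\mathcal{A}_{u,\cdot}$ differs by the $\cF_u$-measurable (hence, after $\E_s$, still present) quantity $B^\ell_u-B^\ell_s$, which has size $|u-s|^H$. So $\delta\mathcal{A}_{s,u,t}$ decomposes into a term of the shape $(B^\ell_u-B^\ell_s)\times(\text{the }\delta A\text{-type expression from }\cref{thm:L})$ plus a term of the shape $(B^\ell_r-B^\ell_u)\times(\text{the difference }C^{(a,b,z,w)}_{u,r}\text{ from }\cref{thm:L})$. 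The first contributes $|u-s|^H\cdot\|f\|_{\cC^{\alpha-1}}|t-s|^{1+(2\alpha-1)H}$ (using the bound on $\E_s\delta A$ derived in \cref{thm:L}), hence an exponent $1+(2\alpha-1)H+H=1+2\alpha H>1$; the second is handled by repeating verbatim the $\tilde B^1,\tilde B^2,\hat B$ analysis of \cref{thm:L}, now carrying an extra Gaussian factor $|r-u|^H$, which only improves the exponent. One must still run the conditional-Jensen/``taking out what is known'' step to reduce from $\E_s$ to $\E_u$ before substituting $(a,b,z,w)=(\phi^{s,X_s}_u,\phi^{s,Y_s}_u,X_u,Y_u)$, and use \cref{Prop:sta-ini} for the $|\phi^{s,X_s}_u-X_u|$ gains. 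The upshot is condition (ii) with $\beta_2>1$ and $\Gamma_2=C\|f\|_{\cC^{\alpha-1}}$.

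Finally I would verify the sewing conclusions (I)–(II) with $K$ in place of $\mathcal{A}$. For (II) one estimates $\E_s(K_t-K_s-\mathcal{A}_{s,t})$ — wait, more precisely $\E_s$ of the difference of $K_{s,t}$ and $\mathcal{A}_{s,t}$ — by replacing $(X_r,Y_r)$ by $(\phi_r^{s,X_s},\phi_r^{s,Y_s})$ inside the $r$-integral, using $\|X_r-\phi_r^{s,X_s}\|_{L^p}\lesssim|r-s|^{1+\alpha H}$ from \cref{Prop:sta-ini}, Lipschitzness of $\chi_\rho f$ for smooth $f$ (and then interpolating back), together with the Gaussian factor $|r-s|^H$; this yields a bound of order $|t-s|^{2+\alpha H+H}$, comfortably of order $|t-s|^{\beta_2}$. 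For (I) one uses $|K_{s,t}|\le\|f\|_{\cC^0}\int_s^t|B^\ell_r-B^\ell_s|\,dr$, bounded in $L^p$ by $\|f\|_{\cC^0}|t-s|^{1+H}$, and then interpolates with the previous smooth-$f$ estimates. Since by the uniqueness part of \cref{lem:SSL-vanila} the sewn process equals $K$, the bound (III) is precisely \eqref{eq:K-bound}. The main obstacle I anticipate is purely bookkeeping: carefully tracking where the extra $B^\ell$-increment lands in $\delta\mathcal{A}_{s,u,t}$ and which of the two resulting pieces it decorates, so that the exponents of $|t-s|$ and $|u-s|$ are correctly tallied and remain strictly above $1$; the analytic heart (Malliavin integration by parts, nondegeneracy via the cutoff, interpolation) is entirely inherited from \cref{thm:L} and \cref{lem:IBP-with-cutoff}.
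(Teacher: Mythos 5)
Your overall strategy (stochastic sewing with the driftless flow as the approximation, inheriting the Malliavin/cutoff machinery of \cref{thm:L}) is the right one, but your choice of germ introduces a genuine gap. You propose $\mathcal{A}_{s,t}=\E_s\int_s^t(B_r^\ell-B_s^\ell)\,g_r^{(s)}\,dr$ with the Gaussian increment pinned to the \emph{moving} left endpoint $s$, where $g_r^{(s)}$ abbreviates the flow-based integrand. Computing $\E_s\delta\mathcal{A}_{s,u,t}$ by splitting $B_r^\ell-B_s^\ell=(B_r^\ell-B_u^\ell)+(B_u^\ell-B_s^\ell)$ gives
\begin{equ}
\E_s\delta\mathcal{A}_{s,u,t}=\E_s\int_u^t(B_r^\ell-B_u^\ell)\big(g_r^{(s)}-g_r^{(u)}\big)\,dr
+\E_s\Big[(B_u^\ell-B_s^\ell)\int_u^t g_r^{(s)}\,dr\Big].
\end{equ}
The first piece is what you correctly identify as the $C^{(a,b,z,w)}_{u,r}$ term decorated by $|r-u|^H$, and it behaves well. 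The second piece, however, is \emph{not} $(B_u^\ell-B_s^\ell)\times\delta A^{\text{Thm L}}_{s,u,t}$ as you assert: the inner integral carries only $g_r^{(s)}$, not the difference $g_r^{(s)}-g_r^{(u)}$, and since $B_u^\ell-B_s^\ell$ is $\cF_u$-measurable but not $\cF_s$-measurable it cannot be pulled out of $\E_s$. Its size is $\lesssim\|f\|_{\cC^{\alpha-1}}\,|u-s|^{H}\,|t-u|^{1+(\alpha-1)H}$, which for $s,u,t$ comparably spaced is of order $|t-s|^{1+\alpha H}$. For distributional drift ($\alpha<0$) this exponent is strictly less than $1$, so condition (ii) of \cref{lem:SSL-vanila} fails and the sewing does not close. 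A separate but related issue is your plan to verify (I)--(II) ``with $K$ in place of $\cA$'': $K$ is a two-parameter object that is not additive, so it cannot be the sewn process of \cref{lem:SSL-vanila}.

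The paper's proof sidesteps both problems with one move: it fixes $(S,T)\in[0,1]^2_\le$ from the start and uses the germ $A_{s,t}=\E_s\int_s^t(B_r^\ell-B_S^\ell)\,g_r^{(s)}\,dr$ with the Gaussian increment pinned to the \emph{fixed} endpoint $S$. Then the candidate sewn process is the honest one-parameter path $t\mapsto K_{S,t}$ on $[S,T]$, and because $B_r^\ell-B_S^\ell$ is the same in the $s$-germ and the $u$-germ, the cross-term vanishes identically and $\E_s\delta A_{s,u,t}=\E_s\int_u^t(B_r^\ell-B_S^\ell)(g_r^{(s)}-g_r^{(u)})\,dr$. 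In the integration-by-parts step, $B_r^\ell-B_S^\ell$ is absorbed into the factor $\cY$ of \cref{lem:IBP-with-cutoff}, contributing a uniform extra factor $|T-S|^H$ to both $\Gamma_1$ and $\Gamma_2$; evaluating the output of \cref{lem:SSL-vanila} (III) at $t=T$ gives exactly \eqref{eq:K-bound}. You would need to replace your germ by this fixed-endpoint one for the argument to go through.
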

Comparing \eqref{eq:J} and \eqref{eq:K}, note that $K_{s,t}=\int_{s}^t(B_u^\ell-B_s^\ell)\,dJ_u$.
Before we move on with the proof, the following corollary is immediate.
\begin{corollary}\label{cor:K}
    Let \cref{asn:b} and \cref{asn:sigma} hold. Let $\chi\in\cut_d$, $p\in[1,\infty)$, and let $X$ and $Y$ be two solutions to \eqref{eq:main}.  Then the map $f\mapsto\cK f$ defined for $f\in\cC^2(\R^d; \R^d)$ by   
\begin{equ}
(\cK f)_{s,t}=\int_0^1\int_s^t(B_r-B_s)\otimes\Big(\chi_\rho(X_r-Y_r)\nabla f\big(\theta X_r+(1-\theta)Y_r\big)\Big)\,dr\,d\theta
\end{equ}
  extends as a continuous linear map from $\cC^{\alpha+}(\R^d; \R^d)$ to $\cC^{1+\alpha H}_{2}([0,1]; L_p(\Omega ; \R^{d_0} \otimes \R^{d\times d}))$, whose norm depends only on $\alpha,p,d,d_1,\lambda,H,\|\sigma\|_{\cC^5},C^X_{D, p}, C^X_{S, 2p}, C^Y_{D,p}, C^Y_{S, 2p}$. Moreover, one has the identity
    \begin{equ}\label{eq:chen-ver1}
        \delta (\cK f)_{s,u,t}=B_{s,u}\otimes(\cJ f)_{u,t}.
    \end{equ}
    
    \end{corollary}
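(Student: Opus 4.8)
The proof follows that of \cref{thm:L} almost line by line, the only structural change being an extra factor $B_r^\ell-B_s^\ell$ carried through all the estimates. I would apply the stochastic sewing lemma (\cref{lem:SSL-vanila}) with the germ
\begin{equ}
A_{s,t}=\E_s\int_s^t(B_r^\ell-B_s^\ell)\,\chi_\rho(\phi_r^{s,X_s}-\phi_r^{s,Y_s})\,f\big(\theta\phi_r^{s,X_s}+(1-\theta)\phi_r^{s,Y_s}\big)\,dr ,
\end{equ}
i.e. the flow-version germ of \cref{thm:L} with $(B_r^\ell-B_s^\ell)$ kept inside. As in \cref{thm:L}, one freezes $(X_s,Y_s)$, uses \cref{lem:continuous-version} to pass to $\E^B_s$ and to reduce to $(x,y)\in\Q^{2d}$, and the whole task becomes to estimate $\E^B_s\big((B_r^\ell-B_s^\ell)\mathbb I_{s,r}^{(x,y)}f(\psi_r^{s,(x,y)})\big)$ in terms of $\|f\|_{\cC^{\alpha-1}}$ and $\|f\|_{\cC^0}$. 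The identity $\delta K_{s,u,t}=(B_u^\ell-B_s^\ell)J_{u,t}$ (hence, after integrating in $\theta$ and using \cref{cor:L,cor:K}, the claimed \eqref{eq:chen-ver1}) is immediate from \eqref{eq:K} and \eqref{eq:J}.

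\textbf{The new input: the Gaussian factor in the integration by parts.} Decompose $B_r^\ell-B_s^\ell=\widetilde B^{s,\ell}_r+(\overline B^{s,\ell}_r-\overline B^{s,\ell}_s)$ as in \eqref{eq:def_tilde_bar}. The second term is $\cF^B_s$-measurable, equals $\E^B_s(B_r^\ell-B_s^\ell)$ and has $L^q$-norm $\lesssim|r-s|^H$ by conditional Jensen; it factors out of $\E^B_s$ and reproduces exactly the quantity bounded in \cref{thm:L}, times this extra $|r-s|^H$. For the first term I would feed $\widetilde B^{s,\ell}_r$ into the $\cY$-slot of \cref{lem:IBP-with-cutoff}, with the same choices as in the $A_{s,t}$-part of \cref{thm:L} ($\cX=\psi_r^{s,(x,y)}$, $\cZ=\bar\phi_r^{s,(x,y)}$, $\chi'=\chi$, $a=\rho$, $f=\partial_{i_1}\partial_{i_2}g$). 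The point is that $\widetilde B^{s,\ell}_r=B\big(\Pi_{\cH_s^\perp}(e_\ell\bone_{[0,r]})\big)$ has a \emph{deterministic} first Malliavin derivative of $\cH$-norm $\lesssim|r-s|^H$ (by \cref{lem:representation_projection} and the estimate $\|\widetilde B^{s,1}_r\|_{L_2}=Q_s(r,r)^{1/2}\lesssim|r-s|^H$), vanishing higher derivatives, and is independent of $\cF^B_s$; hence $\|\widetilde B^{s,\ell}_r\|_{\bD_s^{k^*,p^*}}\lesssim|r-s|^H$ almost surely. The factors $\mu$, $\cZ$, $D_{\cH_s^\perp}\cX$ are bounded exactly as in \cref{thm:L} via \cref{lem:bound_controlled_flow,lem:flow-D-with-power,lem:bounds_for_J_sigma(phi),cor:interpolation}, so \cref{lem:IBP-with-cutoff} gives a bound $\lesssim\|g\|_{L^\infty}|r-s|^{-H}$ — exactly one power $|r-s|^H$ better than the corresponding bound in \cref{thm:L}. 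Interpolating (via \cref{lem:interpolation}) with the trivial bound $\lesssim\|f\|_{\cC^0}|r-s|^H$ yields $\lesssim\|f\|_{\cC^{\alpha-1}}|r-s|^{\alpha H}$, and integrating in $r$ gives $\|A_{s,t}\|_{L^p}\lesssim\|f\|_{\cC^{\alpha-1}}|t-s|^{1+\alpha H}$, which is \cref{lem:SSL-vanila}(i) with $\beta_1=1+\alpha H>1/2$.

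\textbf{The $\E_s\delta A$ estimate.} For \cref{lem:SSL-vanila}(ii) I would repeat verbatim the long computation of $\E_s\delta A_{s,u,t}$ in \cref{thm:L}: rewrite using $\phi^{s,x}_r=\phi^{u,\phi^{s,x}_u}_r$, introduce the auxiliary cutoffs $\tilde{\mathbb I}$, split $C=C^1+C^2$ and then into $\tilde B^1,\tilde B^2,\hat B$, and apply \cref{lem:IBP-with-cutoff} to each piece. Each term now carries the extra $(B_r^\ell-B_s^\ell)$ (or $(B_r^\ell-B_u^\ell)$) factor, handled by the same $\widetilde B+\overline B$ split as above, so that every estimate gains an extra $|r-u|^H$ or $|r-s|^H$ relative to \cref{thm:L}; combined with \cref{Prop:sta-ini} this keeps all exponents strictly above $1$ under $\alpha>1-1/(2H)$. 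The one genuinely new contribution comes from the mismatch of base points — $A_{s,\cdot}$ carries $B_r^\ell-B_s^\ell$ while $A_{u,\cdot}$ carries $B_r^\ell-B_u^\ell$ — which produces a term essentially of the form $(B_u^\ell-B_s^\ell)$ times a $J$-germ on $[u,t]$; since $B_u^\ell-B_s^\ell=\widetilde B^{s,\ell}_u+\E^B_s(B_u^\ell-B_s^\ell)$ with $\widetilde B^{s,\ell}_u$ independent of $\cF_s$, this is controlled using the $\cF_s$-conditional centering together with \cref{thm:L} and \cref{Prop:sta-ini}. This delivers \cref{lem:SSL-vanila}(ii) with $\Gamma_2=C\|f\|_{\cC^{\alpha-1}}$ and some $\beta_2>1$. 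Finally, exactly as in the closing paragraph of the proof of \cref{thm:L}, one checks that the process $K$ realizes the sewing output: \cref{lem:SSL-vanila}(I) holds trivially because $\|K_{s,t}\|_{L^p}\lesssim\|f\|_{\cC^0}|t-s|^{1+H}\lesssim\|f\|_{\cC^0}|t-s|^{1+\alpha H}$, and \cref{lem:SSL-vanila}(II) follows from the $\widetilde B+\overline B$ split plus \cref{Prop:sta-ini} as in the last display there; \eqref{eq:K-bound} is then \cref{lem:SSL-vanila}(III).

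\textbf{Expected main obstacle.} The delicate point is the $\E_s\delta A_{s,u,t}$ bound, and within it the base-point-mismatch term described above: unlike every other term it is not a higher-order ``flow $\approx$ solution'' error but carries a full-strength increment of $J$, so squeezing its $\cF_s$-conditional expectation down to an exponent $>1$ genuinely requires the independence of $B_\cdot-\E^B_sB_\cdot$ from $\cF_s$ and the conditional nondegeneracy estimates of \cref{sec:partial-malliavin} (in particular \cref{cor:interpolation}). Everything else is a bookkeeping repetition of \cref{thm:L} with one extra $|r-\cdot|^H$ inserted everywhere.
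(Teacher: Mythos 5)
You reprove \cref{thm:K} (the paper's own proof of \cref{cor:K} is just: apply \eqref{eq:K-bound} coordinate-wise, integrate in $\theta$, and pass \eqref{eq:chen-ver1} to the limit by density). Your germ, however, differs from the paper's \eqref{eq:K-A-def} in one decisive respect: in the paper's germ the Brownian increment is anchored at the \emph{fixed} left endpoint $S$ of the sewing interval $[S,T]$ --- the factor is $B_r^\ell-B_S^\ell$ --- whereas yours has the moving anchor $B_r^\ell-B_s^\ell$. With the fixed anchor, the extra factor is identical across all three terms of $\delta A_{s,u,t}$: there is no base-point mismatch at all, one simply gains a uniform $|T-S|^H$ in $\Gamma_1,\Gamma_2$ while keeping the exponents $\beta_1,\beta_2$ from \cref{thm:L} unchanged, and the Riemann sums telescope to $K_{S,\cdot}$, so \cref{lem:SSL-vanila}\,(III) applied at $s=S$, $t=T$ gives \eqref{eq:K-bound} directly.

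Your moving-anchor germ instead produces the mismatch term $\E_s\big[(B_u^\ell-B_s^\ell)\int_u^t G_s(r)\,dr\big]$ in $\E_s\delta A_{s,u,t}$, with $G_s(r)$ the flow-frozen integrand, and your proposed remedy does not resolve it. Writing $B_u^\ell-B_s^\ell=\E^B_s(B_u^\ell-B_s^\ell)+\widetilde{B}^{s,\ell}_u$, the conditional-centering and partial-Malliavin machinery of \cref{sec:partial-malliavin} only bites on the $\widetilde{B}^{s,\ell}_u$ piece. The other piece, $\E^B_s(B_u^\ell-B_s^\ell)$, is $\cF^B_s$-measurable, factors straight out of $\E_s$, and Cauchy--Schwarz with the \cref{thm:L}-type bound on $\E_s\int_u^t G_s(r)\,dr$ gives at best $\|f\|_{\cC^{\alpha-1}}\,|u-s|^H\,|t-s|^{1+(\alpha-1)H}\lesssim \|f\|_{\cC^{\alpha-1}}|t-s|^{1+\alpha H}$; when $\alpha<0$ the exponent $1+\alpha H$ is strictly below the stochastic sewing threshold $\beta_2>1$, and there is no integration by parts available against an $\cF^B_s$-measurable Gaussian scalar. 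So \cref{lem:SSL-vanila}\,(ii) fails for your germ precisely in the distributional regime the paper targets. (A secondary symptom of the same problem: with the moving anchor your germ's Riemann sums no longer telescope to $K_{S,t}$, so the identification of the sewn output with $K$ in your last paragraph would also need a fresh argument.) The cure is the paper's fixed anchor; the remaining ingredients of your sketch --- the $L_q$ and one-Malliavin-derivative bounds on $B^\ell_r-B^\ell_S$, the interpolation via \cref{lem:interpolation}, and the reduction of \cref{cor:K} to \cref{thm:K} --- are all sound.
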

    \begin{proof}
        Apply \eqref{eq:K-bound} for each coordinate of $B$ and with each coordinate of $\nabla f $ in place of $f$ and then integrate in $\theta$. The identity \eqref{eq:chen-ver1} is elementary for $f \in\cC^2$ and therefore holds for all $f \in \cC^{\alpha+}$ by continuity.
    \end{proof}

\begin{proof}[Proof of \cref{thm:K}]
We fix some $S,T\in[0,1]_{\leq}^2$ and all other time points in the rest of the proof will be taken from $[S,T]$.
Define the germ
\begin{equ}\label{eq:K-A-def}
    A_{s,t}=\E_s\int_s^t(B_r^\ell-B_S^\ell)\Big(\chi_{\rho}(\phi_r^{s,X_s}-\phi_r^{s,Y_s})f(\theta \phi_r^{s,X_s}+(1-\theta) \phi_r^{s,Y_s})\Big)\,dr
\end{equ}
for $(s,t)\in[S,T]_{\leq}^2$. 
We aim to verify the conditions of \cref{lem:SSL-vanila}. The steps are very similar to the proof of \cref{thm:L}, with $\gamma=\alpha-1$.
In fact, there is only one difference in each of the steps:  in the application of \cref{lem:IBP-with-cutoff} one gains an extra factor of an increment of $B_r^\ell-B_S^\ell$ in the term $\cY$. To estimate the Malliavin norm of this extra factor, notice that clearly $D^k\cY=0$ for $k\geq 2$, while for any $q_1,q_2\geq 1$
\begin{equ}
    \big\|(\E_s|B_r^\ell-B_S^\ell|^{q_1})^{1/q_1}\big\|_{L^{q_2}(\Omega)}\leq |r-S|^{H}, \qquad \|D(B_r^\ell-B_S^\ell)\|_{\cH}\lesssim |r-S|^H.
\end{equ}
\iffalse
\begin{equs}                               \label{eq:mixed_A}
    \red{\text{Can we pu here the exact bounds for $\|A_{s,t}\|$}}
\end{equs}
\begin{equs}                               \label{eq:mixed_delta_A}
    \red{\text{Can we pu here the exact bounds for $\|\delta A_{s,u,t}\|$ so that O can refer to those later on?}}
\end{equs}
\fi
It follows that in the application of \cref{lem:SSL-vanila} one can gain an extra factor $|T-S|^H$ in both $\Gamma_1$ and $\Gamma_2$.
That is, one has
\begin{equ}\label{eq:estimate-A-for-K}
    \|A_{s,t}\|_{L^p(\Omega)}\lesssim \|f\|_{\cC^{\alpha-1}}|T-S|^{H}|t-s|^{1+(\alpha-1)H},
\end{equ}
\begin{equ}\label{eq:estimate-deltaA-for-K}
    \|\E_s\delta A_{s,u,t}\|_{L^p(\Omega)}\lesssim \|f\|_{\cC^{\alpha-1}}|T-S|^H|t-s|^{2+2\alpha H-2H}.
\end{equ}
It is also clear as before that the process $(\cA_t)_{t\in[S,T]}$ has to coincide with $(K_{S,t})_{t\in[S,T]}$, and therefore \cref{lem:SSL-vanila} (III) with the choice $s=S$, $t=T$, yields the bound, 
\begin{equ}
    \|K_{S,T}\|_{L^p(\Omega)}\lesssim \|f\|_{\cC^{\alpha-1}}|T-S|^H|T-S|^{1+(\alpha-1)H}
\end{equ}
as desired.
\end{proof}

The integral constructed above enables one to lift the pair $(B, L)$ to a rough path. Unlike a usual path, however, here the components have different regularities. The extension of the usual definitions is rather straightforward and is summarised below. As before, the distinction between $B$ as a path, $B$ as the first component of the rough path $\blue{B}$, and $B$ as a component of the mixed rough path above $(B, L)$ is crucial, so we distinguish mixed rough paths with a new color.

For the remainder of the section we fix finite dimensional Euclidean spaces $V,E,K$ and exponents  $\beta \in (1/2, 1)$, $\gamma \in (1/3, 1/2)$. For later applications one may keep in mind $\beta$ to stand for the regularity of $L$  and $\gamma$ to stand for the regularity of $B$.  More precisely, $\beta$ will be $1+(\alpha-1)H_+$ and $\gamma$ will be $H_-$ (see \eqref{eq:Choice of H-}). 

\begin{definition}[Rough paths of mixed regularity]      
 We denote by $\mathcal{R}^{\gamma, \beta}([0, 1]; V\times E)$ the collection of all elements ${\color{purp}g}=(g^\bullet, g^\circ, \bbg^{\bullet\bullet},  \bbg^{\circ\bullet}, \bbg^{\bullet\circ})$, such that 
    $(g^\bullet, \bbg^{\bullet\bullet}) \in  \mathcal{R}^\gamma([0, 1]; V)$,
    $g^\circ \in \cC^\beta([0, 1];E)$,     
        $ (\bbg^{\bullet\circ}, \bbg^{\circ\bullet}) \in  \cC^{\beta+\gamma}_2([0,1]^2;  (V\otimes E) \times (E\otimes V) )$, and that the Chen's relations
        \begin{equs}                \label{eq:chen_mixed}
            \bbg^{\bullet\circ}_{s, t}-\bbg^{\bullet\circ}_{s, u}- \bbg^{\bullet\circ}_{u, t}= g^\bullet_{s, u} \otimes g^\circ_{u, t}, \qquad \bbg^{\circ\bullet}_{s, t}-\bbg^{\circ\bullet}_{s, u}- \bbg^{\circ\bullet}_{u, t}= g^\circ_{s, u} \otimes g^\bullet_{u,t}       
        \end{equs}
        hold for all $(s,u,t)\in[0,1]^3_\leq$.
        Moreover, we set 
        \begin{equ}
         \,    [\purp{g}]_{\mathcal{R}^{\gamma, \beta}}= [g^{\bullet}]_{\cC^\gamma}+ [g^{\circ}]_{\cC^\beta}+[\mathbb{g}^{\bullet \bullet}]_{\cC^{2\gamma}_2}+[\mathbb{g}^{\circ  \bullet}]_{\cC^{\gamma+ \beta}_2}+[\mathbb{g}^{\bullet \circ}]_{\cC^{\gamma+ \beta}_2}.
        \end{equ}
\end{definition}
As before, the set $\mathcal{R}^{\gamma, \beta}$ is equipped with the metric
 \begin{equs}
     d_{\gamma, \beta} (\purp{g}, \purp{h}):= \|g^\bullet-h^\bullet\|_{\cC^\gamma}+\|g^\circ-h^\circ\|_{\cC^\beta}+ [\purp{g}-\purp{h}]_{\mathcal{R}^{\gamma, \beta}}. 
 \end{equs}

\begin{remark}
For $i,j\in\{\bullet,\circ\}$, the process $\bbg^{ij}$ stands for the iterated integral of $g^i$ against $g^j$.
 Since the iterated integral of $g^\circ$ against itself is canonically well-defined as a Young integral, it is not included separately in the definition.
 %Since it is of order $2 \beta>1$, they are redundant for integration purposes, hence not included in the definition. 
\end{remark}

Given %$\bg \in \mathcal{R}^{\beta, \gamma}([0, 1]; V \times W)$
$g=(g^\bullet,g^\circ)\in\cC^{\gamma}([0,1];V)\times\cC^{\beta}([0,1];E)$ 
we denote by $\mathcal{D}_{g}^{2\gamma} ([0, 1]; K )$ the set of all functions $\purp{f} = (f, \D_\bullet f,\D_\circ f) :[0, 1] \to K \times  \mathcal{L}(V; K)   \times  \mathcal{L}(E;K) $, such that 
\begin{equs}
 \, [\purp{f}]_{\mathcal{D}_g^{2\gamma}}:= \sup_{{s \neq t}} \frac{| f_{s,t}-  \D_\bullet f_s g^\bullet_{s, t} -\D_\circ f_s g^\circ_{s, t}|}{|t- s|^{\gamma}} +[\D_\bullet f]_{\cC^{\gamma}([0, 1])}+[\D_\circ f]_{\cC^{\gamma}([0, 1])} < \infty.
\end{equs}
For  $\purp{f}= (f, \D_\bullet f, \D_\circ f)  \in \mathcal{D}_{g}^{2 \gamma} ([0, 1]; K )$ and $\purp{h}=(h, \D_\bullet h, \D_\circ h) \in \mathcal{D}_{g}^{2
\gamma} ([0, 1]; \mathcal{L}(K, K') )$, we set 
\begin{equs}                            \label{eq:def:purple_product}
\purp{hf} : = (hf, e \mapsto  (\D_\bullet h e) f+h (\D_\bullet fe), v \mapsto  (\D_\circ h v) f+h (\D_\circ f v).
\end{equs}
Then, one has $\purp{hf}\in \mathcal{D}_{g}^{2\gamma} ([0, 1]; K' )$, and  similarly to \eqref{eq:controlled_product} we have
\begin{equs}     \label{eq:controlled_product_mixed}
\|\purp{hf}\|_{\cD^{2\gamma}_{g}([S,T])} \leq C \| \purp{h}  \|_{\cD^{2\gamma}_{g}([S,T])}\| \purp{f}  \|_{\cD^{2\gamma}_{g}([S,T])}(1+[g^\bullet]_{ \mathcal{C}^\gamma([S,T])}+[g^\circ]_{ \mathcal{C}^\beta([S,T])})^2,
\end{equs}
for a universal constant $C$. 

Then for  $\purp{g} \in   \mathcal{R}^{ \gamma,\beta}([0, 1]; V \times E)$ and $\purp{f} \in \mathcal{D}_{g}^{2\gamma} ([0, 1]; \mathcal{L}( E;  K))$ we can define the $K$-valued rough integral with respect to $g^\circ$ as
\begin{equs}
    \int_0^t \purp{f_r} \, d\purp{g^\circ_r} := \lim_{|\mathcal{P}| \to 0} \sum_{ [s, u] \in \mathcal{P}_{[0, t]}} A_{s, u},
\end{equs}
where  $A_{s, u} := f_s g^\circ_{s,u}+ \D_\bullet f_s \bbg^{\bullet\circ}_{s, u}. 
$
Indeed, 
by \eqref{eq:chen_mixed},  for $0 \leq s \leq v \leq u \leq 1$ we have
\begin{equs}
    \delta A_{s, v, u}& = - (f_{s,v}-  \D_\bullet f_s g^\bullet_{s, v}) g^\circ_{v,u} - \D_\bullet f_{s,v} \bbg^{\bullet\circ }_{v, u}
    \\
    & = - (f_{s,v}-\D_\circ f_s g^\circ_{s, v}- \D_\bullet f_s g^\bullet_{s, v} ) g^\circ_{v,u} - \D_\bullet f_{s,v} \bbg^{\bullet\circ}_{v, u} - (\D_\circ f_s g^\circ_{s, v})g^\circ_{v,u},
\end{equs}
so that 
\begin{equs}
     |\delta A_{s, v, u}| &  \leq 2 [\purp{f}]_{\mathcal{D}_{g}^{2\gamma}}  [\purp{g}]_{\mathcal{R}^{\beta, \gamma}} |u-s|^{2\gamma +\beta } + \|\D_\circ f_s\|_{\cC^0} [\purp{g}]^2_{\mathcal{R}^{\beta, \gamma}} |u-s|^{2\beta}\\
    &  \lesssim  \|\purp{f}\|_{\mathcal{D}_{g}^{2\gamma}} (1+ [\purp{g}]_{\mathcal{R}^{\beta, \gamma}}^2) |t-s|^{\delta}, 
\end{equs}
where $\delta= (2\gamma +\beta)\wedge 2\beta >1$. 
By the sewing lemma the above limit exists and satisfies the bound
\begin{equs}
     |\int_s^t \purp{f_r} \, d\purp{g^\circ_r} - f_s g^\circ_{s,t}- \D_\bullet f_s \bbg^{\bullet \circ}_{s, t}|  \lesssim 
     \|\purp{f}\|_{\mathcal{D}_{g}^{\lambda}} (1+ [\purp{g}]_{\mathcal{R}^{\beta, \gamma}}^2) |t-s|^{\delta},
\end{equs}
which in turn implies that 
\begin{equs}
    |\int_s^t \purp{f_r} \, d\purp{g^\circ_r} - f_s g^\circ_{s,t}|  \lesssim 
    \|\purp{f}\|_{\mathcal{D}_{g}^{2 \gamma}}  (1+ [\purp{g}]_{\mathcal{R}^{\beta, \gamma}}^2) |t-s|^{2\gamma}. 
\end{equs}
As a consequence, for any $0\leq S\leq T\leq 1$,
\begin{equ}
    \purp{\int_S^\cdot \purp{f_r \, dg^\circ_r}}:=\Big(\int_S^\cdot \purp{f_r} \, d\purp{g^\circ_r},0,f_\cdot \Big)\in\mathcal{D}_g^{2\gamma}([S,T];K).
\end{equ}

Similarly, for $\purp{f} \in \mathcal{D}_{g}^{2\gamma} ([0, 1]; \mathcal{L}( V;  K))$ we can define the $K$-valued rough integral with respect to $g^\bullet$ as
\begin{equs}
    \int_0^t \purp{f_r} \, d\purp{g^\bullet_r} := \lim_{|\mathcal{P}| \to 0} \sum_{ [s, u] \in \mathcal{P}_{[0, t]}} A'_{s, u},
\end{equs}
where 
\begin{equs}
  A'_{s,u}:=   f_s g^\bullet_{s,u}+  \D_\circ f_s \bbg^{\circ\bullet}_{s,u}+\D_\bullet f_s \bbg^{\bullet\bullet}_{s, u}. 
\end{equs}
Indeed, as before, one can compute 
\begin{equs}
   |\delta A'_{s, u, v}| & =   | - (f_{s,v}-  \D_\circ f_s g^\circ_{s, v} -\D_\bullet f_s g^\bullet_{s, v} ) g^\bullet_{v,u} - \D_\circ f_{s,v} \bbg^{\circ\bullet}_{v, u}- \D_\bullet f_{s,v} \bbg^{\bullet\bullet}_{v, u} |
   \\
   & \lesssim   
   \|\purp{f}\|_{\mathcal{D}_{g}^{2\gamma}} (1+ [\purp{g}]_{\mathcal{R}^{\beta, \gamma}}) |t-s|^{3\gamma},
\end{equs}
(recall that  $3\gamma \wedge(2\gamma+\beta)=3\gamma >1$). 
By the sewing lemma the above limit exists and satisfies the bound
\begin{equs}
    |  \int_s^t \purp{f_r} \, d\purp{g^\bullet_r}-  f_s g^\bullet_{s,t}- \D_\circ f_s \bbg^{\circ\bullet}_{s,t}+\D_\bullet f_s \bbg^{\bullet\bullet}_{s, t} |  \lesssim \|\purp{f}\|_{\mathcal{D}_{g}^{2\gamma}} (1+ [\purp{g}]_{\mathcal{R}^{\beta, \gamma}}) |t-s|^{3\gamma},
\end{equs}
which implies that 
\begin{equs}
     |  \int_s^t \purp{f_r} \, d\purp{g^\bullet_r}-  f_s g^\bullet_{s,t} |  \lesssim \|\purp{f}\|_{\mathcal{D}_{g}^{2\gamma}} (1+ [\purp{g}]_{\mathcal{R}^{\beta, \gamma}}) |t-s|^{2\gamma}.
\end{equs}
As a consequence, for any $0\leq S\leq T\leq 1$,
\begin{equ}
    \purp{\int_S^\cdot \purp{f_r \, dg^\bullet_r}}:=\Big(\int_S^\cdot \purp{f_r} \, d\purp{g^\bullet_r},f_\cdot,0 \Big)\in\mathcal{D}_g^{2\gamma}([S,T];K).
\end{equ}

The following two lemmata are straightforward extensions of the standard Kolmogorov criterion for paths to rough paths to rough paths of mixed regularity. They can be proved in the same was as  \cite[Theorem 3.1 and Theorem 3.3]{Friz-Hairer}.
\begin{lemma}
    Let $(G^\bullet, G^\circ):  \Omega \times [0,1] \to V \times E$ and $(\mathbb{G}^{\bullet \bullet}, \mathbb{G}^{\circ \bullet}, \mathbb{G}^{\bullet \circ}) : \Omega \times [0, 1]^2 \to (V \otimes V ) \times (E \otimes V) \times (V \otimes E)$. Assume that for all $(s,u,t) \in [0,1]^3_\leq$, $(G^\bullet, \mathbb{G}^{\bullet \bullet})$ satisfies \eqref{eq:Chen_single_regularity} and $(G^\bullet, G^\circ, \mathbb{G}^{\bullet \circ}, \mathbb{G}^{\circ \bullet})$ satisfy \eqref{eq:chen_mixed}. Let $p \geq 2$ and $\tilde{\gamma}_\bullet, \tilde{\gamma}_\circ \in (0,1)$ with $\tilde{\gamma}_\bullet, \tilde{\gamma}_\circ >1/p$. Assume further that  there exist constants $K_{ij}$, $i,j \in \{ \bullet, \circ\}$, such that for all $(s,t)$
    \begin{equs}
        \| G^i_{s,t}\|_{L_p}\leq  K_i |t-s|^{\tilde{\gamma}_i}, \qquad   
        \| \mathbb{G}^{i j }_{s,t}\|_{L_{p/2}}\leq  K_{ij} |t-s|^{\tilde{\gamma}_i+\tilde{\gamma}_j}
    \end{equs}
    for $i,j \in \{ \bullet, \circ\}$ with $(i,j) \neq (\circ, \circ)$. Then for any $\gamma_\bullet< \tilde{\gamma}_\bullet-1/p $, $\gamma_\circ< \tilde{\gamma}_\circ-1/p $, there exists a modification of $\purp{G}:= (G^\bullet, G^\circ,\mathbb{G}^{\bullet \bullet}, \mathbb{G}^{\circ \bullet}, \mathbb{G}^{\bullet \circ} )$ such that with probability one, we have $\purp{G} \in \mathcal{R}^{\gamma_\bullet, \gamma_\circ}$. Moreover, there is a constant $C=C(\gamma_\bullet, \tilde{\gamma}_\bullet, \gamma_\circ, \tilde{\gamma}_\circ, p)$ such that 
    \begin{equs}
        \| [G^i]_{\cC^{\gamma_i}} \|_{L_p} \leq C K_i, \qquad \| [\mathbb{G}^{ij}]_{\cC^{\gamma_i+\gamma_j}_2} \|_{L_p} \leq  C (K_{ij}+K_iK_j) . 
    \end{equs}
\end{lemma}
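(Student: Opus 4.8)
The plan is to reduce the statement to two well-established continuity criteria: the classical Kolmogorov continuity theorem for the genuine path components $G^\bullet,G^\circ$, and the Kolmogorov criterion for rough paths (a two-parameter Garsia--Rodemich--Rumsey argument that accounts for the Chen defect) for the iterated integrals $\mathbb{G}^{\bullet\bullet},\mathbb{G}^{\circ\bullet},\mathbb{G}^{\bullet\circ}$. This is precisely the strategy of \cite[Theorem~3.1]{Friz-Hairer}; the only new feature is that the two one-parameter scales $\tilde\gamma_\bullet$ and $\tilde\gamma_\circ$ may differ, which affects only the exponent bookkeeping below.

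First I would apply the classical Kolmogorov theorem to $G^\bullet$ and to $G^\circ$ separately: from $\|G^i_{s,t}\|_{L_p}\leq K_i|t-s|^{\tilde\gamma_i}$ with $\tilde\gamma_i>1/p$ one obtains, for every $\gamma_i<\tilde\gamma_i-1/p$, a continuous modification with $\|[G^i]_{\cC^{\gamma_i}}\|_{L_p}\leq CK_i$ ($i\in\{\bullet,\circ\}$, $C=C(\gamma_i,\tilde\gamma_i,p)$), and I would fix these modifications once and for all. Then, for each of the three iterated integrals $\mathbb{G}^{ij}$ with $(i,j)\neq(\circ,\circ)$ I would run the two-parameter argument as in \cite{Friz-Hairer}. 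The Chen relations \eqref{eq:Chen_single_regularity} and \eqref{eq:chen_mixed} express the defect of $\mathbb{G}^{ij}$ as a product of increments,
\begin{equ}
\delta\mathbb{G}^{ij}_{s,u,t}=\mathbb{G}^{ij}_{s,t}-\mathbb{G}^{ij}_{s,u}-\mathbb{G}^{ij}_{u,t}=G^i_{s,u}\otimes G^j_{u,t},
\end{equ}
so that by the Cauchy--Schwarz inequality (using $p\geq2$) and the hypotheses,
\begin{equs}
\|\delta\mathbb{G}^{ij}_{s,u,t}\|_{L_{p/2}}&\leq\|G^i_{s,u}\|_{L_p}\|G^j_{u,t}\|_{L_p}\\
&\leq K_iK_j|u-s|^{\tilde\gamma_i}|t-u|^{\tilde\gamma_j}\leq K_iK_j|t-s|^{\tilde\gamma_i+\tilde\gamma_j}.
\end{equs}
Feeding $\|\mathbb{G}^{ij}_{s,t}\|_{L_{p/2}}\leq K_{ij}|t-s|^{\tilde\gamma_i+\tilde\gamma_j}$ together with this defect bound into the two-parameter GRR inequality (in which the Chen defect enters as a source term, exactly as in the equal-regularity case) yields, for all $\gamma_i<\tilde\gamma_i-1/p$ and $\gamma_j<\tilde\gamma_j-1/p$, a continuous modification of $\mathbb{G}^{ij}$ with $\|[\mathbb{G}^{ij}]_{\cC^{\gamma_i+\gamma_j}_2}\|_{L_p}\leq C(K_{ij}+K_iK_j)$; here one uses that $\gamma_i+\gamma_j<\tilde\gamma_i+\tilde\gamma_j-2/p$, which is exactly the admissible range for an $L_{p/2}$ bound of order $\tilde\gamma_i+\tilde\gamma_j$. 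The assumption $\tilde\gamma_\bullet,\tilde\gamma_\circ>1/p$ allows $\gamma_\bullet,\gamma_\circ$ to be chosen strictly positive.

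Finally I would verify consistency of the modifications: the relations \eqref{eq:Chen_single_regularity} and \eqref{eq:chen_mixed} hold almost surely for the original processes at every fixed $(s,u,t)$, hence for the chosen modifications at every rational triple, and therefore, by joint continuity of all five processes, simultaneously for all $(s,u,t)\in[0,1]^3_\leq$ with probability one. Combined with the Hölder estimates of the previous two steps, this produces a modification $\purp{G}=(G^\bullet,G^\circ,\mathbb{G}^{\bullet\bullet},\mathbb{G}^{\circ\bullet},\mathbb{G}^{\bullet\circ})$ with $\purp{G}\in\mathcal{R}^{\gamma_\bullet,\gamma_\circ}$ almost surely and the asserted moment bounds. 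The only genuinely nontrivial ingredient is the two-parameter GRR estimate for the non-additive processes $\mathbb{G}^{ij}$, but since that is carried out in \cite[Theorem~3.1]{Friz-Hairer} and the passage to two distinct Hölder exponents only changes the arithmetic displayed above, I do not expect any real obstacle.
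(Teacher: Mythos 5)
Your proposal is correct and follows exactly the route the paper itself invokes: the paper does not prove this lemma in detail but simply remarks that it ``can be proved in the same way as \cite[Theorem~3.1 and Theorem~3.3]{Friz-Hairer}'', i.e.\ classical Kolmogorov for the first-level paths, and a dyadic two-parameter Kolmogorov/GRR chaining for the second-level components in which the Chen defect $\delta\mathbb{G}^{ij}_{s,u,t}=G^i_{s,u}\otimes G^j_{u,t}$ is the source term — precisely the argument you sketch, with the only new bookkeeping being the mixed exponent $\tilde\gamma_i+\tilde\gamma_j>2/p$. One minor remark: as in the paper's own statement, the hypothesis on $\mathbb{G}^{ij}$ is in $L_{p/2}$, so the moment bound you derive for $[\mathbb{G}^{ij}]_{\cC^{\gamma_i+\gamma_j}_2}$ is naturally an $L_{p/2}$ bound rather than the $L_p$ bound written in the conclusion — this is a harmless slip carried over from the statement and does not affect the argument.
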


\begin{lemma}
   Let $p \geq 2$, $\tilde{\gamma}_\bullet, \tilde{\gamma}_\circ  >1/p$, and $\gamma_\bullet< \tilde{\gamma}_\bullet-1/p $, $\gamma_\circ< \tilde{\gamma}_\circ-1/p$. Let $\purp{G}= (G^\bullet, G^\circ,\mathbb{G}^{\bullet \bullet}, \mathbb{G}^{\circ \bullet}, \mathbb{G}^{\bullet \circ} )$,  $\purp{\bar{G}}= (\bar{G}^\bullet, \bar{G}^\circ,\bar{\mathbb{G}}^{\bullet \bullet}, \bar{\mathbb{G}}^{\circ \bullet}, \bar{\mathbb{G}}^{\bullet \circ} )$ random variables with values in $\mathcal{R}^{\gamma_\bullet, \gamma_\circ}$. Assume further that there exist 
   constants   $K_{ij}$, $i,j \in \{ \bullet, \circ\}$, such that for all $(s,t)$
    \begin{equs}
        \| G^i_{s,t}-\bar{G}^i_{s,t}\|_{L_p}\leq  K_i |t-s|^{\tilde{\gamma}_i}, \qquad   
        \| \mathbb{G}^{i j }_{s,t}-  \bar{\mathbb{G}}^{i j}_{s,t}\|_{L_{p/2}}\leq  K_{ij} |t-s|^{\tilde{\gamma}_i+\tilde{\gamma}_j}
    \end{equs}
    for $i,j \in \{ \bullet, \circ\}$ with $(i,j) \neq (\circ, \circ)$. Then, there exists a constant $C=C(\gamma_\bullet, \tilde{\gamma}_\bullet, \gamma_\circ, \tilde{\gamma}_\circ, p)$, such that 
    \begin{equs}
        \| [G^i-\bar{G}^i]_{\cC^{\gamma_i}} \|_{L_p} \leq N K_i, \qquad \| [\mathbb{G}^{ij}-\bar{\mathbb{G}}^{ij}]_{\cC^{\gamma_i+\gamma_j}_2} \|_{L_p/2} \leq  N ( K_{ij}+K_iK_j). 
    \end{equs}
\end{lemma}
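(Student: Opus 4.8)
The plan is to follow the scheme of \cite[Theorems~3.1 and 3.3]{Friz-Hairer}; the only additional bookkeeping is that the space $\mathcal{R}^{\gamma_\bullet,\gamma_\circ}$ is fibered over two regularity scales, but since each component $\mathbb{G}^{ij}$ enters exactly one of the Chen relations \eqref{eq:Chen_single_regularity}, \eqref{eq:chen_mixed}, I would estimate the five coordinates one at a time. For the first-order coordinates, fix $i\in\{\bullet,\circ\}$ and apply the quantitative Kolmogorov continuity theorem (via the Garsia--Rodemich--Rumsey inequality) to the process $t\mapsto G^i_t-\bar G^i_t$, whose increments satisfy $\|(G^i-\bar G^i)_{s,t}\|_{L_p}\le K_i|t-s|^{\tilde{\gamma}_i}$ with $\tilde{\gamma}_i>1/p$; for $\gamma_i<\tilde{\gamma}_i-1/p$ this yields $\|[G^i-\bar G^i]_{\cC^{\gamma_i}}\|_{L_p}\le N K_i$.

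For the second-order coordinates, fix $(i,j)\in\{\bullet,\circ\}^2$ with $(i,j)\ne(\circ,\circ)$ and put $R=\mathbb{G}^{ij}-\bar{\mathbb{G}}^{ij}$. Subtracting the Chen relations for $\mathbb{G}^{ij}$ and $\bar{\mathbb{G}}^{ij}$ and inserting the cross term $\bar G^i_{s,u}\otimes G^j_{u,t}$ yields the exact identity
\[
\delta R_{s,u,t}=(G^i-\bar G^i)_{s,u}\otimes G^j_{u,t}+\bar G^i_{s,u}\otimes(G^j-\bar G^j)_{u,t}.
\]
The plan is then to bound this in $L_{p/2}$ by H\"older's inequality with exponents $p,p$, using the difference hypotheses together with a priori moment bounds $\|G^j_{u,t}\|_{L_p}\lesssim|t-u|^{\tilde{\gamma}_j}$, $\|\bar G^i_{s,u}\|_{L_p}\lesssim|u-s|^{\tilde{\gamma}_i}$ on the individual objects (these are part of the hypothesis set, as in \cite[Theorem~3.3]{Friz-Hairer}, and in our applications follow from the companion lemma above); this gives $\|\delta R_{s,u,t}\|_{L_{p/2}}\le N(K_i+K_j)|t-s|^{\tilde{\gamma}_i+\tilde{\gamma}_j}$, while the hypothesis gives directly $\|R_{s,t}\|_{L_{p/2}}\le K_{ij}|t-s|^{\tilde{\gamma}_i+\tilde{\gamma}_j}$.

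Since $\tilde{\gamma}_i+\tilde{\gamma}_j>2/p$, these two estimates put us in the setting of the Kolmogorov criterion for second-order components of rough paths: iterating the decomposition $R_{s,t}=R_{s,u}+R_{u,t}+\delta R_{s,u,t}$ along the dyadic subdivision between $s$ and $t$ and summing the dyadic contributions, exactly as in the proof of \cite[Theorem~3.1]{Friz-Hairer}, upgrades the bounds to $\|[R]_{\cC^{\gamma_i+\gamma_j}_2}\|_{L_{p/2}}\le N(K_{ij}+K_iK_j)$ for any $\gamma_i+\gamma_j<\tilde{\gamma}_i+\tilde{\gamma}_j-2/p$; the assumptions $\gamma_i<\tilde{\gamma}_i-1/p$, $\gamma_j<\tilde{\gamma}_j-1/p$ ensure $\gamma_i+\gamma_j$ lies in this range. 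Collecting the first- and second-order bounds then finishes the proof. Note that $\mathbb{g}^{\circ\circ}$ never enters, consistent with its exclusion from the data of $\mathcal{R}^{\gamma_\bullet,\gamma_\circ}$: it is the canonical Young iterated integral of $g^\circ$ against itself, whose stability is classical.

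I expect no genuine analytic obstacle beyond faithfully reproducing the \cite{Friz-Hairer} argument component by component; the step demanding the most care is the exponent accounting in the second-order estimate, where the loss in the Kolmogorov embedding is $2/p$ rather than $1/p$ because the iterated integrals are controlled only in $L_{p/2}$ --- this is compatible with the stated thresholds exactly because $\tilde{\gamma}_i+\tilde{\gamma}_j-2/p=(\tilde{\gamma}_i-1/p)+(\tilde{\gamma}_j-1/p)$.
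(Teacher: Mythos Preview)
Your proposal is correct and matches the paper's approach: the paper does not give a self-contained proof but simply states that this lemma (together with the preceding one) is a straightforward extension of the standard Kolmogorov criterion for rough paths and ``can be proved in the same way as \cite[Theorem 3.1 and Theorem 3.3]{Friz-Hairer}'', which is exactly the scheme you outline. Your identification of the need for a priori $L_p$ bounds on the individual increments $G^j_{u,t}$, $\bar G^i_{s,u}$ (implicit in the statement but explicit in \cite[Theorem~3.3]{Friz-Hairer} and supplied in practice by the companion lemma) is the one point where the lemma's formulation is slightly elliptical, and you handle it correctly.
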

The following is a direct consequence of the previous two lemmata combined with \cref{cor:L} and \cref{cor:K}. For $ f \in \cC^{\alpha+}(\R^d; \R^d)$,  set $\tilde\cK^{jki}_{s,t} f=(\cJ^{jk}_{s,t}f)B^i_{s,t} -\cK^{ijk}_{s,t}$ for $i=1,\ldots,d_0$ and $j,k=1,\ldots,d$.  
\begin{corollary}    \label{cor:maps_in_Rouph_Paths}
The map $ f \mapsto \purp{Gf }$ defined for $ f \in\cC^2(\R^d; \R^d)$ by
    \begin{equ}
       \purp{Gf}=\big(B, \cJ f, \mathbb{B}, \tilde\cK f,\cK f \big)
    \end{equ}
    extends uniquely to a continuous map from $\cC^{\alpha+}(\R^d; \R^d)$ to $L^p\big(\Omega;\cR^{H_-, 1+(\alpha-1)H_+}(\R^{d_0} \times \R^{d\times d}) \big)$. Moreover, there exists a constant $C=C(\alpha,p,d,d_0,\lambda,H,H_-,\|\sigma\|_{\cC^5},C^X_{D, p}, C^X_{S, 2p}, C^Y_{D,p}, C^Y_{S, 2p})$  
    such that for all $f, \tilde{f} \in \cC^{\alpha+}(\R^d; \R^d)$, we have 
    \begin{equs}
       \|  [\purp{Gf}]_{\cR^{H_-, 1+(\alpha-1)H_+}} \|_{L_p} & \leq C(1+\| f\|_{\cC^\alpha}),
       \\
       d_{H_-, 1+(\alpha-1)H_+} (\purp{Gf}, \purp{G\tilde{f}}) & \leq C \|f-\tilde{f}\|_{\cC^\alpha}.                     \label{eq:Lipschitz_bound_G}
    \end{equs}
\end{corollary}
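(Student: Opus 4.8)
The plan is to combine the two Kolmogorov-type lemmata for mixed-regularity rough paths with the quantitative bounds from \cref{cor:L} and \cref{cor:K}, together with the already established bound \eqref{eq:K-bound} of \cref{thm:K} and \eqref{eq:L-bound} of \cref{thm:L}. First I would observe that for $f\in\cC^2(\R^d;\R^d)$ the tuple $\purp{Gf}=(B,\cJ f,\mathbb{B},\tilde\cK f,\cK f)$ is well-defined, and that the required Chen relations hold: \eqref{eq:Chen_single_regularity} for $(B,\mathbb B)$ is the classical statement about the Gaussian lift, the relation for $\bbg^{\bullet\circ}=\cK f$ is exactly \eqref{eq:chen-ver1}, and the relation for $\bbg^{\circ\bullet}=\tilde\cK f$ follows from the definition $\tilde\cK^{jki}_{s,t}f=(\cJ^{jk}_{s,t}f)B^i_{s,t}-\cK^{ijk}_{s,t}$ by a direct computation using \eqref{eq:chen-ver1} and the product/Leibniz structure (this is the ``product rule'' enforced in the introduction). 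The algebraic identities are elementary for $f\in\cC^2$ and extend to $\cC^{\alpha+}$ by continuity.

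Next I would check the moment bounds needed as inputs to the first Kolmogorov lemma. For the Gaussian component one has the standard bounds $\|B_{s,t}\|_{L_p}\lesssim|t-s|^H$ and $\|\mathbb{B}_{s,t}\|_{L_{p/2}}\lesssim|t-s|^{2H}$. For $g^\circ=\cJ f$, applying \eqref{eq:L-bound} coordinatewise with $\gamma=\alpha-1$ and integrating in $\theta$ (which is exactly \cref{cor:L}) gives $\|(\cJ f)_{s,t}\|_{L_p}\lesssim(1+\|f\|_{\cC^\alpha})|t-s|^{1+(\alpha-1)H}$. For the cross term $\bbg^{\bullet\circ}=\cK f$, \cref{cor:K} gives $\|(\cK f)_{s,t}\|_{L_p}\lesssim(1+\|f\|_{\cC^\alpha})|t-s|^{1+\alpha H}$, and since $1+\alpha H=H+(1+(\alpha-1)H)$ this matches the required exponent $\tilde\gamma_\bullet+\tilde\gamma_\circ$ with $\tilde\gamma_\bullet$ slightly above $H_-$ and $\tilde\gamma_\circ$ slightly above $1+(\alpha-1)H$. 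The term $\bbg^{\circ\bullet}=\tilde\cK f$ is then bounded by combining the $\cK f$ bound with the bound $\|(\cJ f)_{s,t}B_{s,t}\|_{L_{p/2}}\le\|(\cJ f)_{s,t}\|_{L_p}\|B_{s,t}\|_{L_p}\lesssim(1+\|f\|_{\cC^\alpha})|t-s|^{1+\alpha H}$ via Cauchy--Schwarz. Choosing $H_-<H$ and $H_+>H$ as in \eqref{eq:Choice of H-} and taking $p$ large enough that $H-1/p>H_-$ and $1+(\alpha-1)H-1/p>1+(\alpha-1)H_+$, the first Kolmogorov lemma produces a modification lying in $\cR^{H_-,1+(\alpha-1)H_+}$ almost surely, with the quantitative bound $\|[\purp{Gf}]_{\cR^{H_-,1+(\alpha-1)H_+}}\|_{L_p}\le C(1+\|f\|_{\cC^\alpha})$.

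For the Lipschitz estimate \eqref{eq:Lipschitz_bound_G} and the continuous extension, I would run the same argument with the second Kolmogorov lemma applied to the pair $\purp{Gf}$ and $\purp{G\tilde f}$. By linearity of all the maps $\cJ,\cK,\tilde\cK$ in $f$, the differences $(\cJ f-\cJ\tilde f)_{s,t}$, $(\cK f-\cK\tilde f)_{s,t}$, $(\tilde\cK f-\tilde\cK\tilde f)_{s,t}$ satisfy exactly the same moment bounds as above but with $1+\|f\|_{\cC^\alpha}$ replaced by $\|f-\tilde f\|_{\cC^\alpha}$ (the $\bullet$-components $B,\mathbb B$ are unchanged, so $K_\bullet=K_{\bullet\bullet}=0$). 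The second lemma then yields $d_{H_-,1+(\alpha-1)H_+}(\purp{Gf},\purp{G\tilde f})\le C\|f-\tilde f\|_{\cC^\alpha}$ for $f,\tilde f\in\cC^2$. Since $\cC^2$ is dense in $\cC^{\alpha+}$ and $L^p(\Omega;\cR^{H_-,1+(\alpha-1)H_+})$ is complete, the map extends uniquely to a continuous (indeed Lipschitz, after the $+1$ on the right) map on $\cC^{\alpha+}(\R^d;\R^d)$, and both displayed bounds pass to the limit.

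I do not anticipate a genuine obstacle here: all the analytic heavy lifting is in \cref{thm:L} and \cref{thm:K}, and this corollary is essentially bookkeeping. The one point requiring a little care is verifying the Chen relation for the $\circ\bullet$-component $\tilde\cK f$: one must check that the definition $\tilde\cK^{jki}_{s,t}f=(\cJ^{jk}_{s,t}f)B^i_{s,t}-\cK^{ijk}_{s,t}$ is compatible with \eqref{eq:chen_mixed}, i.e. that $\delta\tilde\cK^{jki}_{s,u,t}f=(\cJ^{jk}_{s,u}f)B^i_{u,t}$, which follows by expanding $\delta$ of the product $(\cJ f)B$ and using $\delta(\cK f)_{s,u,t}=B_{s,u}\otimes(\cJ f)_{u,t}$ from \eqref{eq:chen-ver1}; this is the concrete realisation of the ``product rule'' and amounts to the identity $(\cJ^{jk}_{s,t}f)B^i_{s,t}-(\cJ^{jk}_{s,u}f)B^i_{s,u}-(\cJ^{jk}_{u,t}f)B^i_{u,t}=(\cJ^{jk}_{s,u}f)B^i_{u,t}+(\cJ^{jk}_{u,t}f)B^i_{s,u}$ after moving terms, combined with $\delta(\cK f)$. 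A minor second point is to make sure the exponents $H_\pm$ can be chosen simultaneously satisfying \eqref{eq:Choice of H-} and the Kolmogorov gaps $H-1/p>H_-$, $1+(\alpha-1)H-1/p>1+(\alpha-1)H_+$; this holds for $p$ sufficiently large since strict inequalities $H_-<H<H_+$ are available.
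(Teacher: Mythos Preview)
Your proposal is correct and follows essentially the same route as the paper, which simply states that the corollary is a direct consequence of the two Kolmogorov-type lemmata combined with \cref{cor:L} and \cref{cor:K}. You have filled in all the bookkeeping details (Chen relations for $\tilde\cK f$, matching of exponents, Cauchy--Schwarz for the $\circ\bullet$ component, density extension) accurately; the only minor remark is that $H_-$ and $H_+$ are already fixed in the paper's setup \eqref{eq:Choice of H-}, so the correct phrasing is to take $p$ large enough relative to the given gaps $H-H_-$ and $(1-\alpha)(H_+-H)$, and then recover small $p$ by monotonicity of $L_p$ norms.
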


From now on, we set 
\begin{equs}
    \purp{G}=(G^{\bullet}, G^{\circ}, \mathbb{G}^{\bullet \bullet}, \mathbb{G}^{\circ \bullet}, \mathbb{G}^{\bullet \circ}):= \purp{Gb}, 
\end{equs}
that is 
\begin{equs}
    \purp{G}\,  ``=" \, \Big( B, L, \int B  \otimes \, dB , \int L  \otimes \, dB, \int B  \,  \otimes dL \Big). 
\end{equs}
\section{Closing the equation}\label{sec:closing}

Throughout this section, we fix two solutions $X, Y$ of \eqref{eq:main} and we set $Z=X-Y$. Moreover, we introduce an operator $I \in \mathcal{L}(\R^d; \mathcal{L}( \R^{d \times d} ; \R^d))$ and an operator valued process $ \Sigma : \Omega \times [0,1] \to \mathcal{L}( \R^d; \mathcal{L}( \R^{d_0} ; \R^d))$, defined  as follows: for $z \in \R^d$, we set 
\begin{equs}            \label{eq:def_I_and_Sigma}
   Iz  := \Bigg( (c^{ij})_{i,j=1}^d \mapsto \Big(\sum_{j=1} z^j c^{ij} \Big)_{i=1}^d \Bigg), \qquad 
     \Sigma_tz  &= \Bigg( (\xi^\varrho)_{\varrho=1}^{d_0} \mapsto \Big( \sum_{l=1}^d \sum_{\varrho=1}^{d_0} \Sigma^{li\varrho}_tz^l_t b\xi^\varrho \Big)_{i=1}^d \Bigg), 
     \end{equs}
     where 
     \begin{equs}
     \Sigma^{li\varrho}_t:=  \int_0^1 \D_{l} \sigma^{i\varrho}(\theta X_t+(1-\theta)Y_t) \, d \theta.  
\end{equs}
We will see below that $Z$ is controlled by $G$ and that its Gubinelli derivatives with respect  to $G^\bullet$ and $G^{\circ}$ are given by $\Sigma Z$ and $IZ$, respectively. 

Next, we define processes which will play the role of Gubinelli derivatives of $I Z$ and $\Sigma Z$. Let us start with $IZ$. For this, let us define two processes $\D_\bullet (IZ) : \Omega \times [0,1] \to \mathcal{L}(\R^{d_0}; \mathcal{L}(\R^{d \times d}; \R^d))$ and 
$\D_\circ (IZ) : \Omega \times [0,1] \to \mathcal{L}(\R^{d\times d}; \mathcal{L}(\R^{d \times d}; \R^d))$ as follows: for $\xi =(\xi^\varrho)_{\varrho=1}^{d_0}$

\begin{equs}
   \D_\bullet (IZ)_t \tilde{b} 
   &=  \Bigg( (c^{ij})_{i,j=1}^d \mapsto \big( \sum_{j, l=1}^d \sum_{\varrho=1}^{d_0} \Sigma^{lj\varrho} Z^l_t  c^{ij} \xi^\varrho  \Big)_{i=1}^d \Bigg) 
\end{equs}
and for $\tilde{c}=(\tilde{c}^{ij})_{i,j=1}^d$
\begin{equs}
     \D_\circ (IZ)_t \tilde{c} =  \Bigg( (c^{ij})_{i,j=1}^d\mapsto  \Big(\sum_{j,l=1}^d  Z^l_t \tilde{c}^{jl} c^{ij}  \Big)_{i=1}^d \Bigg).
\end{equs}

We also set $\blue{Z}= (Z, \Sigma Z)$. Notice that by the fundamental theorem of calculus, we have that $\sigma (X)-\sigma(Y)= \Sigma Z$, which in particular implies that 
\begin{equ}
    \blue{Z}=  (Z, \Sigma Z)= \blue{X-Y}= (X-Y, \sigma(X)- \sigma(Y)) \in  \mathcal{D}^{2H_-}_{G^\bullet}([0, 1]; \R^d), 
\end{equ}
with probability one. Moreover, we set $\blue{I}:=(I,0) \in \mathcal{D}^{2H_-}_{G^\bullet}([0, 1]; \mathcal{L}(\R^d; \mathcal{L}(\R^{d\times d}; \R^d)))$. Consequently, $\blue{IZ}$ as defined in \eqref{eq:def_blue_product} belongs to  $\mathcal{D}^{2H_-}_{G^\bullet}([0, 1];\mathcal{L}(\R^{d\times d}; \R^d))$. In addition, it is straightforward that $\blue{IZ}$ coincides with $(Z, \D_\bullet(IZ))$. Similarly, we set 
$\blue{\Sigma}=(\Sigma, \D_\bullet \Sigma )$, where  $\D_\bullet \Sigma : \Omega \times [0,1] \to \mathcal{L}(\R^{d_0}; \mathcal{L}(\R^d; \mathcal{L}(\R^{d_0}; \R^d)))$, is defined as follows: for $\tilde{\xi} \in \R^d$
\begin{equs}
    \D_\bullet \Sigma_t \tilde{\xi}= \Bigg( (z^l)_{l=1}^d \mapsto \Bigg( (\xi^\varrho)_{\varrho=1}^{d_0} \mapsto \Big(   \sum_{l, q=1}^d \sum_{\varrho, \tilde{\varrho}=1}^{d_0} \hat{\Sigma}^{li\varrho \tilde{\varrho}}_t \tilde{\xi}^{\tilde{\varrho}}z^l\xi^\varrho  \Big)_{i=1}^d   \Bigg)                   \Bigg),
\end{equs}
where 
\begin{equs}
    \hat{\Sigma}^{qli\varrho \tilde{\varrho}}_t= \int_0^1 \D_{ql}\sigma^{i\varrho}(\theta X_t+(1-\theta)Y_t)(\theta \sigma^{q\tilde{\varrho}}(X_t)+(1-\theta) \sigma^{q \tilde{\varrho}}(Y_t)) \, d \theta.
\end{equs}
Then, we have that  $\blue{\Sigma} \in \mathcal{D}^{2H_-}_{G^\bullet}([0, 1];  \mathcal{L}(\R^d; \mathcal{L}(\R^{d_0}; \R^d))) $, which follows from the fact that $\blue{X}, \blue{Y} \in \mathcal{D}^{2H_-}_{G^\bullet}([0, 1]; \R^d)$. Consequently, we also have that $\blue{\Sigma Z}$ (as defined by \eqref{eq:def_blue_product}) belongs to $ \mathcal{D}^{2H_-}_{G^\bullet}([0, 1]; \mathcal{L}(\R^{d_0}; \R^d))$. In fact, $\blue{\Sigma Z}$ is nothing but $\blue{\sigma(X)-\sigma(Y)}$ but it will be convenient to have it in this linear in $Z$ form.

For the next lemma, recall that $\rho$ is defined in \eqref{eq:rho def}
\begin{lemma}          \label{lem:Z_is_controlled_by_G}
Let $\tau=\inf\{t \in [0,1] : |Z_t|> \rho\}\wedge 1$ and set $\purp{Z}=(Z,\Sigma Z,IZ)$ and $\purp{I}=(I,0, 0)$. The following hold.
\begin{enumerate}[(i)]
    \item \label{item:Z_is_controlled_purple} We have $\purp{Z} \in  \mathcal{D}^{2H_-}_{G}([0, \tau], \R^d)$ and $\purp{I} \in \mathcal{D}^{2H_-}_{G}([0, 1]; \mathcal{L}(\R^d; \mathcal{L}(\R^{d\times d}; \R^d)))$, with probability one. 
    \item \label{item:identification_IZ} We have that $ \purp{IZ} \in \mathcal{D}^{2H_-}_{G}([0, \tau], \mathcal{L}(\R^{d \times d};  \R^d))$ and $\purp{IZ}= (IZ,\D_\bullet (IZ) , \D_\circ (IZ)) $. 
    \item \label{item:drifts_int_Z_dG} With probability one,  for all $t \in [0, \tau]$,  we have 
    \begin{equs}             \label{eq:drifts_int_Z_dG}
        D^X_t-D^Y_t = \int_0^t  \purp{ IZ_s} \, d \purp{G^\circ_s}. 
    \end{equs}
\end{enumerate}
 \end{lemma}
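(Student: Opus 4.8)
The three items are proved in order; item (i) is the technical heart. Once $Z$ is known to be controlled by $G=(B,L)$ with the stated Gubinelli derivatives, (ii) is a routine application of the product rule \eqref{eq:controlled_product_mixed}, and (iii) follows by smooth approximation $b^n\to b$ combined with continuity of the (mixed) rough integral in its driving signal. Throughout, we use the observation that, by the definition of $\tau$ (recall $\rho$ from \eqref{eq:rho def}), the cutoff $\chi_\rho(X_r-Y_r)$ equals $1$ on $[0,\tau]$.

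\textbf{Item (i).} That $\purp{I}=(I,0,0)\in\mathcal{D}^{2H_-}_G([0,1])$ is immediate, $I$ being a constant operator. For $\purp{Z}$, combine the solution equations of $X,Y$ and the identity $\sigma(X)-\sigma(Y)=\Sigma Z$ to get, a.s.,
\begin{equs}
Z_{s,t}=(D^X_{s,t}-D^Y_{s,t})+\int_s^t\blue{(\Sigma Z)_r}\,d\blue{B_r},\qquad \blue{\Sigma Z}=\blue{\sigma(X)}-\blue{\sigma(Y)}\in\mathcal{D}^{2H_-}_B([0,1]).
\end{equs}
By \eqref{eq:rough_integral_first_order}, $\int_s^t\blue{(\Sigma Z)_r}\,d\blue{B_r}=(\Sigma Z)_sB_{s,t}+O(|t-s|^{2H_-})$ a.s. Since the remainder in $\mathcal{D}^{2H_-}_G$ is $R^{\purp{Z}}_{s,t}=Z_{s,t}-(\Sigma Z)_sB_{s,t}-(IZ)_sL_{s,t}$ and $(IZ)_sL_{s,t}=L_{s,t}Z_s$, it remains to show $D^X_{s,t}-D^Y_{s,t}-L_{s,t}Z_s=O(|t-s|^{2H_-})$ a.s. on $[0,\tau]$. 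Take $b^n\in\cC^\infty$, $b^n\to b$ in $\cC^\alpha$, and set $L^n=\cJ b^n$. On $[0,\tau]$, the fundamental theorem of calculus in $\theta$ gives $\int_s^t(b^n(X_r)-b^n(Y_r))\,dr=\int_s^t(dL^n_r)Z_r=L^n_{s,t}Z_s+\int_s^t(dL^n_r)(Z_r-Z_s)$; the last (ordinary Young) integral is crudely bounded by $[L^n]_{\cC^{\beta'}}[Z]_{\cC^{H_-}}|t-s|^{\beta'+H_-}$ for any $\beta'\in(H_-,1)$, and $\beta'+H_->2H_-$. Letting $n\to\infty$ along a subsequence on which $L^n\to L$ a.s.\ in $\cC^{\beta'}$, and using $D^X_{s,t}-D^Y_{s,t}=\lim_n\int_s^t(b^n(X_r)-b^n(Y_r))\,dr$ (\cref{def_solution}\,(i)) and \cref{cor:L}, the claim follows. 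Since also $\Sigma Z=\sigma(X)-\sigma(Y)\in\cC^{H_-}$ and $IZ\in\cC^{H_-}$ a.s., we conclude $\purp{Z}\in\mathcal{D}^{2H_-}_G([0,\tau])$.

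\textbf{Item (ii).} With $\purp{I}=(I,0,0)$ and $\purp{Z}=(Z,\Sigma Z,IZ)$ from (i), the product $\purp{I}\,\purp{Z}$ in the sense of \eqref{eq:def:purple_product} lies in $\mathcal{D}^{2H_-}_G([0,\tau];\mathcal{L}(\R^{d\times d};\R^d))$ by \eqref{eq:controlled_product_mixed}. Unwinding \eqref{eq:def:purple_product} with $h=I$ constant and $f=Z$, its Gubinelli derivatives are $\bigl(e\mapsto I((\Sigma Z)_\cdot e),\, v\mapsto I((IZ)_\cdot v)\bigr)$, which by inspection coincide with the operators $\D_\bullet(IZ),\D_\circ(IZ)$ defined before the lemma; hence $\purp{IZ}=\purp{I}\,\purp{Z}=(IZ,\D_\bullet(IZ),\D_\circ(IZ))$.

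\textbf{Item (iii) and the main difficulty.} For smooth $b^n$ the $\circ$-component $L^n=\cJ b^n$ of $\purp{Gb^n}$ is smooth and $\cK b^n_{s,t}=\int_s^t(B_r-B_s)\otimes dL^n_r$ satisfies $|\cK b^n_{s,t}|\lesssim_n|t-s|^{1+H_-}$, so the mixed rough integral against $(Gb^n)^\circ$ collapses to the Riemann–Stieltjes one, and as in (i),
\begin{equs}
\int_0^t\purp{IZ_s}\,d\purp{(Gb^n)^\circ_s}=\int_0^t(dL^n_s)Z_s=\int_0^t\bigl(b^n(X_s)-b^n(Y_s)\bigr)\,ds\qquad\text{on }[0,\tau].
\end{equs}
Letting $n\to\infty$: the right-hand side tends to $D^X_t-D^Y_t$ by \cref{def_solution}\,(i); on the left, $\purp{Gb^n}\to\purp{G}$ in $L^p(\Omega;\cR^{H_-,\,1+(\alpha-1)H_+})$ by \cref{cor:maps_in_Rouph_Paths}, while $\purp{IZ}$ (reinterpreted as controlled by $(B,L^n)$, which only alters its remainder by $\D_\circ(IZ)_s(L-L^n)_{s,t}$) converges to $\purp{IZ}$ controlled by $(B,L)$ in $\mathcal{D}^{2\gamma'}$ for suitable $\gamma'\le H_-$ and $\beta'\in(1/2,1+(\alpha-1)H_+)$ with $2\gamma'+\beta'>1$ (available by \eqref{eq:Choice of H-}); continuity of the rough integral then yields \eqref{eq:drifts_int_Z_dG}. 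Equivalently, refining the computation of (i) with the controlled structure of $Z$ shows $D^X-D^Y$ approximates the germ $(s,t)\mapsto(IZ)_sL_{s,t}+\D_\bullet(IZ)_s\mathbb{G}^{\bullet\circ}_{s,t}$ to order $>1$, so it coincides with its sewing, namely $\int_0^\cdot\purp{IZ}\,d\purp{G^\circ}$. The real obstacle is exactly this: showing that the drift difference $D^X-D^Y$, manufactured from the \emph{distributional} $b$, expands (to order $2H_-$, and to order $>1$ for the germ matching) as $L_{s,t}Z_s$ plus the iterated-integral correction — which is what \cref{cor:L} and \cref{cor:K} were built for; the remaining work is deterministic rough-path bookkeeping, the only nuisance being that $1+(\alpha-1)H_+>1/2$ while $2H_-<1$, which occasionally forces one to work with slightly reduced Hölder exponents.
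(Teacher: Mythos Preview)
Your argument for item (i) has a genuine gap that happens to hit the exact obstruction the paper is built around. The bound
\[
\Big|\int_s^t(dL^n_r)(Z_r-Z_s)\Big|\lesssim [L^n]_{\cC^{\beta'}}[Z]_{\cC^{H_-}}|t-s|^{\beta'+H_-}
\]
is a Young estimate and is only valid when $\beta'+H_->1$, not for ``any $\beta'\in(H_-,1)$''. To then pass to the limit you need $[L^n]_{\cC^{\beta'}}$ to remain bounded, which by \cref{cor:L} requires $\beta'\le 1+(\alpha-1)H_+$. The two constraints together demand $(\alpha-1)H_++H_->0$; letting $H_\pm\to H$ this becomes $\alpha H>0$, i.e.\ $\alpha>0$. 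So your argument only covers the function case and fails precisely for distributional drift, which is the regime of the paper. (This is exactly the power-counting obstruction flagged in the introduction around \eqref{eq:Z-intro}.)

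The paper's fix is the one you gesture at only at the very end of (iii): work with the germ $A_{s,t}=(IZ_s)G^\circ_{s,t}+\partial_\bullet(IZ)_s\,\mathbb{G}^{\bullet\circ}_{s,t}$ from the start. Using that $\blue{IZ}=(IZ,\partial_\bullet(IZ))\in\mathcal{D}^{2H_-}_{G^\bullet}$ and Chen's relation for $\mathbb{G}^{\bullet\circ}$, one gets
\[
\delta A_{s,u,t}=-R^{\blue{IZ}}_{s,u}\,G^\circ_{u,t}-\partial_\bullet(IZ)_{s,u}\,\mathbb{G}^{\bullet\circ}_{u,t},
\]
so $|\delta A_{s,u,t}|\lesssim|t-s|^{2H_-+1+(\alpha-1)H_+}$, and this exponent exceeds $3H_->1$ under the standing choice \eqref{eq:Choice of H-}. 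The crucial gain is that the remainder $R^{\blue{IZ}}$ has order $2H_-$, not just $H_-$; this is what the iterated integral $\mathbb{G}^{\bullet\circ}$ was constructed to exploit. Sewing gives a process $Q$ with $|Q_{s,t}-(IZ_s)G^\circ_{s,t}|\lesssim|t-s|^{1+(\alpha-1)H_++H_-}$, which is $>2H_-$. The same smooth-approximation step (now comparing the germs for $\purp{Gb^n}$ and $\purp{G}$, controlled by \cref{cor:maps_in_Rouph_Paths}) identifies $Q=D^X-D^Y$ on $[0,\tau]$. This simultaneously yields (i) and (iii); your logical order, in which (iii) depends on an already-established (i), hides that the germ with $\mathbb{G}^{\bullet\circ}$ is needed already for (i).
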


 \begin{proof}
   First, let us set 
   \begin{equs}        \label{eq:germ_of_Q}
       A_{s,t}:= (IZ_s)G^\circ_{s,t}+ \D_\bullet (I Z)_s  \mathbb{G}^{ \bullet \circ}_{s, t}. 
   \end{equs}
 We claim that with probability one, for all $t \in [0, 1]$, the limit 
 \begin{equs}                \label{eq:sewing_Q}
     Q_t:= \lim_{\substack{\cP\in\pi_{[0,t]}\\ |\cP|\to0}}\sum_{[s,u]\in\cP}A_{s,u}
 \end{equs}
 exists and defines a process $Q$ such that almost surely $Q \in \mathcal{C}^{1+(\alpha-1)H_+} ([0, 1])$, and 
 \begin{equs}     \label{eq:remainder_Q}
     |Q_{s, t}- (I Z)_s  G^\circ_{s, t}| \lesssim |t-s|^{1+(\alpha-1)H_++H_-}. 
 \end{equs}
 Indeed, for $0\leq s<u<v\leq 1$, we have 
 \begin{equs}
      \delta A_{s,u,v} & = -(IZ_{s,u})G^\circ_{u,t}+\D_\bullet (IZ)_s \delta \mathbb{G}^{\bullet \circ}_{s,u,t}- \D_\bullet (IZ)_{s,u} \mathbb{G}^{\bullet \circ}_{u,t}
     \\
     & = -((IZ_{s,u})- \D_\bullet (IZ)_s G^{\bullet}_{s,u}) G^\circ_{u,t}- \D_\bullet (IZ)_{s,u} \mathbb{G}^{\bullet \circ}_{u,t}
 \end{equs}
where we have used  Chen's relation \eqref{eq:chen_mixed}. Consequently, we have 
 \begin{equs}
     |\delta A_{s,u,v}| \lesssim \big( [\blue{I Z}]_{\mathcal{D}^{2H_-}_{\mathbf{G}^\bullet}} [G^\circ]_{\mathcal{C}^{1+(\alpha-1)H_+}}+ [\D_\bullet (IZ)]_{\mathcal{C}^{H_-}} [\mathbb{G}^{\bullet \circ }]_{\mathcal{C}^{1+(\alpha-1)H_++H_-}_2}\big) |v-s|^{3H_-},
 \end{equs}
 where we have used that $1+(\alpha-1)H_++2H_- \geq 1/2+2H_- \geq  3H_-$. Since $3H_->1$, the claims follows from Gubinelli's sewing lemma, and the fact that 
 \begin{equs}
     |A_{s,v}| & \lesssim \| I Z \|_{\cC^0}[G^\circ]_{\mathcal{C}^{1+(\alpha-1)H_+}} |t-s|^{1+(\alpha-1)H_+}
     \\
     & \qquad + \| \D_\bullet(IZ)\|_{\cC^0}[\mathbb{G}^{\bullet \circ }]_{\mathcal{C}^{1+(\alpha-1)H_++H_-}_2}|t-s|^{1+(\alpha-1)H_++H_-}
     \\
     &  \lesssim \Big( \| Z \|_{\cC^0}[G^\circ]_{\mathcal{C}^{1+(\alpha-1)H_+}} 
+ \| \sigma \|_{\cC^1} \| Z \|_{\cC^0}[\mathbb{G}^{\bullet \circ }]_{\mathcal{C}^{1+(\alpha-1)H_++H_-}_2}\Big) |t-s|^{1+(\alpha-1)H_+} 
\\
& \lesssim |t-s|^{1+(\alpha-1)H_+}.
 \end{equs}
 Next, let  $b \in \mathcal{C}^\infty$, such that $\|b^n-b\|_{\mathcal{C}^\alpha}  \leq 2^{-2n}$.  Let us define 
 \begin{equs}
     G^{\circ,n}_t:= \int_0^t \int_0^1 \chi_\rho(X_r-Y_r) \nabla b^n(\theta X_r+(1-\theta) Y_r ) \, d \theta \, dr, \qquad \mathbb{G}^{\bullet \circ,n}_{s, t}:= \int_s^t G^{\bullet}_{s, r} \, \otimes \, dG^{\circ, n}_r,
 \end{equs}
 where the last one is a Riemann integral,  since $G^{\circ ,n} \in \mathcal{C}^1([0, 1])$,  and notice that $\mathbb{G}^{\bullet \circ,n} \in \mathcal{C}^{1+H_-}_2([0, 1]^2)$. It follows then that with probability one, for all $t \in [0, 1]$, we have 
 \begin{equs}
     \int_0^t IZ_r \, d G^{\circ,n}_r =  \lim_{\substack{\cP\in\pi_{[0,t]}\\ |\cP|\to0}}\sum_{[s,u]\in\cP}(IZ_s) G^{\circ,n}_{s,u}=\lim_{\substack{\cP\in\pi_{[0,t]}\\ |\cP|\to0}}\sum_{[s,u]\in\cP} A_{s, t}^n,
 \end{equs}
 where $A^n_{s,u}:= (IZ_s) G^{\circ,n}_{s,u}+ \D_\bullet (I Z)_s \mathbb{G}^{\bullet \circ, n}_{s,u}$. Consequently, 
 \begin{equs}
     Q_t- \int_0^t Z_r \, dG^{\circ,n}_r = \lim_{\substack{\cP\in\pi_{[0,t]}\\ |\cP|\to0}}\sum_{[s,u]\in\cP} (A_{s,u}-A^n_{s,u}). 
 \end{equs}
 As before, we have 
      \begin{equs}
      |A_{s,v}-A^n_{s,v}| & \leq \|IZ\|_{\mathcal{C}^0}[G^\circ-G^{\circ,n}]_{\mathcal{C}^{1+(\alpha-1)H_+}} |v-s|^{1+(\alpha-1)H_+} 
      \\
      & \qquad + \|\D_\bullet(IZ)\|_{\mathcal{C}^0}[\mathbb{G}^{\bullet \circ}-\mathbb{G}^{\bullet \circ,n}]_{\mathcal{C}^{1+(\alpha-1)H_++H_-}_2} |v-s|^{1+(\alpha-1)H_++H_-}
      \\
      & \lesssim \big( [G^\circ-G^{\circ,n}]_{\mathcal{C}^{1+(\alpha-1)H_+}}  + [\mathbb{G}^{\bullet \circ}-\mathbb{G}^{\bullet \circ,n}]_{\mathcal{C}^{1+(\alpha-1)H_++H_-}_2}\big) |v-s|^{1+(\alpha-1)H_+},
      \\
     |\delta (A-A^n)_{s,u,v}| & = |(IZ_{s,u}-\D_\bullet(IZ)_s G^\bullet_{s,u})(G^\circ_{u,t}-G^{\circ,n}_{u,t}) + \D_\bullet (IZ)_{s,u} (\mathbb{G}^{\bullet \circ}_{u,t}-\mathbb{G}^{\bullet \circ,n}_{u,t})|
     \\
     & \lesssim \big( [\blue{IZ}]_{\mathcal{D}^{2 H_-}_{\mathbf{G}^\bullet}} [G^\circ-G^{\circ,n}]_{\mathcal{C}^{1+(\alpha-1)H_+}}+ 
     \\
     & \qquad + \|\D_\bullet (IZ)\|_{\mathcal{C}^{H_-}} [\mathbb{G}^{\bullet \circ}-\mathbb{G}^{\bullet \circ,n}]_{\mathcal{C}^{1+(\alpha-1)H_++H_-}_2}\big) |v-s|^{1+(\alpha-1)H_++2H_-}
     \\
     & \lesssim \big( [G^\circ-G^{\circ,n}]_{\mathcal{C}^{1+(\alpha-1)H_+}}+  [\mathbb{G}^{\bullet \circ}-\mathbb{G}^{\bullet \circ,n}]_{\mathcal{C}^{1+(\alpha-1)H_++H_-}_2}\big) |v-s|^{3H_-}.
 \end{equs}
 Consequently, by Gubinelli's sewing lemma we get that with probability one, for all $0 \leq s <t \leq 1$, we have 
 \begin{equs}    
    &  |Q_{s, t}-\int_s^t Z_r \, dG^{\circ, n}_r| 
     \\
     \lesssim & \,  \big( [G^\circ-G^{\circ,n}]_{\mathcal{C}^{1+(\alpha-1)H_+}}  + [\mathbb{G}^{\bullet \circ}-\mathbb{G}^{\bullet \circ,n}]_{\mathcal{C}^{1+(\alpha-1)H_++H_-}_2}\big) |v-s|^{1+(\alpha-1)H_+}. \label{eq:Q-approx}
 \end{equs}
 Next we derive an almost sure bound for  the right hand side of the above relation. By \eqref{eq:Lipschitz_bound_G} from \cref{cor:maps_in_Rouph_Paths}, we have 
\begin{equs}
    \|\big( [G^\circ-G^{\circ,n}]_{\mathcal{C}^{1+(\alpha-1)H_+}}  + [\mathbb{G}^{\bullet \circ}-\mathbb{G}^{\bullet \circ,n}]_{\mathcal{C}^{1+(\alpha-1)H_++H_-}_2}\big)\|_{L_2(\Omega)}  \lesssim  \|b-b^n\|_{\mathcal{C}^\alpha} \lesssim  2^{-2n}. 
\end{equs}
Hence, by Markov's inequality we get 
\begin{equs}
 \mathbb{P}\Big(  [G^\circ-G^{\circ,n}]_{\mathcal{C}^{1+(\alpha-1)H_+}}  + [\mathbb{G}^{\bullet \circ}-\mathbb{G}^{\bullet \circ,n}]_{\mathcal{C}^{1+(\alpha-1)H_++H_-}_2} \geq 2^{-n}\Big)   \lesssim   2^{2n}  2^{-4n} = 2^{-2n}. 
\end{equs}
Consequently, by the Borel-Cantelli lemma, there exists a random variable $\xi$ such that with probability one, for all $n \in \mathbb{N}$, we have 
\begin{equs}
 \,  [G^\circ-G^{\circ,n}]_{\mathcal{C}^{1+(\alpha-1)H_+}}  + [\mathbb{G}^{\bullet \circ}-\mathbb{G}^{\bullet \circ,n}]_{\mathcal{C}^{1+(\alpha-1)H_++H_-}_2}\leq \xi 2^{-n}.   
\end{equs}
Combining the above with \eqref{eq:Q-approx}, we get that with probability one
\begin{equs}       \label{eq:Q-approx_C_beta}
    \big\| Q- \int_0^\cdot IZ_r \, dG^{\circ,n}_r \big\|_{\mathcal{C}^{1+(\alpha-1)H_+}} \lesssim \xi 2^{-n}. 
\end{equs}
Next, notice that by the fundamental theorem of calculus we have that 
\begin{equs}
    \int_0^t b^n(X_r)-b^n(Y_r) \, dr = \int_0^t IZ_r \, dG^{\circ, n}, \qquad \text{for $t \in [0, \tau]$}. 
\end{equs}
By the above combined with \eqref{eq:Q-approx_C_beta} and \eqref{item:def_D} of \cref{def_solution}, we get 
\begin{equs}      \label{eq:drifts_Q}
    \| D^X-D^Y- Q\|_{\mathcal{C}^0([0, \tau])}& = \lim_{n \to \infty}\big\| D^X-D^Y- \int_0^\cdot IZ_r \, dG^{\circ, n} \big\|_{\mathcal{C}^0([0, \tau])}
    \\
    &= \lim_{n \to \infty}\big\| D^X-D^Y- \int_0^\cdot b^n(X_r)-b^n(Y_r)  \, dr \big\|_{\mathcal{C}^0([0, \tau])} =0,
\end{equs}
where the limits are taken in probability. Consequently, for any $0\leq s < t < \tau$, we have 
\begin{equs}
    Z_{s, t}= Q_{s,t} + \int_s^t \blue{ \Sigma Z_r} \, d \blue{G^\bullet_r}, 
\end{equs}
which in turn gives 
\begin{equs}
    |Z_{s, t} - (IZ_s) G^\circ_{s, t} -(\Sigma Z_s) G^\bullet_{s,t}|
    & \leq | Q_{s,t}- (IZ_s) G^\circ_{s, t}|  + |\int_s^t \blue{\Sigma Z_r} \, d \blue{G^\bullet_r}-(\Sigma Z_s) G^\bullet_{s,t}| 
\\
& \lesssim |t-s|^{1+(\alpha-1)H_++H_-}+ |t-s|^{2H_-} \lesssim |t-s|^{2H_-},
\end{equs}
where we have used \eqref{eq:remainder_Q} and the fact that $\blue{\Sigma Z} \in \mathcal{D}^{2H_-}_{G^\bullet}([0, 1])$. The above together with the fact that $IZ, \Sigma Z \in \mathcal{C}^{H_-}$ shows that 
 $\purp{Z}=(Z, \Sigma Z, I Z)$ belongs to the space $\mathcal{D}^{2H_-}_{G}([0, \tau], \R^d)$, with probability one. The fact that $\purp{I}=(I,0,0)$  belongs to $\mathcal{D}^{2H_-}_{G^\bullet}([0, 1]; \mathcal{L}(\R^d; \mathcal{L}(\R^{d\times d}; \R^d)))$ is trivial. Hence, we are done with \eqref{item:Z_is_controlled_purple}. 

 Moving to \eqref{item:identification_IZ}, the fact that $\purp{IZ}$ belongs to $\mathcal{D}^{2H_-}_{G}([0, \tau], \mathcal{L}(\R^{d \times d};  \R^d))$
is a direct consequence of \eqref{item:Z_is_controlled_purple}.  The fact that $\purp{IZ}:= (IZ,\D_\bullet(IZ) , \D_\circ (IZ))$ simply follows from the definition of the processes $\D_\bullet(IZ)$ and  $\D_\circ (IZ)$. 

Finally, \eqref{item:drifts_int_Z_dG} follows from \eqref{eq:drifts_Q} and the fact that 
\begin{equ}
    Q_t= \int_0^t  \purp{IZ_s} \, d \purp{G^\circ_s}, 
\end{equ}
which in turn simply follows from the definition of $Q$, that is, from \eqref{eq:germ_of_Q} and \eqref{eq:sewing_Q}. This finishes the proof. 
\end{proof}

Next, let us set $\purp{\Sigma}=(\Sigma, \D_\bullet \Sigma, 0)$, which clearly belongs to $\mathcal{D}^{2H_-}_{G}([0, 1];  \mathcal{L}(\R^d; \mathcal{L}(\R^{d_0}; \R^d)))$,   since $\blue{\Sigma}=(\Sigma, \D_\bullet \Sigma) \in \mathcal{D}^{2H_-}_{G^\bullet}([0, 1];  \mathcal{L}(\R^d; \mathcal{L}(\R^{d_0}; \R^d)))$. Consequently, we have that $\purp{\Sigma Z} \in \mathcal{D}^{2H_-}_{G}([0, 1];  \mathcal{L}(\R^{d_0};\R^d)))$.

Our main focus is the following.

\begin{lemma}                    \label{lem:blue_equals_purple}
    With probability one, for all $t \in [0,1]$ we have 
    \begin{equs}                               \label{eq:equality _of_rough_integrals}
        \int_0^t \blue{\Sigma_r Z_r} \, d \blue{G^\bullet_r}=  \int_0^t \purp{\Sigma_r Z_r} \, d \purp{G^\bullet_r}. 
    \end{equs}
\end{lemma}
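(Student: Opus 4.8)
The plan is to match the two integrals at the level of their defining germs and to kill the single discrepancy between them by a smooth approximation of $b$, under which the $L$-component of $\purp G$ becomes $C^1$ and therefore negligible. Write $\purp{\Sigma Z}=(\Sigma Z,\D_\bullet(\Sigma Z),\D_\circ(\Sigma Z))$ and observe that this triple of processes does not involve $b$: $\Sigma$ and $\D_\bullet\Sigma$ are built only from $\sigma,X,Y$, and $\D_\circ(\Sigma Z)$ is built from $\Sigma$ and the fixed operator $I$ applied to $Z=X-Y$. By the definitions of the two rough integrals one has $\int_0^t\blue{\Sigma_r Z_r}\,d\blue{G^\bullet_r}=\lim_{|\cP|\to0}\sum_{[s,u]\in\cP}A_{s,u}$ with $A_{s,u}=(\Sigma Z)_sB_{s,u}+\D_\bullet(\Sigma Z)_s\mathbb{B}_{s,u}$, and $\int_0^t\purp{\Sigma_r Z_r}\,d\purp{G^\bullet_r}=\lim_{|\cP|\to0}\sum_{[s,u]\in\cP}A'_{s,u}$ with $A'_{s,u}=A_{s,u}+\D_\circ(\Sigma Z)_s\mathbb{G}^{\circ\bullet}_{s,u}$; here one uses that the $G^\bullet$- and $\mathbb{G}^{\bullet\bullet}$-components of $\purp G$ are exactly $B$ and $\mathbb{B}$, and that the $\D_\bullet$-component of $\purp{\Sigma Z}$ coincides with the Gubinelli derivative of the $B$-controlled path $\blue{\Sigma Z}=\blue{\sigma(X)-\sigma(Y)}$. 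Thus the whole difference between the two integrals is carried by the germ $\D_\circ(\Sigma Z)_s\mathbb{G}^{\circ\bullet}_{s,u}$, i.e.\ by the ``$\int L\otimes dB$''-component $\mathbb{G}^{\circ\bullet}=\tilde\cK b$. Since this component has H\"older exponent only $1+(\alpha-1)H$, which in the present regime may be below $1$, it cannot simply be discarded, and an approximation is needed.

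The first step would be to pick $b^n\in\cC^\infty$ with $\|b^n-b\|_{\cC^\alpha}\to0$ (possible since $b\in\cC^{\alpha+}$ by \cref{asn:b}) and to set $\purp{G^n}:=\purp{Gb^n}=(B,\cJ b^n,\mathbb{B},\tilde\cK b^n,\cK b^n)$. By \cref{cor:maps_in_Rouph_Paths}, $\purp{G^n}\to\purp G$ in $L^p(\Omega;\cR^{H_-,1+(\alpha-1)H_+})$, so after passing to a subsequence the convergence holds almost surely. The crucial point is that for smooth $b^n$ the path $G^{\circ,n}:=\cJ b^n$ is $C^1$ in time, so $\tilde\cK b^n$ is the ordinary Young integral $\int_s^t G^{\circ,n}_{s,r}\otimes dB_r$, and since $|G^{\circ,n}_{s,r}|\lesssim_n|r-s|$ and $B\in\cC^{H_-}$ we get $|\tilde\cK_{s,t}b^n|\lesssim_n|t-s|^{1+H_-}$. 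Moreover, because $G^{\circ,n}\in C^1$, the triple $\purp{\Sigma Z}$ is a controlled path in $\mathcal{D}^{2H_-}_{G^n}([0,1])$ whose norm is bounded uniformly in $n$ — only the component $G^{\circ,n}$ of $\purp{G^n}$ enters the remainder of $\purp{\Sigma Z}$, and $G^{\circ,n}\to L$ in $\cC^{1+(\alpha-1)H_+}$.

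For fixed $n$, the germ of $\int_0^t\purp{\Sigma_r Z_r}\,d\purp{G^{n,\bullet}_r}$ (the mixed rough integral against the first component of $\purp{G^n}$) is $A^{(n)}_{s,u}=A_{s,u}+\D_\circ(\Sigma Z)_s\tilde\cK_{s,u}b^n$, and the extra term has vanishing Riemann sums: $\big|\sum_{[s,u]\in\cP}\D_\circ(\Sigma Z)_s\tilde\cK_{s,u}b^n\big|\le\|\D_\circ(\Sigma Z)\|_{\cC^0}\sum_{[s,u]\in\cP}|\tilde\cK_{s,u}b^n|\lesssim_n|\cP|^{H_-}\to0$ as $|\cP|\to0$. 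Hence $\int_0^t\purp{\Sigma_r Z_r}\,d\purp{G^{n,\bullet}_r}=\int_0^t\blue{\Sigma_r Z_r}\,d\blue{G^\bullet_r}$ for every $n$, the right-hand side being independent of $n$. Letting $n\to\infty$ along the chosen subsequence and using the continuity of rough integration of mixed regularity in the driving signal — applied with the fixed integrand $\purp{\Sigma Z}$ and the uniform bounds of the previous step — the left-hand side converges a.s., for each fixed $t$, to $\int_0^t\purp{\Sigma_r Z_r}\,d\purp{G^\bullet_r}$. Comparing the two limits gives \eqref{eq:equality _of_rough_integrals} a.s.\ for each $t$, and then for all $t$ simultaneously by continuity in $t$.

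The germ identity and the vanishing of the Riemann sums for smooth $b^n$ are routine. The delicate point is the final passage to the limit: one must verify that the stability estimate for mixed rough integration applies with the integrand $\purp{\Sigma Z}$ reinterpreted as a controlled path over the varying rough paths $\purp{G^n}$, which is precisely where the uniform control of $\|\purp{\Sigma Z}\|_{\mathcal{D}^{2H_-}_{G^n}}$ — guaranteed by $G^{\circ,n}\to L$ in $\cC^{1+(\alpha-1)H_+}$ — enters.
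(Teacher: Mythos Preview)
Your reduction of the problem to the single extra germ $\D_\circ(\Sigma Z)_s\,\mathbb{G}^{\circ\bullet}_{s,u}$ is correct and matches the paper's starting point, and the observation that for smooth $b^n$ one has $|\tilde\cK_{s,u}b^n|\lesssim_n|u-s|^{1+H_-}$ so that the corresponding Riemann sums vanish is fine. The gap is the passage $n\to\infty$. Your claimed uniform bound on $\|\purp{\Sigma Z}\|_{\mathcal D^{2H_-}_{G^n}}$ does \emph{not} follow from $G^{\circ,n}\to L$ in $\cC^{1+(\alpha-1)H_+}$: the remainder of $\purp{\Sigma Z}$ over $G^n$ equals the remainder over $G$ plus $\D_\circ(\Sigma Z)_s\,(L-G^{\circ,n})_{s,t}$, and this extra term is only $O(\epsilon_n|t-s|^{1+(\alpha-1)H_+})$, not $O(|t-s|^{2H_-})$ uniformly. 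More concretely, the sewing exponent governing $\delta(A'^{(n)}-A')$ is $1+(\alpha-1)H_++H_-$, which drops below $1$ exactly when $\alpha<1-H_-/H_+$, i.e.\ (up to the gap between $H_\pm$ and $H$) when $\alpha\le 0$ --- the distributional regime that is the whole point of the paper. In that regime no stability estimate of the type you invoke is available, and the interchange of the limit in $n$ with the mesh limit cannot be justified.

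The paper's proof is genuinely different and uses structure your approximation discards. It splits $\mathbb{G}^{\circ\bullet}=\tilde\cK b$ via $\tilde\cK_{s,t}b=(\cJ_{s,t}b)\otimes B_{s,t}-\cK_{s,t}b$ into $C^{i,1}+C^{i,2}$. For $C^{i,1}$ it uses a pathwise cancellation: subtracting the blue expansion $Z_{s,t}=(\Sigma_sZ_s)B_{s,t}+O(|t-s|^{2H_-})$ from the purple one $Z_{s,t}=(IZ_s)L_{s,t}+(\Sigma_sZ_s)B_{s,t}+O(|t-s|^{2H_-})$ gives $(IZ_s)L_{s,t}=O(|t-s|^{2H_-})$, whence $C^{i,1}_{s,t}=O(|t-s|^{3H_-})$ almost surely (\cref{lem:C1}). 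For $C^{i,2}$ it runs a stochastic sewing argument (\cref{lem:C2}) that crucially exploits the higher regularity $1+\alpha H$ (not merely $1+(\alpha-1)H$) of the individual drifts $D^X,D^Y$ --- the ``one more resource'' flagged in the introduction --- to obtain $\|C^{i,2}_{s,t}\|_{L_p}\lesssim|t-s|^{3H_-}$. Since $3H_->1$, the Riemann sums of both pieces vanish. This extra drift regularity is precisely what bridges the gap from $\alpha>0$ to $\alpha>1-1/(2H)$; a plain smoothing of $b$ does not access it.
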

Recall that 
\begin{equs}
    \blue{\Sigma Z}&=(
    \Sigma Z, \Sigma (\Sigma Z)(\cdot) +\D_\bullet\Sigma (\cdot) Z ),
    \\
     \purp{\Sigma Z}&=(
    \Sigma Z, \Sigma (\Sigma Z)(\cdot) +\D_\bullet\Sigma (\cdot) Z,\Sigma (IZ)(\cdot)).
\end{equs}

By recalling the definition of the integrals appearing in \eqref{eq:equality _of_rough_integrals}, it is clear that  to have \eqref{eq:equality _of_rough_integrals} it suffices to show that 
\begin{equs}         \label{eq:extra_term=0}
    \lim_{|\mathcal{P}| \to 0} \sum_{ [s, u] \in \mathcal{P}_{[0, t]}} \big(\Sigma_s(IZ_s)\big) \mathbb{G}^{\circ \bullet}_{s,u}=0.
\end{equs}
Notice that  by definition of the two integrals appearing in \eqref{eq:equality _of_rough_integrals}, it follows that with probability one, the limit appearing on the left hand side 
exists. 
Moreover, in terms of indices, \eqref{eq:extra_term=0} is written as follows: for each $i \in \{1,\dots, d\}$, 
\begin{equs}                                      \label{eq:extra_term=0_indice}
 \lim_{|\mathcal{P}| \to 0} \sum_{ [s, u] \in \mathcal{P}_{[0, t]}}  C^i_{s,u}=0,
 \end{equs}
 where, 
 \begin{equs}
     C^i_{s,u}:= \sum_{l=1}^d \sum_{\varrho=1}^{d_0}\Sigma^{li\varrho}_s \sum_{q=1}^d Z^q_s (\tilde{\mathcal{K}}^{lq \varrho}_{s,u}b).
\end{equs}
 Further, notice that by definition of $\tilde{\mathcal{K}}$ (see \cref{cor:maps_in_Rouph_Paths}), we have that $C^i_{u,s}=C^{i,1}_{u,s}+C^{i,2}_{u,s}$, where 
\begin{equs}
 C^{i,1}_{s,u}:= \sum_{l=1}^d \sum_{\varrho=1}^{d_0}\Sigma^{li\varrho}_s \sum_{q=1}^d Z^q_s (\mathcal{J}^{lq}_{s,u}b) B^{\varrho}_{s,u}, \qquad  C^{i,2}_{s,u}:= \sum_{l=1}^d \sum_{\varrho=1}^{d_0}\Sigma^{li\varrho}_s \sum_{q=1}^d Z^q_s (\mathcal{K}^{\varrho lq }_{s,u}b)
\end{equs}
We next show that the terms above are of order $|s-u|^{3H_-}$.

\begin{lemma}                 \label{lem:C1} 
    For each $i \in \{1,\dots , d\}$, with probability one we have 
\begin{equs}
   \sup_{(s,t) \in [0,1]^2_{\leq}} \frac{|C^{i,1}_{s,t} |}{|t-s|^{3H_-}}< \infty.
\end{equs}
\end{lemma}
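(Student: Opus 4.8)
\emph{Reduction.} The plan is to bound $C^{i,1}_{s,t}$ in $L^p(\Omega)$ with a H\"older exponent strictly above $3H_-$ and then invoke a Kolmogorov-type continuity criterion. Since $\Sigma^{li\varrho}_s$ is bounded by $\|\sigma\|_{\cC^1}$ and $\sup_{s}\|Z_s\|_{L^q(\Omega)}<\infty$ for every $q$ by \eqref{eq:bound_X_controlled}, H\"older's inequality reduces matters to an estimate of the form
\begin{equs}
\,\Big\|\sum_q Z^q_s(\cJ b)^{lq}_{s,t}\Big\|_{L^q(\Omega)}\lesssim|t-s|^{1+(\alpha-1)H+H_-},
\end{equs}
which combined with the Gaussian bound $\|B^\varrho_{s,t}\|_{L^q(\Omega)}\lesssim|t-s|^H$ yields $\|C^{i,1}_{s,t}\|_{L^p(\Omega)}\lesssim|t-s|^{1+\alpha H+H_-}$ for every $p\geq1$. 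The naive route -- bounding $(\cJ b)^{lq}_{s,t}$ by \cref{thm:L} and $B^\varrho_{s,t}$ by Gaussianity separately -- only produces the exponent $1+\alpha H$, which is not above $3H_-$ when $\alpha$ is close to $1-1/(2H)$; the extra factor $|t-s|^{H_-}$ must be extracted from contracting $\cJ b$ against $Z_s$.

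\emph{The gain.} Writing $\psi^\theta_r=\theta X_r+(1-\theta)Y_r$, the fundamental theorem of calculus \eqref{eq:rough_FTC} applied to the plain function $b$ gives $\int_0^1\nabla b(\psi^\theta_r)(X_r-Y_r)\,d\theta=b(X_r)-b(Y_r)$, so, substituting $X_s-Y_s=(X_r-Y_r)-(Z_r-Z_s)$,
\begin{equs}
\,(\cJ b)_{s,t}Z_s=\int_s^t\chi_\rho(Z_r)\big(b(X_r)-b(Y_r)\big)\,dr-\int_s^t\chi_\rho(Z_r)\int_0^1\nabla b(\psi^\theta_r)(Z_r-Z_s)\,d\theta\,dr.
\end{equs}
The first integral is precisely a process of the type treated in \cref{thm:L} (take $\theta\in\{0,1\}$ there; for $\alpha\ge0$ an elementary estimate suffices), hence has $L^q(\Omega)$-norm $\lesssim|t-s|^{1+\alpha H}\le|t-s|^{1+(\alpha-1)H+H_-}$. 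For the second integral the plan is to rerun the stochastic sewing argument of \cref{thm:L} with the germ
\begin{equs}
\,A_{s,t}=\E_s\int_s^t\chi_\rho\big(\phi^{s,X_s}_r-\phi^{s,Y_s}_r\big)\int_0^1\nabla b\big(\theta\phi^{s,X_s}_r+(1-\theta)\phi^{s,Y_s}_r\big)\big(\phi^{s,X_s}_r-\phi^{s,Y_s}_r-Z_s\big)\,d\theta\,dr.
\end{equs}
Relative to \cref{thm:L} the only new ingredient is the extra bounded factor $|\phi^{s,X_s}_r-\phi^{s,Y_s}_r-Z_s|\lesssim(1+[\blue{B}]_{\cR^{H_-}})^{q'}|r-s|^{H_-}$ supplied by \cref{cor:phi-x}; carrying it through the verification of conditions (i)--(ii) of \cref{lem:SSL-vanila} (which otherwise proceeds word for word as in \cref{thm:L}, invoking \cref{lem:IBP-with-cutoff} for the conditional density bounds and then interpolating) multiplies both $\Gamma_1$ and $\Gamma_2$ by $|t-s|^{H_-}$, giving $\beta_1=1+(\alpha-1)H+H_-$ and $\beta_2=2+2(\alpha-1)H+H_->1$ (the latter using $(1-\alpha)H<1/2$). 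Approximating $b$ by smooth functions identifies the sewn process with $\int_0^\cdot\chi_\rho(Z_r)\int_0^1\nabla b(\psi^\theta_r)(Z_r-Z_s)\,d\theta\,dr$, so part (III) of \cref{lem:SSL-vanila} bounds its increments by $|t-s|^{1+(\alpha-1)H+H_-}$ in $L^q(\Omega)$.

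\emph{Conclusion and the hard part.} Combining the above, $\|C^{i,1}_{s,t}\|_{L^p(\Omega)}\lesssim|t-s|^{1+\alpha H+H_-}$ for all $p$. Since $\alpha>1-1/(2H)$ and $H<1/2$ one has $1+\alpha H>2H>2H_-$, hence $1+\alpha H+H_->3H_-$ with a fixed positive margin; choosing $p$ large and applying the Garsia--Rodemich--Rumsey/Kolmogorov continuity estimate to the continuous two-parameter field $(s,t)\mapsto C^{i,1}_{s,t}$ gives the a.s. finiteness of $\sup_{(s,t)}|C^{i,1}_{s,t}|/|t-s|^{3H_-}$. The only delicate point is rerunning the sewing of \cref{thm:L} with the additional $|r-s|^{H_-}$-weight; since for each fixed $r$ the weighted integrand is, up to the harmless factor $Z_r-Z_s$, the gradient of a $\cC^\alpha$ vector field of norm $\lesssim|r-s|^{H_-}\|b\|_{\cC^\alpha}$, no genuinely new estimate is required beyond propagating this factor through the proof of \cref{thm:L}.
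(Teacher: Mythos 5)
Your proof takes a genuinely different route from the paper's, and it also contains a gap that needs fixing.

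\textbf{Comparison of approaches.} The paper's proof of this lemma is a three--line pathwise argument: it uses the almost sure H\"older bound $|B^{\varrho}_{s,t}|\lesssim|t-s|^{H_-}$, and then observes that $(IZ_s)G^\circ_{s,t}=O(|t-s|^{2H_-})$ by comparing the two remainder estimates coming from $\blue{Z}=(Z,\Sigma Z)\in\cD^{2H_-}_{G^\bullet}$ (immediate from \cref{def_solution}) and $\purp{Z}=(Z,\Sigma Z,IZ)\in\cD^{2H_-}_{G}$ (established in \cref{lem:Z_is_controlled_by_G}\eqref{item:Z_is_controlled_purple}): subtracting the two relations, the $\Sigma Z$ terms cancel and the leftover $(IZ_s)G^\circ_{s,t}$ must be $O(|t-s|^{2H_-})$. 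Since $((IZ_s)G^\circ_{s,t})^l=\sum_q Z^q_s(\cJ^{lq}_{s,t}b)$, multiplying by $|B^{\varrho}_{s,t}|\lesssim|t-s|^{H_-}$ finishes the proof. You instead re-run the stochastic sewing machinery of \cref{thm:L} with a modified germ carrying the extra factor $\phi^{s,X_s}_r-\phi^{s,Y_s}_r-Z_S$. This is precisely the route the paper uses for the \emph{other} half, \cref{lem:C2} (where the pathwise argument is unavailable because $B$ sits inside the $r$-integral), so your plan is sound in principle; but for $C^{i,1}$ it replaces a short structural observation by a second pass through the entire sewing argument. Note, though, that the pathwise route tacitly relies on \cref{lem:Z_is_controlled_by_G}, whose conclusion is only stated on $[0,\tau]$; your $L^p$-then-Kolmogorov argument does not have this restriction, which is a small advantage.

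\textbf{The gap.} In your germ
\begin{equs}
A_{s,t}=\E_s\int_s^t\chi_\rho(\phi^{s,X_s}_r-\phi^{s,Y_s}_r)\int_0^1\nabla b(\theta\phi^{s,X_s}_r+(1-\theta)\phi^{s,Y_s}_r)\big(\phi^{s,X_s}_r-\phi^{s,Y_s}_r-Z_s\big)\,d\theta\,dr
\end{equs}
the reference value $Z_s$ moves with the left endpoint of the germ. With this choice the quantity you want to bound, $\Gamma_2(s,t)=\int_s^t\chi_\rho(Z_r)\int_0^1\nabla b(\psi^\theta_r)(Z_r-Z_s)\,d\theta\,dr$, is \emph{not} of the form $\cA_t-\cA_s$ for a single one-parameter process: one computes $\delta\Gamma_2(s,u,t)=(\cJ b)_{u,t}(Z_u-Z_s)$, which does not vanish. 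Consequently stochastic sewing does not reconstruct $\Gamma_2$, and the claim that ``the sewn process equals $\int_0^\cdot\chi_\rho(Z_r)\int\nabla b(\psi^\theta_r)(Z_r-Z_s)\,dr$'' is not well posed (which $s$?). The repair is exactly the one used in the proof of \cref{lem:C2}: fix $(S,T)\in[0,1]_\leq^2$, replace $Z_s$ in the germ by the \emph{fixed} $Z_S$, and run \cref{lem:SSL-vanila} on $[S,T]$. Then the target $\cQ_t=\int_S^t\chi_\rho(Z_r)\int_0^1\nabla b(\psi^\theta_r)(Z_r-Z_S)\,d\theta\,dr$ is a genuine one-parameter process, the factor $|\phi^{s,X_s}_r-\phi^{s,Y_s}_r-Z_S|\lesssim|T-S|^{H_-}\Xi'$ contributes $|T-S|^{H_-}$ to the constants $\Gamma_1,\Gamma_2$ (rather than to the exponents $\beta_1,\beta_2$, but the endpoint evaluation at $s=S$, $t=T$ gives the same overall bound), and one recovers $\|\cQ_{S,T}\|_{L^p}\lesssim|T-S|^{1+(\alpha-1)H+H_-}$ as you wanted. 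When the germ is decomposed as in \eqref{eq:First_Dec_Q}, one also needs the $Z_{S,s}$ prefactor to be peeled off before applying \cref{lem:IBP-with-cutoff}, since $Z_{S,s}$ is merely $\cF_s$-measurable and not obviously in $\mathbb{D}^{k^*,p^*}_s$. Your exponent arithmetic ($\beta_1>1/2$, $\beta_2>1$, $1+\alpha H+H_->3H_-$) and the final Kolmogorov step are correct.
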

\begin{proof}
    By the boundedness of $\Sigma$, we have 
   \begin{equs}
       |C^{i, 1}_{s,t}| & \lesssim  \sum_{l=1}^d  \big| \sum_{q=1}^d Z^q_s (\mathcal{J}^{lq}_{s,t}b) B^{\varrho}_{s,t} \big| 
       \lesssim  |t-s|^{H_-} \sum_{l=1}^d  \big| \sum_{q=1}^d Z^q_s (\mathcal{J}^{lq}_{s,t}b) \big| .                  \label{eq:to_estimate_IZL}
   \end{equs}
   Next, recall that by \cref{def_solution}, it is immediate that  $\blue{Z}= (Z, \Sigma Z) \in \mathcal{D}^{2H_-}_{G^\bullet}$, which in particular gives that 
   \begin{equ}   \label{eq:Z_controlled_blue}
       Z_{s,t}= (\Sigma_sZ_s) G^\bullet_{s,t}+O(|t-s|^{2H_-}).
   \end{equ}
   On the other hand, in \cref{lem:Z_is_controlled_by_G} we showed that  $\purp{Z}=(Z,\Sigma Z,IZ) \in  \mathcal{D}^{2H_-}_{G}$, which in particular implies that 
   \begin{equs} 
   \label{eq:Z_controlled_purple}
       Z_{s,t}=(IZ_s)G^\circ_{s,t}+  (\Sigma_sZ_s) G^\bullet_{s,t}+O(|t-s|^{2H_-}).
       \end{equs}
       Subtracting \eqref{eq:Z_controlled_blue} from \eqref{eq:Z_controlled_purple} gives that $(IZ_s)G^\circ_{s,t}=O(|t-s|^{2H_-})$. Keeping in mind the definition of $I$ (see, \eqref{eq:def_I_and_Sigma}) and the fact that again by definition $G^\circ=(\mathcal{J}^{lq}b)_{l,q=1}^d$, we have that    
       \begin{equs}
     \sum_{l=1}^d  \big| \sum_{q=1}^d Z^q_s (\mathcal{J}^{lq}_{s,t}b) \big| \lesssim |t-s|^{2H_-}.
\end{equs}
Replacing the above in \eqref{eq:to_estimate_IZL} gives the desired result. 
\end{proof}
\begin{lemma}                              \label{lem:C2}
For any $p \geq 1$ and $i \in \{1,\dots , d\}$ we have
\begin{equs}
 \sup_{(s,t) \in [0,1]^2_{\leq}}    \frac{\|C^{i,2}_{s,t} \|_{L_p} }{|t-s|^{3H_-}}< \infty.
\end{equs}
\end{lemma}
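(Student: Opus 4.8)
\textbf{Proof plan for \cref{lem:C2}.}
Since $\Sigma^{li\varrho}_s$ is deterministically bounded by $\|\sigma\|_{\cC^1}$, it suffices to bound $\|\sum_{q}Z^q_s\,\cK^{\varrho lq}_{s,t}b\|_{L_p}$ by a constant times $|t-s|^{3H_-}$, uniformly in $(s,t)\in[0,1]^2_\leq$ and in $l,\varrho$. The crude estimate one gets from \cref{thm:K} together with $\sup_s\|Z_s\|_{L_{2p}}\leq C^X_{S,2p}+C^Y_{S,2p}<\infty$ only yields $|t-s|^{1+\alpha H}$, which is not enough when $\alpha\leq 0$; the extra smallness has to be extracted from the weight $Z^q_s$. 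The key algebraic point is that for $\psi^\theta_r:=\theta X_r+(1-\theta)Y_r$ one has $\partial_\theta\psi^\theta_r=X_r-Y_r=Z_r$, so that $\int_0^1\sum_q Z^q_r\,\partial_q g(\psi^\theta_r)\,d\theta=g(X_r)-g(Y_r)$ for any $g\in\cC^1$.

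First I would approximate $b$ by smooth $b^n\to b$ in $\cC^\alpha$, insert $Z^q_s=Z^q_r-Z^q_{s,r}$ inside the (Lebesgue) integral defining $\cK b^n$, integrate in $\theta$ using the identity above, and pass to the limit using the continuity of $\cK$ from \cref{cor:maps_in_Rouph_Paths} together with the estimates below. This gives the decomposition
\begin{equs}
\sum_{q}Z^q_s\,\cK^{\varrho lq}_{s,t}b
&=\underbrace{\int_s^t B^\varrho_{s,r}\,\chi_\rho(X_r-Y_r)\big(b^l(X_r)-b^l(Y_r)\big)\,dr}_{=:\ \mathrm I^{\varrho l}_{s,t}}
\\&\qquad-\underbrace{\int_0^1\!\!\int_s^t B^\varrho_{s,r}\,\chi_\rho(X_r-Y_r)\sum_q Z^q_{s,r}\,\partial_q b^l(\psi^\theta_r)\,dr\,d\theta}_{=:\ \mathrm{II}^{\varrho l}_{s,t}},
\end{equs}
where both terms are understood through the stochastic-sewing constructions of \cref{thm:L} (and \cref{cor:L}). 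For $\mathrm I^{\varrho l}$: the process $M^l_t:=\int_0^t\chi_\rho(X_r-Y_r)(b^l(X_r)-b^l(Y_r))\,dr$ (the difference of two instances of the process from \cref{thm:L} at $\theta=1$ and $\theta=0$) satisfies $\|M^l_{s,t}\|_{L_p}\lesssim|t-s|^{1+\alpha H}$ by \cref{thm:L} when $\alpha<0$, and $\|M^l_{s,t}\|_{L_p}\lesssim|t-s|$ trivially when $\alpha\geq0$; by Kolmogorov's criterion $M^l$ is a.s.\ Hölder of every exponent below its $L_p$-order, with all moments finite. Since $1+\alpha H>1-H_-$ (because $\alpha H>H-1/2>-H_-$, using $H+H_->1/2$), the integral $\mathrm I^{\varrho l}_{s,t}=\int_s^t B^\varrho_{s,r}\,dM^l_r$ is a well-defined Young integral, and the standard Young estimate gives $\|\mathrm I^{\varrho l}_{s,t}\|_{L_p}\lesssim|t-s|^{H_-+1+(\alpha\wedge0)H}$; the exponent is $\geq 3H_-$ because $1\geq 2H_-$ and $1+\alpha H\geq 2H_-$ for $H_-$ close to $H$ (from $\alpha H>H-1/2>2H-1\geq 2H_--1$, where $H<1/2$ is used). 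For $\mathrm{II}^{\varrho l}$: here the integrand carries two factors, $B^\varrho_{s,\cdot}$ and $Z^q_{s,\cdot}$, each lying in $\cC^{H_-}$ and vanishing at $s$, while $\int_0^1\chi_\rho(X_\cdot-Y_\cdot)\partial_q b^l(\psi^\theta_\cdot)\,d\theta$ is the derivative of a component of $L=\cJ b\in\cC^{1+(\alpha-1)H_+}$ (\cref{cor:L}); since $2H_-<1$ the naive Young integral does not converge, and one argues instead by a stochastic-sewing argument in the spirit of \cref{thm:K} (with an $\cF_s$-conditioned germ), obtaining $\|\mathrm{II}^{\varrho l}_{s,t}\|_{L_p}\lesssim|t-s|^{2H_-+1+(\alpha-1)H_+}$. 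This exponent again exceeds $3H_-$ because $1+(\alpha-1)H_+>H_-$ for $H_\pm$ close to $H$, equivalently $\alpha\geq 2-1/H$, which follows from $\alpha>1-1/(2H)\geq 2-1/H$ (again because $H<1/2$). Adding the two bounds and summing over $l,\varrho$ against the bounded $\Sigma_s$ yields the claim.

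The main obstacle will be making $\mathrm{II}^{\varrho l}$ rigorous: unlike $\mathrm I^{\varrho l}$ it cannot be treated pathwise because $Z$, $B$ and $L$ have Hölder regularities that do not add up beyond $1$, so one is forced to re-run a full stochastic-sewing argument parallel to \cref{thm:K} — choosing the germ with the flow $\phi^{s,X_s},\phi^{s,Y_s}$ replacing $X,Y$ (so that $Z_{s,r}$ is replaced by the increment $\phi^{s,X_s}_r-\phi^{s,Y_s}_r-Z_s$), controlling it through the conditional integration-by-parts formula \cref{lem:IBP-with-cutoff}, and absorbing the discrepancy between $Z_{s,\cdot}$ and its flow approximation via \cref{Prop:sta-ini} and \cref{cor:phi-x}. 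A secondary point of care is that the controlled-path identities (used both here and implicitly for $M^l$) are available only up to the stopping time $\tau$; as in \cref{lem:C1}, one uses that the cutoffs $\chi_\rho$ force the relevant integrands to vanish once the two solutions have separated, so nothing is lost on $[\tau,1]$.
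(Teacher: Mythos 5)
Your plan matches the paper's proof in all essentials: after approximating $b$ by smooth $b_n$, the paper writes $Z_s = Z_r - Z_{s,r}$ inside the Lebesgue integral defining $\cK b_n$ and splits into the same two pieces $\Gamma^n_1 = \int_s^t B^\varrho_{s,r}\,dD^n_r$ (your $\mathrm I$, treated as a pathwise Young integral against $D^n_t=\int_0^t\chi_\rho(X_r-Y_r)(b^l_n(X_r)-b^l_n(Y_r))\,dr$, controlled via Theorem~\ref{thm:L} at $\gamma=\alpha$ plus Kolmogorov) and $\Gamma^n_2$ (your $\mathrm{II}$, treated by a fresh stochastic sewing with germ built from $\phi^{s,X_s}_r,\phi^{s,Y_s}_r$ so that $Z_{s,r}$ is replaced by $\phi^{s,X_s}_r-\phi^{s,Y_s}_r-Z_s$, exactly as you describe). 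You have also correctly located where the extra smallness $|T-S|^{2H_-}$ comes from: the factor $B^\varrho_{S,r}$ (giving $|T-S|^{H_-}$) and the flow increment $\phi^{s,x}_r-\phi^{s,y}_r-x+y$ (giving $|T-S|^{H_-}$ via \cref{cor:phi-x}) entering the $\cY$ slot of \cref{lem:IBP-with-cutoff}. The one piece of the paper's argument your plan does not anticipate, and which saves a fair amount of work, is that for the $\E_s\delta Q$ bound the germ $Q_{s,t}$ is decomposed as $\tilde Q^1_{s,t}-\tilde Q^2_{s,t}-Z_S\tilde Q^3_{s,t}$ where each $\tilde Q^j$ is itself an instance of the germ from \eqref{eq:K-A-def} with particular $(f,\theta)$; this lets one quote \eqref{eq:estimate-deltaA-for-K} directly rather than re-derive the $\delta$-estimate from scratch, which is worth knowing before you ``re-run a full stochastic-sewing argument.'' Otherwise the exponent bookkeeping you perform is correct and consistent with the paper's.
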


\begin{proof}
    By the boundedness of $\Sigma$, it suffices to show that for each $l \in \{1, \dots, d\}$ and $\varrho \in \{1, \dots, d_0\}$ we have 
    \begin{equs}
       \Big\| \sum_{q=1}^d Z^q_s (\mathcal{K}^{\varrho lq }_{s,t}b) \Big\|_{L_p} \lesssim |t-s|^{3H_-}.
    \end{equs}
    To this end, let $b_n \in \cC^\infty$ such that $b_n \to b$ in $\cC^\alpha$ and $\|b_n\|_{\cC^\alpha} \leq 2\|b\|_{\cC^\alpha}$ for all $n \in \mathbb{N}$. By \cref{cor:K} we have that 
  \begin{equs}                                   \label{eq:ZK_limit}
       \Big\| \sum_{q=1}^d Z^q_s (\mathcal{K}^{\varrho lq }_{s,t}b) \Big\|_{L_p} = \lim_{n \to \infty}\Big\| \sum_{q=1}^d Z^q_s (\mathcal{K}^{\varrho lq }_{s,t}b_n) \Big\|_{L_p}.
    \end{equs}
    For fixed $n$, we have 
     \begin{equs}         
    \sum_{q=1}^d Z^q_s (\mathcal{K}^{\varrho lq }_{s,t}b_n)  &  = \int_s^t \int_0^1  B^\varrho_{s,r} \chi_{\rho}(X_r-Y_r)  Z_r \nabla b_n^l(\theta X_r+(1-\theta)Y_r) \, d \theta dr
      \\
      & \qquad +  \int_s^t \int_0^1  B^\varrho_{s,r} \chi_{\rho}(X_r-Y_r)   Z_{s,r}\nabla b_n^l(\theta X_r+(1-\theta)Y_r) \, d \theta dr
      \\
      &=: \Gamma^n_1(s,t)+\Gamma^n_2(s,t).    \label{eq:ZK_dec}
    \end{equs}
   By the fundamental theorem of calculus we have 
    \begin{equs}
       | \Gamma^n_1(s,t)| & = \Big| \int_s^t  B^\varrho_{s,r} \chi_{\rho}(X_r-Y_r)  \big( b^l_n(X_r)-b^l_n(Y_r)\big)  \, dr \Big|
       \\
       & = \Big| \int_s^t  B^\varrho_{s,r} \, dD^n_r \Big|, 
    \end{equs}
    where 
    \begin{equs}
        D^n_t= \int_0^t \chi_{\rho}(X_r-Y_r)  \big( b^l_n(X_r)-b^l_n(Y_r)\big)  \, dr.
    \end{equs}
    Consequently, 
    \begin{equs} 
         |\Gamma^n_1(s,t)|  \lesssim [B]_{\cC^{H_-}} [D^n]_{\cC^{1+\alpha H_+}} |t-s|^{H_-+1+\alpha H_+}.
    \end{equs}
 By \cref{thm:L}  (with the choice $\gamma = \alpha$, $\theta=0$ and $\theta=1$)  we see that $\|D^n\|_{\cC^{1+\alpha H}([0,1]; L_{2p})} \lesssim \|b_n\|_{\cC^\alpha} 
    $. Moreover, by Kolmogorov's continuity criterion, for $p$ sufficiently large and $\varepsilon'$ sufficiently small, we have $\|D^n\|_{L_{2p}(\Omega; \cC^{1+\alpha H_+})} \lesssim \|D^n\|_{\cC^{1+\alpha H-\varepsilon'}([0,1]; L_{2p})}$, hence we obtain 
    \begin{equs}
         \| \Gamma^n_1(s,t)\|_{L_p} &  \lesssim \| B \|_{L_{2p}(\Omega;\cC^{H_-})}\|D^n\|_{L_{2p}(\Omega; \cC^{1+\alpha H_+})}  |t-s|^{H_-+1+\alpha H_+}
         \\
         & \lesssim \|b_n\|_{\cC^\alpha}   |t-s|^{3H_-},
    \end{equs}
    where we have used that $1+\alpha H_+>2H_-$, which follows by the choice of $H_-, H_+$ (see, also \eqref{eq:Choice of H-}). . Consequently, we have 
    \begin{equs}           \label{eq:estimate_Gamma_1}
        \limsup_{n \to \infty}   \| \Gamma^n_1(s,t)\|_{L_p} \lesssim \|b\|_{\cC^\alpha}  |t-s|^{3H_-}.
    \end{equs}
    To estimate $\Gamma_2^n$, let $(S, T) \in [0, 1]^2_{\leq}$ and for $q \in \{1, \dots, d\}$ consider the process $(\mathcal{Q}_t)_{t \in [S,T]}$ given by 
    \begin{equs}
       \mathcal{Q}_t= \int_S^t \int_0^1  B^\varrho_{S,r} \chi_{\rho}(X_r-Y_r)   Z_{S,r}\nabla b_n^l(\theta X_r+(1-\theta)Y_r) \, d \theta dr.
    \end{equs}
    It is easy to see that the process $
    \mathcal{Q}$ is reconstructed by the germs 
    \begin{equs}
        Q_{s,t}&= \E_s \int_s^t \int_0^1  B^\varrho_{S,r} \chi_{\rho}(\phi^{s,X_s}_r-\phi^{s,Y_s}_r) (\phi^{s,X_s}_r-\phi^{s,Y_s}_r-Z_S)  \nabla b_n^l(\theta\phi^{s,X_s}_r+(1-\theta)\phi^{s,Y_s}_r) \, d \theta dr.
    \end{equs}
    First, we want to obtain that 
    \begin{equs}                     \label{eq:Q_s_t}
          \|Q_{s,t}\|_{L_p} \lesssim \|b\|_{\cC^\alpha} |t-s|^{1+(\alpha-1)H} |T-S|^{2H_-}.
    \end{equs}
  For this, we decompose $Q$ as 
   \begin{equs}
        Q_{s,t}= & Z_{S,s}\E_s \int_s^t \int_0^1  B^\varrho_{S,r} \chi_{\rho}(\phi^{s,X_s}_r-\phi^{s,Y_s}_r))  \nabla b_n^l(\theta \phi^{s,X_s}_r+(1-\theta)\phi^{s,Y_s}_r) \, d \theta dr
        \\
        &+ \E_s \int_s^t \int_0^1  B^\varrho_{S,r} \chi_{\rho}(\phi^{s,X_s}_r-\phi^{s,Y_s}_r) (\phi^{s,X_s}_r-\phi^{s,Y_s}_r-Z_s)  \nabla b_n^l(\theta\phi^{s,X_s}_r+(1-\theta)\phi^{s,Y_s}_r) \, d \theta dr
        \\
        =&:Q^1_{s,t}+Q^2_{s,t}.             \label{eq:First_Dec_Q}
    \end{equs}
 Notice that $Q^1_{s,t}=Z_{S,s}A'_{s,t}$, where $A'_{s,t}$ is  the integral over $\theta \in (0, 1)$  of the germ in \eqref{eq:K-A-def}, with the choice $f=\nabla b^l_n$. By using then \eqref{eq:estimate-A-for-K}, we get  
    \begin{equs}            \label{eq:Q1_s_t}
        \|Q^1_{s,t} \|_{L_p} &\leq  \|Z_{S,s}\|_{L_{2p}} \|A'_{s,t}\|_{L_{2p}} \\
        &\lesssim \| [Z]_{\cC^{H_-}}\|_{L_{2p}} |T-S|^{H_-} \|b_n\|_{\cC^\alpha} |t-s|^{1+(\alpha-1)H} |T-S|^{H}
        \\
        & \lesssim \|b_n\|_{\cC^\alpha} |t-s|^{1+(\alpha-1)H} |T-S|^{2H_-}.
        \end{equs}
        The estimate for $Q^2_{s,t}$ is essentially a repetition of the arguments for the bound \eqref{eq:estimate-A-for-L} in proof of \cref{thm:L}. One has 
        \begin{equs}
            Q^2_{s,t}= \int_s^t\E_s\big(\I_{s,r}^{(x,y)}\mathbb{J}_{s,r}^{(x,y)}\nabla b^l_n(\psi_{r}^{s,(x,y)})\big)\big|_{(x,y)=(X_s,Y_s)}\,dr, 
        \end{equs}
        where $\I_{s,r}^{(x,y)}$ and $\psi_{r}^{s,(x,y)}$ are as in the proof of \cref{thm:L}, while the extra term $\mathbb{J}_{s,r}^{(x,y)}$ is given by 
        \begin{equs}
            \mathbb{J}_{s,r}^{(x,y)}:= B^\varrho_{S,r}(\phi^{s,x}_r-\phi^{s,y}_r-x+y).
        \end{equs}
        One can repeat then the arguments that lead to \eqref{eq:estimate-A-for-L}, with the following modification: in the application of \cref{lem:IBP-with-cutoff} instead of $\mathcal{Y}=1$ we choose $\mathcal{Y}= \mathbb{J}_{s,r}^{(x,y)}$,
   whose conditional Malliavin norm needs to be estimated to gain the extra factor of $|T-S|^{2H_-}$. First, notice that for $q\geq 1$
   \begin{equs}
       \big(\E_s|B^\varrho_{S,r}|^q\big)^{1/q}&\leq  \big(\E_s[B]_{\cC^{H_-}}^q\big)^{1/q}|T-S|^{H_-},
       \\
\big(\E_s\|D_{\mathcal{H}_s^{\perp}}B^\varrho_{S,r}\|_{\mathcal{H}}^q\big)^{1/q}&\leq  \|D B^\varrho_{S,r}\|_{\mathcal{H}} \lesssim |T-S|^H,
   \end{equs}
   while for $k \geq 2$, we have $D_{\mathcal{H}_s^{\perp}}^k B^\varrho_{S,r}=0$. Moreover, by \cref{cor:phi-x} we have 
   \begin{equs}
        \big(\E_s|\phi^{s,x}_r-\phi^{s,y}_r-x+y|^q\big)^{1/q} \lesssim  \big(\E_s|(1+[\blue{B}]_{\cR^{H_-}})^{q(1+(4H_-+3))/H_-}\big)^{1/q}|T-S|^{H_-}
   \end{equs}
   and for $k \geq 1$, we have 
    \begin{equs}
        \big(\E_s\|D_{\mathcal{H}_s^{\perp}}^k(\phi^{s,x}_r-\phi^{s,y}_r-x+y)\|_{\R^d \otimes \mathcal{H}^{\otimes k}}^q\big)^{1/q} & \leq \big(\E_s\|D^k(\phi^{s,x}_r-\phi^{s,y})\|_{\R^d \otimes\mathcal{H}^{\otimes k}}^q\big)^{1/q} 
        \\
        &\leq 2\big(\E_s|\Lambda_n |^q\big)^{1/q}|T-S|^{H_-},
   \end{equs}
   where we have used \cref{lem:flow-D-with-power}. By these considerations it follows that for $s \in [S,T]$, $k^*\in \mathbb{N}$, $p^* \geq 2$ there exists a random variable $\Xi'_s$ which satisfies the following:
   for any $p \geq 1$, $\|\Xi'_s\|_{L_p} \leq C=C(k^*, p^*, p)<\infty$ (but independent of $s$)
  and  for all $x, y \in \R^d$, $r \in [s,T]$, with probability one, we have 
   \begin{equs}
       \|\mathbb{J}_{s,r}^{(x,y)}\|_{D^{k^*, p^*}_s} \leq \Xi'_s |T-S|^{2H_-}. 
   \end{equs}
   With this estimate in hand, one can repeat the arguments that lead to \eqref{eq:estimate-A-for-L} and conclude that 
   \begin{equs}
    \|Q^2_{s,t}\|_{L_p}  \lesssim \| b_n\|_{\cC^\alpha} |T-S|^{2H_-} |t-s|^{1+(\alpha-1)H},
   \end{equs}
   which combined with \eqref{eq:Q1_s_t} and the fact that $\|b_n\|_{\cC^\alpha} \leq \|b\|_{\cC^\alpha}$ gives \eqref{eq:Q_s_t}. 

To bound $\delta Q_{s,u,t}$, we decompose $Q$ in a different manner. Namely, we have 
     \begin{equs}
        Q_{s,t}&= \E_s \int_s^t \int_0^1  B^\varrho_{S,r} \chi_{\rho}(\phi^{s,X_s}_r-\phi^{s,Y_s}_r) (\phi^{s,X_s}_r-\phi^{s,Y_s}_r)\nabla b_n^l(\theta\phi^{s,X_s}_r+(1-\theta)\phi^{s,Y_s}_r) \, d \theta dr
        \\
        &  \qquad - Z_S\E_s \int_s^t \int_0^1  B^\varrho_{S,r} \chi_{\rho}(\phi^{s,X_s}_r-\phi^{s,Y_s}_r) \nabla b_n^l(\theta\phi^{s,X_s}_r+(1-\theta)\phi^{s,Y_s}_r) \, d \theta dr  
        \\
        &=  \E_s \int_s^t   B^\varrho_{S,r} \chi_{\rho}(\phi^{s,X_s}_r-\phi^{s,Y_s}_r) \big( b_n^l(\phi^{s,X_s}_r)- b_n^l(\phi^{s,Y_s}_r) \big) \, dr
        \\
          &  \qquad - Z_S\E_s \int_s^t \int_0^1  B^\varrho_{S,r} \chi_{\rho}(\phi^{s,X_s}_r-\phi^{s,Y_s}_r) \nabla b_n^l(\theta\phi^{s,X_s}_r+(1-\theta)\phi^{s,Y_s}_r) \, d \theta dr  
          \\
          &= \tilde{Q}^1_{s,t}-\tilde{Q}^2_{s,t}-Z_S\tilde{Q}^3_{s,t},
    \end{equs}
    where $\tilde{Q}^1_{s,t}$ is the germ in \eqref{eq:K-A-def} with the choice $\theta=1$ and $f= b^l$,  $\tilde{Q}^2_{s,t}$ is the germ in \eqref{eq:K-A-def} with the choice $\theta=0$ and $f= b^l$,  and $\tilde{Q}^3_{s,t}$ is the integral over $\theta \in (0, 1)$ of the germ in \eqref{eq:K-A-def} with the choice $f= \nabla b^l$. Consequently, by using \eqref{eq:estimate-deltaA-for-K}, it  is easy to see that 
    \begin{equs}
        \|\E_s\delta Q_{s,u,t}\|_{L_p} \lesssim \|b_n\|_{\cC^{\alpha}}|T-S|^H|t-s|^{2+2\alpha H-2H}. 
    \end{equs}
   We can apply  \cref{lem:SSL-vanila} to get 
    \begin{equs}
        \|\mathcal{Q}_{S,T}\|_{L_p} & \lesssim \|b_n\|_{\cC^\alpha} \Big( |T-T|^{1+(\alpha-1)H} |T-S|^{2H_-}+|T-S|^H|T-S|^{2+2\alpha H-2H}\big) 
        \\
        & \lesssim \|b_n\|_{\cC^\alpha} |T-S|^{3H_-}.
    \end{equs}
Recall that by definition $\Gamma^n_2(S,T)= \mathcal{Q}_{S,T}$, and since in the above $S,T$ were arbitrary, we get 
\begin{equs}                                                   \label{eq:estimate_Gamma_2}
    \limsup_{n \to \infty}\|\Gamma^n_2(s,t)\|_{L_p} \lesssim  \|b\|_{\cC^\alpha} |t-s|^{3H_-}. 
\end{equs}
Finally, by \eqref{eq:ZK_limit}, \eqref{eq:ZK_dec}, \eqref{eq:estimate_Gamma_1}, and \eqref{eq:estimate_Gamma_2}, we conclude that 
\begin{equs}
       \Big\| \sum_{q=1}^d Z^q_s (\mathcal{K}^{\varrho lq }_{s,t}b) \Big\|_{L_p} \lesssim \|b\|_{\cC^\alpha} |t-s|^{3H_-},
\end{equs}
which brings the proof to an end. 
\end{proof}

We can now proceed with the proof of \cref{lem:blue_equals_purple}. 

\begin{proof}[Proof of  \cref{lem:blue_equals_purple}]
Recall that according to the discussion after the statement of the lemma, we know that, with probability one, the limit  of $\sum_{ [s, u] \in \mathcal{P}_{[0, t]}}  C^i_{s,u}$ as $|\mathcal{P}| \to 0$ exists and we only have to show that it is equal to zero. Recall also that 
\begin{equs}
    \sum_{ [s, u] \in \mathcal{P}_{[0, t]}}  C^i_{s,u}=  \sum_{ [s, u] \in \mathcal{P}_{[0, t]}}  C^{i,1}_{s,u}+ \sum_{ [s, u] \in \mathcal{P}_{[0, t]}}  C^{i,2}_{s,u}.
\end{equs}
By \cref{lem:C1}, since $3H_->1$,  the limit as $|\mathcal{P}| \to 0$ of the first term on the right hand side exists with probability one and it is zero. This implies that the second term on the right hand side also converges with probability one. One the other hand, by \cref{lem:C2} it follows that it converges to zero in $L_p$. Hence, the almost sure limit is also zero. 
This shows that with probability one we have \eqref{eq:extra_term=0_indice}, which in turn implies \eqref{eq:equality _of_rough_integrals}. This finishes the proof. 
\end{proof}

\section{Proof of the main result}

We now have all the tools to complete the proof of \cref{thm:main}. 

\begin{proof}[Proof of \cref{thm:main}]
Let $X, Y$ be solutions of \eqref{eq:main}. Then, by \cref{def_solution}, for the difference $Z:=X-Y$, with probability one, for all $t \in [0,1]$, we have 
\begin{equs}
    Z_t= D^X_t-D^Y_t+ \int_0^t \blue{\Sigma_r Z_r} \, d \blue{G^\bullet_r}.
\end{equs}
From this equality, by \cref{lem:Z_is_controlled_by_G} and \cref{lem:blue_equals_purple} we obtain that with probability one, for all $t \in [0, \tau]$ (recall the definition of $\tau$ in \cref{lem:Z_is_controlled_by_G}), 
\begin{equs}
    Z_t=  \int_0^t  \purp{ IZ_s} \, d \purp{G^\circ_s} + \int_0^t \purp{\Sigma_r Z_r} \, d \purp{G^\bullet_r}.
\end{equs}
The above is a linear rough differential equation for the controlled path $\purp{Z}=(Z, \Sigma Z, I Z)$, starting at $Z_0=0$. Consequently, the only solution of the above equation is identically zero, which shows that $X_t=Y_t$ for $t \in [0 , \tau]$. Taking into account now the definition of $\tau$, it follows that $\tau=1$ almost surely, which shows the uniqueness. 

Next we move to the existence part.  Since in \cite{KM} weak existence was shown (under an even weaker condition $\alpha>1/2-1/(2H)$) and above we have shown strong uniqueness, it is classical (going back originally to \cite{Y-W}) that these imply strong existence. Here we provide a sketch based on a characterisation of convergence in probability from \cite{Gyongy_Krylov}.

For $n \in \mathbb{N}$, let $b^n \in \mathcal{C}^\infty$ with $\lim_{n \to \infty}\|b^n-b\|_{\cC^\alpha}=0$, and  consider $X^n$ given by the unique solution of the equation
\begin{equs}
    dX^n_t= b^n(X^n_t) \,  dt + \blue{\sigma(X^n_t)} \, d\blue{B_t}, \qquad X^n_0=x,
\end{equs}
and let us set 
$$
D^n= \int_0^t b^n(X^n_s) \, ds.
$$
Let $X^{k(n)}, X^{k'(n)}$ be arbitrary subsequences and consider the sequence 
$$
Z_n = (X^{k(n)}, D^{k(n)},  X^{k'(n)}, D^{k(n)},  \blue{B} , W) \in \mathcal{Z}
$$ 
where $\mathcal{Z}$ is the Polish space $\cC^{H_-+} \times \cC^{1+\alpha H_++} \times \cC^{H_-+} \times \cC^{1+\alpha H_++} \times \mathcal{R}_{\geo}^{H-} \times \cC^{H+}$. 
Then, following the arguments on \cite[Proof of Theorem 3]{KM}, the following can be shown:
\begin{enumerate}[(i)]
    \item there exists  subsequence of $Z_n$ for $n \in \mathbb{N}_1 \subset \mathbb{N}$ and a probability space $(\bar{\Omega}, \bar{\cF}, \bar{\mathbb{P}})$ carrying $\mathcal{Z}$-valued random variables $\bar{Z}_n, \bar{Z}$, $n \in \mathbb{N}_2$, with 
\begin{equs}
    \bar{Z}_n& = (\bar{X}^{k(n)}, \bar{D}^{k(n)},  \bar{X}^{k'(n)}, \bar{D}^{k(n)},  \blue{\bar{B}^n
} , \bar{W}^n),
\\
 \bar{Z}&= (\bar{X}^1, \bar{D}^1, \bar{X}^2 , \bar{D}^2,  \bar{\blue{B}
} , \bar{W}),
\end{equs}
such that $\bar{Z}_n \overset{d}{=}Z_n$ for $n \in \mathbb{N}_1$ and $Z_n \to Z$ $\bar{\mathbb{P}}$-almost surely as $\mathbb{N}_1 \ni n \to \infty$. 

\item $\bar{W}$ is an $\bar{\mathbb{F}}$-Brownian motion, where $\bar{\mathbb{F}}=(\bar{\cF}_t)_{t \in [0,1]}$, $\bar{\cF}_t= \sigma( \bar{W}_s, \bar{X}^i_s, \bar{D}^i_s, i=1,2, s \leq t)$ and $\bar{\mathbb{F}}=(\bar{\cF}_t)_{t \in [0,1]}$. 

\item $\bar{\blue{B}
}$ is the Gaussian rough path lift of the fractional Brownian motion 
\begin{equs}
    \bar{B}_t= \int_0^t K(t,r) \, d \bar{W}_r.
\end{equs}
\item For each $i =1,2$, $\bar{X}^i$ is a solution with drift component $\bar{D}^i$, driven by the same noise $\bar{\blue{B}
}$ and with respect to the same filtration $\bar{\mathbb{F}}$. 
\end{enumerate}
Consequently, by the uniqueness part $\bar X^1=\bar X^2$ (hence $\bar D^1=\bar D^2$), and the characterisation of convergence in probability of Gy\"ongy and Krylov (\cite{Gyongy_Krylov}), we have that the original sequence $(X^n, D^n)$ converges in probability to some $(X, D)$. From this, again by using the arguments of  the \cite[Proof of Theorem 3]{KM}, it is easy to see that $X$ is a strong solution to \cref{def_solution} with drift component $D$.
\end{proof}

\begin{appendix}
% \appendix   
\section{Auxiliary lemmas} 

  \label{sec:appendix}
%\addcontentsline{toc}{section}{\protect\numberline{}Appendix}

\begin{lemma}  \label{lem:buckling_D_norm}
    Let $(S, T) \in [0,1]^2_\leq$ and $\blue{f} \in \mathcal{D}_g^{2\gamma}([s,t])$.  Suppose that there exist constants $C_1, C_2$ such that for all $(s,t) \in [S,T]_{\leq}^2$ we have
    \begin{equs}
       \,  [\blue{f}]_{\mathcal{D}_g^{2\gamma}([s,t])} \leq C_1   [\blue{f}]_{\mathcal{D}_g^{2\gamma}([s,t])} |t-s|^{\gamma}+C_2.
    \end{equs}
    Then, it holds that 
    \begin{equs}
       \,  [\blue{f}]_{\mathcal{D}_g^{2\gamma}([S,T])} \leq 2(1+[g]_{\cC^\gamma([S, T])}) (2C_1)^{1/\gamma} C_2. 
    \end{equs}
\end{lemma}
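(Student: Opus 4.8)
The plan is the standard buckling/iteration argument. The first step is to pick the length scale $h:=(2C_1)^{-1/\gamma}$ at which the hypothesis becomes contractive. Indeed, for any $[s,t]\subseteq[S,T]$ with $t-s\le h$ one has $C_1|t-s|^{\gamma}\le C_1 h^{\gamma}=\tfrac12$, so the assumed inequality reads
\[[\blue{f}]_{\mathcal{D}_g^{2\gamma}([s,t])}\le \tfrac12[\blue{f}]_{\mathcal{D}_g^{2\gamma}([s,t])}+C_2.\]
Since $\blue f\in\mathcal{D}_g^{2\gamma}([S,T])$ by assumption, $[\blue{f}]_{\mathcal{D}_g^{2\gamma}([s,t])}$ is finite and we may subtract, obtaining $[\blue{f}]_{\mathcal{D}_g^{2\gamma}([s,t])}\le 2C_2$ on every subinterval of length at most $h$.

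The second step is to glue these local bounds. Set $k:=\lceil (T-S)/h\rceil$ and split $[S,T]$ into consecutive intervals $[u_{i-1},u_i]$, $i=1,\dots,k$, of equal length $(T-S)/k\le h$, with $u_0=S$, $u_k=T$. Applying the subadditivity estimate \eqref{eq:sum_partition} together with the local bound from the first step gives
\[[\blue{f}]_{\mathcal{D}_g^{2\gamma}([S,T])}\le 2\bigl(1+[g]_{\cC^\gamma([S,T])}\bigr)\sum_{i=1}^k[\blue{f}]_{\mathcal{D}_g^{2\gamma}([u_{i-1},u_i])}\le 4k\bigl(1+[g]_{\cC^\gamma([S,T])}\bigr)C_2.\]
Finally one counts the number of intervals: since $T-S\le 1$ (as $[S,T]\subseteq[0,1]$), in the case $h\le T-S$ one has $k\le (T-S)/h+1\le 2(T-S)(2C_1)^{1/\gamma}\le 2(2C_1)^{1/\gamma}$, while in the case $h>T-S$ one has $k=1$ and the first step already bounds $[\blue{f}]_{\mathcal{D}_g^{2\gamma}([S,T])}$ directly on all of $[S,T]$. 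Either way one arrives at a bound of the asserted form $C\bigl(1+[g]_{\cC^\gamma([S,T])}\bigr)(2C_1)^{1/\gamma}C_2$, and a slightly more careful bookkeeping of $k$ recovers the stated constant.

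There is no real obstacle here; the argument is routine once one has the subdivision estimate \eqref{eq:sum_partition} in hand. The only two points needing (minor) attention are: (i) the rearrangement $[\blue f]\le\tfrac12[\blue f]+C_2\Rightarrow[\blue f]\le 2C_2$ is legitimate \emph{only} because $[\blue f]_{\mathcal{D}_g^{2\gamma}([S,T])}$, hence $[\blue f]_{\mathcal{D}_g^{2\gamma}([s,t])}$ for every subinterval, is assumed finite a priori; and (ii) the passage from the per-interval bound to the global one must use \eqref{eq:sum_partition} rather than naive additivity, because the $R^{\blue f}$-part of the controlled-path seminorm only behaves subadditively up to the extra cross term $[f']_{\cC^\gamma}[g]_{\cC^\gamma}$, which is exactly what produces the factor $1+[g]_{\cC^\gamma([S,T])}$.
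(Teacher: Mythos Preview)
Your proposal is correct and follows essentially the same approach as the paper's proof: choose the length scale $h=(2C_1)^{-1/\gamma}$ so that the hypothesis becomes contractive on subintervals of length $\le h$, obtain the local bound $[\blue{f}]_{\mathcal{D}_g^{2\gamma}}\le 2C_2$ there, and then glue via \eqref{eq:sum_partition}. Your remarks on the need for a priori finiteness and on why \eqref{eq:sum_partition} (rather than naive additivity) is required are apt; the only differences from the paper are cosmetic (you are slightly more explicit about the ceiling and the case $h>T-S$).
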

\begin{proof}
    Let us take a partition $\phi=\{s_0, ..., s_m\}$ of $[S, T]$ such that $|\pi| \leq (2 C_1)^{-1/\gamma}$ and $m \leq (2C_1)^{1/\gamma}$. It then follows by assumption that 
    \begin{equs}
       \,  [\blue{f}]_{\mathcal{D}_g^{2\gamma}([s_i,s_{i+1}])} \leq 2 C_2
    \end{equs}
    Next, by \eqref{eq:sum_partition}, we get 
    \begin{equs}
         \,  [\blue{f}]_{\mathcal{D}_g^{2\gamma}([S,T])} \leq 2(1+[g]_{\cC^\gamma([S, T])}) m C_2.
    \end{equs}
\end{proof}

\begin{proof}[Proof of \cref{lem:orthogonal_density}]
    First, it follows from the definitions of $\mathcal{S}(V \otimes \mathcal{H})$ and $\mathbb{D}^{k, p}(V \otimes \mathcal{H})$ that linear combinations of elements of the form 
    $ F v \otimes h$ with  $F\in \mathcal{S}$, $v \in V$, and  $h \in \mathcal{H}$ are dense in $\mathbb{D}^{k, p}(V \otimes \mathcal{H})$. In addition, $\mathcal{S}_{ON}(V \otimes \mathcal{H})$ is a linear space and therefore it suffices to show that  $F v \otimes h$ can be approximated by elements from  $\mathcal{S}_{ON}(V \otimes \mathcal{H})$. Notice that for some $h_1, \dots h_m \in \mathcal{H}$ and $f \in \cC_{\pol}^\infty(\R^m)$ we have 
    \begin{equs}
       \zeta:=  F v \otimes h & = f(B(h_1), \dots B(h_m)) \sum_{i,k=1}^\infty  \langle v, q_k \rangle_V \langle h, e_i \rangle_{\cH} q_k \otimes e_i 
        \\
        & = f\big(\sum_{i=1}^\infty \langle h_1, e_i \rangle_{\cH} B(e_i), \dots,  \sum_{i=1}^\infty \langle h_m, e_i \rangle_{\cH} B(e_i)\big) \sum_{i,k=1}^\infty  \langle v, q_k \rangle_V \langle h, e_i \rangle_{\cH} q_k \otimes e_i,
    \end{equs}
    where we have used that $B$ is an isonormal process. Then, we set 
    \begin{equs}
        \zeta_n: = F_n  \sum_{i,k=1}^n \langle v, q_k \rangle_V \langle h, e_i \rangle_{\cH} q_k \otimes e_i,
    \end{equs}
    where 
    \begin{equs}
       F_n:=  f\big(\sum_{i=1}^n \langle h_1, e_i \rangle_{\cH} B(e_i), \dots,  \sum_{i=1}^n \langle h_m, e_i \rangle_{\cH} B(e_i)\big).
    \end{equs}
    We clearly have $\zeta_n \in \mathcal{S}_{ON}(V \otimes \mathcal{H})$. By the triangle inequality and Parseval's identity  we have 
    \begin{equs}
        \| \zeta-\zeta_n\|_{V \otimes \mathcal{H}}^2  \leq 2|F-F_n|^2  \sum_{i,k=1}^\infty  \langle v, q_k \rangle_V^2 \langle h, e_i \rangle_{\cH}^2+ 2|F_n|^2 \sum_{ i\vee k>n}\langle v, q_k \rangle_V^2 \langle h, e_i \rangle_{\cH}^2,
    \end{equs}
    which gives 
    \begin{equs}
         \| \zeta-\zeta_n\|_{L_p(\Omega; V \otimes \mathcal{H}) } & \leq 2\|F-F_n\|_{L_p(\Omega)}\|v\otimes h\|_{V\otimes \cH}
         \\
         & \qquad + 2\|F_n\|_{L_p(\Omega)}\Big(\sum_{ i\vee k>n}\langle v, q_k \rangle_V^2 \langle h, e_i \rangle_{\cH}^2\Big)^{1/2}.      \label{eq:zeta-zeta_n}
    \end{equs}
    Next, notice that since $f$ has polynomial growth, for any $q \geq 2$, there exists $l \in \mathbb{N}$  such that 
    \begin{equs}
        \E |F_n|^q & \lesssim  1+  \sum_{j=1}^m \E \Big| \sum_{i=1}^n \langle h_j, e_i \rangle_{\cH} B(e_i)  \Big|^l    \lesssim  1+  \sum_{j=1}^m \Big\| \sum_{i=1}^n \langle h_j, e_i \rangle_{\cH} B(e_i)  \Big\|_{L_2(\Omega)} ^l 
        \\
        & \lesssim 1+  \sum_{j=1}^m \Big(  \sum_{i=1}^\infty  \langle h_j, e_i \rangle_{\cH}^2  \Big)^{l/2} \lesssim 1,
    \end{equs}
    where in the second inequality we have used Gaussianity.
    This shows that $F_n$ are bounded uniformly in $n$ in $L_q$ for any $q \geq 1$. Consequently, the second summand in \eqref{eq:zeta-zeta_n} converges to zero with $n \to \infty$. In addition, by using the continuity of $f$, we see that $|F-F_n| \to 0$ in probability as $n \to \infty$, which combined with the boundedness of $F_n$ in $L_q$ for any $q \geq1$ shows that the first summand in \eqref{eq:zeta-zeta_n}  also converges to zero. Consequently, we get 
    that $\lim_{n \to \infty}\| \zeta-\zeta_n\|_{L_p(\Omega; V \otimes \mathcal{H}) }=0$. Finally, to check that $\lim_{n \to \infty}\| D^\kappa (\zeta-\zeta_n)\|_{L_p(\Omega; V \otimes \mathcal{H}^{\otimes (\kappa+1) }) }=0$ for $\kappa \leq k$, it suffices to Malliavin differentiate the expressions for $\zeta$ and $\zeta_n$ and repeat the above argument.  
\end{proof}

\begin{lemma}\label{lem:interpolation}
%    Chengcheng will write this, thanks!\\ 
    Let $p\in[1,\infty)$, $k,\ell\in\N$, $k$ even. Take a  linear map $T:\mathcal{C}^\infty(\mR^d)\mapsto L^p(\Omega)$ such there exist positive constants $\Gamma_{\ell}\leq\Gamma_{-k}$ so that the following holds: for any $f\in \mathcal{C}^\infty(\mR^d)$ the following bounds hold                       
    %and $\forall\mathbb{k}=(k_1,\ldots,k_d)\in\N^d$ so that $|\mathbb{k}|=k$,
    \begin{equs}
       \|Tf\|_{L^p(\Omega)} &\leq \Gamma_{\ell}\|\nabla^\ell f\|_{\infty}\label{con:interpolation-1},\\
        \|T(\nabla^k f)\|_{L^p(\Omega)} &\leq \Gamma_{-k}\|f\|_{\infty}.  \label{con:interpolation-2}
    \end{equs}
    Suppose furthermore that $Tf_n\to T f$ in $L^p(\Omega)$ whenever $f_n\to f$ uniformly on compacts.
    Then for any $\beta\in[-k,0)$ there exists a constant $C=C(d,\ell,k,\beta)$ such that for all $f\in \mathcal{C}^\infty(\mR^d)$ one has
    \begin{equ}
        \label{est:interpolation}  \|Tf\|_{L^p(\Omega)} \leq C \Gamma_{\ell}^{1-\theta}\Gamma_{-k}^{\theta}\|f\|_{\mathcal{C}^{{\beta}}},   
    \end{equ}
    where $\theta\in[0,1]$ is uniquely defined by $\beta=(1-\theta)\ell-\theta k$.
\end{lemma}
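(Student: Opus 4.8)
\emph{Strategy.} The plan is to prove \eqref{est:interpolation} by a Littlewood--Paley interpolation argument: decompose $f$ into a low-frequency part plus dyadic high-frequency blocks, estimate each block in two ways --- once through \eqref{con:interpolation-1}, paying $\ell$ derivatives, and once through \eqref{con:interpolation-2}, paying $k$ derivatives --- and then optimise over the dyadic scale. Since in \eqref{est:interpolation} the function $f$ is already assumed smooth, no density reduction is needed; the hypothesis that $Tf_n\to Tf$ whenever $f_n\to f$ uniformly on compacts will be used only to pass the decomposition of $f$ through $T$ term by term.

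\emph{The decomposition.} Recall that for $\beta<0$ the heat-semigroup norm \eqref{def:C-} is equivalent to the Littlewood--Paley (Besov) norm of $\mathcal{C}^\beta=B^\beta_{\infty,\infty}$; this equivalence (and the associated Bernstein inequalities) I would take as standard. Fix Schwartz functions $\chi,\varphi$ with $\widehat\chi$ supported in a ball about the origin, $\widehat\varphi$ supported in an annulus bounded away from the origin, and $\widehat\chi(\xi)+\sum_{j\geq1}\widehat\varphi(2^{-j}\xi)=1$, and set $\varphi_j(x)=2^{jd}\varphi(2^jx)$. Then $f=\chi\ast f+\sum_{j\geq1}\varphi_j\ast f$ with partial sums converging to $f$ uniformly on compacts (here $f\in\mathcal{C}^\infty$ is more than enough), so by linearity and the continuity assumption on $T$ one has $Tf=T(\chi\ast f)+\sum_{j\geq1}T(\varphi_j\ast f)$ with convergence in $L^p(\Omega)$, and it remains to bound each summand. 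The key structural point is that, since $\widehat\varphi$ vanishes to infinite order at the origin (and $k$ is even), one can write $\varphi=\Delta^{k/2}\widetilde\varphi$ with $\widetilde\varphi$ Schwartz and $\widehat{\widetilde\varphi}$ again annulus-supported; by scaling this gives $\varphi_j\ast f=2^{-jk}\,\Delta^{k/2}(\widetilde\varphi_j\ast f)$, and $\Delta^{k/2}$ is a fixed linear combination of $d^{k/2}$ components of $\nabla^k$.

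\emph{The two block bounds and the optimisation.} Applying the Bernstein estimate $\|\psi_j\ast f\|_\infty\lesssim 2^{-j\beta}\|f\|_{\mathcal{C}^\beta}$ (valid for annulus-supported Schwartz $\psi$, together with $\|\psi\ast f\|_\infty\lesssim\|f\|_{\mathcal{C}^\beta}$ for ball-supported $\psi$ since $\beta<0$) to $\widetilde\varphi_j\ast f$, to $(\nabla^\ell\varphi)_j\ast f$ and to $(\nabla^\ell\chi)\ast f$, and combining with \eqref{con:interpolation-1} (through $\nabla^\ell(\varphi_j\ast f)=2^{j\ell}(\nabla^\ell\varphi)_j\ast f$) and with \eqref{con:interpolation-2} (through $\varphi_j\ast f=2^{-jk}\Delta^{k/2}(\widetilde\varphi_j\ast f)$), one obtains for each $j\geq1$ the two bounds $\|T(\varphi_j\ast f)\|_{L^p(\Omega)}\lesssim\min(\Gamma_\ell\,2^{j(\ell-\beta)},\ \Gamma_{-k}\,2^{-j(k+\beta)})\,\|f\|_{\mathcal{C}^\beta}$, as well as $\|T(\chi\ast f)\|_{L^p(\Omega)}\lesssim\Gamma_\ell\|f\|_{\mathcal{C}^\beta}$. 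For $\beta\in(-k,0)$ both exponents $\ell-\beta$ and $k+\beta$ are strictly positive, so the increasing geometric sequence $\Gamma_\ell2^{j(\ell-\beta)}$ and the decreasing one $\Gamma_{-k}2^{-j(k+\beta)}$ cross near $j_\ast=(\ell+k)^{-1}\log_2(\Gamma_{-k}/\Gamma_\ell)\geq0$; summing the smaller of the two over $j\geq1$ (two geometric tails meeting at $j_\ast$) gives $\sum_{j\geq1}\|T(\varphi_j\ast f)\|_{L^p(\Omega)}\lesssim\Gamma_\ell\,2^{j_\ast(\ell-\beta)}\|f\|_{\mathcal{C}^\beta}=\Gamma_\ell(\Gamma_{-k}/\Gamma_\ell)^{\theta}\|f\|_{\mathcal{C}^\beta}=\Gamma_\ell^{1-\theta}\Gamma_{-k}^{\theta}\|f\|_{\mathcal{C}^\beta}$, with a constant depending only on $d,\ell,k,\beta$ (through the Littlewood--Paley kernels and the ratios $2^{-(\ell-\beta)}$, $2^{-(k+\beta)}$). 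Since $\theta\in[0,1]$ and $\Gamma_\ell\leq\Gamma_{-k}$ one also has $\Gamma_\ell\leq\Gamma_\ell^{1-\theta}\Gamma_{-k}^{\theta}$, so adding the low-frequency contribution yields \eqref{est:interpolation}.

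\emph{Main obstacle.} The only genuinely delicate point is the endpoint $\beta=-k$ (equivalently $\theta=1$): there $k+\beta=0$, the block estimate coming from \eqref{con:interpolation-2} is merely $O(1)$ in $j$, and the dyadic sum ceases to converge, so this borderline case needs a separate treatment (or the relevant range should be read as $\beta\in(-k,0)$, which already contains every application in the article, where $\beta=\alpha-1$ or $\beta=\gamma>-1/(2H)$ with $k=2$). All the remaining ingredients are routine: the termwise passage of the decomposition through $T$ is justified by the continuity hypothesis combined with $P_{2^{-2N}}f\to f$ (equivalently $S_Nf\to f$) uniformly on compact sets, and the Bernstein/low-frequency estimates, as well as the equivalence of \eqref{def:C-} with the Littlewood--Paley characterisation of $\mathcal{C}^\beta$ for $\beta<0$, are classical facts about Besov spaces that I would quote without proof.
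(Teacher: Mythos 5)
Your proof is correct on the range $\beta\in(-k,0)$ and takes a genuinely different route from the paper's. The paper avoids any dyadic decomposition: it applies $T$ directly to $P_t f$ and obtains the two bounds $\|TP_tf\|_{L^p}\lesssim \Gamma_\ell\, t^{(\beta-\ell)/2}\|f\|_{\cC^\beta}$ (via \eqref{con:interpolation-1} and \eqref{est:2derivate-heatk}) and $\|TP_tf\|_{L^p}\lesssim \Gamma_{-k}\, t^{(\beta+k)/2}\|f\|_{\cC^\beta}$ (by writing $P_t=(P_{t/m})^m$ with $m=k/2$, using $\partial_s P_s=\Delta P_s$ and \eqref{con:interpolation-2}); raising these to powers $1-\theta$ and $\theta$ makes the $t$-factors cancel identically, so the bound on $\|TP_tf\|_{L^p}$ is already $t$-independent and one simply sends $t\to0$ using the continuity hypothesis. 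There is no optimisation over scales and no infinite sum. Your Littlewood--Paley version replaces this by the standard dyadic decomposition, block estimates via Bernstein, and a geometric-series optimisation around the crossing index $j_\ast$; it reaches the same conclusion with essentially the same bookkeeping, at the cost of invoking the equivalence of \eqref{def:C-} with the Besov characterisation of $\cC^\beta$ and the identity $\varphi=\Delta^{k/2}\widetilde\varphi$. The semigroup route is arguably leaner here because \eqref{def:C-} is phrased directly in terms of $P_t$, so no change of characterisation is needed. One further point worth noting: the endpoint issue you flag is real, and in fact the paper's own proof has exactly the same restriction --- the last step there explicitly uses $\beta/2>-k/2$ so that the iterated integral $\int_0^{t/m}\cdots\int_0^{t/m}(s_1+\cdots+s_m)^{\beta/2}\,ds_1\cdots ds_m$ converges, which fails at $\beta=-k$. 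So the stated range $\beta\in[-k,0)$ should be read as $\beta\in(-k,0)$ in both arguments; as you observe, the applications in the paper ($k=2$, $\beta=\alpha-1$ or $\beta=\gamma>-1/(2H)>-2$) are all strictly interior.
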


\begin{proof} 
By continuity, it suffices to show \eqref{est:interpolation} with $P_t f$ in place of $f$ on the left-hand side, uniformly in $t\in(0,1]$. First we have, by definition of the H\"older norm \eqref{def:C-}
\begin{equ}
\|T P_tf\|_{L_p(\Omega)}\leq \Gamma_\ell\|\nabla^\ell P_t f\|_{L^\infty}\lesssim \Gamma_\ell t^{-\ell/2}\|P_{t/2} f\|_{L^\infty}\leq \Gamma_\ell t^{-\ell/2}t^{\beta/2}\|f\|_{\cC^\beta}.
\end{equ}
On the other hand, if $k$ is even, so that $k=2m$ for $m\in \N$, then
\begin{equs}
\|T P_t f\|_{L^p(\Omega)}&=\|TP_{t/m}\cdots P_{t/m} f\|_{L^(\Omega)}
\\
&=\|T\int_0^{t/m}\cdots\int_0^{t/m}\partial_t P_{s_1}\cdots\partial_t P_{s_{m}} f\,ds_1\cdots ds_m\|_{L^p(\Omega}
\\
&=\|\int_0^{t/m}\cdots\int_0^{t/m} T\Delta^m P_{s_1+\cdots +s_m}f\,ds_1\cdots\,ds_m\|_{L^p(\Omega)}
\\
&\leq \Gamma_{-k}\int_0^{t/m}\cdots\int_0^{t/m} \|P_{s_1+\cdots +s_m}f\|_{L^\infty}\,ds_1\cdots\,ds_m
\\
&\leq\Gamma_{-k}\|f\|_{\cC^\beta}\int_0^{t/m}\cdots\int_0^{t/m} (s_1+\cdots +s_m)^{\beta/2}\,ds_1\cdots\,ds_m\\&\lesssim \Gamma_{-k}\|f\|_{\cC^\beta} t^{k/2+\beta/2},
\end{equs}
using $\beta/2>-k/2=-m$ in the last step.
Raising the first estimate to the $(1-\theta)$ power and the second to $\theta$, the factors involving $t$ cancel exactly, and we get the desired bound.
\end{proof}

The following lemma follows directly by \cite[Proposition 2.2, Lemma 2.3]{Mate_Peter} and standard Young integration. For the notation $\cC^\beta_\gamma$ therein, recall \eqref{eq:weighted-Holder}.
\begin{lemma}\label{lem:Young_with_blow_up}
Let $s \in [0,1]$,  $\alpha, \beta \in (0,1)$ such that $\alpha+ \beta >1$ and let $\gamma \in (0, \beta)$. Then, for all $f \in \cC^\alpha([s,1])$, $g \in \cC^\beta_\gamma([s,1])$ and  $(u,t) \in [0,1]^2_{\leq}$, we have 
\begin{equs}
\big| \int_u^t f_{u,r} \, dg_r \big| \lesssim [f]_{\cC^\alpha} [g]_{\cC^\beta_\gamma} |t-u|^{\beta+\alpha -\gamma}.
\end{equs}
\end{lemma}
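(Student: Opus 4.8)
The plan is to realise $\int_u^t f_{u,r}\,dg_r$ directly as the output of the sewing lemma applied to a germ that carries a boundary weight at $s$, and then simply to read off the stated bound. I would fix $(u,t)\in[0,1]^2_\leq$ (the case $t=u$ being trivial), work on the interval $[u,t]$, and introduce for $u\leq v\leq w\leq v'\leq t$ the germ $A_{v,v'}:=f_{u,v}\,g_{v,v'}=(f_v-f_u)(g_{v'}-g_v)$. Since $A_{v,v}=0$ and in particular $A_{u,t}=f_{u,u}\,g_{u,t}=0$, the inequality to be proved is exactly a quantitative estimate on the sewing $\Lambda_{u,t}$ of $A$ on $[u,t]$.

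The only computation needed is the defect. A direct expansion gives $\delta A_{v,w,v'}=-f_{v,w}\,g_{w,v'}$, and therefore, combining $f\in\cC^\alpha$ with the weighted modulus of continuity of $g$ recorded in $[g]_{\cC^\beta_\gamma}$ (cf.\ \eqref{eq:weighted-Holder}, with the exponent chosen so that the increments of $g$ may blow up near $s$ at rate $\gamma$),
\begin{equs}
   |\delta A_{v,w,v'}|\ \lesssim\ [f]_{\cC^\alpha}\,[g]_{\cC^\beta_\gamma}\,|w-s|^{-\gamma}\,|v'-v|^{\alpha+\beta},
\end{equs}
and similarly $|A_{v,v'}|\lesssim [f]_{\cC^\alpha}[g]_{\cC^\beta_\gamma}|v-s|^{-\gamma}|v'-v|^{\alpha+\beta}$. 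Because $\alpha+\beta>1$ and $0<\gamma<\beta<1$, this is precisely the situation covered by the weighted sewing statements \cite[Proposition~2.2]{Mate_Peter} together with \cite[Lemma~2.3]{Mate_Peter}: the boundary factor $|w-s|^{-\gamma}$ is admissible there, and the exponent bookkeeping carried out in that reference shows that $\sum_{[v,v']\in\cP}A_{v,v'}$ converges as $|\cP|\to0$ to a limit $\Lambda_{u,t}$ with $|\Lambda_{u,t}-A_{u,t}|\lesssim [f]_{\cC^\alpha}[g]_{\cC^\beta_\gamma}|t-u|^{\alpha+\beta-\gamma}$; since $A_{u,t}=0$, this is the claim. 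To identify $\Lambda_{u,t}$ with the object denoted $\int_u^t f_{u,r}\,dg_r$, I would invoke standard Young theory: on any subinterval avoiding $s$ both $f$ and $g$ are ordinary Hölder functions whose exponents sum to more than one, so the classical Young integral exists and agrees with the sewing limit—this is the ``standard Young integration'' ingredient.

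I do not anticipate a genuine obstacle: the entire substance sits in the cited weighted sewing lemma, and the only point requiring care is translating the weighted-Hölder conventions of \cite{Mate_Peter} into those of \eqref{eq:weighted-Holder} and verifying that the hypotheses $\alpha+\beta>1$ and $0<\gamma<\beta$ are exactly the ones under which the defect above is summable over a partition of $[u,t]$ (equivalently, under which the boundary singularity of $g$ at $s$ is integrated away, at the cost of reducing the Hölder exponent of the integral from $\alpha+\beta$ to $\alpha+\beta-\gamma$). This is bookkeeping rather than a new estimate.
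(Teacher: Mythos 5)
Your proof is correct and follows the same route the paper indicates: compute the germ $A_{v,v'}=f_{u,v}g_{v,v'}$ and its defect $\delta A_{v,w,v'}=-f_{v,w}g_{w,v'}$, bound the defect by $[f]_{\cC^\alpha}[g]_{\cC^\beta_\gamma}|w-s|^{-\gamma}|v'-v|^{\alpha+\beta}$, and invoke the weighted sewing lemma from the cited reference; the paper compresses exactly this into a one-line citation. (Your stated bound on $A_{v,v'}$ itself is slightly off---it should carry $|v-u|^\alpha|v'-v|^\beta$ rather than $|v'-v|^{\alpha+\beta}$---but since the sewing lemma uses only the defect bound and $A_{u,t}=0$, this has no bearing on the argument.)
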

\end{appendix}

 \section*{Acknowledgment}
KD and KL have been supported by  the Engineering \& Physical Sciences Research Council (EPSRC) [grant number EP/Y016955/1].
MG and CL acknowledge funding from the FWF: This research was funded in whole or in part by the Austrian Science Fund (FWF) [10.55776/P34992]. For open access purposes, the authors have applied a CC BY public copyright license to any authors accepted manuscript version arising from this submission.

\bibliographystyle{Martin}
\bibliography{multi-bib}
\end{document}